\newtheorem{theorem}{Theorem}
\newtheorem{corollary}{Corollary}
\newtheorem{definition}{Definition}
\newtheorem{lemma}{Lemma}
\newtheorem{assumption}{Assumption}
\newtheorem{proposition}{Proposition}
\newtheorem{remark}{Remark}
\let\e=\varepsilon
\let\p=\partial
\let\O=\Omega
\let\o=\omega
\let\b=\beta
\newcommand{\R}{\mathbb{R}}
\newcommand{\be}{\begin{equation}}
\newcommand{\bm}{\begin{multline}}
\newcommand{\ee}{\end{equation}}
\newcommand{\dd}{\mathrm{d}}
\newcommand{\xb}{x_{\mathbf{b}}}
\newcommand{\tb}{t_{\mathbf{b}}}
\newcommand{\vb}{v_{\mathbf{b}}}
\newcommand{\xf}{x_{\mathbf{f}}}
\newcommand{\tf}{t_{\mathbf{f}}}
\newcommand{\vf}{v_{\mathbf{f}}}
\newcommand{\VN}{\mathbb{V}^{N}}
\newcommand{\X}{\mathbf{x}}
\newcommand{\V}{\mathbf{v}}
\newcommand{\dist}{\mathrm{dist}}
\newcommand{\Bes}{\begin{eqnarray*}}
	\newcommand{\Ees}{\end{eqnarray*}}
\newcommand{\Be}{\begin{equation}}
\newcommand{\Ee}{\end{equation}}
\numberwithin{equation}{section}
\def\p{\partial}
\def\O{\Omega}
\def\R{\mathbb{R}}
\def\B{\begin{equation}}
\def\E{\end{equation}}
\def\BN{\begin{eqnarray*}}
\def\EN{\end{eqnarray*}}
\begin{document}
	\date{ \today
	}
	
	\title{DECAY OF THE BOLTZMANN EQUATION WITH THE SPECULAR BOUNDARY CONDITION IN NON-CONVEX CYLINDRICAL DOMAINS }

	\author{Chanwoo Kim \and Donghyun Lee}

	\begin{abstract}
		A basic question about the existence and stability of the Boltzmann equation in general non-convex domain with the specular reflection boundary condition has been widely open. In this paper, we consider cylindrical domains whose cross sections are general non-convex analytic planar domain. We establish the global-wellposedness and asymptotic stability of the Boltzmann equation with the specular reflection boundary condition in such domains. Our method consists of sharp classification of billiard trajectories which bounce infinitely many times or hit the boundary tangentially at some moment, and a delicate construction of an $\e$-tubular neighborhood of such trajectories. Analyticity of the boundary is crucially used. Away from such $\e$-tubular neighborhood, we control the number of bounces of trajectories and its' distance from singular sets in a uniform fashion. The worst case, sticky grazing set, can be excluded by cutting off small portion of the temporal integration. Finally we apply a method of \cite{KimLee} by the authors and achieve a pointwise estimate of the Boltzmann solutions.  
	\end{abstract}
		
	\maketitle
	
	\tableofcontents
			
	\section{Introduction}
	
	\textit{The Boltzmann equation} is a mathematical model for dilute gas which describes a probability density function of particles. In addition to free transport of a particle, a collision effect is also considered. If there is no external force or self-generating force, 
	 probability density function $F(t,x,v)$ is governed by
	\begin{equation} \label{Boltzmann}
		\p_{t}F + v\cdot\nabla_{x} F = Q(F,F),\quad    F(0,x,v)= F_{0}(x,v),
	\end{equation}
	where 
	the position $x\in U \subset \mathbb{R}^{3}$ and velocity $v\in\mathbb{R}^{3}$ at time $t \geq 0$. The collision operator $Q(F_1,F_2)$ takes the form of
	\begin{equation*} \label{collision Q}
		Q(F_{1}, F_{2}) = \int_{\mathbb{R}^{3}} \int_{\mathbb{S}^{2}} B(v-u,\o) \big[ F_{1}(u^{\prime})F_{2}(v^{\prime}) - F_{1}(u)F_{2}(v) \big] d\o du,\quad 
	\end{equation*} 
	where 
	$u^{\prime} = u + ((v-u)\cdot\o)\o$, $v^{\prime} = v - ((v-u)\cdot\o)\o$. For collision kernel, we choose so-called the hard sphere model $B(v-u,\o) = |(v-u)\cdot\o|$. We study (\ref{Boltzmann}) when $F$ is near the \textit{Maxwellian} $\mu = e^{-\frac{|v|^{2}}{2}}$. 
	
	When the gas contacts with the boundary, we need to impose boundary condition for $F$ on $\p U$, the boundary of the domain $U$. 
	In this paper, we impose \textit{the specular reflection boundary condition}, which is one of the most basic conditions
	\begin{equation} \label{specular}
		F(t,x,v) = F(t,x,R_{x}v),\quad x\in\p U,
	\end{equation}
	where $R_{x} := I - 2\mathbf{n}(x)\otimes \mathbf{n}(x)$ and $\mathbf{n}(x)$ is the outward unit normal vector at $x\in\p U$. Note that the Maxwellian is an equilibrium state (or a steady solution) of (\ref{Boltzmann}) with (\ref{specular}).
	
	\vspace{2pt}

	Despite extensive developments in the study of the Boltzmann theory, many basic boundary problems, especially regarding the specular reflection BC with general domains, have remained open. In 1977, in \cite{SA}, Shizuta and Asano  announced the global existence of the Boltzmann equation with the specular boundary condition in smooth convex domain without a complete proof. The first mathematical proof of such problem was given by Guo in \cite{Guo10}, but with a strong extra assumption that the boundary should be a level set of a real analytic function. Very recently the authors proved the unique existence and asymptotic stability of the specular boundary problem for general smooth convex domains (with or without external potential) in \cite{KimLee}, using triple iteration method and geometric decomposition of particle
	trajectories. This marks a complete resolution of a 40-years open question after \cite{SA}. 
	
  Meanwhile, there were even fewer results for general non-convex domains with the specular boundary condition. An asymptotic stability of the global Maxwellian is established in \cite{DV}, provided certain \textit{a-priori} strong Sobolev estimates can be verified. However, such strong estimates seem to fail especially when the domain is non-convex (\cite{GKTT1, GKTT2, Kim11}). Actually we believe that the solution cannot be in $C^1$ (but in $C^{0,\alpha}$) when the domain is non-convex. To the best of our knowledge, our work is the first result on the global well-posedness and decay toward Maxwellian results for any kind of non-convex domains with the specular boundary condition! One of the intrinsic difficulties of the non-convex domain problem is the (billiard) trajectory is very complicated to control (e.g. infinite bouncing, grazing).  
  The problems of general smooth non-convex domains or three-dimensional non-convex domains are still open.

	\vspace{2pt}
	
	In the case of the specular reflection boundary condition, we have the total mass and energy conservations as
	\begin{equation} \label{conserv_F_mass}
	\iint_{U \times\R^{3}} F(t )   =  \iint_{U \times\R^{3}} F_{0}   ,    
	 \ \ \  \ \
	\iint_{U \times\R^{3}} \frac{|v|^{2}}{2} F(t) 
	= 	 \iint_{U \times\R^{3}}  \frac{|v|^{2}}{2} F_{0} 
	.
	\end{equation}
	%
	%

	%
	
	\noindent  By normalization, we assume that 
	\begin{equation}\begin{split}\label{normalize_M}
	\iint_{ U\times \R^{3}} F_{0} (x,v)   =  \iint_{ U\times \R^{3}} \mu  ,   	 \ \ \  \ \
	\iint_{ U\times \R^{3}}  \frac{|v|^{2}}{2} F_{0} (x,v)   =  \iint_{ U\times \R^{3}}  \frac{|v|^{2}}{2} \mu   .   
	\end{split}
	\end{equation}
	
	In general, the total momentum is not conserved. However, in the case of axis-symmetric domains, we have an angular momentum conservation, i.e. if there exist a vector $x_{0}$ and an angular velocity $\varpi$ such that 
	\begin{equation}\label{axis-symmetric}
	\{  (x-x_{0}) \times \varpi\} \cdot \mathbf{n}(x) =0 \ \ \ \ \  \text{for all} \ x \in \p U,
	\end{equation}
then we have a conservation of the angular momentum as
	\begin{equation}\label{conserv_F_angular}
	\iint_{ U\times \R^{3}} \{  (x-x_{0}) \times \varpi\} \cdot v  F(t )
	=   \iint_{ U\times \R^{3}} \{  (x-x_{0}) \times \varpi\} \cdot v  F_{0}
	.
	\end{equation}
	In this case, we assume
	\begin{equation} \label{normalize_M_angular}
	\iint_{ U\times \R^{3}} \{  (x-x_{0}) \times \varpi\} \cdot v  F_{0}(x,v)
	= 0.  \\
	\end{equation}
	
	In this paper, we deal with periodic cylindrical domain with non-convex analytic cross section. A domain $U$ is given by 
	\begin{equation} \label{domain U}
	U = \O\times [0, H],  \ \ \
	(x_{1},x_{3}) \in \O \ \ \text{and} \ x_{2} \in [0, H] \ \ \ 
	 \text{for } (x_{1},x_{2},x_{3}) \in U
	\end{equation}
	where $\O \subset \R^{2}$ is the cross section. See Figure~\ref{fig1}. We assume that $F$ is periodic in $x_{2}$, i.e. $F(t, (x_{1}, 0, x_{3}),v) = F(t, (x_{1}, H, x_{3}), v )$. For the boundary of $U$, we denote
	$
		\p U :=  \p\O\times [0, H].
	$
We are interested in non-convex analytic cross section $\O \subset \mathbb{R}^{2}$:
	\begin{definition}\label{AND}
		Let $\Omega\subset\mathbb{R}^2$ be an open connected bounded domain and there exist simply connected subsets ${\Omega}_i \subset\mathbb{R}^2$, for $i=0,1,2,\cdots,M<\infty$ such that
		\begin{equation*} \label{domain}
		{\Omega} = {\Omega}_0 \backslash \{  {\Omega}_1 \cup  {\Omega}_2 \cup \cdots \cup  {\Omega}_M\},
		\end{equation*}
		where
		\begin{itemize}
			\item[1.] ${\Omega}_0 \supset\supset   {{\Omega}}_i $ for all $ \ i=1,2,\cdots,M$, and $ \p\O_{i} \cap \p\O_{j} =\emptyset$ for all $i\neq j$ and $i,j= 0,1,2,\cdots,M,$
			\item[2.] for each $\Omega_i$, there is a closed regular analytic curve $\alpha_i : [a_i,b_i]\rightarrow \mathbb{R}^2$ such that $\partial\Omega_i$ is an image of $\alpha_i$.
			\item[3.] $\p\O = \bigsqcup_{i=0}^{M} \p\O_{i}$,
		\end{itemize}
		where $\bigsqcup$ means disjoint union. 
	\end{definition}
	
	\begin{figure}[h] 
		\centering
		\begin{subfigure}[b]{0.55\textwidth}
			\includegraphics[width=\textwidth]{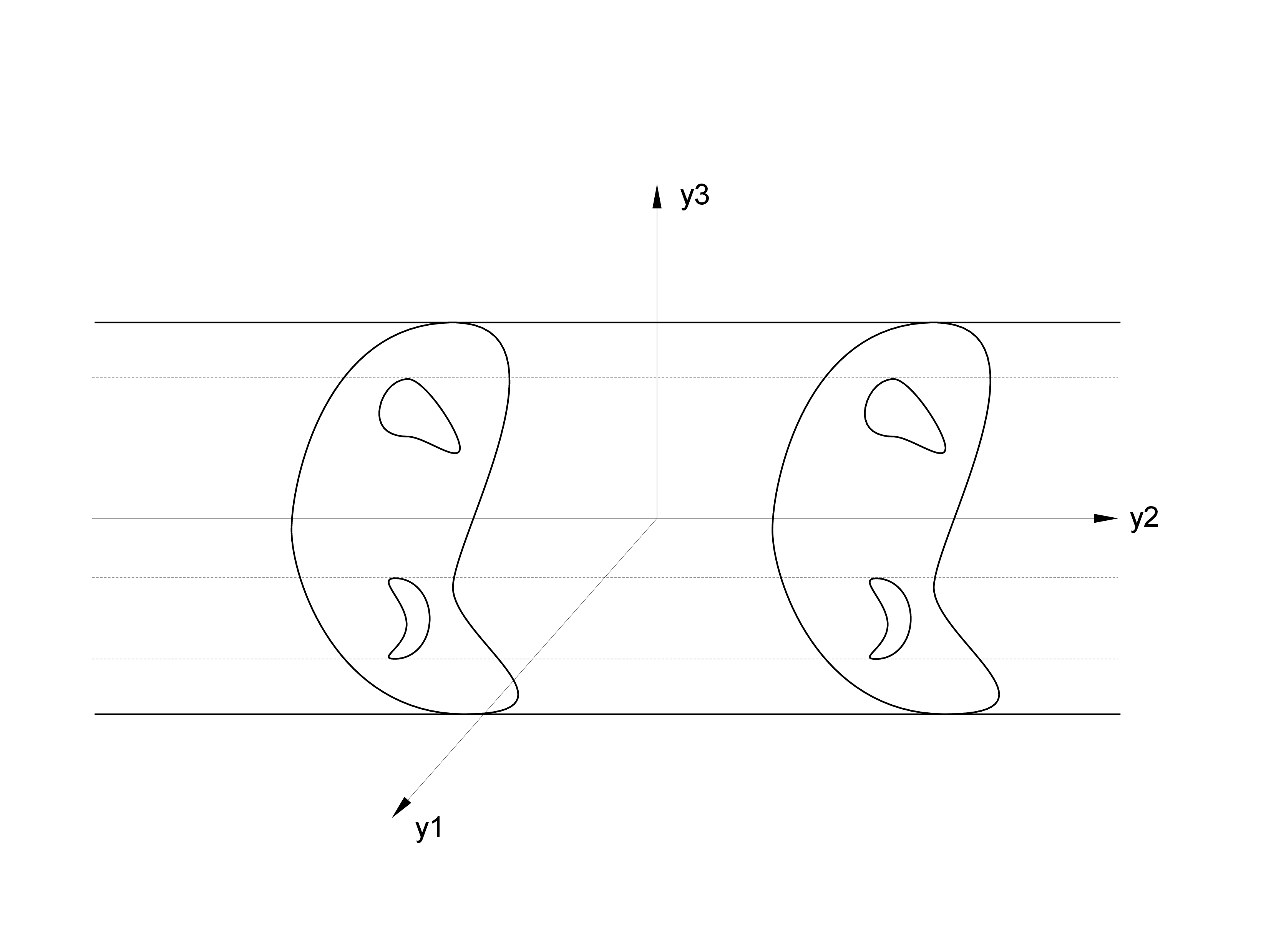}
			\caption*{Periodic cylindrical domain}
		\end{subfigure}
		~ 
		\begin{subfigure}[b]{0.4\textwidth}
			\includegraphics[width=\textwidth]{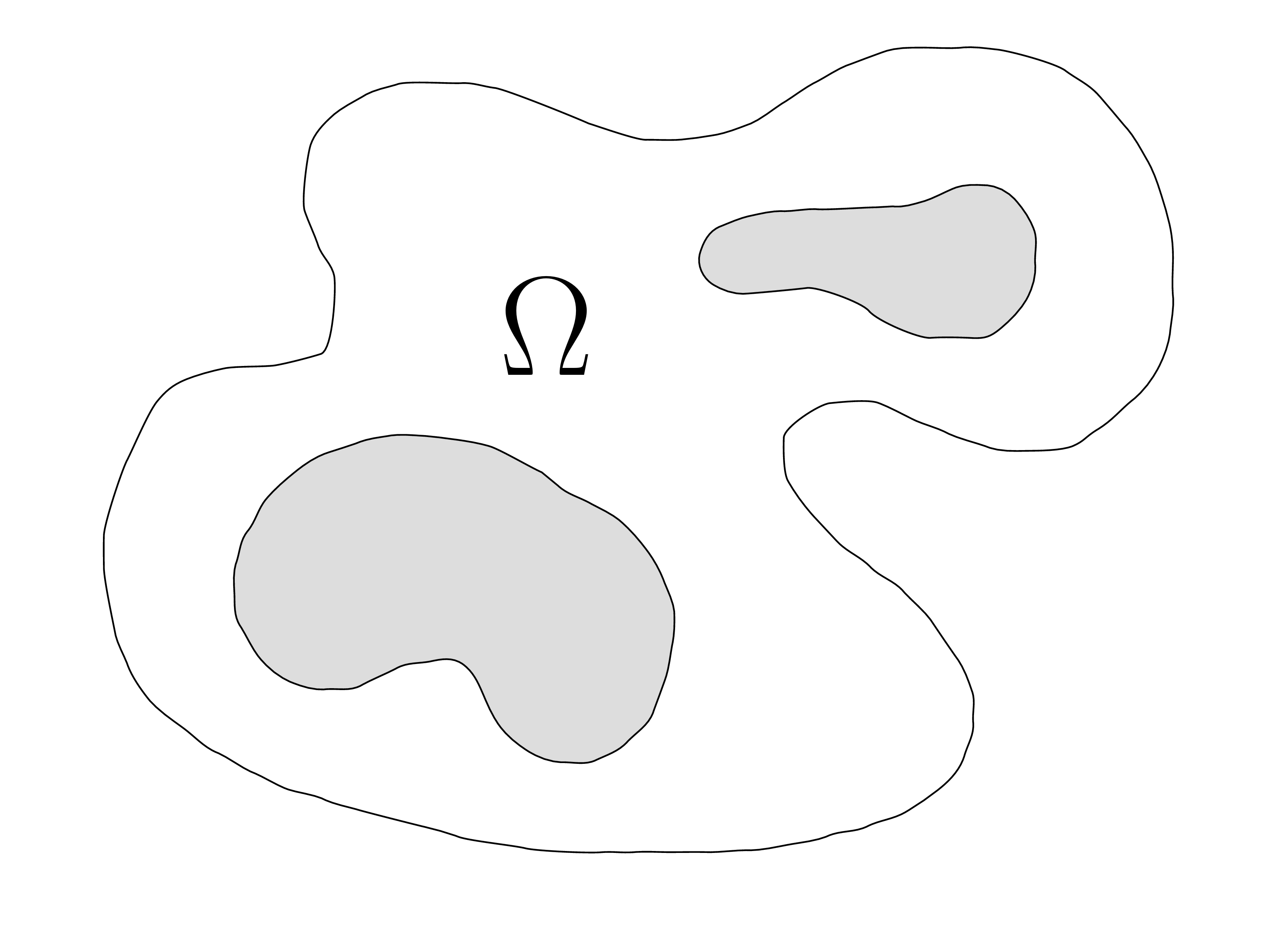}
			\caption*{Non-convex analytic cross section with punctures}
		\end{subfigure}
		\caption{Periodic cylindrical domain with non-convex analytic cross section}
		\label{fig1}
	\end{figure}	
	
	\begin{theorem}\label{theorem_decay}
		Let $w(v) = (1+|v|)^{\beta}$ with $\beta > \frac{5}{2}$. We assume periodic cylindrical domain $U$ defined in (\ref{domain U}), where analytic non-convex cross section with punctures $\O$ is defined in Definition \ref{AND}. We assume (\ref{normalize_M}) and also assume (\ref{normalize_M_angular}) if the cross section $\O$ is axis-symmetric (\ref{axis-symmetric}). Then, there exist $0 < \delta \ll 1$ such that if 
		\[
			F_{0} = \mu + \sqrt{\mu} f_{0} \geq 0\quad\text{and}\quad \|wf_{0}\|_{\infty} < \delta,
		\]
		then the Boltzmann equation (\ref{Boltzmann}) with the specular BC (\ref{specular}) has a unique global solution $F(t) = \mu + \sqrt{\mu}f(t) \geq 0$. Moreover, there exist $\lambda > 0$ such that
		\[	
			\sup_{t \geq 0} e^{\lambda t}\|wf(t)\|_{\infty} \lesssim \|wf_{0}\|_{\infty},
		\] 
		with conservations (\ref{conserv_F_mass}). In the case of axis-symmetric domain (\ref{axis-symmetric}), we have additional angular momentum conservation (\ref{conserv_F_angular}).   
	\end{theorem}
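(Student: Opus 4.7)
The overall strategy is an $L^{2}$--$L^{\infty}$ bootstrap for the perturbation $f = (F-\mu)/\sqrt{\mu}$, reducing the global decay to (i) exponential $L^{2}$ decay of the linearized flow and (ii) an $L^{\infty}$ Duhamel estimate with loss that can be closed against the $L^{2}$ decay. Item (i) is by now standard hypocoercive/interpolation, once the hydrodynamic kernel of the linearized collision operator is killed by the conservations (\ref{conserv_F_mass}) and, in the axis-symmetric case, (\ref{conserv_F_angular}). The serious work is in (ii), where the $L^{\infty}$ bound along backward characteristics demands a quantitative control of billiard trajectories in the non-convex cross section $\O$.

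A first step is to reduce the 3D problem to a 2D billiard. In (\ref{domain U}) the $x_{2}$-periodicity together with the fact that $R_{x}$ acts trivially on $v_{2}$ makes the $x_{2}$-motion decouple as free transport on a torus; every geometric pathology therefore comes from the planar billiard on $\O$. Along a backward characteristic $X(s;t,x,v)$ one needs (a) a uniform bound on the number of bounces before time $t$ and (b) a quantitative lower bound on the distance to the singular set where the specular reflection map loses regularity.

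The next step is to classify the dangerous phase-space sets into three families: (I) trajectories that accumulate infinitely many bounces by spiraling into a concave arc of some $\p\O_{i}$; (II) trajectories that graze $\p\O$ tangentially at some instant; and (III) the \emph{sticky grazing} set where both phenomena combine. Analyticity of the boundary, hypothesized in Definition~\ref{AND}, is used decisively here: the grazing and recurrence conditions on the analytic curves $\a_{i}$ become analytic constraints on $(x,v)$, so the pathological set admits a finite stratification by order of contact with no interior accumulation, and its lift to phase space has quantifiably small measure. Using this classification, I would construct an $\e$-tubular neighborhood $\mathcal{T}_{\e}$ of the union of bad trajectories whose Lebesgue measure vanishes with an explicit modulus as $\e\to 0$. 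Off $\mathcal{T}_{\e}$ the number of bounces is uniformly bounded and trajectories remain $\gtrsim \e$ from the singular set, so the triple iteration and geometric trajectory decomposition of \cite{KimLee} apply and give an estimate of the schematic form $\|wf(t)\|_{\infty} \lesssim \e^{-N}\|f\|_{L^{2}} + o_{\e}(1)\|wf\|_{\infty} + \text{nonlinear term}$. The sticky grazing component is absorbed by cutting off a small portion of the temporal integration, exploiting that sticky-grazing trajectories need a definite time to realize the bad bouncing pattern.

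The main obstacle — and the reason analyticity is indispensable — is the construction of $\mathcal{T}_{\e}$ and the quantitative classification of the bad set: in a generic smooth non-convex domain one cannot exclude that the grazing or infinite-bounce set is large or dense, and standard transversality arguments fail because the backward bounce map degenerates at grazing. Analyticity is what permits the finite stratification and an explicit smallness estimate for $|\mathcal{T}_{\e}|$. Once $\mathcal{T}_{\e}$ is built, combining the display above with the exponential $L^{2}$ decay and optimizing $\e$ on dyadic time intervals yields $\sup_{t\geq 0} e^{\l t}\|wf(t)\|_{\infty} \lesssim \|wf_{0}\|_{\infty}$; a standard contraction/positivity argument then produces the unique global solution with $F\geq 0$.
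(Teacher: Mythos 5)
Your overall route coincides with the paper's: reduce to the planar billiard in the cross section via the decoupled $x_{2}$-dynamics, classify the bad trajectories (infinite bounce, grazing, sticky grazing) using analyticity, excise a small neighborhood of them, run a Duhamel iteration with a change of variables off the bad set, and close with linear $L^{2}$ decay (proved here, as in \cite{Guo10}, by a compactness/contradiction argument whose kernel is killed by (\ref{normalize_M}), (\ref{normalize_M_angular})). However, two of your specific assertions hide or misstate the actual new content. First, the claim that the pathological set ``has quantifiably small measure'' in phase space and can be enclosed in a tubular neighborhood $\mathcal{T}_{\e}$ with $|\mathcal{T}_{\e}|\to 0$ is false precisely at the sticky grazing points: there a whole one-parameter family of grazing rays focuses at a single spatial point $x^{*}$, so the velocity section of the bad set at $x^{*}$ has measure bounded below independently of $\e$. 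The paper's resolution is not measure-smallness but the dichotomy of Lemma~\ref{dichotomy} (sticky versus isolated grazing, by rigidity of analytic functions), which shows the sticky set $\mathcal{SG}$ consists of finitely many points; these are then removed by small spatial balls, and their contribution to the Duhamel integral is controlled because a particle with speed $\geq 1/N$ spends only time $O(\e N)$ inside such balls — a residence-time argument, not your ``definite time to realize the bad bouncing pattern.''

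Second, saying that off $\mathcal{T}_{\e}$ ``the triple iteration and geometric decomposition of \cite{KimLee} apply'' skips the steps that are genuinely new for non-convex domains. The bounce map $(x,v)\mapsto(x^{k},v^{k})$ is discontinuous at concave grazing, so the uniform bound on the number of bounces and the uniform non-grazing property $|v^{k}\cdot\mathbf{n}(x^{k})|>\delta$ cannot be obtained by continuity plus compactness alone; the paper needs the case analysis of Lemma~\ref{uniformbound} (comparing a perturbed trajectory's $k$-th bounce with a possibly different index $\ell$ of the reference trajectory) and the inductive construction of the $\e$-neighborhoods $(\mathfrak{G}^{C,k})_{\e}$ culminating in Lemma~\ref{G_C unif}. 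Moreover, the change of variables $u\mapsto X(s';s,X(s),u)$ requires, beyond non-grazing, the exclusion of finitely many times $s'$ near the zero sets of the affine functions $\psi^{k}$ produced by the specular transition matrix, together with the degeneracy conditions $|u_{3}|\geq 1/N$ and (\ref{geo e1}) (Lemmas~\ref{nonzero_sub}, \ref{lemma rank 2}, Proposition~\ref{prop_full_rank}); your sketch uses the temporal cutoff only for sticky grazing and omits this. A minor structural point: because the cylinder gives $\p X_{2}/\p u_{2}=-(t-s)$ for free, a double Duhamel iteration with a rank-two estimate in the cross section suffices; invoking the triple iteration of \cite{KimLee} is unnecessary and would not by itself supply the missing two-dimensional Jacobian bound in the non-convex geometry.
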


\vspace{4pt}

From (\ref{Boltzmann}), the perturbation $f$ satisfies 
	\begin{equation}\label{E_eqtn}
	\p_{t}f + v\cdot \nabla_{x} f + L f 
	 = \Gamma(f,f),  
	\end{equation} 
	and $f(t,x,v)  = f(t,x,R_{x}v),$ for $x\in \p U$ where 
	\begin{equation} \label{Lf}
	Lf = -\frac{1}{\sqrt{\mu}} \big[ Q(\mu, f\sqrt{\mu}) + Q(f\sqrt{\mu}, \mu) \big],
	 \ \ \ 
	\Gamma(f,f) = \frac{1}{\sqrt{\mu}} Q(f\sqrt{\mu}, f\sqrt{\mu}).
	\end{equation}
	The linear operator $Lf$ can be decomposed into $Lf = \nu(v)f - Kf$, where the collisional frequency $\nu(v)$ is defined
	\begin{equation}
	\nu(v) := \int_{\mathbb{R}^{3}} \int_{\mathbb{S}^{2}} |(v-u)\cdot\o| \sqrt{\mu}(u) d\o du,
	\end{equation} 
	with estimate $C_{0}\langle v \rangle \leq \nu(v) \leq C_{1}\langle v \rangle, $ where $\langle v \rangle := \sqrt{1+|v|^{2}}$ for some $C_{0}, C_{1} > 0$. The linear operator $Kf$ is a compact operator on $L^{2}(\mathbb{R}^{3}_{v})$ with kernel $\mathbf{k}(v,\cdot)$,
	\begin{equation}
	Kf(v) := \int_{\mathbb{R}^{3}} \mathbf{k}(v,u)f(u) du.  \\
	\end{equation}
	
	We explain main scheme of the proof of Theorem~\ref{theorem_decay}. To apply $L^{p}-L^{\infty}$ bootstrap argument, we claim the uniform number of bounce for a finite travel length. Also, we classify some singular sets especially where trajectories belong to grazing sets on the boundary.  
	\subsection{Uniform number of bounce on analytic domain}
	Let us denote backward trajectory of a particle as $X(s;t,x,v)$ and $V(s;t,x,v)$, where $X$ and $V$ are position and velocity of the particle at time $s$, which was at position $x$ with velocity $v$ at time $t \geq s$. Also we use $(t^{k}, x^{k}, v^{k}) = (t^{k}(x,v), x^{k}(x,v), v^{k}(x,v))$ to denote $k-$th bouncing time, position, and velocity backward in time. From the specular BC, dynamics in $x_{2}$ direction (axial direction) is very simple, because we have $X_2(s;t,x,v) = x_{2} - (t-s)v_{2}$ and $V_{2}(s;t,x,v) = v_{2}$. So we suffice to analyze trajectory projected onto two-dimensional cross section $\O$, with $(x_{1}, x_{3})\in \O$. We also consider finite time interval $[0,T_{0}]$ and velocity $v\in\mathbb{R}^{2}$ with $\frac{1}{N} \leq |v| \leq N$ so that maximal travel length is uniform bounded by $NT_{0}$. Unlike to strictly convex domain, trajectory $(X(s),V(s))$ can graze at some bouncing time $t^{k}$. We split grazing set $\{ (t^{k}, x^{k}, v^{k}) : v^{k}\cdot \mathbf{n}(x^{k}) = 0 \}$ into three types: convex grazing, concave grazing, and inflection grazing, depending on whether $x^{k}\in\p\O$ belongs to convex region, concave region, and inflection points. See Definition~\ref{decom-grazing} for explicit definitions. The following simplified lemma is the crucial tool to control the number of bounce. 
	
	\vspace{2pt}
	
	\noindent \textit{ \textbf{Simple version of Lemma~\ref{finitebounces}} If a trajectory does not belong to inflection grazing set, infinite number of bouncing cannot happen for a finite travel length. }   

	We prove this lemma via contradiction argument. If infinite number of bounce happens for a finite travel length, we have converging sequence of boundary points $x^{k} \rightarrow x^{\infty}$. By analyticity, all inflection points on the boundary $\p\O$ are finite and distinct. 
	
	$(i)$ If $x^{\infty}$ is a point in convex or concave part of $\p\O$, we can choose small boundary neighborhood $B(x^{\infty}, \varepsilon)\cap \p\O$ so that boundary is uniformly convex or concave in the neighborhood. If it is concave, trajectory does not stay in this small neighborhood. If it is uniformly convex, it is well-known that normal component of $v^{k}$ is always uniformly comparable and there exist at most finite number of bounce for a finite time interval (or equivalently finite travel length). We refer \cite{KimLee} and \cite{Guo10}.  
	
	$(ii)$ Therefore, the only possible case is when $x^{\infty}$ is an inflection point. By analyticity, every inflection points are isolated and we consider a sequence $x^{k}$ which converges to $x^{\infty}$ through convex region, because the trajectory leave the small neighborhood if it is in concave region. Using analyticity and properties of inflection points, profile of $\p\O$ near inflection points is nearly linear, i.e. $\alpha^{\prime\prime}(\tau_{\infty}) = 0$. Using this linear property, we obtain $|x^{k-1} - x^{k}| \leq |x^{k} - x^{k+1}|$ which is contradiction to our assumption $x^{k} \rightarrow x^{\infty}$. See the first picture in Figure~\ref{sticky fig}.  \\
	\indent Above \textit{Simple version of Lemma~\ref{finitebounces}}	is in sharp contrast to non-analytic general smooth domain, where infinite bounce in finite travel length is possible. We refer section 3 in \cite{Strange} for an example of infinite number of bounce for finite travel length. 
	
	\vspace{2pt}
	
	
	Meanwhile, a trajectory with the specular boundary condition is always deterministic and we can collect all possible trajectories including inflection grazing set. For each points on these trajectories, we uniformly cut corresponding velocities off. From compactness argument, we can define infinite bounces set 
	\[
		\mathfrak{IB} = \bigcup_{i=1}^{l_{IB}} \big\{ B(x_{i}^{IB}, r_{i}^{IB}) \times \mathcal{O}_{i}^{IB} \big\},
	\]    	 
	where $\bigcup_{i=1}^{l_{IB}} B(x_{i}^{IB}, r_{i}^{IB})$ is an open cover for $\O$, and corresponding open sets $\mathcal{O}_{i}^{IB}$ are sufficiently small in velocity phase. Moreover, the trajectory from $(x,v) \in \{cl(\O) \times \{\frac{1}{N}\leq |v|\leq N\} \} \backslash \mathfrak{IB}$ is uniformly away from grazing bounce, where $cl(\O)$ means closure of $\O$ in standard $\mathbb{R}^{2}$ topology. 
	
	Since we excluded inflection grazing and convex grazing, the only possibility is concave grazing. When a trajectory has concave grazing, $(t^{k}, x^{k}, v^{k})$ is not continuous function of $(x,v)$. However, away from grazing points, the trajectory is alway continuous in $(x,v)$. Therefore, for small perturbation $|(y,u)-(x,v)| \ll 1$, bouncing phase $(x^{k}(y,u), v^{k}(y,u))$ must be very close to some $(x^{\ell}(x,v), v^{\ell}(x,v))$, where $\ell \geq k$ can be different to $k$ by multiple concave grazings (e.g. Figure~\ref{case2-1}), and this implies finite number of bouncing. At last, from compactness of $\{cl(\O) \times \{\frac{1}{N}\leq |v|\leq N\} \} \backslash \mathfrak{IB}$, we derive the uniform number of bounce for given finite travel length.  
	
	\begin{figure}[h] 
		\centering
		\begin{subfigure}[b]{0.45\textwidth}
			\includegraphics[width=\textwidth]{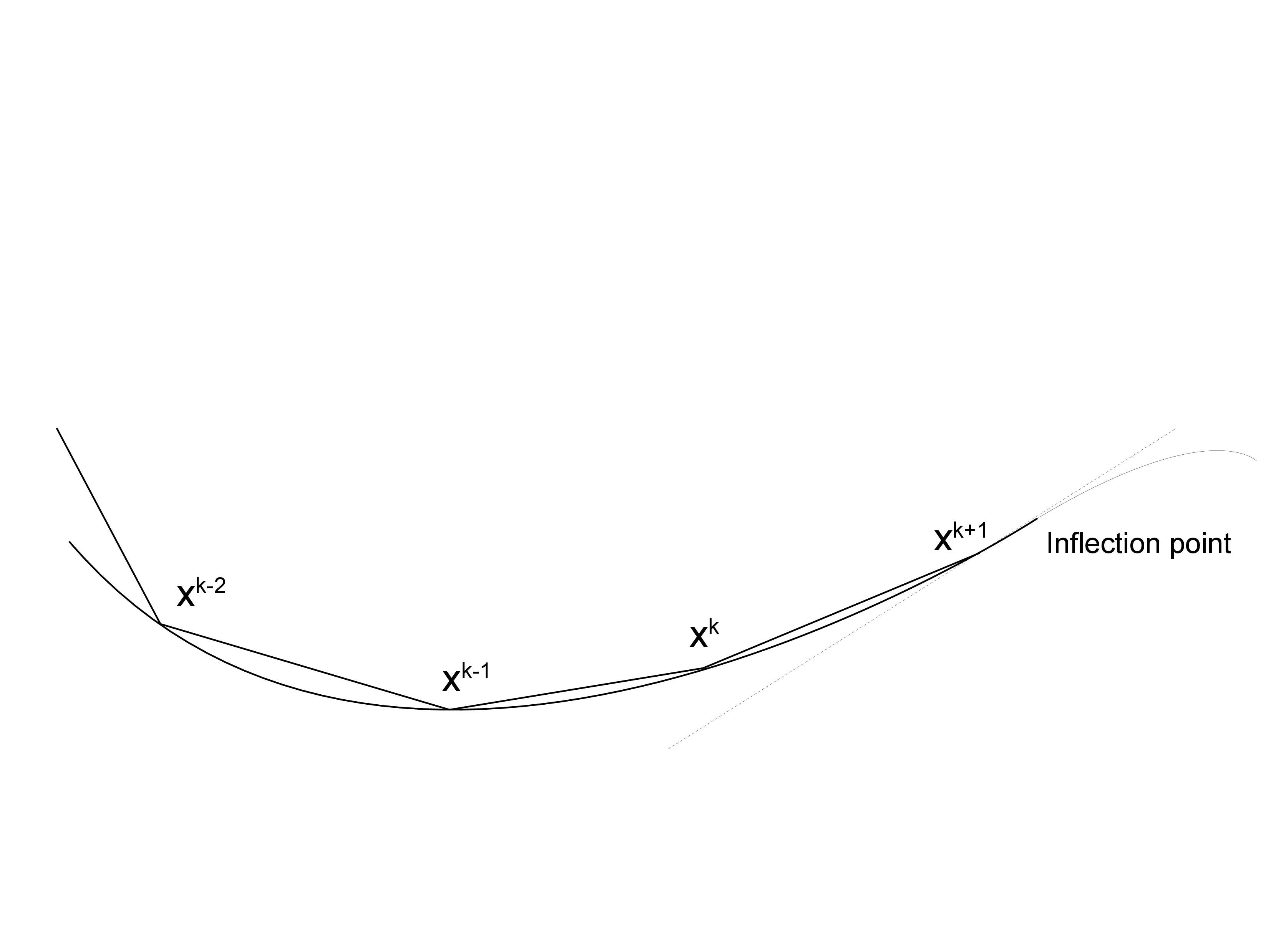}
			\caption*{Bouncing near inflection point}
		\end{subfigure}
		~ 
		\begin{subfigure}[b]{0.5\textwidth}
			\includegraphics[width=\textwidth]{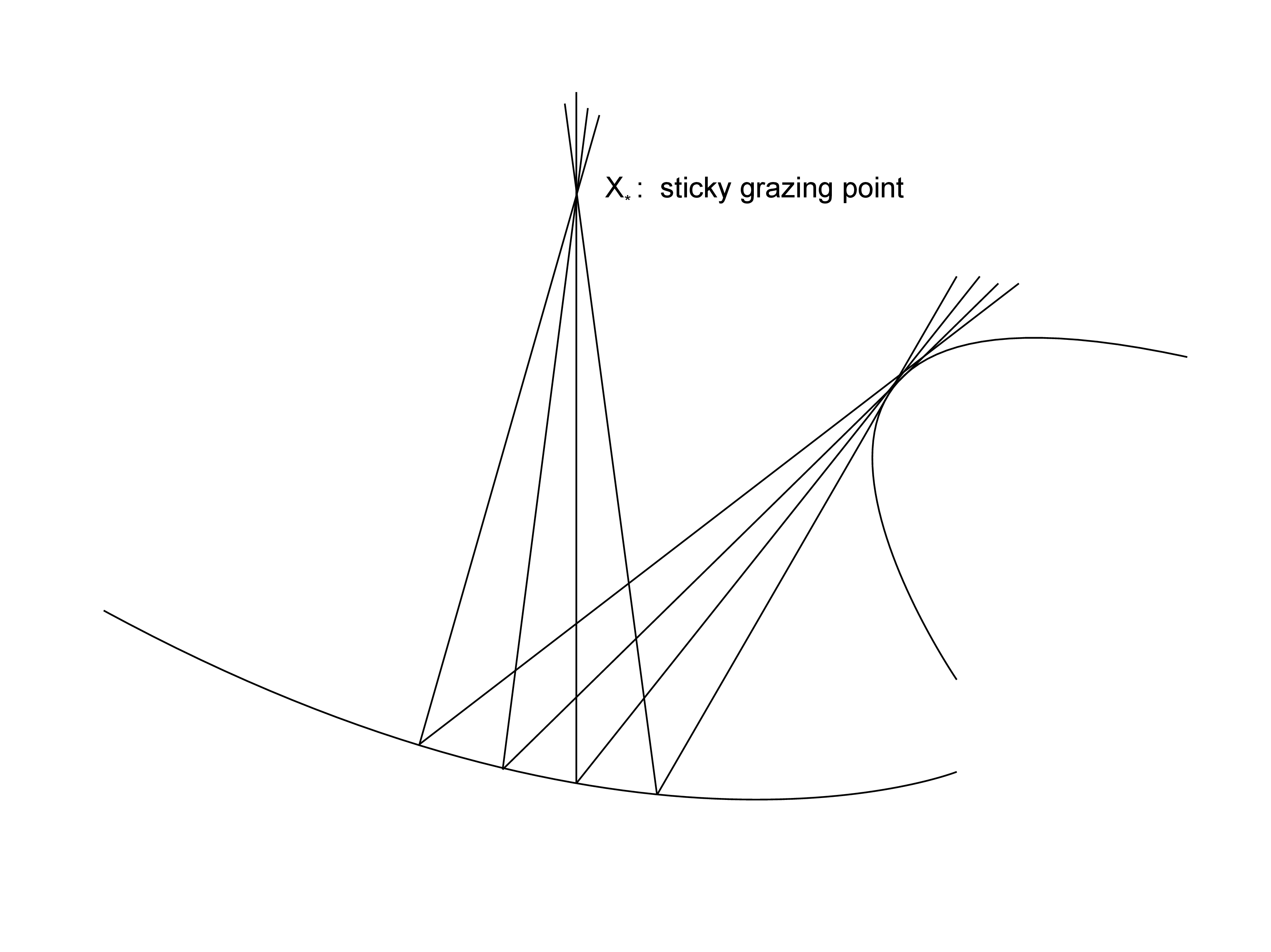}
			\caption*{Sticky grazing point}
		\end{subfigure}
		\caption{Bouncing near inflection and Sticky grazing point $\mathcal{SG}$}
		\label{sticky fig}
	\end{figure}

	\subsection{Sticky grazing set on analytic domain}
	For concave grazing set, we should consider another type of singular points. Let $\{ \alpha(\tau) : \tau_{1} < \tau < \tau_{2} \}$ be a local parametrization for concave boundary. Then $\big( \alpha(\tau), \alpha^{\prime}(\tau) \big)$ belongs to concave grazing set. Let us consider a set of trajectory
	\[
		X\big(s;t, \alpha(\tau), \alpha^{\prime}(\tau)\big), V\big(s;t, \alpha(\tau), \alpha^{\prime}(\tau)\big),\quad \tau_{1} < \tau < \tau_{2}.
	\]  
	If $\big( x^{1}(\alpha(\tau), \alpha^{\prime}(\tau)), v^{1}(\alpha(\tau), \alpha^{\prime}(\tau)) \big)$ is not grazing phase, we can use rigidity of analytic function to show that \textit{there could be a fixed point $x^{1}_{*} \in cl(\O)$ (we call $x^{1}_{*}$ a sticky grazing point)} such that
	\[
		x^{1}_{*} \in \overline{x^{1}(\alpha(\tau), \alpha^{\prime}(\tau)) x^{2}(\alpha(\tau), \alpha^{\prime}(\tau))},\quad \text{for all} \quad \tau\in (\tau_{1}, \tau_{2}).
	\]
	See second picture in Figure~\ref{sticky fig}. Also see Appendix for a concrete example of sticky grazing point. This implies that backward trajectory from 
	\[
		\big( x^{1}_{*}, -v^{1}(\alpha(\tau), \alpha^{\prime}(\tau)) \big),
	\]
	has grazing phase in the second bouncing for all $\tau_{1} < \tau < \tau_{2}$.   
	
	Similarly, for each $k$, if $\big( x^{i}(\alpha(\tau), \alpha^{\prime}(\tau)), v^{i}(\alpha(\tau), \alpha^{\prime}(\tau)) \big)$ is not grazing for all $1\leq i \leq k$, there could be a sticky grazing point $x^{k}_{*} \in cl(\O)$ such that
	\[
		x^{k}_{*} \in \overline{x^{k}(\alpha(\tau), \alpha^{\prime}(\tau)) x^{k+1}(\alpha(\tau), \alpha^{\prime}(\tau))},\quad \text{for all} \quad \tau\in (\tau_{1}, \tau_{2}).
	\]
	
	Now, we pick a point $x$ in 
	\begin{equation} \label{projec}
		\big\{ x\in cl(\O) : (x,v) \in \{cl(\O) \times \{\frac{1}{N}\leq |v|\leq N\} \} \backslash \mathfrak{IB}, \ \text{for some} \ v\in \{\frac{1}{N}\leq |v|\leq N\} \big\}.
	\end{equation}
	For above $x$ and $v\in \{\frac{1}{N}\leq |v|\leq N\}$, we have at most finite $K$ bounces for fixed travel length, so we can uniformly exclude a set of velocity so that trajectory $(X(s;t,x,v), V(s;t,x,v))$ avoids all concave grazings. Then the trajectory avoid all three types of grazing. Moreover, from the uniform number of bounce, set of all possible sticky grazing points, $\mathcal{SG}$, contains finite points at most. Excluding all small neighborhoods of the points in $\mathcal{SG}$ from (\ref{projec}), we can state the following lemma:
	
	\vspace{2pt}
	
	\noindent \textit{ \textbf{Simple version of Lemma~\ref{G_C unif}}} Excluding uniform $\varepsilon$ neighborhood of $\mathcal{SG}$, let us consider 
	\[
		(\ref{projec}) \backslash \bigcup_{x_{*}\in\mathcal{SG}} B(x_{*},\varepsilon) .
	\]
	Then we have a finite open cover $\bigcup_{i=1}^{l_{G}} B(x_{i}^{C}, r_{i}^{C})$ and corresponding small velocity sets $\{ \mathcal{O}_{i}^{IB} \big\}_{i=1}^{l_{G}}$ such that if 
	\[
		(x,v) \in B(x_{i}^{C}, r_{i}^{C}) \times \{ v : \frac{1}{N}\leq |v| \leq N \} \backslash \mathcal{O}_{i}^{IB},
	\]	   
	then $\big( x^{k}(x,v), v^{k}(x,v) \big)$ is uniformly non-grazing, i.e. 
	\[
		|v^{k}(x,v)\cdot \mathbf{n}(x^{k}(x,v))| > \delta > 0,\quad \text{for all}\quad 1\leq k \leq K,
	\] 
	where $K$ is uniformly finite number of bounce.

	\subsection{$L^{p}-L^{\infty}$ bootstrap and double iteration} 
	
	We consider linear Boltzmann equation,
	\begin{equation} \label{lin eq}
		\p_{t}f + v\cdot \nabla_{x} f + \nu(v) f 
		 = Kf,
	\end{equation}
	with the specular boundary condition. To apply $L^{p}-L^{\infty}$ bootstrap argument our aim is to claim 
	\begin{equation} \label{bootstrap} 
		\|f\|_{L^{\infty}} \lesssim \|f_{0}\|_{L^{\infty}} + \int_{0}^{T} \|f\|_{L^{2}}.
	\end{equation}
	This is obtained by trajectory analysis and change of variable. Let us explain using simplified version of (\ref{lin eq}) ,
	\begin{equation*} \label{simple}
	\p_{t}f + v\cdot \nabla_{x} f + f  = \int_{|u|\leq N} f du, \quad x\in\O. 
	\end{equation*}
	In the aspect of transport, $\p_{t}f + v\cdot\nabla_{x}f$ means simple transport of $f$ along trajectory and $f$ on the LHS means exponential decay effect along the trajectory. Therefore, Duhamel's principle gives
	\begin{equation*}\label{DH1}
	\begin{split}
	f(t,x,v ) &= e^{- t} f_{0}(x,v) + \int^{t}_{0} e^{- (t-s)} 
	\int_{|u| \leq N} f(s, X(s;t,x,v), u) \dd u
	\dd s.
	\end{split}
	\end{equation*}
	Applying this formula again (double iteration) to $f(s, X(s;t,x,v), u)$, we get
	\begin{equation*}\label{DH2}
	\begin{split}
	f(t,x,v) &= \textit{initial datum's contributions} + O(\frac{1}{N})  \\
	&\quad +  \int^{t}_{0} e^{- (t-s)} \int^{s-\e}_{0} e^{- (s-s^{\prime})}
	\iint_{|u|\leq N, |u^{\prime}| \leq N}
	f(s^{\prime}, X(s^{\prime}; s, X(s;t,x,v) , u), u^{\prime}) \dd u^{\prime}\dd u \dd s^{\prime}\dd s.  
	\end{split}
	\end{equation*}	
 The key step is to prove that the change of variable from $u$ to $X(s^{\prime}; s, X(s;t,x,v) , u)$ is valid. We apply geometric decomposition of trajectories as introduced in \cite{KimLee} to study a Jacobian marix of  
	\[
	\frac{X(s^{\prime})}{du},\quad\text{where}\quad X(s^{\prime}) = X(s^{\prime}; s, X(s) , u)\quad\text{and}\quad X(s) = X(s;t,x,v),
	\]
	We note that direct computations in Lemma~\ref{Jac_billiard} in t\textbf{}his paper is quite similar as Lemma 2.3 in \cite{KimLee}, because Lemma~\ref{Jac_billiard} assumes non-grazing bounce of a trajectory. Then Lemma~\ref{Jac_billiard} can be used to compute Jacobian of $\frac{X(s^{\prime})}{du}$. Note that, unlike to triple iteration scheme in \cite{KimLee}, we suffice to perform double iteration. Since we are assuming cylindrical domains, dynamics of axial component is very simple and $\frac{X(s^{\prime})}{d u_{2}} = -(t-s)$ gives rank one clearly. Then the problem is changed into claiming rank two in two-dimensional cross section $\O$. By decomposing $(u_{1}, u_{3})$ into speed and another independent directional variable, we can claim rank two.  
	
	Another main difference between strictly convex case (e.g. \cite{Guo10} and \cite{KimLee}) and non-convex case is the existence of sticky grazing set $\mathcal{SG}$. When trajectory $X(s^{\prime})$ hits boundary $\p\O$, we cannot perform change of variable so we should exclude such points, which should be chosen sufficiently small. However, for sticky grazing point $x\in\mathcal{SG}$, non-small portion of velocity phase	should be excluded. Therefore, we cannot cut this bad set off in velocity phase. Instead, we exclude $\mathcal{ {SG}}$ using small intervals in temporal integration. Because $\mathcal{ {SG}}$ contains finite set, if a particle speed is uniformly nonzero, say $|v| \geq \frac{1}{N}$, then we can choose sufficiently small neighborhoods near points in $\mathcal{ {SG}}$ so that a trajectory stays in these small neighborhoods of $\mathcal{SG}$ only for very short time at most.

	\section{Domain decomposition and notations}

	\subsection{Analytic non-convex domain and notations for trajectory}	
	
	Throughout this paper, cross section $\Omega$ is a connected and bounded open subset in $\mathbb{R}^2$. In this section, we denote the spatial variable $x=(x_{1},x_{3})\in cl({\Omega}) \subset \mathbb{R}^2$, where $cl({\Omega})$ denotes the closure of $\Omega$ in the standard topology of $\mathbb{R}^{2}$, and the velocity variable $v=(v_{1},v_{3})\in\mathbb{R}^{2}$. We also define standard inner product using dot product notation: $a\cdot b := (a_{1}, a_{3})\cdot (b_{1}, b_{3}) = a_{1} b_{1} + a_{3} b_{3}$.  \\
	
	The cross section boundary $\partial\Omega$ is a local image of some smooth regular curve. More precisely, for each $x\in\partial\Omega$, there exists $r>0$ and $\delta_1 < 0 < \delta_2$ and a curve $\alpha:= (\alpha_1, \alpha_3) : \{\tau \in\mathbb{R} : \delta_1<\tau<\delta_2\}\rightarrow \mathbb{R}^2$ such that 
	\begin{equation} \label{locallycurve}
	\partial\Omega\cap B(x,r) = \{\alpha(\tau)\in \mathbb{R}^{2} : \tau \in (\delta_{1}, \delta_{2})\},
	\end{equation}
	where $B(x,r) := \{ y\in \mathbb{R}^2 : |y-x|<r \}$ and
	$|\dot{\alpha}(\tau)|=[(\dot{\alpha}_{1}(\tau))^2+ (\dot{\alpha}_{3}(\tau))^2]^{1/2} := \Big[\big(\frac{d{\alpha}_{1}(\tau)}{d\tau}\big)^{2} +  \big(\frac{d{\alpha}_{3}(\tau)}{d\tau}\big)^{2}\Big]^{1/2}\neq 0,$
	for all $\tau \in (\delta_1,\delta_2)$. Without loss of generality, we can assume that $\alpha(\tau)$ is regularly parametrized curve, i.e. $|\dot{\alpha}(\tau)|=1$. For a smooth regularized curve $\alpha(\tau) = (\alpha_{1}(\tau),\alpha_{3}(\tau))\in\mathbb{R}^2$, we define the \textit{signed curvature} of $\alpha$ at $\tau$ by 
	\begin{equation} \label{singedcurvature}
	\kappa(\tau) := \ddot{\alpha}(\tau)\cdot \mathbf{n}(\alpha(\tau)) = \ddot{\alpha}_{1}(\tau)\dot{\alpha}_{3}(\tau) - \dot{\alpha}_{1}(\tau)\ddot{\alpha}_{3} (\tau) ,
	\end{equation}
	where $\mathbf{n}(\alpha(\tau)) = (\dot{\alpha}_{3}(\tau), - \dot{\alpha}_{1}(\tau))$ is outward unit normal vector on $\alpha(\tau)\in\p\O$.  \\
	
	Meanwhile, we assume that the curvature of $\p\O$ is uniformly bounded from above, so (\ref{locallycurve}) should be understood as simply connected curve, i.e. we can choose sufficiently small $r>0$ so that $\p\O\cap B(x,r)$ is simply connected curve for all $x\in\p\O$. Throughout this paper, we assume that a local parametrization of boundary satisfies (\ref{locallycurve}) as a simply connected curve. 
	
	We define convexity and concavity of $\alpha$ by the sign of $\kappa$ :
	\begin{definition} \label{SCD}
		Let $\Omega\subset\mathbb{R}^2$ be an open connected bounded subset of $\mathbb{R}^2$ and let the boundary $\partial\Omega$ be an image of smooth regular curve $\alpha \in C^{3}$ in (\ref{locallycurve}). For $\p\O\cap B(x,r) = \{\alpha(\tau) \ \vert \ \delta_1 < \tau < \delta_2 \}$, if
		\begin{equation*} \label{2Dconvexity}
		\kappa(\tau) < 0, \quad \delta_1 < \tau < \delta_2, 
		\end{equation*}
		then we say $\p\O\cap B(x,r)$ is locally \textit{convex}. Otherwise, if $\kappa(\tau) > 0$, we say it is locally \textit{concave}.  
	\end{definition}
	
	We denote the phase boundary of the phase space $\Omega \times \mathbb{R}^{3}$ as $%
	\gamma :=\partial \Omega \times \mathbb{R}^{3},$ and split into the outgoing
	boundary $\gamma _{+}$, the incoming boundary $\gamma _{-}$, and the
	grazing boundary $\gamma_{0}$ :
	\begin{equation} \label{phase def}
	\begin{split}
	\gamma _{0} &:= \{(x,v) \in \partial \Omega \times \mathbb{R}^{3}:\text{ \ }%
	\mathbf{n}(x)\cdot v=0\}, \\
	\gamma _{+} &:= \{(x,v) \in \partial \Omega \times \mathbb{R}^{3}:\text{ \ }%
	\mathbf{n}(x)\cdot v>0\}, \\
	\gamma _{-} &:= \{(x,v) \in \partial \Omega \times \mathbb{R}^{3}:\text{ \ }%
	\mathbf{n}(x)\cdot v<0\}. \\
	\end{split}
	\end{equation}
	
	Let us define trajectory. Given $(t,x,v)\in[0,\infty)\times cl({\Omega}) \times\mathbb{R}^3,$ we use  $[X(s),V(s)]=[X(s;t,x,v),V(s;t,x,v)]$ to denote position and velocity of the particle at time $s$ which was placed at $x$ at time $t$. Along this trajectory, we have
	\begin{equation*} \label{E_Ham} 
	\frac{d}{ds}X(s;t,x,v)   \ =  \ V(s;t,x,v),\ \ \  \frac{d}{ds} V(s;t,x,v)  \ = \ 0 . 
	\end{equation*} 
	with the initial condition: $(X(t;t,x,v),V(t;t,x,v)) = (x,v)$ .  
		
	\begin{definition} 
		\noindent We recall the standard notations from \cite{GKTT1}. We define 
		\begin{equation*}\begin{split}\label{backward_exit}
		\tb(t,x,v)  &:=  \sup \big\{  s \geq 0 :  X(\tau;t,x,v)  \in \Omega  \ \ \text{for all}  \ \tau \in ( t-s ,t)  \big\}, \\
		\xb(t,x,v)  &:=  X ( t- \tb(t,x,v);t,x,v), \\
		\vb(t,x,v)  &:=  \lim_{s\rightarrow\tb(t,x,v)}V ( t- s;t,x,v) ,  \end{split}
		\end{equation*}
		and similarly,
		\begin{equation*}\begin{split}\label{forward_exit}
		\tf(t,x,v)  &:=  \sup \big\{  s \geq 0 :  X(\tau;t,x,v)  \in \Omega  \ \ \text{for all}  \ \tau \in ( t,t+s)  \big\}, \\
		\xf(t,x,v)  &:=  X ( t+ \tf(t,x,v);t,x,v), \\
		\vf(t,x,v)  &:= \lim_{s\rightarrow\tf(t,x,v)} V ( t+ s;t,x,v). 
		\end{split}\end{equation*}
		Here, $\tb$ and $\tf$ are called the backward exit time and the forward exit time, respectively. We also define the the specular cycle as in \cite{GKTT1}. \\	
		We set $(t^{0}, x^{0}, v^{0}) = (t,x,v)$. When $\tb > 0$, we define inductively
		\begin{equation}\begin{split}\label{specular_cycles}
		t^{k} & = t^{k-1} - \tb (t^{k-1}, x^{k-1}, v^{k-1}),\\
		x^{k} &= X(t^{k}; t^{k-1}, x^{k-1}, v^{k-1}), \\
		v^{k}   &=  
		R_{x^{k}} V(t^{k}; t^{k-1}, x^{k-1}, v^{k-1}),
		\end{split}	
		\end{equation}
		where
		\begin{eqnarray*}
			R_{x^{k}} V(t^{k}; t^{k-1}, x^{k-1}, v^{k-1}) &=& V(t^{k}; t^{k-1}, x^{k-1}, v^{k-1}) 	\\
			&& - 2 \big( \mathbf{n}(x^{k})\cdot V(t^{k}; t^{k-1}, x^{k-1}, v^{k-1}) \big) \mathbf{n}(x^{k}).
		\end{eqnarray*}
		Since $t^{k}, x^{k}$, and $v^{k}$ depend on initial phase $(x,v)=(x^{0}, v^{0})$, we use $t^{k}(x,v), x^{k}(x,v)$, and $v^{k}(x,v)$ when we should denote initial phase. \\
		\noindent We define the specular characteristics as
		\begin{equation}\label{X_cl}
		\begin{split}
		X_{\mathbf{cl}} (s;t,x,v) &= \sum_{k} \mathbf{1}_{s \in ( t^{k+1}, t^{k}]}
		X(s;t^{k}, x^{k}, v^{k}),  \\
		V_{\mathbf{cl}} (s;t,x,v) &= \sum_{k} \mathbf{1}_{s \in ( t^{k+1}, t^{k}]}
		V(s;t^{k}, x^{k}, v^{k}).
		\end{split}
		\end{equation}
		For the sake of simplicity, we abuse the notation of (\ref{X_cl}) by dropping the subscription $\mathbf{cl}$ in this section.  \\
	\end{definition}

	\subsection{Decomposition of the grazing set and the boundary $\p\O$}
	In order to study the effect of geometry on particle trajectory, we further decompose the grazing boundary $\gamma_0$ (which was defined in (\ref{phase def})) more carefully:
	
	\begin{definition} \label{decom-grazing}
		Using disjoint union symbol $\sqcup$, we decompose grazing set $\gamma_{0}$:
		\[
		\gamma_{0} = \gamma_{0}^{C} \sqcup \gamma_{0}^{V} \sqcup \gamma_{0}^{I},\quad \gamma_{0}^{I} = \gamma_{0}^{I_{+}} \sqcup \gamma_{0}^{I_{-}}.
		\]
		$\gamma_{0}^{C}$ is \textit{concave(singular) grazing set:} 
		\begin{equation*}
		\gamma_{0}^{C} := \{(x,v)\in\gamma_0 : t_{\mathbf{b}}(x,v)\neq 0 \ \text{and} \ t_{\mathbf{b}}(x,-v)\neq 0\}.
		\end{equation*}
		$\gamma_{0}^{V}$ is \textit{convex grazing set:}
		\begin{equation*}
		\gamma_{0}^{V} := \{ (x,v)\in\gamma_0 : t_{\mathbf{b}}(x,v)=0 \ \text{and} \ \ t_{\mathbf{b}}(x,-v)=0
		\}.
		\end{equation*}
		$\gamma_{0}^{I_{+}}$ is \textit{outward inflection grazing set:} 
		\begin{equation*}
		\gamma_0^{I+} =\{ (x,v) \in \gamma_0 : t_{\mathbf{b}}(x,v) \neq 0 \ \text{and} \ t_{\mathbf{b}}(x,-v)=0 \ \text{and } \exists \ \delta>0 \ \text{such that } \ x+\tau v\in \mathbb{R}^{2}\backslash cl(\O) \ \text{ for } \ \tau\in (0,\delta)
		\}.
		\end{equation*}
		$\gamma_{0}^{I_{+}}$ is \textit{inward inflection grazing set:} 
		\begin{equation*}
		\gamma_0^{I-}=\{ (x,v) \in \gamma_0 : t_{\mathbf{b}}(x,v)=0 \ \text{and} \ t_{\mathbf{b}}(x,-v) \neq 0  \ \text{and } \exists \ \delta>0 \ \text{such that } \ x-\tau v\in \mathbb{R}^{2}\backslash cl(\O) \ \text{ for } \ \tau\in (0,\delta)
		\}.  \\
		\end{equation*}
	\end{definition}

	Recall that ${\Omega} \ := \  {\Omega}_0 \ \backslash \ \{  {\Omega}_1 \cup \cdots \cup  {\Omega}_M\},$ where each $ {\Omega}_{i}$ is an image of a unit-speed analytic curve $\alpha_i : [a_i,b_i]\rightarrow \mathbb{R}^2$. 
	Recall that $\kappa$ stands the signed curvature in Definition~\ref{singedcurvature}. Since the curvature $\kappa$ is continuous, the set $\{\tau \in [a_{i} , b_{i} ] : \kappa(\tau) > 0\}$ is an open subset of the interval $[a_i ,b_i]$ and therefore it is a countable union of disjoint open intervals, i.e. 
	$$
	\big\{\tau\in [a_{i} , b_{i} ] \ : \ \kappa(\tau) > 0 \big\}  \ = \ \bigsqcup_{j=1}^{\infty} \ \{\tau \in (a_{i,j}, b_{i,j}): a_{i,j}< \tau < b_{i,j}\}  .
	$$
	It is clear that $\kappa(a_{i,j})=0=\kappa(b_{i,j})$ for all $i,j$ : Suppose not, then there exists $\epsilon>0$ such that $(a_{i,j} -\epsilon,b_{i,j})\in\{\tau \in [a_i ,b_i] : \kappa(\tau) > 0\}$ or $(a_{i,j} ,b_{i,j} +\epsilon)\in\{\tau\in [a_i ,b_i] : \kappa(\tau) > 0\}$ which is a contradiction.
	
	On the other hand, the signed curvature $\kappa$ is analytic since the curve $\alpha_i$ is analytic. If $\kappa$ is identically zero then $\alpha_i$ is a straight line so that $\partial\Omega_i$ cannot be a boundary of a bounded set $\Omega$. Since the analytic function $\kappa$ have at most finite zeroes on a compact set $[a_i,b_i]$, there is a finite number $N_{i} < \infty$ such that
	\begin{equation}
	\{\tau\in [a_i ,b_i] \ : \ \kappa(\tau) > 0\} \ =  \ \bigsqcup_{j=1}^{N_{i}} \ \{\tau \in (a_{i,j}, b_{i,j}): a_{i,j}< \tau < b_{i,j}\}   , \nonumber
	\end{equation}
	which is a finite union of disjoint open intervals. \\
	
	Now we consider the closure of $\{\tau\in [a_i ,b_i] : \kappa(\tau)<0\}$ which is a union of closed intervals and there may exist two closed intervals which have same end point. For example $[a_{1,1},b_{1,1}]$ and $[a_{1,2},b_{1,2}]$ could have the same end point as $b_{1,1} = a_{1,2}$. In this case we can rewrite $[a_{1,1},b_{1,1}]\cup [a_{1,2},b_{1,2}] \ \equiv \ [{\tilde{a}}_{1,1},{\tilde{b}}_{1,1}]$ with ${\tilde{a}}_{1,1} = a_{1,1}$ and ${\tilde{b}}_{1,1} = b_{1,2}$. Therefore we can decompose the closure of ${\{\tau\in [a_i ,b_i] \ : \ \kappa(\tau)<0\}}$ as the disjoint union of $M_i(\leq N_i)$'s closed intervals:
	\begin{equation*}
	cl\big({\big\{\tau\in [a_i ,b_i] \ : \ \kappa(\tau) > 0\big\} }\big)  \ = \ \bigcup_{j=1}^{N_i} \ [a_{i,j}, b_{i,j}] \ = \ \bigsqcup_{j=1}^{M_i} \ [{\tilde{a}}_{i,j},{\tilde{b}}_{i,j}].\label{relabel}
	\end{equation*}
	For simplicity, we abuse the notation $a_{i,j}=\tilde{a}_{i,j}$ and $b_{i,j}=\tilde{b}_{i,j}.$
	\begin{definition}\label{CVI}
		Let $\Omega \ \subset \mathbb{R}^2$ be an analytic non-convex domain in Definition~\ref{AND}. We decompose the boundary $\partial {\Omega}$ into three parts;
		\begin{eqnarray*}
			&&\partial{ {\Omega}}^C \ := \ \bigsqcup_{i=0}^M \ \bigsqcup_{j=1}^{N_{i}}\big\{\alpha_i(\tau) : \tau \in ({ {a}}_{i,j} , { {b}}_{i,j})\big\}  \ := \ \bigsqcup_{l=1}^{M^{C}}\partial {\Omega}^C_{ l}, \ \ \ \ \ \  \  \text{(Concave boundary)}
			\\
			&&\partial{{\Omega}}^I \ := \ \bigsqcup_{i=1}^{M} \ \bigsqcup_{j=1}^{N_{i}} \ \{\alpha_i({ {a}}_{i,j}), \alpha_i ({ {b}}_{i,j})\},   \ \ \ \ \ \ \ \ \ \ \ \ \  \ \ \ \ \ \ \ \ \ \  \ \ \ \ \ \ \text{(Inflection boundary)}
			\\
			&&\partial{ {\Omega}}^V \ := \ \partial {\Omega} \ \backslash \ (\partial {\Omega}^{C}\cup\partial {\Omega}^{I}). \ \ \ \ \ \ \ \ \ \ \ \ \ \ \ \ \ \ \ \ \ \ \ \ \ \ \ \  \ \ \ \ \ \ \ \ \ \  \text{(Convex boundary)}
		\end{eqnarray*}
		The number $M^C \ = \ \sum_{i=0}^M N_{i}$ and the $l$-th concave part $\partial {\Omega}_{ l}^C$ for $l=1,2,\cdots,M^C$ is renumbered sequence of $ \big\{\alpha_i(\tau) : \tau \in [{ {a}}_{i,j} , { {b}}_{i,j}]\big\}$ for $i=0,1,\cdots ,M$ and $j=1,2,\cdots, N_{i}$. Therefore, we can define $M^{C}$ number of parametrization $\bar{\alpha}_{l}$ with $l=1,\cdots, M^{C}$ such that 
		\begin{equation} \label{renumber concave}
		\p\O^{C}_{l} = \{\bar{\alpha}_{l}(\tau) \ : \ \tau\in(\bar{a}_{l}, \bar{b}_{l}) \}.
		\end{equation}
		We further split $\partial\Omega^I = \partial\Omega^{I_+}  \cup \partial\Omega^{I_-}$ where $\partial\Omega^{I_+} := \bigsqcup_{i=0}^M \partial\Omega_{i}^{I_+}$ and $\partial\Omega^{I_-} := \bigsqcup_{i=0}^M \partial\Omega_{i}^{I_-}$ with
		\begin{eqnarray*}
			\partial\Omega_{i}^{I_+}=\{ \alpha_i (\tau) \in \partial\Omega_{i}^I : \exists \ \varepsilon > 0 \ \text{such that \ } \kappa_i(\tau^{\prime}) < 0 \ \text{for } \tau^{\prime}\in (\tau-\varepsilon, \tau) \ \text{and  } \kappa_{i}(\tau^{\prime}) > 0 \ \text{for  } \tau^{\prime} \in (\tau, \tau+\varepsilon)
			\},\\
			\partial\Omega_{i}^{I_-}=\{ \alpha_i (\tau) \in \partial\Omega_i^I : \exists \ \varepsilon > 0 \ \text{such that \ } \kappa_i(\tau^{\prime}) > 0 \ \text{for } \tau^{\prime}\in (\tau-\varepsilon, \tau) \ \text{and  } \kappa_{i}(\tau^{\prime}) < 0 \ \text{for  } \tau^{\prime} \in (\tau, \tau+\varepsilon)
			\}.
		\end{eqnarray*}
		
		\noindent Note that the following decomposition is compatible with those of Definition~\ref{decom-grazing}.
		\begin{eqnarray*}
			\gamma_0^{C} &=& \{(x,v)\in\gamma_0 : x \in \partial\Omega^C\}, \ \ \ \ \ \  \  \text{(Concave grazing set)}
			\\
			\gamma_0^{I} &=& \{(x,v)\in\gamma_0 : x \in \partial\Omega^I\}, \ \ \ \ \ \  \ \ \text{(Inflection grazing set)}
			\\
			\gamma_0^{V} &=& \{(x,v)\in\gamma_0 : x \in \partial\Omega^V\}. \ \ \ \ \ \  \ \ \text{(Convex grazing set)}
		\end{eqnarray*}
		
		\noindent Remark that from the definition, it is clear that
		\begin{eqnarray*}
			cl({\partial\Omega^{C}}) &=& \bigcup_{i=0}^{M} \bigcup_{j=1}^{N_i} cl\big( {\big\{ \alpha_i(\tau) : \tau \in (a_{i,j},b_{i,j})\big\}} \big)
			= \bigsqcup_{i=0}^{M} \bigsqcup_{j=1}^{N_i}  {\big\{ \alpha_i(\tau) : \tau \in [a_{i,j},b_{i,j}]\big\}}.
		\end{eqnarray*}
	\end{definition}

	\section{$L^{\infty}$ estimate}
	
	\subsection{Inflection grazing set}
	Trajectory of a particle is very simple for axial direction,
	\[
		V_{2}(s;t,x,v) = v_{2},\quad X_{2}(s;t,x,v) = x_{2} - (t-s)v_{2}. 
	\]
	Therefore, the characteristics of trajectories come from dynamics in two-dimensional cross section $\O$. In this subsection, we analyze trajectories in $\O \subset \mathbb{R}^{2}$. First, for fixed $N \gg 1$, we define the admissible set of velocity:
	\begin{equation*} \label{adset}
	\VN := \{ v\in\mathbb{R}^{2} : \frac{2}{N} \leq |v| \leq \frac{N}{2} \}	.
	\end{equation*}
	And $\mathfrak{m}_{2} : P(\O) \rightarrow \mathbb{R}$ is standard Lebesgue measure in $\mathbb{R}^{2}$. \\
	
	We control collection of bad phase sets those are nearly grazing set for each open covers contaning boundary $\p\O$.
	\begin{lemma} \label{near_boundary}
		Let $\Omega\subset\mathbb{R}^2$ be an analytic non-convex domain, defined in Definition~\ref{AND}. For $\varepsilon \ll 1, N \gg 1$, there exist finite points 
		\[
		\{{x}_{1}^{nB},\cdots, {x}_{l_{nB}}^{nB}\} \subset cl({\O}),
		\]
		and their open neighborhoods 
		\[
		B({x}_1^{nB}, r_1^{nB}), \cdots, B({x}_{l_{nB}}^{nB}, r_{l_{nB}}^{nB}) \subset \mathbb{R}^{2}, 
		\]
		as well as corresponding open sets 
		\[
		\mathcal{O}_1^{nB}, \cdots, \mathcal{O}_{l_{nB}}^{nB} \subset \mathbb{V}^N, 
		\]
		with $\mathfrak{m}_{2}(\mathcal{O}_i^{nB}) \leq \varepsilon$ for all $i=1,\cdots,l_{nB}$ such that for every $x\in cl({\O})$ there exists $i\in \{1,\cdots, l_{nB}\}$ with $x\in B({x}_i^{nB}, r_i^{nB})$ and satisfies either 
		\[
		B(x_i^{nB}, r_i^{nB}) \cap \partial\Omega = \emptyset \quad \text{or} \quad |v^{\prime}\cdot \mathbf{n}(x^{\prime})| > \varepsilon/ N^4,
		\] 
		for all $x^{\prime}\in B({x}^{nB}_i, r^{nB}_i)\cap\partial\Omega$ and $v^{\prime}\in \mathbb{V}^N  \backslash \mathcal{O}_i^{nB}$ .
	\end{lemma}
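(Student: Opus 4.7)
The plan is a pointwise construction followed by a compactness extraction. For each $x \in cl(\Omega)$ I build a ball $B(x, r_x)$ and a velocity exclusion set $\mathcal{O}_x \subset \mathbb{V}^N$ with $\mathfrak{m}_2(\mathcal{O}_x) \leq \varepsilon$ so that the local version of the stated dichotomy holds, and then invoke compactness of $cl(\Omega)$ to reduce $\{B(x, r_x)\}_{x \in cl(\Omega)}$ to the finite subcover named in the conclusion.

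For an interior point $x \in cl(\Omega) \setminus \partial\Omega$ there is nothing to do: take $r_x = \frac{1}{2}\mathrm{dist}(x, \partial\Omega) > 0$ and $\mathcal{O}_x = \emptyset$, so that $B(x, r_x) \cap \partial\Omega = \emptyset$ and the first alternative of the lemma is satisfied with empty velocity exclusion.

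The substantive case is $x \in \partial\Omega$, and here the only geometric input I need is continuity of the outward unit normal $\mathbf{n}$ along $\partial\Omega$, which is immediate from the analytic unit-speed parametrizations of Definition~\ref{AND}. Set $\delta := \varepsilon/(4 N^{3})$ and, using continuity of $\mathbf{n} \circ \alpha$ at the corresponding parameter, pick $r_x > 0$ small enough that $|\mathbf{n}(x') - \mathbf{n}(x)| < \delta$ for every $x' \in B(x, r_x) \cap \partial\Omega$. Define
\[
\mathcal{O}_x := \big\{ v \in \mathbb{V}^N \ : \ |v \cdot \mathbf{n}(x)| \leq \varepsilon/N^{4} + N \delta \big\}.
\]
A one-line triangle inequality then yields, for every $v \in \mathbb{V}^N \setminus \mathcal{O}_x$ and every $x' \in B(x, r_x) \cap \partial\Omega$,
\[
|v \cdot \mathbf{n}(x')| \ \geq \ |v \cdot \mathbf{n}(x)| - |v|\,|\mathbf{n}(x') - \mathbf{n}(x)| \ > \ \frac{\varepsilon}{N^{4}} + N\delta - \frac{N}{2}\delta \ > \ \frac{\varepsilon}{N^{4}},
\]
which is exactly the uniform non-grazing bound required by the second alternative.

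The measure check is routine: $\mathcal{O}_x$ is contained in the intersection of the disk $\{|v| \leq N/2\}$ with the strip of half-width $\varepsilon/N^{4} + N\delta$ about the line $v \cdot \mathbf{n}(x) = 0$, and that intersection has area at most $2N(\varepsilon/N^{4} + N\delta) = 2\varepsilon/N^{3} + \varepsilon/(2N) \leq \varepsilon$ once $N \gg 1$, as assumed. Compactness of $cl(\Omega)$ then extracts a finite subcover of $\{B(x, r_x)\}_{x \in cl(\Omega)}$, whose centers, radii and associated velocity sets are the $(x_i^{nB}, r_i^{nB}, \mathcal{O}_i^{nB})$ of the statement. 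There is no genuine obstacle in this lemma; the deeper analytic structure of $\partial\Omega$ (isolated inflection points, rigidity of the signed curvature) is not invoked here and will be needed only in the subsequent lemmas that bound the number of bounces and construct the sticky grazing set.
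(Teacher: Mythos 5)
Your proof is correct and takes essentially the same approach as the paper: interior balls chosen to avoid $\partial\Omega$, boundary balls on which the outward normal varies by at most $O(\varepsilon/N^{3})$, removal of a thin set of nearly tangential velocities of measure $\lesssim\varepsilon$, and a compactness extraction of the finite subcover. The only cosmetic difference is that you excise the strip $\{\,|v\cdot\mathbf{n}(x)|\leq \varepsilon/N^{4}+N\delta\,\}$ and conclude by a triangle inequality, whereas the paper excludes an angular sector about the tangential directions and converts angle to $|v'\cdot\mathbf{n}(x')|$ using the lower speed bound $|v|\geq 2/N$; both yield the same bound $\varepsilon/N^{4}$.
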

	\begin{proof}
		By Definition~\ref{AND}, $\partial{\Omega} \in \mathbb{R}^2$ is a compact set in $\mathbb{R}^2$ and a union of the images of finite curves. For ${x}\in{\Omega}$, we define $r_{{x}}>0$ such that $B(x,r_{{x}})\cap \partial{\Omega}=\emptyset$. For each ${x}\in\partial{\Omega}$, we can define the outward unit normal direction $\mathbf{n}(x)$ and the outward normal angle $\theta_n({x})\in [0,2\pi)$ specified uniquely by $\mathbf{n}({x}) = (\cos\theta_n({x}), \ \sin\theta_n({x}) )$. Using the smoothness and uniform boundedness of curvature of the boundary $\partial{\Omega}$, there exist uniform $r_{\varepsilon, N} > 0$ such that for $r_{x} \leq r_{\varepsilon, N}$, 
		\begin{equation} \label{near-paral}
		| \ \theta_{{n}}({x}^{\prime}) -  \theta_{{n}}({x}) \ | <  \varepsilon / 2N^2 \ \  \ \text{for all }  \ {x}^{\prime} \in {B}({x},r_{{x}}) \cap \partial{\Omega}, 
		\end{equation}
		and ${B}({x},r_{{x}}) \cap \partial{\Omega}$ is a simply connected curve. \\
		\indent By compactness, we have finite integer $l_{nB} > 0$,  points $\{x_{i}^{nB}\}_{i=1}^{l_{nB}}$, and positive numbers $\{r_{i}^{nB}\}_{i=1}^{l_{nB}}$ such that 
		\begin{equation*}
		cl({\O}) \subset \bigcup_{i=1}^{l_{nB}} B(x^{nB}_{i}, r^{nB}_{i}), \quad r_{i}^{nB} \leq r_{\varepsilon,N}.
		\end{equation*}
		By above construction, for each $1\leq i \leq l_{nB}$, we have either
		\begin{equation} \label{int}
		B(x_{i}^{nB}, r_{i}^{nB}) \cap \p\O = \emptyset,
		\end{equation}
		or 
		\begin{equation} \label{bdry}
		x_{i}^{nB} \in \p\O \quad \text{and} \quad r_{i}^{nB} < r_{\varepsilon, N} \quad \text{so that} \quad (\ref{near-paral})\quad \text{holds}.
		\end{equation}
		For $i$ with case (\ref{int}), we set $\mathcal{O}^{nB}_{i} = \emptyset$. For $i$ with case (\ref{bdry}), we define
		\begin{equation*} \label{def O}
		\begin{split}
		\mathcal{O}^{nB}_i &:= \Big\{ v\in \mathbb{V}^N :  {v}= \big(|{v}|\cos\theta, |{v}|\sin\theta\big)  \quad \text{where} \quad \theta \in \Big( \big(\theta_i \pm \frac{\pi}{2}\big) -\frac{\varepsilon}{N^3}, \ \big(\theta_i \pm \frac{\pi}{2}\big) + \frac{\varepsilon}{N^3} \Big) \Big\},  \\
		\end{split}
		\end{equation*}
		where we abbreviated $\theta_{n}(x_{i}^{nB}) = \theta_{i}$. Obviously, $\mathfrak{m}_{2}(\mathcal{O}_{i}^{nB}) \leq \pi \frac{N^{2}}{4}\frac{\varepsilon/N^{2}}{\pi} \leq \varepsilon$ and
		\begin{eqnarray*}
			|v^{\prime} \cdot \mathbf{n}(x^{\prime})| &\geq& |{v}^{\prime}| \times \Big|(\cos\theta^{\prime},\sin\theta^{\prime}) \cdot  (\cos \theta_{{n}}({x}^{\prime}), \sin \theta_{{n}}({x}^{\prime})) \Big|\\
			&\geq& \frac{2}{N} \times \Big|\cos\big(\frac{\pi}{2} + \frac{\varepsilon}{N^{3}} \big) \Big| = \frac{2}{N} \Big|\sin\big(\frac{\varepsilon}{N^{3}} \big) \Big| ,\quad \varepsilon/N^{3} \ll 1,  \\
			&\geq& \frac{\varepsilon}{N^{4}} ,
		\end{eqnarray*}
		for $x^{\prime}\in B({x}^{nB}_i, r^{nB}_i)$ and $v^{\prime} = |v^{\prime}|(\cos\theta^{\prime}, \sin\theta^{\prime})\in \mathbb{V}^N  \backslash \mathcal{O}^{nB}_i$.  \\
	\end{proof}

	We state critical property of analytical boundary for non-convergence of consecutive specular bouncing points. We use notation of the specular cycles $(x^{i}, v^{i})$ defined in (\ref{specular_cycles}). \\
	
	\begin{lemma} \label{finitebounces}
		Assume $\Omega\subset \mathbb{R}^2$ is the analytic non-convex domain of Definitiion~\ref{AND}. Choose ${x}\in cl({\O})$ and nonzero ${v} \in \VN$. If $ [ {x}^i( {x}, {v}), {v}^{i-1}( {x}, {v}) ] \notin \gamma_0^{I} $ for all $i=0,1,2,\cdots$, then
		\begin{equation*}
		\sum_{i=0}^{\infty} | x^i( {x},  {v}) - x^{i+1}( {x},  {v}) | = \infty \ .
		\end{equation*}
	\end{lemma}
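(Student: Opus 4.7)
The plan is to argue by contradiction. Suppose $\sum_{i=0}^\infty |x^i - x^{i+1}| < \infty$; then the partial sums form a Cauchy sequence in $\mathbb{R}^2$, so there is a limit $x^\infty \in cl(\Omega)$ with $x^k \to x^\infty$. Every $x^k$ with $k \geq 1$ is a bounce point and therefore lies on the closed set $\partial\Omega$, so $x^\infty \in \partial\Omega$. By Definition~\ref{CVI} and analyticity, $\partial\Omega$ decomposes into a finite disjoint union of concave open arcs $\partial\Omega^C$, convex open arcs $\partial\Omega^V$, and the finite inflection set $\partial\Omega^I$. I would split the argument according to which stratum contains $x^\infty$.

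If $x^\infty \in \partial\Omega^V$, then in a small ball around $x^\infty$ the boundary is uniformly convex, so the standard strictly-convex billiard estimate (see \cite{Guo10} and \cite{KimLee}) yields a uniform lower bound on $|v^k \cdot \mathbf{n}(x^k)|$ for bounces in this neighborhood; since $|v| \geq 2/N$, this forces a uniform lower bound on the chord length $|x^k - x^{k+1}|$, contradicting $x^k \to x^\infty$. If $x^\infty \in \partial\Omega^C$, a uniformly concave neighborhood cannot contain arbitrarily many consecutive specular bounces of a single trajectory, because after one bounce on a concave arc the outgoing straight segment leaves any sufficiently small neighborhood of the bounce point; so only finitely many $x^k$ can lie near $x^\infty$, again a contradiction. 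The remaining case $x^\infty \in \partial\Omega^I$ is the delicate one.

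By hypothesis $(x^k, v^{k-1}) \notin \gamma_0^I$, so no bounce occurs at the inflection point itself, and since the concave subcase has just been eliminated, the tail of $\{x^k\}$ must approach $x^\infty$ from the convex side. Parametrizing the boundary near $x^\infty$ by a unit-speed analytic curve $\alpha$ with $\alpha(\tau_\infty) = x^\infty$ and $\ddot{\alpha}(\tau_\infty) = 0$, I write $x^k = \alpha(\tau_k)$ with $\tau_k \to \tau_\infty$ and aim for the key comparison
\begin{equation*}
|x^{k-1} - x^k| \; \leq \; |x^k - x^{k+1}| \quad \text{for all sufficiently large } k.
\end{equation*}
Once this is established, the chord lengths are nondecreasing along a tail and hence bounded below by a positive constant, so $\sum |x^k - x^{k+1}| = \infty$, the desired contradiction. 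The hard work is proving this inequality, and this is precisely where analyticity is essential: $\kappa$ vanishes at $\tau_\infty$ to some finite order $m \geq 1$, giving an expansion $\alpha(\tau) - \alpha(\tau_\infty) = (\tau - \tau_\infty)\dot{\alpha}(\tau_\infty) + O(|\tau - \tau_\infty|^{m+2})$. Reflection off the exact tangent line at $x^\infty$ is an isometry that preserves consecutive chord lengths; the curvature correction on the convex side of an inflection point biases the billiard map so that successive chord lengths do not decrease. Turning this heuristic into a rigorous monotonicity statement via a careful Taylor expansion of the bounce map, with error terms controlled uniformly in $k$ once $\tau_k$ is close enough to $\tau_\infty$, will be the main obstacle of the proof.
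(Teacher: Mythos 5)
Your overall architecture is the paper's: argue by contradiction, extract the accumulation point $x^\infty\in\p\O$, dispose of the uniformly convex case via the velocity lemma (comparability of $v^k\cdot\mathbf{n}(x^k)$ on a finite time interval, hence a chord-length lower bound) and the concave case by the trajectory leaving a small neighborhood, and reduce to an accumulation at an inflection point approached through the convex side, where the contradiction should come from the chord monotonicity $|x^{k-1}-x^k|\le|x^k-x^{k+1}|$. The problem is that you stop exactly at the step that constitutes the substance of the lemma: you assert the monotonicity inequality, describe it as a "curvature correction that biases the billiard map," and defer its proof to an unspecified "careful Taylor expansion of the bounce map with error terms controlled uniformly in $k$." That is a genuine gap, not a routine verification: near the inflection point the curvature, the incidence angles, and the chord lengths all degenerate simultaneously, and their relative rates are exactly what is at issue, so a naive expansion around the tangent-line (isometric) dynamics does not by itself produce a sign for the correction.

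For comparison, the paper closes this step without any expansion of the bounce map. It first shows (Step 1 of the proof) that the bounce parameters are eventually monotone along the boundary, which gives the ordering $s_i<s_{i+1}<s_{i+2}<s_\infty$ (you implicitly use such an ordering but never establish it). Then it writes the boundary locally as a graph $\varphi$ normalized so that $\varphi'(s_{i+1})=0$; the specular condition becomes equality of the absolute slopes of the two chords,
\begin{equation*}
\frac{\varphi(s_{i+2})-\varphi(s_{i+1})}{s_{i+2}-s_{i+1}}=\frac{\varphi(s_{i})-\varphi(s_{i+1})}{s_{i+1}-s_{i}},
\end{equation*}
and each side is rewritten as an averaged double integral of $\varphi''$. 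Analyticity enters only through the finite vanishing order $n_0\ge 3$ of $\varphi$ at $s_\infty$, which makes $\varphi''>0$ monotone decreasing toward the inflection point; comparing the two averages then gives $s_{i+1}-s_i\le s_{i+2}-s_{i+1}$, and with equal slope magnitudes this yields the chord comparison. So your proposal is the right route but is missing its central argument; to complete it you would need both the eventual monotonicity of the $\tau_k$ and a quantitative mechanism (such as the paper's monotone-$\varphi''$ averaging trick) replacing the heuristic about the "bias" of the billiard map.
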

	\begin{proof}
		We prove this lemma by contradiction argument: suppose $[x^i ({x},{v}), v^i ( {x},{v})]\notin \gamma_0^I$ for all $i=0,1,2,...$ and
		\begin{equation} \label{finitelength}
		\sum_{i=0}^{\infty} |x^i ({x},{v}) - x^{i+1}({x},{v})| < \infty.
		\end{equation}
		Then $x^i(x,{v})\rightarrow x^{\infty}$ and $x^{\infty} = \lim_{i\rightarrow \infty} \alpha(\tau_{i}) = \alpha(\tau_\infty)\in \partial\Omega$ using that $\partial\Omega$ is closed set. For $i \gg 1$, we assume $x^i(x,v) \in \{\alpha_j(\tau) : \tau \in [a_j,b_j]\}$ for some fixed $j\in\mathbb{N}$ in Definition~\ref{AND}. Otherwise $x^i(x,v)$ can not converge because $dist(\partial\Omega_{j_1},\partial\Omega_{j_2})> \delta>0$ for $j_1\neq j_2.$ Therefore we drop index $j$ and denote $\alpha(\tau_i)=\alpha_j(\tau_i) = x^i ({x},{v})$ in this proof.  \\
		
		\noindent\textit{Step 1}. Let us drop notation of fixed $(x,v)$ and assume that
		\[
		x^{i} = \alpha(\tau_{i}), \quad x^{i+1} = \alpha(\tau_{i+1}),\quad x^{i+2} = \alpha(\tau_{i+2}). 
		\]
		We claim that if $\tau_{i} < \tau_{i+1}$, then $\tau_{i+1} < \tau_{i+2}$ for sufficiently large $i \gg 1$. As explained in (\ref{locallycurve}), we can find $r^{*} \ll 1$ such that if $r \leq r^{*}$, then $B(x,r)\cap\p\O$ is simply connected curve for $x\in \p\O$. Also for $x\in\p\O$, we can find $r^{**} \ll 1$ such that if $r\leq r^{**}$, then $\{B(x,r)\cap\p\O\} \cap N(x) = \{x\}$ where $N(x) = \{c\mathbf{n}(x) : c\in\mathbb{R}\}$, the normal line crossing $x\in\p\O$. For $r = \min(r^{*}, r^{**})$ we can decompose 
		\begin{equation} \label{disj decomp}
			B(x^{i+1},r)\cap\p\O = \underbrace{ \Big\{ \{\alpha(\tau) : \tau < \tau_{i+1} \}\cap \p\O \Big\} }_{:=B_{-}} \ \bigsqcup \ \{x^{i+1}\} \ \bigsqcup \ \underbrace{ \Big\{ \{\alpha(\tau) : \tau > \tau_{i+1} \}\cap \p\O \Big\} }_{:=B_{+}}.
		\end{equation}
		From (\ref{finitelength}), for any $\varepsilon < \frac{1}{2}\min(r^{*}, r^{**})$, we can choose $R \gg 1$ such that 
		\begin{equation} \label{shortdist}
		|x^{i} - x^{i+1}| < \varepsilon,\quad \forall i > R.
		\end{equation}   \\
		If we consider $B(x^{i+1}, \min(r^{*}, r^{**}))$, both $x^{i}$ and $x^{i+2}$ are in $B(x^{i+1}, \min(r^{*}, r^{**})) \cap \p\O$ by (\ref{shortdist}). If $\tau_{i} < \tau_{i+1}$, then $\tau_{i} \in B_{-}$. Combining this fact with disjoint decomposition (\ref{disj decomp}), we know that $v^{i+1}\cdot \dot{\alpha}(\tau_{i+1}) > 0$. Therefore, $x^{i+2} \notin B_{-}$ and we already know that $x^{i+2} \neq x^{i+1}$. Finally we get 
		\[
		x^{i+2}\in \{ B(x^{i+1}, \min(r^{*}, r^{**})) \cap \p\O \} \backslash \{B_{-}\sqcup \{x^{i+1}\} \} := B_{+}.
		\]  
		By definition of $B_{+}$, $\tau_{i+1} < \tau_{i+2}$.  \\
		
		\noindent\textit{Step 2}. We split $\tau_{\infty}$ into three cases and study possible cases for (\ref{finitelength}). Without loss of generality, we assume that $\varepsilon$ and $i > R$ in the rest of this proof satisfy (\ref{shortdist}). \\
		\textit{(i)} If $\kappa(\tau_{\infty}) < 0$, $\exists \ \varepsilon > 0$ such that $\kappa(\tau) < 0$ for $\tau\in (\tau_{\infty} - \varepsilon, \tau_{\infty} + \varepsilon)$. While boundary is convex, we can apply velocity lemma, Lemma 1 in \cite{Guo10} or Lemma 2.6 in \cite{KimLee}. From the velocity lemma, normal velocity at bouncing points are equivalent, especially,
		\begin{equation} \label{velo lem}
		\begin{split}
			e^{C_{\O}(|v|+1)t^{i}} (v^{i}\cdot \mathbf{n}(x^{i})) &\leq e^{C_{\O}(|v|+1)t^{i+1}} (v^{i+1}\cdot \mathbf{n}(x^{i+1}))  \\
			e^{-C_{\O}(|v|+1)t^{i}} (v^{i}\cdot \mathbf{n}(x^{i})) &\geq e^{-C_{\O}(|v|+1)t^{i+1}} (v^{i+1}\cdot \mathbf{n}(x^{i+1})) . \\
		\end{split}
		\end{equation}
		Since nonzero speed $|v|$ is constant, (\ref{finitelength}) implies finite time stop of the trajectory. From (\ref{velo lem}), $v^{i}\cdot \mathbf{n}(x^{i})$ cannot be zero at finite time. So this is contradiction.  \\	
		\textit{(ii)} If $\kappa(\tau_{\infty}) > 0$, $\exists \ \varepsilon > 0$ such that $\kappa(\tau) > 0$ for $\tau\in (\tau_{\infty} - \varepsilon, \tau_{\infty} + \varepsilon)$. Without loss of generality, we choose $\varepsilon \leq \min(r^{*}, r^{**})$ which as chosen in \textit{Step 1}. By concavity, 
		\[
		(\alpha(\tau) - x^{i+1})\cdot\mathbf{n}(x^{i+1}) > 0 \quad\text{for}\quad \tau \in (\tau_{\infty}, \tau_{\infty} + \varepsilon)  \quad \text{where}\quad R_{x^{i+1}}v^{i+1}\cdot\mathbf{n}(x^{i+1}) < 0.
		\]
		This implies, $\tau_{i} \in (\tau_{i+1} - \varepsilon, \tau_{i+1}]$ then $\tau_{i+2} \notin [\tau_{i+1}, \tau_{i+1} + \varepsilon)$. This is contradiction.  \\
		\textit{(iii)} If $\kappa(\tau_{\infty}) = 0$ and $\kappa(\tau) > 0$ for $\tau\in(\tau_{\infty} - \varepsilon, \tau_{\infty}]$, this case is exactly same as case \textit{(ii)}.  \\
		\textit{(iv)} If $\kappa(\tau_{\infty}) = 0$ and $\kappa(\tau) = 0$ for $\tau\in(\tau_{\infty} - \varepsilon, \tau_{\infty}]$, then $\kappa(\tau) = 0 $ for $\tau \in [a_{j}, b_{j}]$ by analyticity. So, $\O$ must be half plane and we get contradiction.  \\
		\textit{(v)} Assume $\kappa(\tau_{\infty}) = 0$ and $\kappa(\tau) < 0$ for $\tau\in(\tau_{\infty} - \varepsilon, \tau_{\infty}]$.  \\
		
		\noindent\textit{Step 3}. We derive contradiction for the last case \textit{(v)} by claiming
		\begin{equation} \label{li}
		l_{i+1} = |x^{i+1}-x^{i}| \leq |x^{i+2}-x^{i+1}| = l_{i+2} < \varepsilon ,\quad i\geq R,
		\end{equation}
		for $\varepsilon$ and $R$ is what we have chosen in (\ref{shortdist}). As explained in (\ref{locallycurve}), we can assume that $B(x^{\infty}, \varepsilon) \cap \p\O$ is a graph of analytic function $\varphi(s)$. From the argument of \textit{Step 1}, we assume $s_{\infty}-\varepsilon < s_{i} < s_{i+1} < s_{i+2} < s_{\infty}$. Moreover, up to tranlation and rotation, we can assume that $\varphi(s_{i+1}) = \varphi^{\prime}(s_{i+1}) = 0$ and $\varphi^{\prime\prime}(s) > 0$ on $s\in(s_{\infty} - \varepsilon, s_{\infty})$. There exist $n_{0}\in\mathbb{N}$ such that
		\begin{eqnarray*}
			\frac{d^{n_{0}} \varphi}{ds^{n_{0}}} (s_{\infty}) \neq 0 \quad \text{and} \quad \frac{d^i \varphi}{ds^i}(s_{\infty}) = 0 \quad \text{for all } 0 \leq i < n_{0}.
		\end{eqnarray*}
		If $n_0 = \infty$, $\p\O$ is straight line so contradiction as explained in \textit{(iv)} of \textit{Step 2}. Also by definition of inflection point, $n_{0} \geq 3$. For finite $n_{0}\in\mathbb{N}$, for $|s| < \varepsilon \ll 1$, 
		\begin{equation} \label{expre1}
		\begin{split}
		\varphi^{\prime\prime}(s) &= c_{n_{0}-2} (s - s_{\infty})^{n_{0}-2} \big( 1 + O(|s-s_{\infty}|)\big) \rightarrow 0 \quad\text{as} \quad s\rightarrow s_{\infty}^{-}.
		\end{split}		
		\end{equation}
		To claim $|x^{i+1}-x^{i}| \leq |x^{i+2}-x^{i+1}|$, we suffice to claim $s_{i+1} - s_{i} \leq s_{i+2} - s_{i+1}$, because absolute values of slopes of $\overline{x^{i}x^{i+1}}$ and $\overline{x^{i+1}x^{i+2}}$ are same by the specular boundary condition. Since we assume $\varphi^{\prime}(s_{i+1}) = 0$, from the specular boundary condition,
		\begin{equation*} 
		\begin{split}
		\frac{\varphi(s_{i+2}) - \varphi(s_{i+1})}{s_{i+2} - s_{i+1}} = \frac{\varphi(s_{i}) - \varphi(s_{i+1})}{s_{i+1} - s_{i}},
		\end{split}
		\end{equation*}
		\begin{equation} \label{slope}
		\begin{split}
		\frac{1}{s_{i+2} - s_{i+1}} \int_{s_{i+1}}^{s_{i+2}} \int_{s_{i+1}}^{t}\varphi^{\prime\prime}(r) dr dt = \frac{1}{s_{i+1} - s_{i}} \int^{s_{i+1}}_{s_{i}} \int_{t}^{z_{i+1}}\varphi^{\prime\prime}(r) dr dt
		\end{split}
		\end{equation}
		It is important that near inflection point, from (\ref{expre1}), $\varphi^{\prime\prime}>0$ is monotone decreasing to zero on $s\in(s_{\infty} - \varepsilon, s_{\infty})$ for $\varepsilon \ll 1$. Therefore,
		\begin{equation} \label{compare}
		\begin{split}
		\frac{1}{s_{i+2} - s_{i+1}} \int_{s_{i+1}}^{s_{i+2}} \int_{s_{i+1}}^{t}\varphi^{\prime\prime}(r) dr dt &\leq \frac{1}{2} (s_{i+2} - s_{i+1})\varphi^{\prime\prime}(s_{i+1}),  \\
		\frac{1}{s_{i+1} - s_{i}} \int^{s_{i+1}}_{s_{i}} \int_{t}^{z_{i+1}}\varphi^{\prime\prime}(r) dr dt &\geq \frac{1}{2} (s_{i+1} - s_{i})\varphi^{\prime\prime}(s_{i+1}).  \\
		\end{split}
		\end{equation}
		From (\ref{slope}) and (\ref{compare}), we get $s_{i+1} - s_{i} \leq s_{i+2} - s_{i+1}$ and justify (\ref{li}). We proved contradictions for all possible cases listed in \textit{Step 2}, and finish the proof.
	\end{proof}	
	Remark that this fact is non-trivial because we can observe the infinitely many bounces of the specular cycles in a finite time interval even in some convex domains \cite{Strange}. Moreover in the case of non-convex domains we need to treat carefully the trajectories hit the inflection part (Definition~\ref{CVI}) tangentially. The analyticity assumption is essential in the proof. \\
	
	Using Lemma~\ref{finitebounces}, we define and control bad phase sets where their cycles may hit inflection grazing sets $\gamma_{0}^{I}$, defined in Definition~\ref{decom-grazing} or \ref{CVI}.
	
	\begin{lemma} \label{near_inflectionset}
		Let $\Omega\subset\mathbb{R}^2$ be an analytic non-convex domain in Definition~\ref{AND}. For $T_{0} > 0, \varepsilon \ll 1, N \gg 1$, there exist finite points 
		\[
		\{ {x}_{1}^{nI},\cdots, {x}_{l_{nI}}^{nI} \} \subset cl({\O}),
		\]
		and open balls 
		\[
		B({x}_1^{nI},r_1^{nI}), \cdots, B({x}_{l_{nI}}^{nI},r_{l_{nI}}^{nI}) \subset \mathbb{R}^{2},
		\]
		as well as corresponding open sets 
		\[
		\mathcal{O}_1^{nI}, \cdots, \mathcal{O}_{l_{nI}}^{nI} \subset \VN, 
		\]
		with $\mathfrak{m}_2(\mathcal{O}_i^{nI}) \lesssim \varepsilon$ for all $i=1,\cdots,l_{nI}$ such that for every $x\in cl({\O})$ there exists $i\in \{1,\cdots, l_{nI}\}$ with $x\in B({x}_i^{nI},r_i^{nI})$ and, for $v\in \mathbb{V}^N \backslash \mathcal{O}_i^{nI} $, the following holds.
		\begin{equation*}
		\left[X(s;T_0,x,v), V(s;T_0,x,v)\right]\notin \gamma_0^I \ \ \ \text{for all} \ s\in [0,T_0] \ .\label{cannotbeininflectionset}
		\end{equation*}
	\end{lemma}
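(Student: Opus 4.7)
The plan is to combine Lemma~\ref{finitebounces} with the analyticity-induced finiteness of the inflection set $\p\O^I = \{p_1,\ldots, p_m\}$, which was already established via zeros of the analytic curvature $\kappa$ on each compact $[a_i,b_i]$. The key observation is that any trajectory on $[0,T_0]$ that avoids $\gamma_0^I$ must, by Lemma~\ref{finitebounces} together with the a priori travel-length bound $\sum_{i} |x^i - x^{i+1}| \leq |v| T_0 \leq \frac{N T_0}{2}$, have only finitely many specular bounces in $[0, T_0]$.

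For each fixed $x_0 \in cl({\O})$ I would define the \emph{bad velocity set}
\[
\mathcal{B}(x_0) := \bigl\{ v \in \VN : [X(s;T_0,x_0,v), V(s;T_0,x_0,v)] \in \gamma_0^I \ \text{for some}\ s \in [0,T_0] \bigr\},
\]
and show $\mathfrak{m}_2(\mathcal{B}(x_0)) = 0$. Since $\gamma_0^I \subset \p\O \times \mathbb{R}^3$ and the trajectory meets $\p\O$ only at bounce times, $v \in \mathcal{B}(x_0)$ forces $x^k(x_0,v) = p_j$ together with the grazing condition $v^k(x_0,v)\cdot \mathbf{n}(p_j) = 0$ for some finite $k$ and some $j \in \{1,\ldots,m\}$. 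On the open subset of $\VN$ whose first $k$ bounces are all non-grazing, the map $v \mapsto x^k(x_0,v)$ is real-analytic; writing $v = |v|(\cos\theta, \sin\theta)$, the $k$-th bounce position is determined by $\theta$ together with a finite bounce itinerary, so each set $\{v : x^k(x_0,v) = p_j\}$ lies on finitely many 1-dimensional rays in $\VN$. Taking the countable union over $k$, $j$, and the finitely many itineraries yields $\mathfrak{m}_2(\mathcal{B}(x_0)) = 0$.

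I can then enclose $\mathcal{B}(x_0)$ in an open set $\mathcal{O}(x_0) \subset \VN$ with $\mathfrak{m}_2(\mathcal{O}(x_0)) \lesssim \varepsilon$. On the compact complement $\VN \setminus \mathcal{O}(x_0)$, every trajectory is uniformly bounded away from inflection grazing, and by Lemma~\ref{finitebounces} the number of bounces in $[0,T_0]$ admits a uniform upper bound $K(x_0)$. Because the bounce maps $(x^k,v^k)$ are continuous in $(x,v)$ off of grazing phases, one obtains a radius $r(x_0) > 0$ such that for all $x \in B(x_0, r(x_0))$ and $v \in \VN \setminus \mathcal{O}(x_0)$, the trajectory from $(T_0,x,v)$ also avoids $\gamma_0^I$ in $[0,T_0]$. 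A Heine--Borel extraction from the open cover $\{B(x_0, r(x_0))\}_{x_0 \in cl({\O})}$ of the compact set $cl({\O})$ then gives the desired finite collection $\{B(x_i^{nI}, r_i^{nI}), \mathcal{O}_i^{nI}\}_{i=1}^{l_{nI}}$.

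The most delicate point is the local continuity extension in the last step: the specular cycle may jump at concave grazing, so I have to argue that $(x^k(x,v), v^k(x,v))$ stays close to some $(x^{\ell}(x_0, v), v^{\ell}(x_0, v))$ with $\ell \geq k$, and that the uniform bound on the number of bounces survives such an index shift. Because $\mathcal{O}(x_0)$ has already absorbed the critical velocities and the inflection points are finitely many, this bookkeeping terminates after finitely many iterations and yields the required uniform non-inflection-grazing statement in a full spatial neighborhood of $x_0$.
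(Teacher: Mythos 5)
There is a genuine gap, and it sits exactly at the step you yourself flag as delicate. Your plan is: for each fixed $x_{0}$ show $\mathfrak{m}_{2}(\mathcal{B}(x_{0}))=0$, enlarge to $\mathcal{O}(x_{0})$ of measure $\lesssim\varepsilon$, and then use continuity of the specular cycle to find $r(x_{0})>0$ so that every $x\in B(x_{0},r(x_{0}))$, $v\in\VN\setminus\mathcal{O}(x_{0})$ also avoids $\gamma_{0}^{I}$. This last transfer is not justified by the tools available at this point of the paper. Closeness of the perturbed trajectory to a trajectory that avoids $\gamma_{0}^{I}$ does not prevent the perturbed trajectory from hitting an inflection point tangentially: for that you need a \emph{uniform positive lower bound} on the distance of the unperturbed cycles from $\gamma_{0}^{I}$, uniformly over the compact set $\VN\setminus\mathcal{O}(x_{0})$, together with stability of the cycle across concave grazings. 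That is precisely the content of Lemma~\ref{uniformbound} and Lemma~\ref{uniformawayfrominflection}, which come \emph{after} the present lemma and use it as input (the uniform bounce bound and the distance bound are both proved on $\{cl(\O)\times\VN\}\setminus\mathfrak{IB}$, i.e.\ after the inflection-grazing velocities have already been removed). Moreover Lemma~\ref{local conti} only gives continuity of $(x^{k},v^{k})$ away from \emph{all} grazing, so the "index shift at concave grazing'' bookkeeping you invoke is exactly the lengthy case analysis of Lemma~\ref{uniformbound}; as sketched, your argument either presupposes those later lemmas (circularity) or leaves the hardest part unproved.

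The paper avoids all of this by time-reversibility: the set of bad phases is exactly the set $\mathcal{A}$ of phases lying on the trajectories emanating from the finitely many inflection points $x_{j}^{I}$ with the two tangential directions $\pm\dot{\alpha}$, and by Lemma~\ref{finitebounces} (travel length $\leq NT_{0}/2$) this is a \emph{finite} union of straight segments with \emph{fixed} directions. Then, on any ball meeting $\mathcal{P}_{x}(\mathcal{A})$, one simply excludes the finitely many segment directions up to $C_{N}\varepsilon$: for every $x$ in the ball, $(x,v)\in\mathcal{A}$ forces $v/|v|=\pm v^{i}(x_{j}^{I},v_{j}^{I})$ for a segment meeting the ball, so the exclusion is automatically uniform over the whole spatial ball and no continuity or perturbation argument is needed. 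This reversibility structure is also what makes your measure-zero claim airtight: your argument that $\{v:\ x^{k}(x_{0},v)=p_{j},\ v^{k}\cdot\mathbf{n}(p_{j})=0\}$ consists of finitely many rays rests on analyticity of $v\mapsto x^{k}(x_{0},v)$ on a fixed non-grazing itinerary, and by itself does not exclude the degenerate (focusing) case where the relevant analytic function is constant on an interval of directions; reversibility settles it, because at any fixed $x$ the bad directions are among the directions of the at most $M^{I}\times\max_{j}\mathtt{m}(x_{j}^{I})$ segments of $\mathcal{A}$ through $x$. I recommend reorganizing the proof around the reversed trajectories from $\gamma_{0}^{I}$, as the paper does; your per-point measure-zero computation then becomes unnecessary, and the ball-uniform statement follows directly from the definition of $\mathcal{O}_{i}^{nI}$.
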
	
	\begin{proof}
		With the specular boundary condition, an particle trajectory is always reversible in time. Therefore, we track backward in time trajectory which depart from inflection grazing phase. Recall from Definition~\ref{CVI} that the inflection boundary $\partial\Omega^I$ is a set of finite points and denote $\partial\Omega^I = \{x_1^I,x_2^I,\cdots, x_{M^I}^I\}$. Define
		\begin{equation*} \label{trackfromI}  
		\bigcup_{j=1 }^{M^I}\Big\{ \big(X(s;T_0,x_j^I,v),  {V}(s;T_0,x_j^I,v)\big) \in cl({\O}) \times \mathbb{R}^2 : s\in [0,T_0], \ \ (x_j^I,v) \in \gamma_0^{I}, \ \ v \in \mathbb{V}^N
		\Big\}.
		\end{equation*}
		Now we fix one point of the inflection boundary $x_j^I\in\partial\Omega^I$ and a velocity $v_j^I\in\mathbb{R}^{2}$ with $|{v}_j^I|=1$ such that $(x_j^I,v_j^I) \in \gamma_0^I$. More precisely, for $x_j^I=\alpha_i(\tau)\in \partial\Omega^{I_+}_i$ with some $i=1,\cdots, M$ in Definition~\ref{CVI}, we choose ${v}_j^I= -\dot{\alpha}_i(\tau)$, and for $x_j^I=\alpha_i(\tau)\in \partial\Omega^{I_-}_i$ we choose ${v}_j^I = \dot{\alpha}_i(\tau)$ so that $(x_j^I,v_j^I) \in \gamma_{0}^{I_{+}}$ and backward in time trajectory is well-defined for short time $(T_{0}-\varepsilon, T_{0}], \ \varepsilon \ll 1$ at least. 
		
		Since $| {V}(s;T_0,x_j^I,v_j^I)| = |{v}_j^I| \leq \frac{N}{2}$ for $v_j^I\in\mathbb{V}^N$, possible total length of the specular cycles is bounded by $\frac{NT_0}{2}$. By Lemma~\ref{finitebounces}, number of bounce cannot be infinite for finite travel length without hitting inflection grazing phase. Moreover, if trajectory hit inward inflection grazing phase, $\gamma_{0}^{I_{-}}$, particle cannot propagate anymore. Therefore, number of bounce for finite travel length is always bounded. This implies 
		\begin{eqnarray*}
			{\verb"m"(x_j^I)} := \inf \Big\{ m \in\mathbb{N} \ : \ \sum_{i=1}^m \big| x^i(x_j^I,v_j^I) - x^{i+1}(x_j^I,v_j^I)\big| > \frac{NT_0}{2}
			\Big\} < +\infty,
		\end{eqnarray*}
		which actually depends on $N$ for fixed $\O$ and $T_{0}$. Therefore the set (\ref{trackfromI}) is a subset of
		\begin{equation*} \label{alltrajec}
		\mathcal{A} := \bigcup_{j=1}^{M^I}\bigcup_{i=0}^{\verb"m"(x_j^I)}\Big\{  (y,u) \in cl({\O})\times \mathbb{V}^N \ : \ y \in \overline{x^i(x_j^I,v_j^I) x^{i+1}(x_j^I,v_j^I)} \ \ \ \text{and} \ \ \ \frac{{u}}{|{u}|}= \pm v^{i}(x^I_j, {v}_j^I)
		\Big\},
		\end{equation*}
		which is a set of all particle paths from all inflection grazing phase. Now, we define projection of $\mathcal{A}$ on spatial dimension,
		\[
		\mathcal{P}_{x} (\mathcal{A}) := \bigcup_{j=1}^{M^I}\bigcup_{i=0}^{\verb"m"(x_j^I)}\Big\{  y \in cl({\O}) \ : \ y \in \overline{x^i(x_j^I,v_j^I) x^{i+1}(x_j^I,v_j^I)}
		\Big\},
		\]
		
		Now we construct open coverings : For $x \in cl({\O})\backslash\mathcal{P}_{x}(cl({\mathcal{A}}))$, we pick $r_{x}>0$ so that $B(x,r_{x})\cap \mathcal{P}_{x}(cl({\mathcal{A}})) = \emptyset$. For $x \in \mathcal{P}_{x}(cl({\mathcal{A}}))$, we pick $r_{x}>0$ to generate covering for $\mathcal{P}_{x}(cl({\mathcal{A}}))$. By compactness, we have finite open covering $B(x_{1}^{nI},r_1^{nI}),\cdots ,B(x_{l_{nI}}^{nI},r_{l_{nI}}^{nI})$. From above construction, for each $1\leq i \leq l_{nI}$, we have either
		\begin{equation} \label{int nI}
		B(x_{i}^{nI},r_{i}^{nI}) \cap \mathcal{P}_{x}(cl({\mathcal{A}})) = \emptyset,
		\end{equation}
		or 
		\begin{equation} \label{bdry nI}
		x_{i}^{nI} \in \mathcal{P}_{x} (cl({\mathcal{A}})).
		\end{equation}
		For $i$ with (\ref{int nI}) case, we set $\mathcal{O}^{nI}_{i} = \emptyset$. For $i$ with (\ref{bdry nI}) case, there are finite number of straight segments (may intersect each other) of $\mathcal{P}_{x}(\mathcal{A})$. This number of segments are bounded by $ M^{I} \times \max_{i} \verb"m"(x_{i}^{I}) < \infty$ for $i=1,\cdots, M^{I}$. By $\mathcal{O}^{nI}_{i}$ with $i$ satisfies (\ref{bdry nI}), we mean
		\begin{equation} \label{def O nI}
		\begin{split}
		\mathcal{O}^{nI}_{i} = \Big\{ u \in \mathbb{V}^N \ : \ \big| { {u}}/{| {u}|} \pm v^{i}(x^I_j, {v}_j^I)\big| < C_N \varepsilon , \ \ \forall(i,j) \ \ \text{s.t} \ \ \overline{x^i(x_j^I,v_j^I) x^{i+1}(x_j^I,v_j^I)}\cap B(x_{i}^{nI},r_{i}^{nI}) \neq \emptyset
		\Big\}.
		\end{split}
		\end{equation}
		Obviouly $\mathfrak{m}_{2}(\mathcal{O}_{i}^{nI}) \lesssim \pi \frac{N^{2}}{4}\frac{C_{N}}{2\pi}\varepsilon M^{I} \times \max_{i} \verb"m"(x_{i}^{I}) \lesssim \varepsilon$ by choosing $C_{N} \lesssim \frac{1}{N^{4}}$ for sufficiently large $N \gg 1$. 
		
		Now we prove (\ref{cannotbeininflectionset}). Since trajectory is reversible in time, $[X(s;T_{0},x,v), V(s;T_{0},x,v)] \notin \gamma_{0}^{I}$ if $(x,v) \notin \mathcal{A}$. By definition of (\ref{def O nI}), if $x\in B(x_i^{nI},r_i^{nI})$, $v\in \mathbb{V}^N \backslash \mathcal{O}_i^{nI}$, and $s\in [0,T_0]$, then $(x,v)\notin \mathcal{A}$. This finishes proof. 
	\end{proof}

	The following lemma comes from Lemma~\ref{near_boundary} and Lemma~\ref{near_inflectionset}.
	\begin{lemma} \label{infinite_bounces_set}
		Consider $\Omega$ as defined in Definition~\ref{AND}. For $\varepsilon \ll 1, N \gg 1$, there exist finite points 
		\[
		\{ {x}_1^{IB},\cdots, {x}_{l_{IB}}^{IB} \} \subset cl({\O}), 
		\]
		and open balls 
		\[
		B( {x}_1^{IB},r_1^{IB}),\cdots, B( {x}_{l_{IB}}^{IB},r_{l_{IB}}^{IB}) \subset \mathbb{R}^{2},
		\]
		as well as corresponding open sets 
		\[
		\mathcal{O}_1^{IB},\cdots, \mathcal{O}_{l_{IB}}^{IB} \subset \VN
		\]
		with $\mathfrak{m}_2(\mathcal{O}_i^{IB}) < C\varepsilon$ (for uniform constant $C>0$) for all $i=1,\cdots,l_{IB}$ such that for every $x\in cl({\O})$, there exists $i\in \{1,\cdots,l_{IB}\}$ with $x\in B({x}_i^{IB},r_i^{IB})$ and, for $v\in \VN \backslash \mathcal{O}_i^{IB}$, 
		\begin{equation*} \label{cond 1}
		|v\cdot \mathbf{n}(x)|> \frac{\varepsilon}{N^4} \ ,
		\end{equation*}
		for all $x\in \partial\Omega \cap B({x}_i^{IB},r_i^{IB})$ and
		\begin{equation*} \label{re-cannotbeininflectionset}
		\big( X(t_{k};T_0,x,v), V(t_{k};T_0,x,v) \big) \ \notin \ \gamma_{0}^{I} \quad  \text{for all} \ t_{k}\in [0,T_0] .  \\
		\end{equation*}
	\end{lemma}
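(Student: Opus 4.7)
The plan is to take the common refinement of the two coverings produced by Lemma~\ref{near_boundary} and Lemma~\ref{near_inflectionset}. First I would apply Lemma~\ref{near_boundary} with parameters $\varepsilon, N$ to obtain finite points $\{x_i^{nB}\}_{i=1}^{l_{nB}} \subset cl(\Omega)$, associated radii $r_i^{nB}$, and bad velocity sets $\mathcal{O}_i^{nB} \subset \mathbb{V}^N$ with $\mathfrak{m}_2(\mathcal{O}_i^{nB}) \leq \varepsilon$, such that on each ball $B(x_i^{nB}, r_i^{nB})$ the non-grazing bound $|v' \cdot \mathbf{n}(x')| > \varepsilon/N^4$ holds for every $x' \in \partial\Omega \cap B(x_i^{nB}, r_i^{nB})$ and $v' \in \mathbb{V}^N \setminus \mathcal{O}_i^{nB}$. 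Then I would apply Lemma~\ref{near_inflectionset} with the same $\varepsilon, N, T_0$ to obtain the analogous data $\{x_j^{nI}\}_{j=1}^{l_{nI}}$, $r_j^{nI}$, and $\mathcal{O}_j^{nI}$ with $\mathfrak{m}_2(\mathcal{O}_j^{nI}) \lesssim \varepsilon$, guaranteeing that for $v \in \mathbb{V}^N \setminus \mathcal{O}_j^{nI}$ one has $(X(s;T_0,x,v), V(s;T_0,x,v)) \notin \gamma_0^I$ for all $s \in [0, T_0]$.

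Next I would build the common refinement. For each $x \in cl(\Omega)$, pick indices $i(x), j(x)$ with $x \in B(x_{i(x)}^{nB}, r_{i(x)}^{nB}) \cap B(x_{j(x)}^{nI}, r_{j(x)}^{nI})$, and choose a radius $\rho_x > 0$ small enough that $B(x, \rho_x)$ is contained in this intersection. By compactness of $cl(\Omega)$ I would extract a finite subcover $\{B(x_m^{IB}, r_m^{IB})\}_{m=1}^{l_{IB}}$ with $r_m^{IB} := \rho_{x_m^{IB}}$, and record the associated indices $i_m := i(x_m^{IB})$, $j_m := j(x_m^{IB})$. Define the combined bad velocity set $\mathcal{O}_m^{IB} := \mathcal{O}_{i_m}^{nB} \cup \mathcal{O}_{j_m}^{nI}$; subadditivity of measure then yields $\mathfrak{m}_2(\mathcal{O}_m^{IB}) \leq \mathfrak{m}_2(\mathcal{O}_{i_m}^{nB}) + \mathfrak{m}_2(\mathcal{O}_{j_m}^{nI}) \leq C\varepsilon$ for a uniform constant $C > 0$ independent of $\varepsilon$ and $N$.

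Finally, to verify the conclusion, given any $x \in cl(\Omega)$ I pick some $m$ with $x \in B(x_m^{IB}, r_m^{IB})$. For $v \in \mathbb{V}^N \setminus \mathcal{O}_m^{IB}$, the inclusion $B(x_m^{IB}, r_m^{IB}) \subset B(x_{i_m}^{nB}, r_{i_m}^{nB})$ together with $v \notin \mathcal{O}_{i_m}^{nB}$ supplies the non-grazing bound $|v \cdot \mathbf{n}(x')| > \varepsilon/N^4$ at every $x' \in \partial\Omega \cap B(x_m^{IB}, r_m^{IB})$ via Lemma~\ref{near_boundary}. Likewise, the inclusion $B(x_m^{IB}, r_m^{IB}) \subset B(x_{j_m}^{nI}, r_{j_m}^{nI})$ and $v \notin \mathcal{O}_{j_m}^{nI}$ yield, via Lemma~\ref{near_inflectionset}, that $(X(s;T_0,x,v), V(s;T_0,x,v)) \notin \gamma_0^I$ for all $s \in [0, T_0]$, which in particular excludes inflection grazing at every bouncing instant $t_k \in [0, T_0]$.

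There is no substantive obstacle here, since the lemma is a direct gluing of two independently established coverings. The only bookkeeping points are that the refined ball centers still lie in $cl(\Omega)$, which is automatic from the construction, and that the measure estimate survives the finite union of two sets of measure at most $\lesssim \varepsilon$, giving a single uniform constant $C$ in the final bound.
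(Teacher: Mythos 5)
Your proposal is correct and is exactly the argument the paper intends: the paper states this lemma without a separate proof, asserting that it "comes from Lemma~\ref{near_boundary} and Lemma~\ref{near_inflectionset}," which is precisely your common-refinement construction with the bad velocity sets unioned and the measure bound obtained by subadditivity.
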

	
	\bigskip
	
	Using above lemma, we define \textit{the infinite-bounces set} $\mathfrak{IB}$ as
	\begin{equation} \label{IB}
	\begin{split}
	\mathfrak{IB} &:= \Big\{ \ (x,v) \in cl({\O}) \times \VN  \ : \ 
	v\in \mathcal{O}^{IB}_{i} \ \text{for some} \ i\in \{1,2,\cdots,l_{IB}\} \ \text{satisfying} \ x\in B({x}_i^{IB},r_i^{IB})
	\Big\}.
	\end{split}	
	\end{equation}
		
	The most important property of the infinite-bounces set (\ref{IB}) is that the bouncing number of the specular backward trajectories on $\{ cl({\O}) \times \VN \} \backslash \mathfrak{IB}$ is uniformly bounded.

	\begin{definition}\label{number}
		When $L > 0, x\in cl({\O}) \subset \mathbb{R}^2$, and nonzero $v \in \mathbb{R}^{2}$ are given, we consider a set 
		\[
		\big\{
		k \in \mathbb{N}  \ : \ \sum_{j=1}^{k} \big| {x}^{j-1}(x,v)- {x}^{j}(x,v)\big| > L
		\big\}\subset \mathbb{N}.
		\]
		If this set is not empty, then we define $\mathfrak{N}(x,v,L)\in \mathbb{N}$ as following, 
		\begin{eqnarray*}
			\mathfrak{N}(x,v,L) := \inf \Big\{
			k \in \mathbb{N}   \ : \ \sum_{j=1}^{k} \big| {x}^{j-1}(x,v)- {x}^{j}(x,v)\big|>L
			\Big\}.
		\end{eqnarray*}
		Otherwise, if the set is empty, it means backward trajectory is trapped in $\gamma_{0}^{I_{-}}$, so we define 
		\[
			\mathfrak{N}(x,v,L) := \inf  \{ i \in \mathbb{N} : ( {x}^i(x,v), {v}^{i-1}(x,v)) \in \gamma_0^{I_{-}} \}. 
		\]
	\end{definition}

	From Lemma~\ref{infinite_bounces_set}, we have $\mathfrak{N}(x,v,\frac{NT_{0}}{2}) < \infty$ for $(x,v)\in \{ cl({\O}) \times \VN \} \backslash \mathfrak{IB}$. To improve this finite result into uniform bound, we use compactness and continuity arguments. 
	\begin{lemma} \label{local conti}
		Let $(x,v) \in cl({\O})\times\VN$. Then $(x^{k}(x,v), v^{k}(x,v))$ is a locally continuous function of $(x,v)$ if 
		\[
		(x^{i}(x,v), v^{i}(x,v)) \notin \gamma_{0},\quad \forall i\in\{1,2,\cdots,k\},
		\]	
		i.e. for any $\varepsilon > 0$, there exist $\delta_{x,v,\varepsilon} > 0$ such that if $|(x,v)-(y,u)|<\delta_{x,v,\varepsilon}$, then 
		\[
		\big| (x^{i}(x,v), v^{i}(x,v)) - (x^{i}(y,u), v^{i}(y,u)) \big| < \varepsilon,\quad \forall i\in\{1,2,\cdots,k\}.
		\]
		Moreover $(x^{i}(y,u), v^{i}(y,u)) \notin\gamma_{0}$ for $i\in\{1,\cdots,k\}$.
	\end{lemma}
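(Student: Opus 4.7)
The plan is a straightforward induction on $k$, propagating continuity through the specular cycle by applying the implicit function theorem at each transversal bounce. The base case $k=0$ is trivial since $(x^0,v^0)=(x,v)$. For the inductive step I assume the claim at level $k-1$ and use the non-grazing hypothesis $v^{k-1}(x,v)\cdot\mathbf{n}(x^k(x,v))\neq 0$ to upgrade it to continuity of $(x^k,v^k)$ at $(x,v)$.

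The reduction is to a one-step statement: the map $(y,u)\mapsto(\xb(y,u),\vb(y,u))$ is continuous at any phase point whose backward hit on $\partial\Omega$ is transversal. To prove this I would use the local parametrization $\alpha(\tau)$ of $\partial\Omega$ near $x^k$ from \eqref{locallycurve} and characterize the backward exit point through the system $y-tu=\alpha(\tau)$ in the unknowns $(t,\tau)$. At the reference configuration $(y,u)=(x^{k-1},v^{k-1})$ this system is solved by $(\tb(x^{k-1},v^{k-1}),\tau^*)$ with $\alpha(\tau^*)=x^k$, and the Jacobian with respect to $(t,\tau)$ is proportional to $u\cdot\mathbf{n}(\alpha(\tau))=v^{k-1}\cdot\mathbf{n}(x^k)\neq 0$ by the non-grazing assumption. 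The implicit function theorem then produces smooth solutions $\tb(y,u)$ and $\tau(y,u)$ on a neighborhood, giving continuity of $\xb$. Since the reflection $u\mapsto R_{\xb}u=u-2(\mathbf{n}(\xb)\cdot u)\mathbf{n}(\xb)$ is jointly continuous in its inputs (the unit normal is smooth along $\partial\Omega$), continuity of $\vb$ follows. Composing this one-step continuity with the inductive hypothesis for $(x^{k-1},v^{k-1})$ yields continuity of $(x^k,v^k)$ at $(x,v)$.

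For the supplementary claim that $(x^i(y,u),v^i(y,u))\notin\gamma_0$ for all $i\in\{1,\dots,k\}$ whenever $(y,u)$ is sufficiently close to $(x,v)$, I would observe that each inner product $v^{i-1}(x,v)\cdot\mathbf{n}(x^i(x,v))$ is strictly nonzero at the base point by hypothesis, and by the continuity just established it remains nonzero on a small enough neighborhood of $(x,v)$. The only real subtlety—rather than obstacle—is the bookkeeping of shrinking neighborhoods through the $k$ inductive steps: at each bounce one must further restrict to a ball on which the previous stage's non-grazing lower bound persists, so the final $\delta_{x,v,\varepsilon}$ should be taken as the minimum of $k$ nested radii, with each radius possibly much smaller than the one produced at the preceding step.
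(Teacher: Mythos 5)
Your proposal is correct and follows the same overall scheme as the paper: reduce to a single transversal bounce, iterate through the $k$ bounces with nested neighborhoods, and get the persistence of non-grazing for $(y,u)$ by continuity of the inner products $v^{i-1}\cdot\mathbf{n}(x^{i})$. The one place where you genuinely diverge is the one-step argument: the paper proves continuity of $(\xb,\vb)$ by a direct geometric estimate (splitting $|x^{1}(x,v)-x^{1}(y,u)|$ into a $v$-perturbation and an $x$-perturbation, using that the line through $x$ meets the local boundary arc once and non-tangentially, plus the estimate keeping $|\frac{u}{|u|}\cdot\dot\alpha(\tau)|$ away from $1$), whereas you solve $y-tu=\alpha(\tau)$ by the implicit function theorem, whose Jacobian in $(t,\tau)$ is exactly $u\cdot\mathbf{n}(\alpha(\tau))\neq 0$ at the reference bounce. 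Your route is cleaner and buys more: it gives $C^{1}$ (indeed smooth) dependence of $\tb$, $\xb$, $\vb$ on $(y,u)$, which is what the paper actually invokes later when it asserts that the functions $\phi^{r}(x,v)=|v^{r-1}\cdot\mathbf{n}(x^{r})|$ are ``well-defined and locally smooth.'' One small point, common to both your argument and the paper's and worth making explicit: the implicit function theorem only produces \emph{a} nearby intersection of the perturbed line with $\p\O$ near $x^{k}$, and you must identify it with the \emph{first} backward hit, i.e.\ rule out an earlier exit elsewhere. This follows from the routine observation that, away from small balls around its two endpoints, the closed segment $\overline{x^{k-1}x^{k}}$ is a compact subset of the open set $\O$, hence at positive distance from $\p\O$, while transversality at both endpoints controls the behavior near them; adding one sentence to that effect closes the argument.
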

	\begin{proof}
		First we claim continuity of $(x^{1}(x,v), v^{1}(x,v))$. Using trajectory notation and lower bound of speed in $\VN$, we know
		\[
		x^{1}(x,v) = X(\tb(t,x,v);t,x,v),\quad \tb \leq CN,
		\]
		for uniform $C$ which depend on the size of $\O$. Let us assume that $|(x,v)-(y,u)| \leq \delta$. Then
		\begin{equation} \label{split}
		\begin{split}
		| x^{1}(x,v) - x^{1}(y,u) | &\leq  | x^{1}(x,v) - x^{1}(x,u) | + | x^{1}(x,u) - x^{1}(y,u) | \\
		\end{split}
		\end{equation}
		Let $x^{1}(x,v) = \alpha_{j}(\tau^{*})$. Since $(x^{1}(x,v), v)\notin\gamma_{0}$, $\big| \frac{v}{|v|}\cdot \dot{\alpha}(\tau^{*}) \big| < 1$. Then we can choose sufficiently small $r_{x,v} \ll 1$ such that $\p\O\cap B(x^{1}(x,v),r_{x,v})$ is simply connected and intersects with line $\{x + sv : s\in\mathbb{R}\}$ in only one point non-tangentially, because $\dot{\alpha}\vert_{x^{1}(x,v)}$ is not parallel to $v$. Since $x + sv$ is continuous on $v$, $x + su$ must intersect to  $\p\O\cap B(x^{1}(x,v),r_{x,v})$ at some $\alpha_{j}(\tau) \in \p\O\cap B(x^{1}(x,v),r)$ whenever $|u-v| \ll \delta_{v,\varepsilon}$. This shows $| x^{1}(x,v) - x^{1}(x,u) | < O(\delta_{v,\varepsilon})$. And 
		\begin{equation} \label{nontan}
		\begin{split}
		\big| \frac{u}{|u|}\cdot \dot{\alpha}(\tau) - \frac{v}{|v|}\cdot \dot{\alpha}(\tau^{*}) \big| &= \big| \frac{u}{|u|}\cdot \dot{\alpha}(\tau) - \frac{u}{|u|}\cdot \dot{\alpha}(\tau^{*}) \big| + \big| \frac{u}{|u|}\cdot \dot{\alpha}(\tau^{*}) - \frac{v}{|v|}\cdot \dot{\alpha}(\tau^{*}) \big|  \\
		&\leq C|\tau - \tau^{*}| + N\delta \\
		&\leq C_{N} \delta \leq \frac{1}{2} \Big( 1 - \big| \frac{v}{|v|}\cdot \dot{\alpha}(\tau^{*}) \big| \Big)
		\end{split}
		\end{equation}
		for sufficiently small $\delta \ll 1$. This implies $\big| \frac{u}{|u|}\cdot \dot{\alpha}(\tau) \big| < 1$, i.e. $(x^{1}(x,u),u)\notin\gamma_{0}$.  \\
		Now, there exist small $r_{x,u} \ll 1$ such that $\p\O\cap B(x^{1}(x,u),r_{x,u})$ is simply connected and intersects with line $\{x + su : s\in\mathbb{R}\}$ in only one point non-tangentially by (\ref{nontan}). So there exist $\delta_{x,u,\varepsilon} \ll 1$ such that line $y + su$ hits $\p\O\cap B(x^{1}(x,u),r_{x,u})$ if $|x-y| < \delta_{x,u,\varepsilon}$. It is obvious that $| x^{1}(x,u) - x^{1}(y,u) | < r_{x,u}$. By far we showed continuity of $x^{1}(x,\cdot)$ and $x^{1}(\cdot,u)$. So continuity of $x^{1}$ follows from (\ref{split}).  \\
		
		To claim continuity of $v^{1}(x,v)$, we use continuity of $x^{1}(x,v)$. When $|(x,v)-(y,u)|<\delta_{x,v,\varepsilon} \ll 1$, we have $|x^{1}(x,v) - x^{1}(y,u)| = O(\delta_{x,v,\varepsilon})$ and therefore $\tau_{1}-\tau_{2} = O(\delta_{x,v,\varepsilon})$, where $\alpha(\tau_{1}) = x^{1}(x,v)$ and $\alpha(\tau_{2}) = x^{2}(x,v)$. By smoothness of $\alpha : \tau \mapsto \mathbb{R}^{2}$, $\mathbf{n}(\alpha(\tau_{1})) - \mathbf{n}(\alpha(\tau_{2}))$ is size of $O(\delta_{x,v,\varepsilon})$ as well as $\dot{\alpha}(\tau_{1}) - \dot{\alpha}(\tau_{2})$. By the specular boundary condition, we have
		\begin{equation*}
		\begin{split}
		|v^{1}(x,v) - v^{1}(y,u)| &\leq \big| R_{\alpha(\tau_{1})}v - R_{\alpha(\tau_{2})}u \big|   \\
		&\leq \big| (I - 2\mathbf{n}(\alpha(\tau_{1})) \otimes \mathbf{n}(\alpha(\tau_{1})) ) v - (I - 2\mathbf{n}(\alpha(\tau_{2})) \otimes \mathbf{n}(\alpha(\tau_{2})) ) u\big|  \\
		&\leq O(\delta_{x,v,\varepsilon}).  \\
		\end{split}
		\end{equation*} 
		Moreover, $v^{1}\cdot\mathbf{n}_{\tau_{1}}$ is also continuous function of $(x,v)$, so $(x^{1}(y,u), v^{1}(y,u)) \notin\gamma_{0}$ when $(y,u)$ are sufficiently close to $(x,v)$. Case of $i=2,\cdots,k$ are easily gained by chain rule, applying above argument several times. 	 
	\end{proof}
	
	\begin{lemma}\label{uniformbound}
		Let $\Omega\subset \mathbb{R}^2$ satisfies Definition~\ref{AND}. Then
		\begin{equation*} \label{numberofbounces}
		\begin{split}
		\mathfrak{N}^{*}_{\varepsilon, N , T_0} &:= \sup_{(x,v)\in \{ cl({\O}) \times \VN \} \backslash \mathfrak{IB} } \mathfrak{N}(x,v,NT_{0}) \ \leq \ C_{\varepsilon, NT_0}, 
		\end{split}
		\end{equation*}
		where $\mathfrak{N}(x,v,NT_{0})$ is defined in Definition~\ref{number} and $\varepsilon$-dependence comes from $\{\mathcal{O}_{i}^{IB}\}_{i=0}^{l_{IB}}$, which was defined in Lemma~\ref{infinite_bounces_set}.
	\end{lemma}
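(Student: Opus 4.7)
The plan is a contradiction argument based on compactness and local control of the cycle map. Suppose the conclusion fails, i.e., there exists a sequence $(x_n, v_n) \in \mathcal{K} := \{cl(\O) \times \VN\} \setminus \mathfrak{IB}$ with $\mathfrak{N}(x_n, v_n, NT_0) \to \infty$. Since $\mathfrak{IB}$ defined in (\ref{IB}) is open in the compact set $cl(\O) \times \VN$ (being a union of sets of the form $B(x_i^{IB}, r_i^{IB}) \times \mathcal{O}_i^{IB}$), its complement $\mathcal{K}$ is compact, and we may pass to a subsequence with $(x_n, v_n) \to (x_*, v_*) \in \mathcal{K}$. Lemma~\ref{infinite_bounces_set} ensures that the backward cycles of $(x_*, v_*)$ do not meet $\gamma_0^I$ within $[0,T_0]$, so Lemma~\ref{finitebounces} forces $\sum_i |x_*^i - x_*^{i+1}| = \infty$ and hence $K_* := \mathfrak{N}(x_*, v_*, NT_0) < \infty$. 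The task is to derive a contradiction by showing that the divergence $\mathfrak{N}(x_n, v_n, NT_0) \to \infty$ is incompatible with this pointwise finiteness at the limit.

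Consider the finite chain of cycles $\{(x_*^i, v_*^{i-1})\}_{i=1}^{K_*}$ along the limiting trajectory. Because $(x_*, v_*) \notin \mathfrak{IB}$, none of these cycles is in $\gamma_0^I$, so any grazing that appears must belong to $\gamma_0^C \sqcup \gamma_0^V$. If no grazing occurs, then Lemma~\ref{local conti} propagates continuity across the entire chain, so for $(y,u)$ in a sufficiently small neighborhood of $(x_*, v_*)$ both the cycles $(y^i, u^{i-1})$ and the inter-bounce distances $|y^{i-1}-y^i|$ depend continuously on $(y,u)$ up to index $K_*$; the partial-sum threshold that defines $\mathfrak{N}$ is then crossed at essentially the same index, giving $\mathfrak{N}(x_n, v_n, NT_0) \to K_*$ and contradicting the divergence.

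The main obstacle is the grazing case, since the cycle map is discontinuous at concave and convex grazing phases and Lemma~\ref{local conti} cannot be applied directly across such a point. My plan here is to split the finite chain into maximal non-grazing runs separated by at most $K_*$ grazing events, apply Lemma~\ref{local conti} on each run, and absorb each grazing event into an $O(1)$ additive error in the perturbed bounce count. At a concave grazing $(x_*^{i_0}, v_*^{i_0-1}) \in \gamma_0^C$, a perturbed trajectory either misses the tangent point (straight passage with no new bounce in the local region) or strikes the concave arc once at a strictly non-tangential angle—local concavity prevents accumulation, so the number of bounces contributed by the perturbed trajectory in a small neighborhood of the grazing point is bounded by a uniform geometric constant. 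Convex grazings are handled in the same spirit using the velocity-lemma inequality (\ref{velo lem}), which forbids accumulation of bounces on a strictly convex arc. Gluing these local bounds across the finitely many grazings and the non-grazing runs yields a bound $\mathfrak{N}(x_n, v_n, NT_0) \leq C(K_*, \O, \varepsilon)$ for $n$ large, contradicting $\mathfrak{N}(x_n, v_n, NT_0) \to \infty$ and completing the proof.
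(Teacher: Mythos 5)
Your skeleton matches the paper's: a local stability estimate for the bounce count at a point of the compact set $\{cl(\O)\times\VN\}\setminus\mathfrak{IB}$ (continuity via Lemma~\ref{local conti} on non-grazing stretches, a bounded number of extra bounces near grazings), combined with compactness — you package it as a sequential-compactness contradiction at the single limit point $(x_*,v_*)$, while the paper proves $\mathfrak{N}(y,u,NT_0)\leq 1+\mathfrak{N}(x,v,NT_0)$ near every point and takes a finite subcover; that difference is immaterial. The problem is that the grazing step, which is where the paper spends essentially all of its effort, is not actually carried out in your plan, and the one geometric assertion you make about it is mis-stated.

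Concretely: first, convex grazing cannot occur along a backward cycle issued from $\{cl(\O)\times\VN\}\setminus\mathfrak{IB}$ — $\gamma_0^V$ is a stopping configuration for both time directions, as the paper notes in its Case 2 — so your velocity-lemma digression is moot and only concave grazing needs treatment. More substantively, a perturbed trajectory that does not miss the grazing site strikes the concave arc \emph{nearly tangentially}, not ``at a strictly non-tangential angle''; and near-tangency is precisely what the argument needs, because it guarantees that the outgoing direction after the extra bounce differs from the unperturbed direction by $O(\delta_{x,v,\varepsilon,NT_0})$ (the paper's estimates (\ref{direc compare}), (\ref{dir comp3})), so the perturbed phase point re-synchronizes with the unperturbed trajectory and Lemma~\ref{local conti} can be restarted on the next non-grazing run. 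This re-synchronization is the ingredient your ``gluing'' silently requires but never establishes: a uniform bound on the number of extra bounces near a grazing point is useless by itself, since an uncontrolled deflection there would decouple the perturbed trajectory from the reference one and leave its subsequent bounce count unrelated to $K_*$. Finally, the reference trajectory may graze \emph{consecutively} — several tangential touches along one tangent line, or alternating between two parallel tangent lines — and these chains are not isolated events; they are exactly what the paper's Cases 2-1 and 2-2, with the time-separation condition (\ref{Tp}) and the estimate (\ref{before p1}), are built to handle. As written, your plan does not address these configurations, so the key step ``absorb each grazing event into an $O(1)$ additive error and glue'' remains an assertion rather than a proof.
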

	\begin{proof}
		From Lemma~\ref{finitebounces} and \ref{infinite_bounces_set}, trajectory does not belongs to inflection grazing set during time $[0,T_{0}]$. $\mathfrak{N}(x,v,\cdot)$ is nondecreasing function for fixed $(x,v)\in \{ cl({\O}) \times \VN \} \backslash \mathfrak{IB} $ and we can assume $|v|=1$, because $NT_{0}$ is fixed maximal travel lenght during time interval $[0,T_{0}]$ with $v\in\VN$.  \\ 
		
		\noindent\textit{Step 1}. We study cases depending on concave grazing.  \\
		\noindent (\textit{Case 1}) If $\mathbf{n}(x^{i}(x,v))\cdot v^{i}(x,v) \neq 0$ for $i=1,\cdots,\mathfrak{N}(x,v,NT_{0})$, trajectory $(X(s;T_{0},x,v), V(s;T_{0},x,v))$ is continuous in $(x,v)$ by Lemma~\ref{local conti}. Therefore, we can choose $\delta_{x,v,\varepsilon,NT_{0}} \ll 1$, such that if $|(x,v)-(y,u)| < \delta_{x,v,\varepsilon,NT_{0}}$, then $|\xb(x,v) - \xb(y,u)| < O(\delta_{x,v,\varepsilon,NT_{0}})$, where $O(\delta_{x,v,\varepsilon,NT_{0}}) \rightarrow 0$ as $\delta_{x,v,\varepsilon,NT_{0}} \rightarrow 0$. Therefore, 
		\begin{equation*} \label{x control y}
		\mathfrak{N}(y,u,NT_{0}) \leq 1 + \mathfrak{N}(x,v,NT_{0}),  \\
		\end{equation*}	 
		for  $|(x,v)-(y,u)| < \delta_{x,v,\varepsilon,NT_{0}} \ll 1$. Moreover, we have 
		\begin{equation*} \label{position close}
		|x^{i}(x,v) - x^{i}(y,u)| < O(\delta_{x,v,\varepsilon,NT_{0}}),
		\end{equation*}	
		for $i=1,\cdots,\mathfrak{N}(x,v,NT_{0})$.  \\
		\noindent (\textit{Case 2}) 
		Assume that $(x^{i}(x,v), v^{i}(x,v))$ belongs to grazing set $\gamma_{0}$ for some $i \in \{ 1,\cdots,\mathfrak{N}(x,v,NT_{0}) \}$. Especially, $(x^{i}(x,v), v^{i}(x,v)) \in \gamma_{0}^{C}$, because $\gamma_{0}^{I}$ is not gained from $\{ cl({\O}) \times \VN \} \backslash \mathfrak{IB}$ as proved in Lemma~\ref{infinite_bounces_set}, and $\gamma_{0}^{V}$ is the stopping point for both forward/backward in time. Let us assume that $i \in \{0,\cdots,\mathfrak{N}(x,v,NT_{0}) \}$ is the smallest bouncing index satisfying $(x^{i}(x,v), v^{i}(x,v)) \in \gamma_{0}^{C}$. Eventhough there are consecutive convex grazings, it must stop at some $(x^{k}(x,v), v^{k}(x,v)$, because $\O$ is analytic and bounded domain, i.e. there exist $i,k \in\mathbb{N}$ such that \\
		\begin{equation} \label{cases}
		\begin{cases}
		(x^{j}(x,v), v^{j}(x,v)) \notin \gamma_{0}^{C},\quad \forall j < i,  \\
		(x^{j}(x,v), v^{j}(x,v)) \in \gamma_{0}^{C},\quad i \leq 
		\forall j \leq k-1,  \\
		(x^{i}(x,v), v^{i}(x,v)) \notin \gamma_{0}^{C},\quad j=k.  \\
		\end{cases}
		\end{equation}	 
		
		When $j < i$, the bouncing number can be counted similar as \textit{Step 1} ,
		\begin{equation*} \label{st2 x control y}
		\mathfrak{N}(y, u, N(T_{0} - t_{i-1}(x,v))) \leq 1 + \mathfrak{N}(x, v, N(T_{0} - t_{i-1}(x,v))),  \\
		\end{equation*}
		for $|(x,v)-(y,u)| < \delta_{x,v,\varepsilon,NT_{0}}$ for some $\delta_{x,v,\varepsilon,NT_{0}} \ll 1$. Now we consider consecutive multiple grazing.  \\
		
		When $i\leq j \leq k-1$, (consecutive convex grazing), we split into two cases, Case 2-1 and Case 2-2. \\
		
		\noindent (\textit{Case 2-1}) We assume $\mathbf{n}(x^{i}(x,v)) = \mathbf{n}(x^{i+1}(x,v)) = \cdots = \mathbf{n}(x^{k-1}(x,v))$. When $|(x,v)-(y,u)| < \delta_{x,v,\varepsilon,NT_{0}} \ll 1$, we have
		\[
			\big| (x^{i-1}(x,v), v^{i-1}(x,v)) - (x^{i-1}(y,u), v^{i-1}(y,u)) \big| < O(\delta_{x,v,\varepsilon,NT_{0}}) \ll 1.
		\] 
		from Lemma~\ref{local conti}. When trajectorys $\big( X(s;y,u,T_{0}), V(s;y,u,T_{0} \big)$ passes near $x^{i}(x,v)$, we split into several cases. \\ 
		
		\begin{figure}[h]
			\centering
			\begin{subfigure}[b]{0.4\textwidth}
				\includegraphics[width=\textwidth]{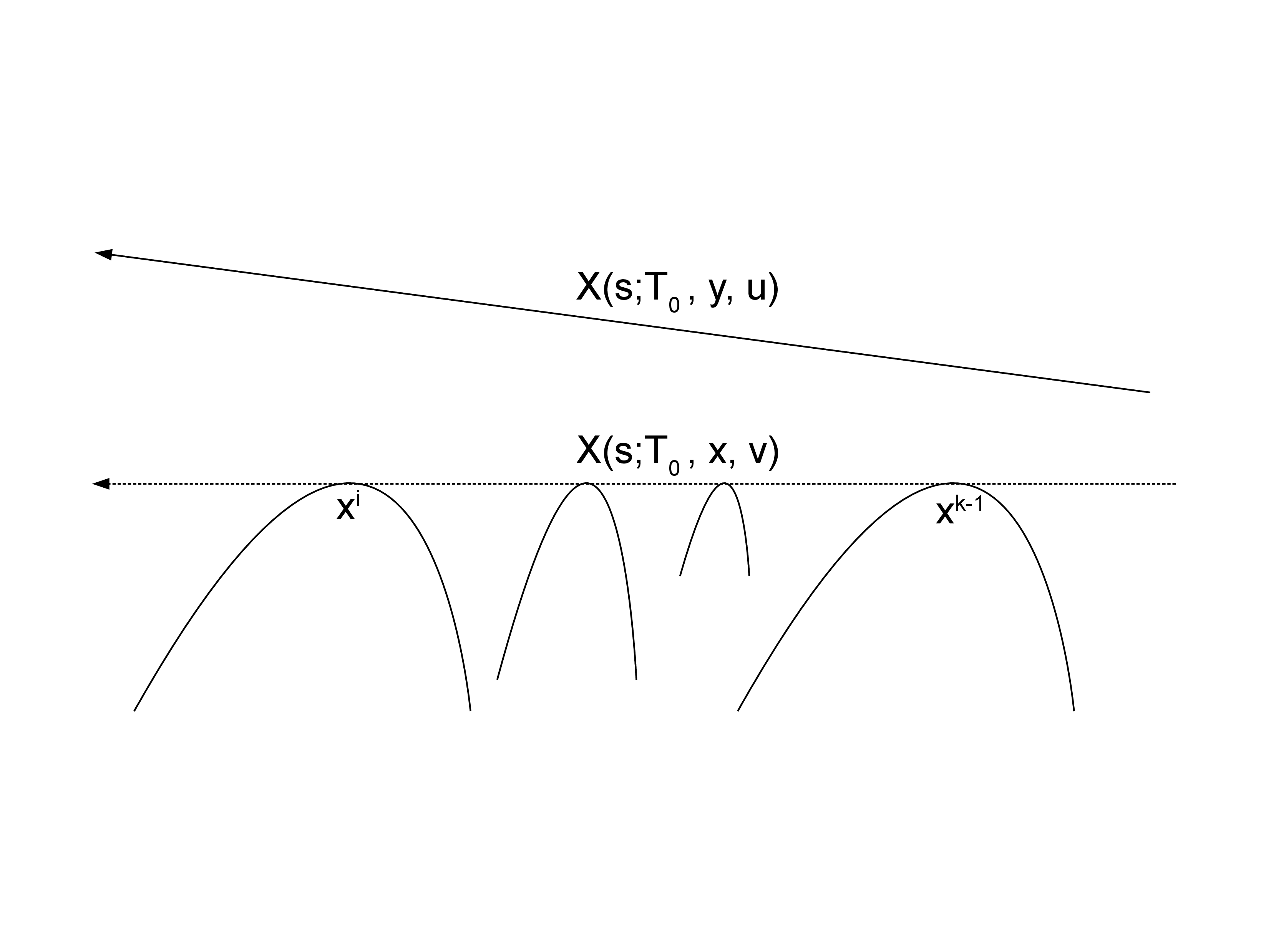}
				\caption*{(i)}
			\end{subfigure}
			\begin{subfigure}[b]{0.4\textwidth}
				\includegraphics[width=\textwidth]{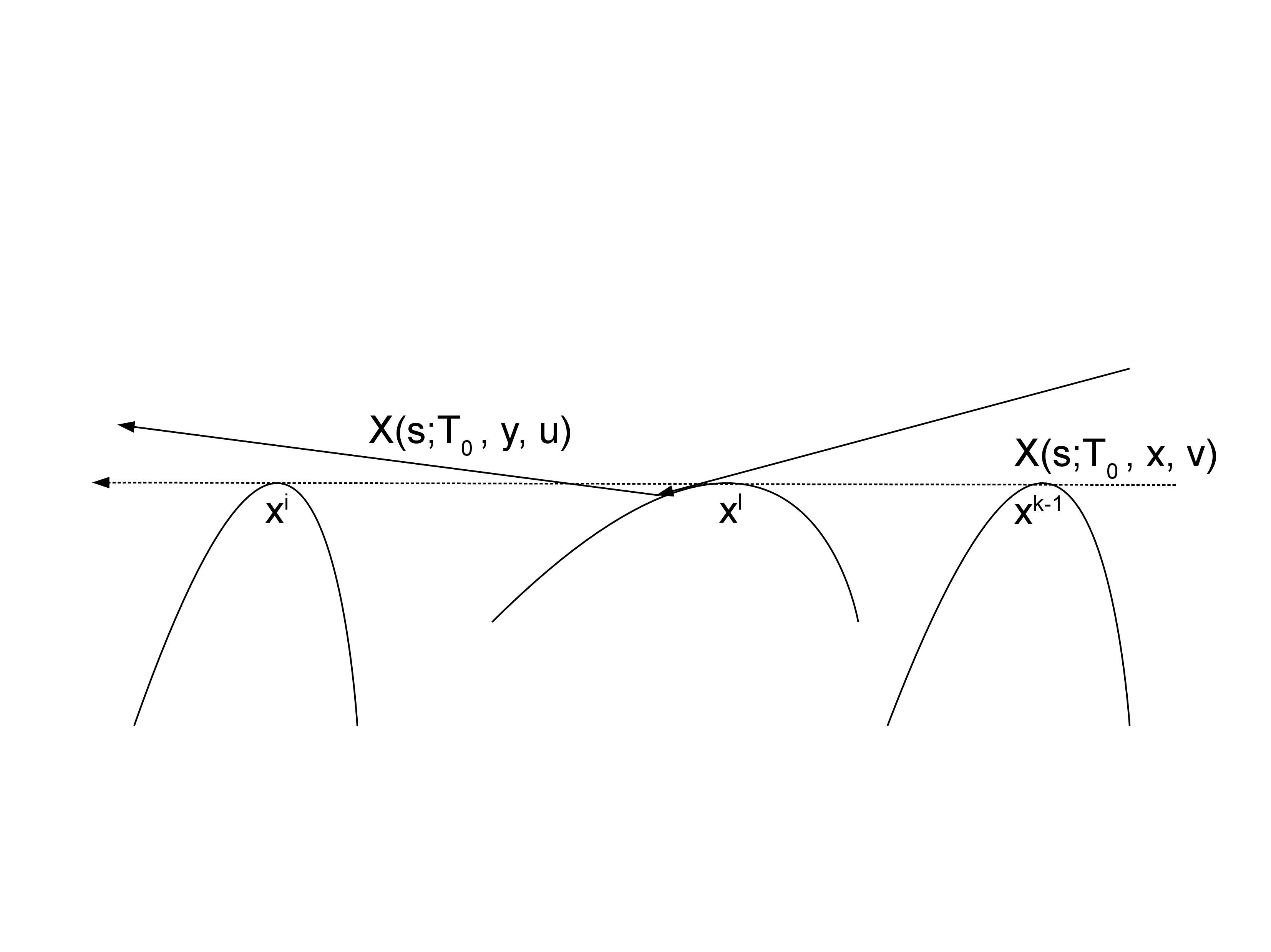}
				\caption*{(ii)}
			\end{subfigure}
			\begin{subfigure}[b]{0.4\textwidth}
				\includegraphics[width=\textwidth]{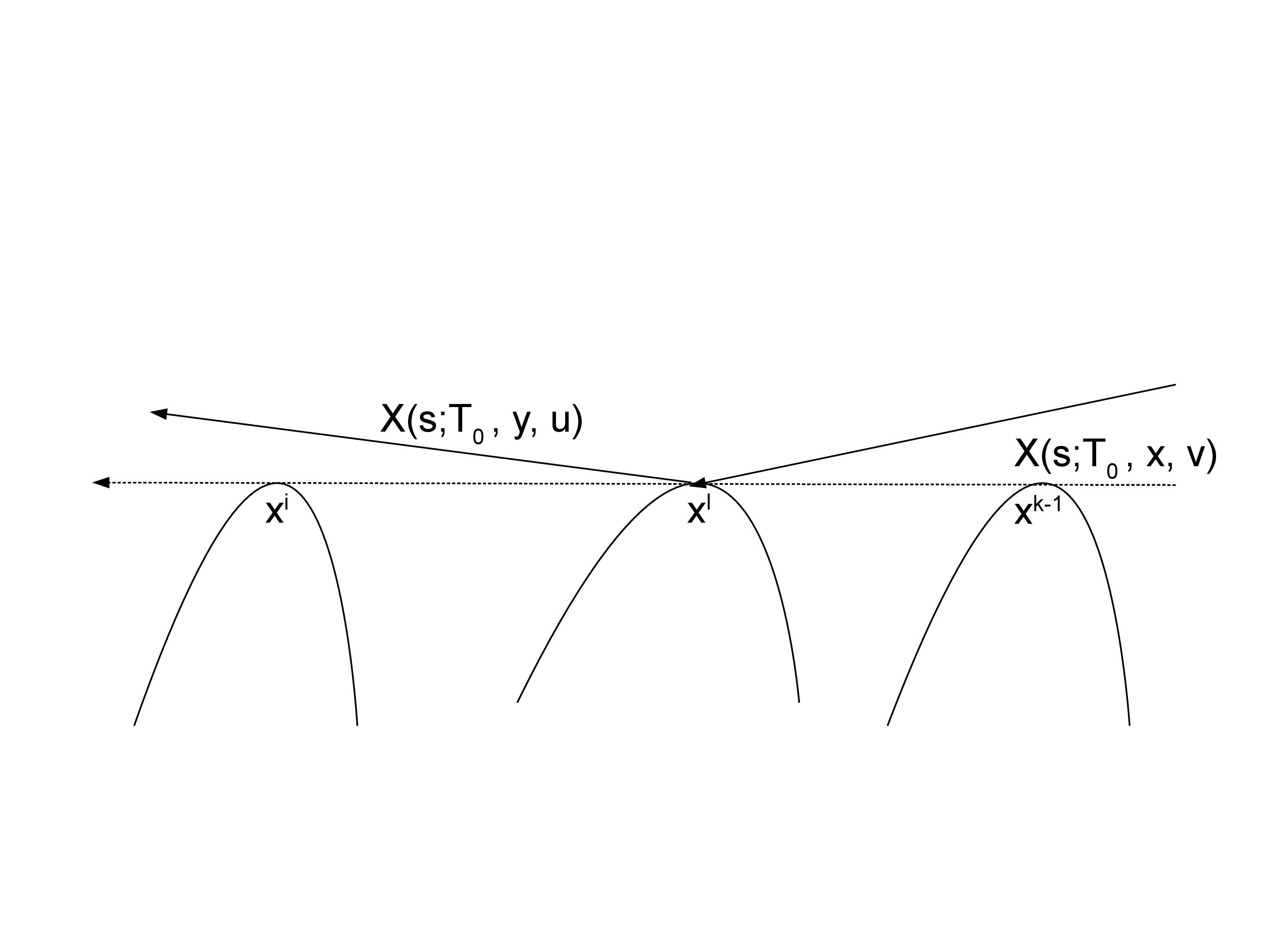}
				\caption*{(iii)}
			\end{subfigure}
			\begin{subfigure}[b]{0.4\textwidth}
				\includegraphics[width=\textwidth]{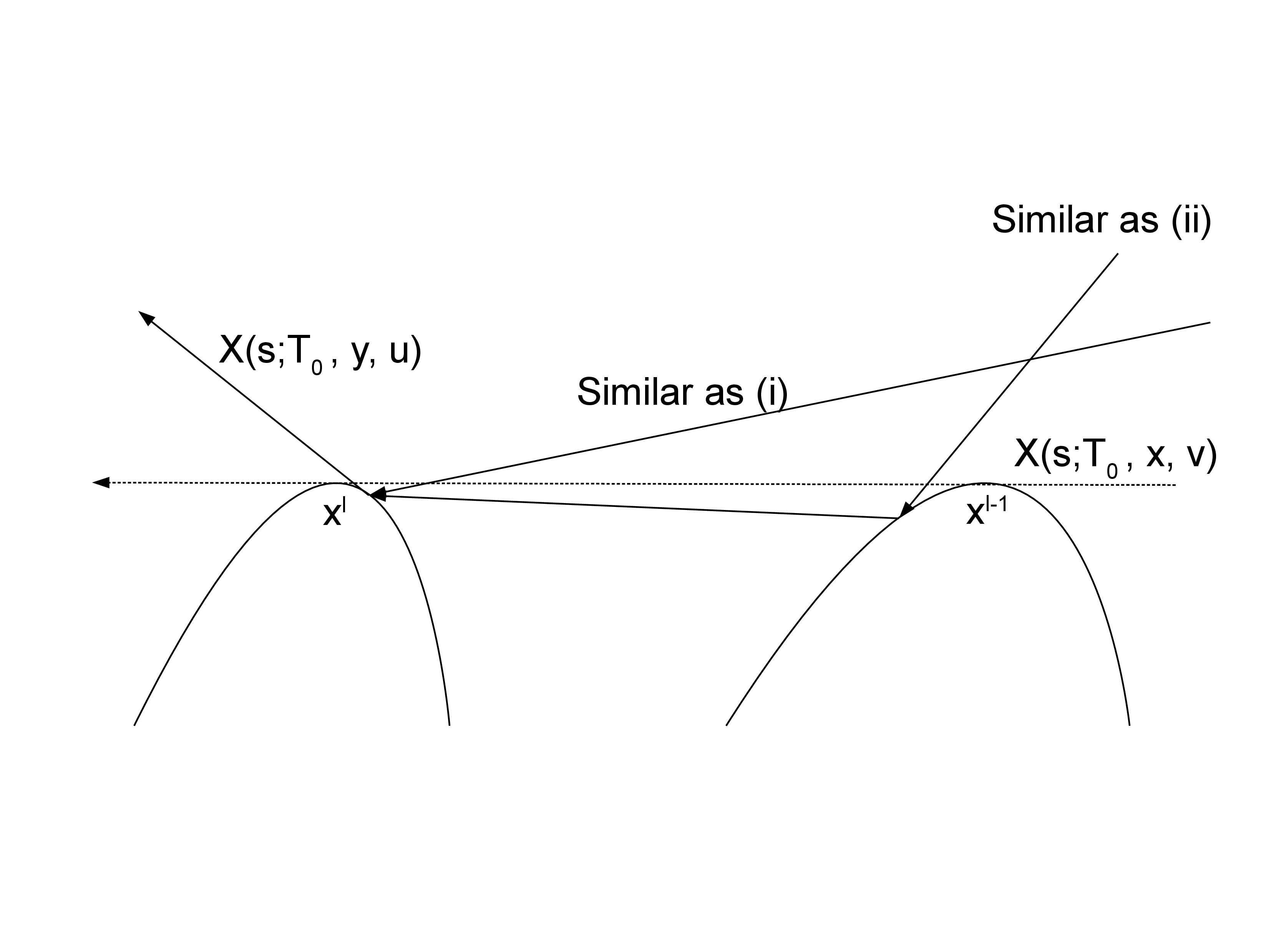}
				\caption*{(iv)}
			\end{subfigure}
			\caption{Case 2-1}\label{case2-1}
		\end{figure}

		We claim that 
		\begin{equation} \label{Case21}
		\begin{split}
		\mathfrak{N}(y, u, N(T_{0} - t_{k}(x,v))) &\leq 1 + \mathfrak{N}(x, v, N(T_{0} - t_{k}(x,v))),  \\
		\end{split}
		\end{equation}
		holds for all following $(i)-(iv)$ cases.  \\
		
		\noindent $(i)$ If $\overline{x^{i-1}(x,v) x^{k}(x,v)}$ does not bounce near $x^{j}(x,v)$ for all $j\in\{i,\cdots,k-1\}$, then obviously we get (\ref{Case21}). \\
		
		If case $(i)$ does not hold, we can assume that the backward trajectory $\big( X(s;y,u,T_{0}), V(s;y,u,T_{0} \big)$ hits near $x^{\ell}(x,v)$ without hitting near $x^{j}(x,v)$ for $i\leq j \leq \ell-1$. Without loss of generality, we parametrize $B(x^{\ell}(x,v), \varepsilon) \cap \p\O, \ \varepsilon \ll 1 $ by regularized curve $\big\{ \beta^{\ell}(\tau) : \tau^{\ell} - \delta_{1} < \tau <  \tau^{\ell} + \delta_{2}, \ \ \beta^{\ell}(\tau^{\ell}) =  x^{\ell}(x,v) \big\}, \ \ 0 \leq \delta_{1}, \delta_{2} \ll 1$. \\
		
		\noindent $(ii)$ Let $x^{i}(y,u) = \beta^{\ell}(\tau)$ with $\tau^{\ell} - \delta_{1} < \tau < \tau^{\ell}$. Without loss of generality, we assume multigrazing dashed line as $x$-axis. By the specular BC, the trajectory $\big( X(s;y,u,T_{0}), V(s;y,u,T_{0} \big)$ must be above tangential line $\{ x^{i}(y,u) + s \dot{\beta}^{\ell}(\tau) \ \vert \ s\in\mathbb{R} \}$ near $x^{i}(y,u)$. Moreover, from the specular BC,    
		\begin{equation} \label{direc compare}
		\begin{split}
			&\Big| \frac{v^{i}(y,u)}{|v^{i}(y,u)|}\cdot \dot{\beta}^{\ell}(\tau) \Big| = \Big| \frac{v^{i-1}(y,u)}{|v^{i-1}(y,u)|}\cdot \dot{\beta}^{\ell}(\tau) \Big|  \\
			&\leq \Big| \Big( \frac{v^{i-1}(y,u)}{|v^{i-1}(y,u)|} - \frac{v^{\ell}(x,v)}{|v^{\ell}(x,v)|} \Big) \cdot \dot{\beta}^{\ell}(\tau) \Big| +
			\Big| \frac{v^{\ell}(x,v)}{|v^{\ell}(x,v)|} \cdot \big( \dot{\beta}^{\ell}(\tau) - \dot{\beta}^{\ell}(\tau^{\ell}) \big) \Big| + 
			\Big| \frac{v^{\ell}(x,v)}{|v^{\ell}(x,v)|} \cdot \dot{\beta}^{\ell}(\tau^{\ell}) \Big|  \\
			&\leq 1 + O(\delta_{x,v,\varepsilon,NT_{0}}).
		\end{split}
		\end{equation}
		This implies that the angle between $v^{i-1}(y,u)$ and tangential line $\{ x^{i}(y,u) + s \dot{\beta}^{\ell}(\tau) \ \vert \ s\in\mathbb{R} \}$ are very small, so we can apply the argument of $(i)$ again and we obtain (\ref{Case21}). \\
		
		\noindent $(iii)$ When $x^{i}(y,u) = \beta^{\ell}(\tau^{\ell})$, we must have    \\
		\begin{equation} \label{dir comp3}
		\begin{split}
		&\Big| \frac{v^{i}(y,u)}{|v^{i}(y,u)|}\cdot \dot{\beta}^{\ell}(\tau^{\ell}) \Big| = \Big| \frac{v^{i-1}(y,u)}{|v^{i-1}(y,u)|}\cdot \dot{\beta}^{\ell}(\tau^{\ell}) \Big| = \Big| \frac{v^{i-1}(y,u)}{|v^{i-1}(y,u)|}\cdot \frac{v^{\ell}(x,v)}{|v^{\ell}(x,v)|} \Big| = 1 + O(\delta_{x,v,\varepsilon,NT_{0}}).
		\end{split}
		\end{equation}	
		So the angle between $v^{i-1}(y,u)$ and $v^{\ell}(x,v)$ are very small. Moreover, trajectory $\big( X(s;y,u,T_{0}), V(s;y,u,T_{0} \big)$ must be above dash tagential line, we can apply $(i)$ to derive (\ref{Case21}).  \\
		
		\noindent $(iv)$ When $x^{i}(y,u) = \beta^{\ell}(\tau)$ with $\tau^{\ell} < \tau < \tau^{\ell} + \delta_{2} $, angle between $\dot{\beta}^{\ell}(\tau)$ and $\dot{\beta}^{\ell}(\tau^{\ell})$ is very small, since $\delta_{2} \ll 1$. Moreover, angle between $v^{i-1}(y,u)$ and $\dot{\beta}^{\ell}(\tau)$ is also small from (\ref{direc compare}). Therefore the angle between $v^{i-1}(y,u)$ and $\dot{\beta}^{\ell}(\tau^{\ell})$ is also small, i.e. $v^{i-1}(y,u)$ is nearly parallel with dashed line in Fig 1. Therefore only $(i)$ and $(ii)$ cases are possible for $x^{i+1}(y,u)$. For both cases, we gain (\ref{Case21}).  \\ 
		
		\noindent (\textit{Case 2-2}) Assume that there exist $\{ p_{1}, p_{2}, \cdots, p_{q} \} \in \{i+1,\cdots,k-1\} $ with $p_{1} < p_{2} < \cdots < p_{q}$ such that 
		\begin{equation*} 
		\begin{cases}
			\mathbf{N} := \mathbf{n}(x^{i}(x,v)) = \mathbf{n}(x^{i+1}(x,v)) = \cdots = \mathbf{n}(x^{p_{1}-1}(x,v))  \\
			-\mathbf{N} = \mathbf{n}(x^{p_{1}}(x,v)) = \mathbf{n}(x^{p_{1}+1}(x,v)) = \cdots = \mathbf{n}(x^{p_{2}-1}(x,v))  \\
			\mathbf{N} = \mathbf{n}(x^{p_{2}}(x,v)) = \mathbf{n}(x^{p_{2}+1}(x,v)) = \cdots = \mathbf{n}(x^{p_{3}-1}(x,v))  \\
			-\mathbf{N} = \mathbf{n}(x^{p_{3}}(x,v)) = \mathbf{n}(x^{p_{3}+1}(x,v)) = \cdots = \mathbf{n}(x^{p_{4}-1}(x,v))  \\
			\quad\quad\quad\quad\quad\quad\quad\quad\quad\quad\quad\quad\cdots		
		\end{cases}
		\end{equation*} 
			
		\begin{figure} [h]  
			\centering
			\begin{subfigure}[h]{0.4\textwidth}
				\includegraphics[width=\textwidth]{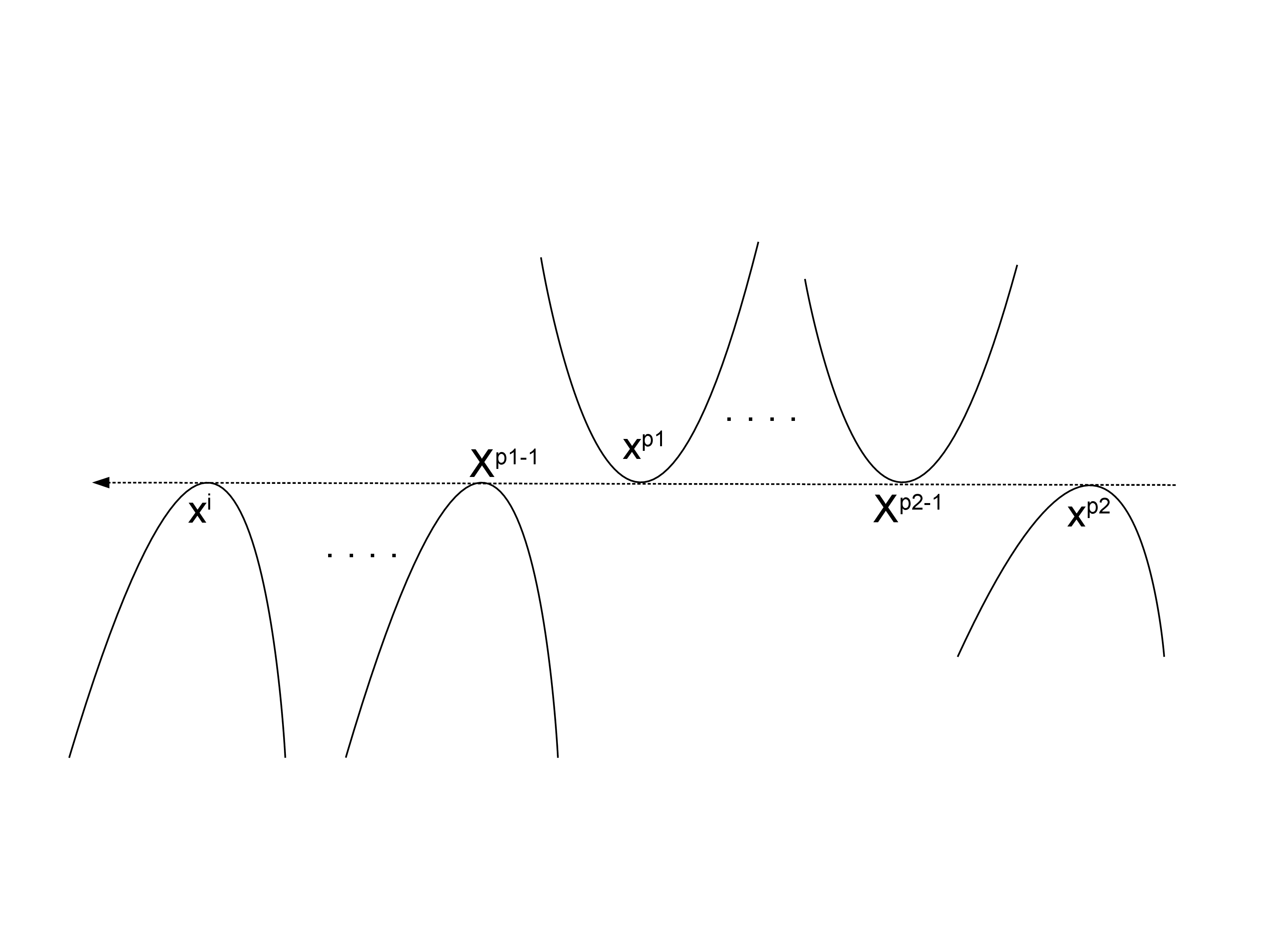}
			\end{subfigure}
			\begin{subfigure}[h]{0.4\textwidth}
				\includegraphics[width=\textwidth]{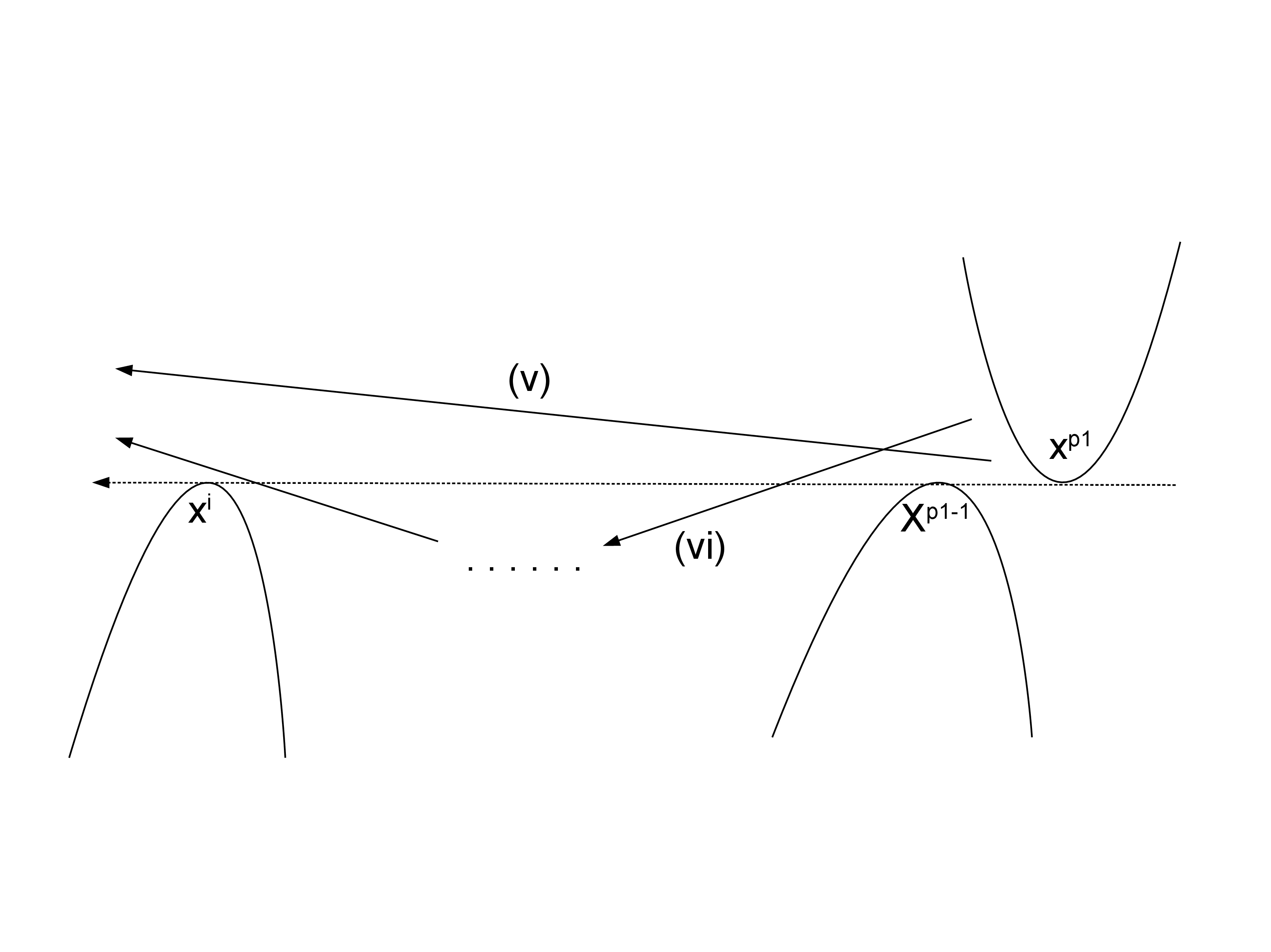}
				\caption*{(v) and (vi)}
			\end{subfigure}
			\caption{Case 2-2}\label{case2-2}
		\end{figure}

		 We split into cases and claim that
		\begin{equation*} \label{Case22}
		\begin{split}
		\mathfrak{N}(y, u, N(T_{0} - t_{k}(x,v))) &\leq 1 + \mathfrak{N}(x, v, N(T_{0} - t_{k}(x,v))),  \\
		\end{split}
		\end{equation*} 
		holds for all cases.  \\
		
		First we define $T_{p_{\ell}} := (t^{p_{\ell}-1}(x,v) - t^{p_{\ell}}(x,v) ) /2, \ 1 \leq \ell \leq q $, and choose $\delta_{x,v,\varepsilon,NT_{0}}$ so that 
		\begin{equation} \label{Tp}
			\delta_{x,v,\varepsilon,NT_{0}} \ll \frac{T_{p_{\ell}}}{N}, \quad \text{for all} \ \ell \in\{1,\cdots, q\},
		\end{equation}
		which implies that traveling time (or distance) between $x^{p_{1}}(x,v)$ and $x^{p_{1}-1}(x,v)$ is sufficiently larger than the size of $\delta_{x,v,\varepsilon, NT_{0}}$. We split into two cases $(v)$ and $(vi)$ as following.  \\
		
		\noindent $(v)$ If $x^{i}(y,u)$ does not hit near any of $x^{i}(x,v),\cdots,x^{p_{1}-1}(x,v)$, we have 
		\begin{equation} \label{before p1}
		\begin{split}
			\Big| \big( X(T_{p_{1}}; y,u,T_{0}), V(T_{p_{1}}; y,u,T_{0}) \big) - \big( X(T_{p_{1}}; x,v,T_{0}), V(T_{p_{1}}; x,v,T_{0}) \big) \Big| \leq O(\delta_{x,v,\varepsilon, NT_{0}}),
		\end{split}
		\end{equation} 
		by Lemma~\ref{local conti}.  \\
		
		\noindent $(vi)$ If $x^{i}(y,u)$ hits near one of $x^{i}(x,v),\cdots,x^{p_{1}-1}(x,v)$, then we can apply $(ii)$, $(iii)$, or $(iv)$ of Case 2-1 to claim that there are \textit{at most} 2 bouncings before trajectory $\big( X(s;y,u,T_{0}) , V(s;y,u,T_{0}) \big)$ approaches $x^{p_{1}}(x,v)$. Moreover, in any case of $(ii)$, $(iii)$, and $(iv)$, (assuming 2 bouncings WLOG), 
		\[
		| v^{i+2}(y,u) - v^{i}(x,v) | = | v^{i+2}(y,u) - v^{p_{1}}(x,v) | = O(\delta_{x,v,\varepsilon, NT_{0}}).
		\]
		And, since trajectory $ X(s;y,u,T_{0}) $ is very close to $X(s;x,v,T_{0})$,
		\[
			\Big| X(s; y,u,T_{0}) - X(s; x,v,T_{0}) \Big| \leq O(\delta_{x,v,\varepsilon, NT_{0}}),\quad t^{i-1}(x,v) \leq s \leq T_{p_{1}}.
		\]  
		Using above two estimates for both velocity and position, (\ref{before p1}) also holds for case $(vi)$.  \\
		
		Now let us derive uniform number of bounce of the second case in (\ref{cases}). For (\textit{Case 2-1}), we proved that (\ref{Case21}) holds. For (\textit{Case 2-2}) case, we change index $p_{1}-1 \leftrightarrow k-1$, and then apply the same argument of (\textit{Case 2-1}) to derive   
		\begin{equation*} \label{Case21 p1}
		\begin{split}
		\mathfrak{N}(y, u, N(T_{0} - T_{p_{1}}(x,v))) &\leq \mathfrak{N}(x, v, N(T_{0} - T_{p_{1}}(x,v))),  \\
		\end{split}
		\end{equation*}
		During $\big( t^{p_{2}}(x,v), t^{p_{1}}(x,v) \big)$, we can also apply same argument of (\textit{Case 2-1}) with help of (\ref{Tp}) and (\ref{before p1}) to obtain 
		\begin{equation*} \label{Case21 p2}
		\begin{split}
		\mathfrak{N}(y, u, N(T_{0} - T_{p_{1}}(x,v))) &\leq \mathfrak{N}(x, v, N(T_{0} - T_{p_{1}}(x,v))).  \\
		\end{split}
		\end{equation*}
		
		We iterate this process until $T_{p_{q}}$ to obtain
		\begin{equation*} \label{Case21 pq}
		\begin{split}
		\mathfrak{N}(y, u, N(T_{0} - T_{p_{q}}(x,v))) &\leq \mathfrak{N}(x, v, N(T_{0} - T_{p_{q}}(x,v))).  \\
		\end{split}
		\end{equation*}
		And since $(x^{k}(x,v), v^{k}(x,v))$ is non-grazing, we have 
		\begin{equation} \label{Case21 pk}
		\begin{split}
		\mathfrak{N}(y, u, N(T_{0} - t^{k}(x,v))) &\leq 1 + \mathfrak{N}(x, v, N(T_{0} - t^{k}(x,v))),  \\
		\end{split}
		\end{equation}
		by applying (\textit{Case 2-1}) for traveling from near $x^{p_{q}}(x,v)$ to $x^{k}(x,v)$ . \\
		
		\noindent\textit{Step 2}. When we encounter second consecutive convex grazings after $t^{k}(x,v)$, we can follow \textit{Step 1} to derive similar estimate as (\ref{Case21 pk}). Finally there exist $\delta_{x,v,\varepsilon, NT_{0}} \ll 1$ such that
		\begin{equation} \label{x control y re}
		\begin{split}
		\mathfrak{N}(y,u,NT_{0}) &\leq 1 + \mathfrak{N}(x,v,NT_{0}),  \\
		\end{split}
		\end{equation}
		where $(x,v)\in \{ cl({\O}) \times \VN \} \backslash \mathfrak{IB}$. Since $\mathfrak{IB}$ is open set from (\ref{IB}), $\{ cl({\O}) \times \VN \} \backslash \mathfrak{IB}$ is closed set. And then we use compactness argument to derive uniform boundness from (\ref{x control y re}). For each $(x,v)\in \{ cl({\O}) \times \VN \} \backslash \mathfrak{IB}$, we construct small balls $B((x,v),\delta_{x,v,\varepsilon,NT_{0}})$ near each points. For each $(y,u)\in B((x,v),\delta_{x,v,\varepsilon,NT_{0}})$, (\ref{x control y re}) holds. By compactness, there exist a finite covering $\bigcup_{i=1}^{\ell} B((x_{i},v_{i}),\delta_{x_{i},v_{i},\varepsilon,NT_{0}})$ for some finite $\ell < \infty$. Therefore, for any $(y,u)\in \{ cl({\O}) \times \VN \} \backslash \mathfrak{IB}$, 
		\begin{equation*} \label{unif cpt}
		\begin{split}
		\mathfrak{N}(y,u,NT_{0}) &\leq 1 + \max_{1\leq i\leq \ell} \mathfrak{N}(x_{i}, v_{i}, NT_{0})  \leq C_{\varepsilon, NT_{0}}.
		\end{split}
		\end{equation*}
	\end{proof}
	
	\begin{lemma}\label{uniformawayfrominflection}
		Let $\Omega\subset \mathbb{R}^2$ saftisfies Definition~\ref{AND}. For any $(x,v)\in \{ cl({\O}) \times \VN \} \backslash \mathfrak{IB}$, trajectory $(X(s;T_{0},x,v), V(s;T_{0},x,v))$ for $s\in[0,T_{0}]$ is uniformly away from inflection grazing set $\gamma_{0}^{I}$, i.e. there exists $\rho_{\varepsilon, NT_{0}} > 0$ such that
		\begin{equation} \label{dist def}
		\begin{split}
			D_{\mathcal{I}}(s,x,v) &:= dist(\partial\Omega^{\mathcal{I}}, X(s;T_0,x,v)) + |\mathbf{n}(X(s;T_0,x,v))\cdot V(s;T_0,x,v)| \\
			&\geq \rho_{\varepsilon, NT_{0}} > 0,
		\end{split}
		\end{equation}
		for all $s\in [0,T_0]$ such that $X(s;T_{0},x,v)\in\p\O$.
	\end{lemma}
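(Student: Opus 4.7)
The strategy is to argue by contradiction, combining the uniform bounce bound from Lemma~\ref{uniformbound} with the compactness of $\{cl(\O)\times\VN\}\setminus\mathfrak{IB}$ and the structural fact that the inflection boundary $\partial\O^{I}$ is a finite set of points (Definition~\ref{CVI}). Suppose the conclusion fails. Then for each $n\in\mathbb{N}$ there exist $(x_{n},v_{n})\in\{cl(\O)\times\VN\}\setminus\mathfrak{IB}$ and a bouncing time $s_{n}=t^{k_{n}}(x_{n},v_{n})\in[0,T_{0}]$ with $D_{\mathcal{I}}(s_{n},x_{n},v_{n})<1/n$. By Lemma~\ref{uniformbound}, $k_{n}\leq \mathfrak{N}^{*}_{\varepsilon,N,T_{0}}=C_{\varepsilon,NT_{0}}$ for every $n$, so after extracting a subsequence we may assume $k_{n}=k^{*}$ is constant. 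Because $\mathfrak{IB}$ is open as a finite union of open sets by (\ref{IB}), its complement in $cl(\O)\times\VN$ is closed and bounded, hence compact, and a further extraction gives $(x_{n},v_{n})\to(x_{*},v_{*})\in\{cl(\O)\times\VN\}\setminus\mathfrak{IB}$. Since $\partial\O^{I}$ is finite, yet another subsequence yields $x^{k^{*}}(x_{n},v_{n})\to y^{*}$ for a fixed $y^{*}\in\partial\O^{I}$, and simultaneously $|v^{k^{*}-1}(x_{n},v_{n})\cdot\mathbf{n}(x^{k^{*}}(x_{n},v_{n}))|\to 0$.

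The next step is to transfer these limits into information about the bounces of $(x_{*},v_{*})$. Since $(x_{*},v_{*})\notin\mathfrak{IB}$, Lemma~\ref{infinite_bounces_set} guarantees that no cycle $(x^{i}(x_{*},v_{*}),v^{i-1}(x_{*},v_{*}))$ with $1\leq i\leq \mathfrak{N}^{*}$ lies in $\gamma_{0}^{I}$, while Lemma~\ref{uniformbound} bounds the total number of bounces. The continuity Lemma~\ref{local conti} handles the trajectory between non-grazing bounces of $(x_{*},v_{*})$, and the Case 2-1 / Case 2-2 analysis from the proof of Lemma~\ref{uniformbound} controls the behaviour of $(x_{n},v_{n})$ across each block of consecutive concave grazings of $(x_{*},v_{*})$: in every sub-case, the bouncing positions of the perturbed trajectory remain $O(\delta_{x_{*},v_{*},\varepsilon,NT_{0}})$-close to a bouncing position of $(x_{*},v_{*})$, and the reflected velocities $v^{\cdot}$ are correspondingly close. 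Combining these, one extracts an index $\ell^{*}\leq k^{*}$, possibly shifted from $k^{*}$ to absorb concave-grazing merging or splitting, for which
\[
x^{k^{*}}(x_{n},v_{n})\longrightarrow x^{\ell^{*}}(x_{*},v_{*}),\qquad v^{k^{*}-1}(x_{n},v_{n})\cdot\mathbf{n}(x^{k^{*}}(x_{n},v_{n}))\longrightarrow v^{\ell^{*}-1}(x_{*},v_{*})\cdot\mathbf{n}(x^{\ell^{*}}(x_{*},v_{*})).
\]

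Comparing with the limits of the first paragraph forces $x^{\ell^{*}}(x_{*},v_{*})=y^{*}\in\partial\O^{I}$ and $v^{\ell^{*}-1}(x_{*},v_{*})\cdot\mathbf{n}(x^{\ell^{*}}(x_{*},v_{*}))=0$, so $(x^{\ell^{*}}(x_{*},v_{*}),v^{\ell^{*}-1}(x_{*},v_{*}))\in\gamma_{0}^{I}$. By the construction of $\mathfrak{IB}$ in Lemma~\ref{near_inflectionset}, every phase whose specular cycle within $[0,T_{0}]$ intersects $\gamma_{0}^{I}$ already belongs to $\mathfrak{IB}$, hence $(x_{*},v_{*})\in\mathfrak{IB}$. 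This contradicts $(x_{*},v_{*})\in\{cl(\O)\times\VN\}\setminus\mathfrak{IB}$, and therefore a uniform lower bound $\rho_{\varepsilon,NT_{0}}>0$ must exist, completing the proof.

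\textbf{Main obstacle.} The delicate step is the index-shift step of the second paragraph: the map $(x,v)\mapsto x^{k}(x,v)$ is only locally continuous away from concave-grazing phases, so nearby $(x_{n},v_{n})$ may present slightly more or fewer bounces than $(x_{*},v_{*})$ within each concave-grazing block. Showing that the limit of $x^{k^{*}}(x_{n},v_{n})$ is in fact $x^{\ell^{*}}(x_{*},v_{*})$ for some $\ell^{*}\leq k^{*}$, and that the grazing quantity $v\cdot\mathbf{n}$ passes to the limit regardless of which sub-case $(i)$--$(vi)$ of the proof of Lemma~\ref{uniformbound} is realized, is essentially the geometric content of the argument; once this bookkeeping is done, the contradiction with the defining property of $\mathfrak{IB}$ is automatic.
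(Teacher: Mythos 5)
Your proposal is correct and is essentially the paper's own argument: both rest on pointwise positivity of $D_{\mathcal{I}}$ at the finitely many bounces (Lemma~\ref{infinite_bounces_set} plus the uniform bounce bound of Lemma~\ref{uniformbound}), on stability of $D_{\mathcal{I}}$ under perturbation of the initial phase via Lemma~\ref{local conti} together with the Case 2-1/2-2 index-shift bookkeeping at concave-grazing blocks, and on compactness of $\{cl({\O})\times\VN\}\backslash\mathfrak{IB}$. The only difference is packaging — the paper upgrades the local lower bounds by a finite subcover, while you run the equivalent sequential-compactness/contradiction version — so no new idea or gap is involved.
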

	\begin{proof}
		By definition of $\mathfrak{IB}$ and Lemma~\ref{infinite_bounces_set}, 
		\[
		(X(s;T_{0},x,v), V(s;T_{0},x,v)) \notin \gamma_{0}^{I}.
		\]
		Therefore, 
		\[
			\min_{ t_{j} \in[0,T_{0}] } D_{\mathcal{I}}(t^{j}(x,v), x, v) > 0,
		\]
		where $D_{\mathcal{I}}(t^{j}(x,v), x, v)$ is defined in (\ref{dist def}). To derive uniform positivity, we use compactness argument again. From Lemma~\ref{uniformbound}, for $(x,v)\in \{ cl({\O}) \times \VN \} \backslash \mathfrak{IB} $, we know that
		\[
			\mathfrak{N}(x,v,NT_{0}) \leq C_{\varepsilon,NT_{0}}.
		\]
		Therefore, 
		\begin{equation} \label{fixed xv}
			\min_{ t_{j} \in[0,T_{0}] } D_{\mathcal{I}}(t^{j}(x,v), x, v) = \min_{ 1 \leq {j} \leq C_{x,v,\varepsilon,NT_{0}} } D_{\mathcal{I}}(t^{j}(x,v), x, v) := \rho_{x,v,\varepsilon,NT_{0}} > 0.
		\end{equation}
		for some uniform positive constant $\rho_{x,v,\varepsilon,NT_{0}} > 0$. Now we split into two cases.  \\
		
		\noindent \textit{Case 1.} If $(X(s;T_{0},x,v), V(s;T_{0},x,v)) \notin \gamma_{0}$, we have local continuity from Lemma~\ref{local conti}, so there exist $r_{x,v,\varepsilon, NT_{0}} \ll 1$ such that if $|(x,v)-(y,u)| < r_{x,v,\varepsilon, NT_{0}}$, 
		\begin{equation} \label{near distance}
		\min_{ 1 \leq {j} \leq C_{x,v,\varepsilon,NT_{0}} } \big| D_{\mathcal{I}}(t^{j}(x,v), x, v) -  D_{\mathcal{I}}(t^{j}(y,u), y, u) \big| < \frac{\rho_{x,v,\varepsilon, NT_{0}}}{2}.
		\end{equation}
		From (\ref{fixed xv}) and (\ref{near distance}),
		\[
			\min_{ 1 \leq {j} \leq C_{x,v,\varepsilon,NT_{0}} } D_{\mathcal{I}}(t^{j}(y,u),y,u) > \frac{\rho_{x,v,\varepsilon, NT_{0}}}{2},
		\]
		which implies uniform nonzero on a ball $cl\big( B((x,v),r_{x,v,\varepsilon, NT_{0}}) \big) $. By compactness, we have a finite open cover for $\{ cl({\O}) \times \VN \} \backslash \mathfrak{IB}$, which is written by $\bigcup_{i=1}^{\ell} B((x_{i},v_{i}),r_{x_{i},v_{i},\varepsilon, NT_{0}})$ for some finite $q$. Finally, we pick uniform positive number
		\[
		\rho_{\varepsilon, NT_{0}} := \min_{1\leq i\leq \ell}\frac{\rho_{x_{i},v_{i},\varepsilon, NT_{0}}}{2} > 0,
		\]
		to finish the proof.  \\
		
		\noindent \textit{Case 2.} If $(X(s;T_{0},x,v), V(s;T_{0},x,v)) \in \gamma_{0}$ for some $s\in[0,T_{0}]$, it must be concave grazing by definition of $\mathfrak{IB}$ and consider consecutive concave grazing cases of (\textit{Case 2-1}) in the proof of Lemma~\ref{uniformbound} again with Figure 1. Let us assume (\ref{cases}).  \\
		
		When $j < i$, using Lemma~\ref{local conti}, we have $r_{x,v,\varepsilon, NT_{0}} \ll 1$ such that if $|(x,v)-(y,u)| < r_{x,v,\varepsilon, NT_{0}}$, 
		\begin{equation*}  \label{before Cc}
		\min_{ 1 \leq {j} \leq i-1 } \big| D_{\mathcal{I}}(t^{j}(x,v), x, v) -  D_{\mathcal{I}}(t^{j}(y,u), y, u) \big| < \frac{\rho_{x,v,\varepsilon, NT_{0}}}{2}.  \\
		\end{equation*}
		
		When $i\leq j \leq k-1$, it is not reasonable to compare with same bouncing index, because we have discontinuity by convex grazing. However, since $D_{\mathcal{I}}$ is uniformly bounded from below by (\ref{fixed xv}), we suffice to compare $D_{\mathcal{I}}(t^{j}(y,u),y,u)$ with the nearset $D_{\mathcal{I}}(t^{\ell}(x,v),x,v)$ for some $j\leq \ell$. \\
		
		\noindent $(i)$ If $\overline{x^{i-1}(x,v) x^{k}(x,v)}$ does not bounce near $x^{j}(x,v)$ for all $j\in\{i,\cdots,k-1\}$, then from Lemma~\ref{local conti} again, we can redefine $r_{x,v,\varepsilon,NT_{0}} \ll 1$ so that if $|(x,v) - (y,u)| < r_{x,v,\varepsilon,N}$,
		\begin{equation*}
		\begin{split}
			& \big| D_{\mathcal{I}}(t^{k}(x,v), x, v) -  D_{\mathcal{I}}(t^{i}(y,u), y, u) \big| < \frac{\rho_{x,v,\varepsilon, NT_{0}}}{2},  \\
		\end{split}
		\end{equation*}
		holds. This implies
		\begin{equation} \label{unif small}
		\begin{split}
			& D_{\mathcal{I}}(t^{i}(y,u), y, u) \geq  D_{\mathcal{I}}(t^{k}(x,v), x, v) - \frac{\rho_{x,v,\varepsilon, NT_{0}}}{2} > \frac{\rho_{x,v,\varepsilon, NT_{0}}}{2},
		\end{split}
		\end{equation}
		from (\ref{fixed xv}).  \\
		
		\noindent $(ii)$ From Lemma~\ref{local conti}, there exist $r_{x,v,\varepsilon,NT_{0}} \ll 1$ so that if $|(x,v) - (y,u)| < r_{x,v,\varepsilon,N}$, $|x^{i}(y,u) - x^{\ell}(x,v)| = O(r_{x,v,\varepsilon,N})$. Moreover, from (\ref{direc compare}), $|v^{i}(y,u) - v^{\ell}(x,v)| = O(r_{x,v,\varepsilon,N})$ also holds. 
		\begin{equation*}
		\begin{split}
		& \big| D_{\mathcal{I}}(t^{\ell}(x,v), x, v) -  D_{\mathcal{I}}(t^{i}(y,u), y, u) \big| < \frac{\rho_{x,v,\varepsilon, NT_{0}}}{2},  \\
		\end{split}
		\end{equation*}
		holds and therefore, (\ref{unif small}) also holds by (\ref{fixed xv}).  \\
		
		\noindent $(iii)$ Obviosly, $|x^{i}(y,u) - x^{\ell}(x,v)| = 0$ and $|v^{i}(y,u) - v^{\ell}(x,v)| = O(r_{x,v,\varepsilon,N})$ also holds by (\ref{dir comp3}), so yields (\ref{unif small}), similarly.  \\
		
		\noindent $(iv)$ Near $x^{i}(y,u)$ (near $x^{\ell}(x,v)$) and $x^{i+1}(y,u)$ (near $x^{\ell+1}(x,v)$), we use argument of $(ii)$ for both bouncings to claim that .
		\begin{equation*}
		\begin{split}
			D_{\mathcal{I}}(t^{i}(y,u), y, u), \ D_{\mathcal{I}}(t^{i+1}(y,u), y, u) &\geq \frac{\rho_{x,v,\varepsilon, NT_{0}}}{2},  
		\end{split}
		\end{equation*}
		if $|(x,v) - (y,u)| < r_{x,v,\varepsilon,NT_{0}}$ , for some small $r_{x,v,\varepsilon,NT_{0}} \ll 1$.  \\
		
		From \textit{Step 2} in proof of Lemma~\ref{uniformbound}, number of interval of consecutive grazing is uniformly bounded becasue we assume Definition~\ref{AND}. And whenever we encounter consecutive grazing, we can split into cases $(i) \sim (iv)$ to gain unifrom positivity of $D_{\mathcal{I}}(t^{j}(y,u),y,u)$ for $0\leq t^{j}(y,u)\leq T_{0}$. And then we apply compactness argument of \textit{Case 1} in the proof of this Lemma to finish the proof.  \\

	\end{proof}
	
	\subsection{Dichotomy of sticky grazing} 	

	\begin{lemma} \label{dichotomy}
		Assume $\Omega\subset \mathbb{R}^2$ as defined in Definition~\ref{AND}. Assume that $(\alpha_{j}(\tau), \alpha^{\prime}_{j}(\tau))\in \gamma_{0}^{C}$ for some $j\in\{1,\cdots,M\}$ and $\tau\in (\tau^{*}-\delta, \tau^{*}+\delta)\subset [a_{j}, b_{j}]$. Also we assume that 
		\[
		(X(s;T_{0},\alpha_{j}(\tau), \alpha^{\prime}_{j}(\tau)), V(s;T_{0},\alpha_{j}(\tau), \alpha^{\prime}_{j}(\tau))) \notin \gamma_{0}
		\]
		for $s\in[0,T_{0}]$. Let us simplify notation:
		\[
			x^{i}(\tau) := x^{i}(\alpha_{j}(\tau), \alpha^{\prime}_{j}(\tau)),\quad v^{i}(\tau) := v^{i}(\alpha_{j}(\tau), \alpha^{\prime}_{j}(\tau)),\quad t^{i}(\tau) := t^{i}(\alpha_{j}(\tau), \alpha^{\prime}_{j}(\tau)),
		\]
		for $\tau\in (\tau^{*}-\delta, \tau^{*}+\delta)\subset [a_{j}, b_{j}]$. Then we have the folloiwng dichotomy. For each $k$,  \\
		(a) There exist unique $x^{*}\in cl(\O)$ such that $x^{*} \in \overline{x^{k}(\tau) x^{k+1}(\tau)}$ for all $\tau\in (\tau^{*}-\delta, \tau^{*}+\delta)\subset [a_{j}, b_{j}]$.  \\
		(b) For each $x\in cl(\O)$, the following set is finite
		\[
		\Big\{ \frac{v^{k}(\tau)}{|v^{k}(\tau)|} \in \mathbb{S}^{1} \ : \ x\in \overline{x^{k}(\tau) x^{k+1}(\tau)}, \ \tau\in (\tau^{*}-\delta, \tau^{*}+\delta) \Big\}.
		\] 
	\end{lemma}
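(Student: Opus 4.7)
My plan is to exploit real analyticity of the billiard dynamics in the parameter $\tau$ together with the identity theorem for real analytic functions of one variable.

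First I would establish that each map $\tau \mapsto (x^i(\tau), v^i(\tau), t^i(\tau))$ is real analytic on the open interval $I := (\tau^* - \delta, \tau^* + \delta)$ for $i = 0, 1, \ldots, k+1$. The initial datum $(\alpha_j(\tau), \alpha_j'(\tau))$ is analytic by Definition~\ref{AND}. At each subsequent bounce the hitting time is determined by the analytic transversality equation $\alpha_{i(\tau)}(\sigma) - x^{i-1}(\tau) - s\, v^{i-1}(\tau) = 0$, and the non-grazing hypothesis ensures $v^{i-1}(\tau) \cdot \mathbf{n}(x^i(\tau)) \neq 0$ at every intermediate bounce. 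The analytic implicit function theorem then yields analyticity of $(x^i, v^i)$ in $\tau$ inductively.

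The heart of the argument is the scalar analytic function
\[
F_k(\tau, x) \ := \ \det\bigl(v^k(\tau), \ x - x^k(\tau)\bigr),
\]
which vanishes precisely when $x$ lies on the full line extending the chord $\overline{x^k(\tau) x^{k+1}(\tau)}$. For each fixed $x \in cl(\Omega)$, the identity theorem gives the dichotomy: either $F_k(\cdot, x) \equiv 0$ on $I$, or the zero set $\{\tau \in I : F_k(\tau, x) = 0\}$ is discrete in $I$. If no $x \in cl(\Omega)$ falls in the first alternative, then $T(x) := \{\tau \in I : x \in \overline{x^k(\tau) x^{k+1}(\tau)}\}$ is contained in the discrete zero set of $F_k(\cdot, x)$ for every $x$, hence finite on any compact subinterval; consequently the analytic image $\{v^k(\tau)/|v^k(\tau)| : \tau \in T(x)\} \subset \mathbb{S}^1$ is finite, giving conclusion (b).

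In the complementary case, let $x^* \in cl(\Omega)$ satisfy $F_k(\cdot, x^*) \equiv 0$. Uniqueness of $x^*$ follows because a second candidate $y^* \neq x^*$ would force every line $\ell_\tau$ to coincide with the fixed line $L$ through $x^*, y^*$; but then $x^k(\tau) \in L \cap \partial\Omega$, and since the real analytic regular curve $\alpha_j$ cannot be contained in a line (as $\Omega$ is bounded), $L \cap \alpha_j([a_j,b_j])$ is finite, so the continuous $x^k(\tau)$ must be constant. Propagating the analytic billiard map backward, $(\alpha_j(\tau), \alpha_j'(\tau))$ would then be constant, contradicting that $\tau$ varies in an open interval. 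To upgrade \emph{``on every line''} to \emph{``on every chord''} for $x^*$, I would introduce the analytic signed coordinate $\phi(\tau) := (x^* - x^k(\tau)) \cdot v^k(\tau)/|v^k(\tau)|$ and the analytic chord length $L(\tau) := |x^{k+1}(\tau) - x^k(\tau)| > 0$, and show that the set $\{\tau \in I : 0 \leq \phi(\tau) \leq L(\tau)\}$ is simultaneously open and closed, hence all of $I$, by using continuity of the chord endpoints and the fact that $x^* \in cl(\Omega)$ must lie on the same connected component of $\ell_\tau \cap cl(\Omega)$ as $\overline{x^k(\tau) x^{k+1}(\tau)}$.

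The main obstacle I anticipate lies in this last step: ruling out that $x^*$, while sitting on every line $\ell_\tau$, could switch from the chord $\overline{x^k(\tau) x^{k+1}(\tau)}$ to a different chord of $\ell_\tau \cap cl(\Omega)$ as $\tau$ varies. Such switches can only occur at parameters where $\ell_\tau$ becomes tangent to $\partial\Omega$ at an intermediate point, an analytic condition on $\tau$ that is either vacuous on $I$ or holds only at isolated $\tau$. Discarding these tangency events cleanly requires combining the non-grazing hypothesis at the endpoints $x^k, x^{k+1}$ (so that the chord endpoints vary smoothly) with discreteness of tangencies along the analytic boundary $\partial\Omega$; this is precisely where the full force of the analyticity assumption of Definition~\ref{AND} is used.
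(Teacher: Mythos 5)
Your proposal follows essentially the same route as the paper's proof: analyticity of the specular bounce maps in $\tau$ together with the identity theorem applied to the collinearity condition (your $F_k(\tau,x)$) gives the identically-zero versus finitely-many-zeros dichotomy behind (a)/(b), and uniqueness of $x^*$ is obtained by the same two-point argument forcing a constant direction $v^k(\tau)/|v^k(\tau)|$, hence a constant bounce point, and then a contradiction with the regular analytic parametrization via time-reversibility. The extra details you supply (the analytic implicit function theorem step and the line-to-chord upgrade, which is indeed handled by non-grazing transversality at the chord endpoints) only make explicit what the paper compresses into ``rigidity of analytic functions,'' so the approach is the same and correct.
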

	\begin{proof}
		Assume that we have some $x^{*}$ satisfying (a). If there exist another $y^{*}\neq x^{*}$,
		\[
			x^{k}(\tau) - x^{*} = |x^{k}(\tau) - x^{*}| \frac{v^{k}(\tau)}{|v^{k}(\tau)|},\quad x^{k}(\tau) - y^{*} = |x^{k}(\tau) - y^{*}| \frac{v^{k}(\tau)}{|v^{k}(\tau)|},\quad \tau \in (\tau^{*}-\delta, \tau^{*}-\delta).
		\]
		This gives
		\[
			x^{*} - y^{*} = \Big( |x^{k}(\tau) - y^{*}| - |x^{k}(\tau) - x^{*}| \Big) \frac{v^{k}(\tau)}{|v^{k}(\tau)|}.
		\]
		Therefore, $\frac{v^{k}(\tau)}{|v^{k}(\tau)|}$ is constant unit vector for $\tau \in (\tau^{*}-\delta, \tau^{*}-\delta)$. And since $(x^{k}(\tau), v^{k}(\tau)$ is not grazing, $x^{k}(\tau)$ is also constant for all $\tau \in (\tau^{*}-\delta, \tau^{*}-\delta)$. Since trajectory is deterministic forward/backward in time, $\dot{\alpha}_{j}(\tau)$ should be constant for $\tau \in (\tau^{*}-\delta, \tau^{*}-\delta)$ which implies $\alpha_{j}(\tau)$ is a part of straight line locally. This is contradiction, because $\O$ is analytic bounded domain. \\
		
		If there does not exist $x^{*}$ which satisfies (a) for $\tau \in (\tau^{*}-\delta, \tau^{*}-\delta)$, 
		\[
			\Big\{ \tau \in (\tau^{*}-\delta, \tau^{*}-\delta) \
			 \big\vert \ x^{k}(\tau) - x^{*} = |x^{k}(\tau) - x^{*}| \frac{v^{k}(\tau)}{|v^{k}(\tau)|} \Big\}
		\]
		is a finite set for any $x^{*}\in cl(\O)$ by rigidity of analytic function. This yields (b).  \\
	\end{proof}

	\subsection{Grazing set}
	
	In this section, we characterize the points of $\{ cl({\O}) \times \VN \} \backslash \mathfrak{IB}$ whose specular backward cycle grazes the boundary (hits the boundaries tangentially) at some moment. By definition of $\mathfrak{IB}$, this grazing cannot be inflection grazing $\gamma_{0}^{I}$. Moreover, Lemma~\ref{infinite_bounces_set} guarantees that convex grazing does not happen neither. Therefore, the only possible grazing is concave grazing $\gamma_{0}^{C}$. We will classify this concave grazing sets depending on the first(backward in time) concave grazing time. \\
	
	\begin{definition} \label{grazing sets}
		For $T_0>0$ and $(x,v)\in cl(\Omega)\times \mathbb{R}^{2}$, we define grazing set:
		\begin{eqnarray*}
		\mathfrak{G} \ := \ \Big\{ \ (x,v) \ \in \{ cl({\O}) \times \VN \} \backslash \mathfrak{IB} \ : \exists \ s\in[0,T_{0}) \ s.t \  \big( X(s;T_{0},x,v), V(s;T_{0},x,v) \big) \in \gamma_{0} \ \Big\} \ , 
		\end{eqnarray*}
	which is a set of phase $(x,v)$ whose trajectory grazes at least once for time interval $[0,T_{0}]$. We also define $\mathfrak{G}^{C}$, $\mathfrak{G}^{V}$, and $\mathfrak{G}^{I}$ by it grazing type, i.e. 
	\begin{equation*}
	\begin{split}
		\mathfrak{G}^{C} \ &:= \ \Big\{ \ (x,v) \ \in \{ cl({\O}) \times \VN \} \backslash \mathfrak{IB} \ : \exists \ s\in[0,T_{0}) \ s.t \  \big( X(s;T_{0},x,v), V(s;T_{0},x,v) \big) \in \gamma_{0}^{C} \ \Big\} \ ,   \\
		\mathfrak{G}^{V} \ &:= \ \Big\{ \ (x,v) \ \in \{ cl({\O}) \times \VN \} \backslash \mathfrak{IB} \ : \exists \ s\in[0,T_{0}) \ s.t \  \big( X(s;T_{0},x,v), V(s;T_{0},x,v) \big) \in \gamma_{0}^{V} \ \Big\} \ ,   \\
		\mathfrak{G}^{I} \ &:= \ \Big\{ \ (x,v) \ \in \{ cl({\O}) \times \VN \} \backslash \mathfrak{IB} \ : \exists \ s\in[0,T_{0}) \ s.t \  \big( X(s;T_{0},x,v), V(s;T_{0},x,v) \big) \in \gamma_{0}^{I} \ \Big\} \ .   \\
	\end{split}
	\end{equation*}
	By definitin of $\mathfrak{IB}$, we know that $\mathfrak{G}^{V} = \mathfrak{G}^{I} = \emptyset$. Therefore, we rewrite and decompose $\mathfrak{G}$ as
	\begin{equation*} \label{GC}
	\mathfrak{G} = \mathfrak{G}^{C} \ := \ \bigcup_{j}  \ \mathfrak{G}^{C,j} \
	:= \bigcup_{l=1}^{M^{C}}  \ \mathfrak{G}^{C}_{l} \ = \  \bigcup_{j} \ \bigcup_{l=1}^{M^C}  \ \mathfrak{G}^{C,j}_{l} \ ,
	\end{equation*}
	where 
	\begin{eqnarray*}
	\mathfrak{G}^{C,j} & := & \Big\{ (x,v) \in \mathfrak{G}^{C} \ : \
	( x^{j}(x,v), v^{j}(x,v) ) \in \gamma_{0}^{C} 
	\Big\} \label{Gcj} ,\\
	\mathfrak{G}^{C}_{l} & := & \Big\{ (x,v) \in \mathfrak{G}^{C} \ : \
	\exists k \ s.t. \ ( x^{k}(x,v), v^{k}(x,v) ) \in \gamma_{0}^{C} \ \text{and} \ x^{k}(x,v)\in\p\O^{C}_{l} 
	\Big\} \label{Gcl} ,\\
	\mathfrak{G}^{C,j}_{l} & := & \Big\{ (x, v) \in \mathfrak{G}^{C,j} \ : \
	x^{j}(x,v) \ \in \ \p\O^{C}_{l}
	\Big\} \label{GCjl} ,
	\end{eqnarray*}
	where $l\in\{1,\cdots, M^{C}\}$ which is defined in (\ref{renumber concave}).

	\end{definition}

	\begin{remark}
		Let us use renumbered notation (\ref{renumber concave}) and the sets defined in Definition~\ref{grazing sets}. If $({x}, {v})\in \mathfrak{G}^{C}_{l}$ then there exists $\tau\in (\bar{a}_{l}, \bar{b}_{l})$ and $k$ such that $(x^{k}(x,v), v^{k}(x,v)) \in \gamma^{C}_{0}$ and $x^{k}(x,v) = \bar{\alpha}_{l}(\tau)$. Due to Lemma \ref{uniformawayfrominflection}, such $\tau$ cannot be arbitrarily close to the end points $\bar{a}_{l}, \ \bar{b}_{l}$ which are inflection points $\kappa = 0$. Lemma~\ref{uniformawayfrominflection} implies that there exists $\bar{a}^{*}_{l} > \bar{a}_{l}$ and $\bar{b}_{l}^{*} < \bar{b}_{l}$ for each $l\in\{1,\cdots, M^{C}\}$ such that
		\begin{equation} \label{staredinterval}
		\Big\{ \ \tau \in (\bar{a}_{l}, \bar{b}_{l}) \ : \ \big( X(s; T_0, {x}, {v}) , V(s; T_0, {x}, {v}) \big) \in \gamma_{0}^{C}, \ \ X(s; T_0, {x}, {v}) = \bar{\alpha}_{l}(\tau) \ \ \text{for} \ \ ({x}, {v})\in \mathfrak{G}^{C}_{l} \
		\Big\} \ \subset \ [ \ \bar{a}^{*}_{l}, \ \bar{b}_{l}^{*} \ ] \ . 
		\end{equation}
	\end{remark}
	
	Throughout this subsection, we use some temporary symbols. Inspired by (\ref{specular_cycles}), we can also define $k$-th backward/forward exit time:
	\begin{equation*} \label{k-bf}
	\begin{split} 
	\tb(x,v)  &:=  \tb^{1}(t,x,v),  \\
	\tb^{k}(x,v)  &:=  t - t^{k}(t,x,v),  \\
	\xb^{k}(x,v)  &:=  x^{k}(t,x,v),  \\
	\tf(x,v)  &:=  \tf^{1}(t,x,v),  \\
	\tf^{k}(x,v)  &:= - t^{k}(0,x,-v),  \\
	\xf^{k}(x,v)  &:=  x^{k}(t,x,-v).  \\
	\end{split}
	\end{equation*}

	\subsubsection{$\mathbf{1^{\text{st}}-}$\textbf{Grazing Set}, $\mathfrak{G}^{C,1}$} 
	Let us use renumbered notation for concave part (\ref{renumber concave}). From the definition of $\mathfrak{G}_{l}^{C,1}$ and (\ref{staredinterval}),
	\begin{eqnarray*}
		\mathfrak{ {G}}_{l}^{C,1} &\subset&
		\bigcup_{p=\pm 1}\Big\{ \big( \bar{\alpha}_{l}(\tau)+s p |v|\dot{\bar{\alpha}}_{l}(\tau),  p |v| \dot{\bar{\alpha}}_{l}(\tau) \big) \in \big\{ \{ cl({\O}) \times \VN \} \backslash \mathfrak{IB} \big\} \ :   \\
		&&\quad \quad  \ \tau\in [\bar{a}_{l}^{*}, \bar{b}_{l}^{*} ] \ , \ v\in \VN \ , \ s\in[0,t_{\mathbf{f}}( \bar{\alpha}_{l}(\tau), p |v|\dot{\bar{\alpha}}_{l}(\tau))]
		\Big\}.	\\
	\end{eqnarray*}
	Since the signed curvature $\kappa$ is positive and bounded,but finite points, $\mathbb{S}^1 \cap \{ v\in\mathbb{R}^2 : ({x},{v})\in \mathfrak{{G}}_{l}^{C,1} \}$ has at most two points for fixed $x$. Since $M^{C}$ is uniformly bounded, $\mathbb{S}^1 \cap \{ v\in\mathbb{R}^2 : ({x},{v})\in \mathfrak{{G}}_{l}^{C,1} \}$ contains at most $2\times M^C$ points and therefore,
	\begin{equation} \label{g1zero}
	\mathfrak{m}_{2} \{ v\in\mathbb{R}^2 : ({x},{v})\in \mathfrak{{G}}_{l}^{C,1} \} \ = \ 0 \ .
	\end{equation}
		
	
	\begin{lemma}\label{G_C1}
		For any $\varepsilon>0$, there exist an open cover $\bigcup_{i=1}^{l_{1}} B(x^{C,1}_{i} , r^{C,1}_{i})$ for $\mathcal{P}_{x}\big( \{ cl({\O}) \times \VN \} \backslash \mathfrak{IB} \big)$, where $\mathcal{P}_{x}$ is projection on sptial space, and corresponding velocity set $\mathcal{O}^{C,1}_{i} \subset \VN$ with $\mathfrak{m}_{2}(\mathcal{O}^{C,1}_{i}) < \varepsilon$ such that \\
		(1) For any $({x},{v})\in \{ cl({\O}) \times \VN \} \backslash \mathfrak{IB}$, there exists ${x}^{C,1}_{i}$, ${r}^{C,1}_{i}$, and $\delta^{C,1} > 0$ such that ${x}\in B({x}^{C,1}_{i}, r^{C,1}_{i})$ and   \\
		(2) $\phi^{1}(x,v)  = | {v} \cdot \mathbf{n}(x_{\mathbf{b}}({x},{v})) \ | > \delta^{1} > 0$ holds for ${v}\in \VN\backslash\mathcal{O}^{C,1}_{i} $, for some uniformly positive $\delta^{1} > 0$.  \\
			
		From above, we define $\varepsilon-$ neighborhood of $\mathfrak{G}^{C,1}$:
		\begin{equation*}
		(\mathfrak{G}^{C,1})_{\varepsilon}  \ := \ \bigcup_{i=1}^{l_1} \ \{ B( {x}^{C,1}_{i} , {r}^{C,1}_{i} )\times \ \mathcal{O}^{C,1}_{i} \}
		\ .
		\end{equation*}
	\end{lemma}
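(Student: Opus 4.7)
The plan is to pin down the exceptional velocity set at each $x$ as a small angular neighborhood of the finitely many tangent directions from $x$ to the concave arcs of $\partial\O$, and then to use continuity of the first-bounce map (Lemma~\ref{local conti}) together with compactness of $cl(\O)$ to turn this pointwise construction into the uniform open cover stated in the lemma.

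First I would fix $x \in \mathcal{P}_{x}(\{cl(\O)\times \VN\}\setminus \mathfrak{IB})$ and analyse the bad-direction set $\mathcal{B}(x) := \{\, v/|v| \in \mathbb{S}^{1} : (x,v)\in \mathfrak{G}^{C,1}\,\}$. If $(x,v)\in \mathfrak{G}^{C,1}$ then by the inclusion preceding \eqref{g1zero} there exist $l \in \{1,\dots,M^{C}\}$, $\tau \in [\bar a_{l}^{*}, \bar b_{l}^{*}]$, and $p=\pm 1$ with $v = p|v|\dot{\bar\alpha}_{l}(\tau)$ and $x$ lying on the tangent line to $\bar\alpha_{l}$ at $\tau$. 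Since $\kappa > 0$ on each $\bar\alpha_{l}$, the arc is strictly convex from its outer side, so at most two tangent lines through $x$ touch $\bar\alpha_{l}$; summing over $l$ yields $|\mathcal{B}(x)| \le 2M^{C}$, which is exactly the observation recorded in \eqref{g1zero}.

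Given $\varepsilon > 0$, I would pick an angular thickness $\eta \simeq \varepsilon/(N^{2} M^{C})$ and set
\[
\mathcal{O}_{x} := \bigcup_{\omega \in \mathcal{B}(x)} \Big\{\, v\in \VN \ : \ \Big| \tfrac{v}{|v|} - \omega \Big| < \eta \,\Big\}.
\]
Since $\VN \subset \{|v|\le N/2\}$, each sector contributes at most $\lesssim N^{2}\eta$ to the area, so $\mathfrak{m}_{2}(\mathcal{O}_{x}) \lesssim N^{2} M^{C} \eta < \varepsilon$. On the compact complement $\VN \setminus \mathcal{O}_{x}$ the map $v \mapsto \phi^{1}(x,v) = |v\cdot \mathbf{n}(\xb(x,v))|$ is continuous by Lemma~\ref{local conti} (no first-bounce grazing can occur there by construction) and strictly positive, hence attains a strictly positive minimum $\delta_{x}$. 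To extend this in $x$, I would invoke Lemma~\ref{local conti} again, which makes $(y,u)\mapsto (\xb(y,u), \vb(y,u))$ continuous near any non-grazing $(x,v)$, and choose $r_{x}>0$ small enough that $\mathcal{B}(y)$ lies in the $2\eta$-enlargement of $\mathcal{B}(x)$ for every $y\in B(x,r_{x})$, with $\phi^{1}(y,v) \ge \delta_{x}/2$ on $\VN$ minus this enlargement. Compactness of $cl(\O)$ then yields a finite subcover $\{B(x_{i}^{C,1}, r_{i}^{C,1})\}_{i=1}^{l_{1}}$, and I would take $\mathcal{O}_{i}^{C,1}$ to be the enlarged exceptional set at $x_{i}$ and $\delta^{1} := \min_{i} \delta_{x_{i}}/2$.

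The main obstacle will be making the continuity $x \mapsto \mathcal{B}(x)$ quantitative. Two degenerate scenarios need care: tangent lines from $x$ to a single arc $\partial\O^{C}_{l}$ may coalesce (this happens when $x$ sits on the tangent envelope of the arc), and the grazing parameter $\tau$ could drift towards an inflection endpoint $\bar a_{l}, \bar b_{l}$. Coalescence only reduces $|\mathcal{B}(x)|$ and is therefore harmless once the angular cutoff $\eta$ is taken uniformly. Drifting to an inflection endpoint is precisely what Lemma~\ref{uniformawayfrominflection} rules out for $(x,v)\in \{cl(\O)\times \VN\}\setminus \mathfrak{IB}$: the admissible $\tau$ are confined to the compact interior interval $[\bar a_{l}^{*}, \bar b_{l}^{*}]$, on which $\dot{\bar\alpha}_{l}$ is Lipschitz with a uniform constant, so the map sending $\tau$ to the tangent direction is uniformly Lipschitz, and the propagation argument above closes.
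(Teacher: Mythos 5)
Your proposal is correct and follows essentially the same route as the paper: at each $x$ one isolates the at most $2M^{C}$ bad tangent directions to the concave arcs (the content of (\ref{g1zero}), with the $\tau$-range confined to $[\bar a_{l}^{*},\bar b_{l}^{*}]$ by Lemma~\ref{uniformawayfrominflection}), removes an angular neighborhood of measure $<\varepsilon$ in $\VN$, uses Lemma~\ref{local conti} to get a positive lower bound $\delta_{x}$ for $\phi^{1}$ on the compact complement and to propagate it to a small ball $B(x,r_{x})$, and concludes by compactness of $cl(\O)$ with $\delta^{1}=\min_{i}\delta_{x_{i}}$. Your extra care about the stability of the bad-direction set $\mathcal{B}(y)$ under perturbation of $x$ only makes explicit what the paper leaves implicit in its choice of $r^{C,1}_{x}$.
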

	\begin{proof}
		Let $x\in\mathcal{P}_{x}\big( \{ cl({\O}) \times \VN \} \backslash \mathfrak{IB} \big)$. Then, there exist at most $2M^{C}$ distinct unit velocity $\frac{v_{i}}{|v_{i}|}, \ i\in\{1,\cdots, 2M^{C}\}$ such that $(x,v_{i}) \in \mathfrak{G}^{C,1}$. We define
		\begin{equation} \label{G1 O def}
		\mathcal{O}^{C,1}_{x} := \big\{ v\in\VN \ : \ \Big|\frac{v_{i}}{ |v_{i}|} - \frac{v}{|v|} \Big| < C_{1}(N)\varepsilon, \ \forall i\in\{1,\cdots,2M^{C}\} \big\} .  \\
		\end{equation}
		When $v\in \VN\backslash\mathcal{O}^{C,1}_{x}$, we can apply Lemma~\ref{local conti} to show that
		\begin{equation*} \label{phi1}
		\phi^1( {x}, {v}) \ := \ | {v} \cdot \mathbf{n}(x_{\mathbf{b}}( {x}, {v}))|
		\end{equation*}
		is well-defined and locally smooth, since $({x},v)\in \big\{ \{ cl({\O}) \times \VN \} \backslash \mathfrak{IB} \big\} \backslash \ \mathfrak{G}^{C,1}$. Using local continuity of Lemma~\ref{local conti} again, we can find $r^{C,1}_{x} \ll 1$ such that
		\[
			\phi^{1}(x,v) > \delta^{1}_{x} > 0, \quad\text{for}\quad (x,v)\in cl \big( B(x, r^{C,1}_{x}) \big) \times \VN\backslash\mathcal{O}^{C,1}_{x}.
		\]
		By compactness, we can find finite open cover $\bigcup_{i=1}^{l_{1}} B(x^{C,1}_{i} , r^{C,1}_{i})$ for $ \mathcal{P}_{x}\big( \{ cl({\O}) \times \VN \} \backslash \mathfrak{IB} \big) $ and corresponding $\mathcal{O}^{C,1}_{i}$ with small measure $\mathfrak{m}_{2}(\mathcal{O}^{C,1}_{i}) < \varepsilon$ by choosing (\ref{G1 O def}) with some proper small $C_{1}(N)$. Finally we choose
		\[
		\delta^{1} := \min_{1\leq i\leq l_{1}} \delta^{1}_{ x^{C,1}_{i} } > 0, 
		\]
		to finish the proof.   \\
	\end{proof}
	
	\subsubsection{$\mathbf{2^{\text{nd}}-}$\textbf{Grazing Set} $\mathfrak{G}^{C,2} $ } 
	From the definition of $\mathfrak{G}^{C,2}$ and (\ref{staredinterval}), the set $\mathfrak{G}^{C,2} \ \backslash \ (\mathfrak{G}^{C,1})_{\varepsilon}$ is a subset of
	\begin{equation} \label{g2}
	\begin{split}
	& \bigcup_{l=1}^{M^{C}}\bigcup_{p=\pm 1}\Big\{ \Big( \xf^{1}( \bar{\alpha}_{l}(\tau), p|v|\dot{\bar{\alpha}}_{l}(\tau)) + s \vf^{1}(\bar{\alpha}_{l}(\tau), p |v|\dot{\bar{\alpha}}_{l}(\tau)),  \vf^{1}(\bar{\alpha}_{l}(\tau), p |v| \dot{\bar{\alpha}}_{l}(\tau)) \Big) \in \{ cl({\O}) \times \VN \} \backslash \mathfrak{IB}  \ :  \\
	&\quad\quad\quad\quad \tau\in[\bar{a}_{l}^{*}, \bar{b}_{l}^{*}], \ v\in\VN, \ s\in[0, (\tf^{2} - \tf^{1})(\bar{\alpha}_{l}(\tau), p|v|\dot{\bar{\alpha}}_{l}(\tau)) ]  \Big\} \Big{\backslash} (\mathfrak{G}^{C,1})_{\varepsilon} . \\
	\end{split}
	\end{equation}
	Without loss of generality, we suffice to consider only $p=1$ case of (\ref{g2}), since $p=-1$ does not change any argument.  \\
	
	\noindent{\textbf{Step 1}} Fix $p=1$ and $l\in\{1,\cdots,M^{C}\}$. First, we remove $1^{st}$-grazing set by complementing $(\mathfrak{G}^{C,1})_{\varepsilon}$. \\
	Let us consider $(\bar{x}, \bar{v}) \in \mathfrak{G}^{C,1} \cap \mathfrak{G}^{C,2}_{l}$ and we write $\bar{\alpha}_{l}(\bar{\tau}) = x^{2}(\bar{x}, \bar{v})$. Then, from Lemma~\ref{G_C1} and Lemma~\ref{local conti}, there exist $i\in\{1,\cdots, l_{1}\}$ such that $(\bar{x}, \bar{v}) \in B(x^{C,1}_{i}, r^{C,1}_{i})\times\mathcal{O}^{C,1}_{i}$ and 
	\begin{equation} \label{near double graze}
	\begin{split}
	&\big\{ x^{2}(x,v)\in\p\O_{l}^{C} \ : \ \forall(x,v) \in cl\big( { B(x^{C,1}_{i}, r^{C,1}_{i})\times\mathcal{O}^{C,1}_{i} } \big) , \  \text{where} \ (\bar{x}, \bar{v}) \in B(x^{C,1}_{i}, r^{C,1}_{i})\times\mathcal{O}^{C,1}_{i} \big\} \\
	&\quad\subset \big[ \bar{\alpha}_{l}(\bar{\tau} - \delta_{-}), \bar{\alpha}_{l}(\bar{\tau} + \delta_{+}) \big],\quad \text{for}\quad 0 < \delta_{\pm} = O(r_{1,i}, \varepsilon) \ll 1.
	\end{split}
	\end{equation}
	Excluding (\ref{near double graze}) from $[\bar{a}_{l}^{*}, \bar{b}_{l}^{*}]$ for all $(\bar{x}, \bar{v})\in \mathfrak{G}^{C,1} \cap \mathfrak{G}^{C,2}_{l}$ yields a union of countable open connected intervals $\mathcal{I}$, i.e.
	\begin{equation*} \label{def I}
		\mathcal{I} := [ \bar{a}_{l}^{*} , d_{l,1} ) \cup  (  c_{l,2} ,  d_{l,2}  )  \cup \cdots  \subset [\bar{a}_{l}^{*}, \bar{b}_{l}^{*}],\quad \bar{a}_{l}^{*} < d_{1} < c_{2} < d_{2} < \cdots.
	\end{equation*}
	Now we claim that $\mathcal{I}$ contains only finite subintervals. If this union is not finite, there exist infinitly many distinct $\{\tau_{i}\}_{i=1}^{\infty}, \tau_{1} < \tau_{2} < \cdots $ such that   
	\[
	\dot{\bar{\alpha}}_{l}(\tau_{i})\cdot \mathbf{n}(x^{\mathbf{f}}(\bar{\alpha}_{l}(\tau_{i}), \dot{\bar{\alpha}}_{l}(\tau_{i}))) = 0, \quad i\in\mathbb{N}.
	\]
	We pick monotone increasing sequence $\tau_{1}, \tau_{2}, \cdots, \tau_{n},\cdots$ by choosing a point $\tau_{i}$ for each disjont closed interval. Since $\tau_{n} \leq b^{l}_{*}$ for all $n\in\mathbb{N}$, there exist a $\tau_{\infty}$ such that $\tau_{n} \rightarrow \tau_{\infty}$ up to subsequence. Let us assume that 
	\[
		\big( \xf(\bar{\alpha}_{l}(\tau_{n}), \dot{\bar{\alpha}}_{l}(\tau_{n})), \dot{\bar{\alpha}}_{l}(\tau_{n}) \big) \in \gamma_{0}^{C},\quad \xf(\bar{\alpha}_{l}(\tau_{n}), \dot{\bar{\alpha}}_{l}(\tau_{n})) \in \p\O^{C}_{p}.
	\]
	Since we have chosen $\tau_{n}$'s from each distinct intervals, there exist $\tau', \ \tau_{n} < \tau' < \tau_{n+1}$ such that
	\[
	\big(\xf(\bar{\alpha}_{l}(\tau'), \dot{\bar{\alpha}}_{l}(\tau')), \dot{\bar{\alpha}}_{l}(\tau') \big) \notin \gamma_{0}^{C}.
	\]
	By monotonicity of $\{\tau_{1},\cdots,\tau_{\infty}\}$ the fact that $\tau_{\infty}$ is accumulation implies that we have accumulating concave grazing phase $\{ \big( \xf(\bar{\alpha}_{l}(\tau_{n}), \dot{\bar{\alpha}}_{l}(\tau_{n})), \dot{\bar{\alpha}}_{l}(\tau_{n}) \big) \}_{i=1}^{\infty}$ near $\{ \big( \xf(\bar{\alpha}_{l}(\tau_{\infty}), \dot{\bar{\alpha}}_{l}(\tau_{\infty})), \dot{\bar{\alpha}}_{l}(\tau_{\infty}) \big) \}$. This is contradiction because $\p\O$ is analytic domain. Finally we can write $\mathcal{I}$ as disjoint union of finite $m^{l}_{2}$ intervals, i.e.
	\begin{equation} \label{interval split}
		\mathcal{I} := [ \bar{a}_{l}^{*} , d_{l,1} ) \cup  (  c_{l,2} ,  d_{l,2}  )  \cup \cdots  \cup  (  c_{l,m^{l}_{2}-1}  ,  d_{l,m^{l}_{2}-1}  ) \cup  (  c_{l,m^{l}_{2}}  , \bar{b}_{l}^{*} ].  \\
	\end{equation}
	
	\noindent{\textbf{Step 2}} Since we have chosen $\delta_{\pm}$ as nonzero in (\ref{near double graze}), we can include boundary points of each subinterval of (\ref{interval split}). Therefore, $\mathfrak{G}^{C,2} \ \backslash \ ( \mathfrak{G}^{C,1} )_{\varepsilon}$ is a subset of   \\
	\begin{equation} \label{new G2C}
	\begin{split}
	& \bigcup_{l=1}^{M^{C}} \Big\{ \Big( \xf^{1}(\bar{\alpha}_{l}(\tau), |v|\dot{\bar{\alpha}}_{l}(\tau)) + s \vf^{1}(\bar{\alpha}_{l}(\tau), |v|\dot{\bar{\alpha}}_{l}(\tau)), \vf^{1}(\bar{\alpha}_{l}(\tau), |v|\dot{\bar{\alpha}}_{l}(\tau)) \Big) \in \{ cl({\O}) \times \VN \} \backslash \mathfrak{IB}  \ :  \\
	&\quad\quad\quad\quad \tau \in [\bar{a}_{l}^{*} , d_{l,1}]\cup [c_{l,2},d_{l,2}]\cup\cdots\cup [c_{l,m^{l}_{2}-1}, d_{l,m^{l}_{2}-1}] \cup [c_{l,m^{l}_{2}}, \bar{b}_{l}^{*} ] ,  \\
	&\quad\quad\quad\quad v\in\VN, s\in[0, (\tf^{2} - \tf^{1})( \bar{\alpha}_{l}(\tau), |v|\dot{\bar{\alpha}}_{l}(\tau)) ]  \Big\}   \\
	\end{split}
	\end{equation}
	and for all $\tau \ \in \ [  \bar{a}_{l}^{*} , d_{l,1}]\cup [c_{l,2},d_{l,2}]\cup\cdots\cup [c_{l,m^{l}_{2}-1}, d_{l,m^{l}_{2}-1}] \cup [c_{l,m^{l}_{2}}, \bar{b}_{l}^{*} ]$,
	\begin{eqnarray*}
		|\dot{\bar{\alpha}}_{l}(\tau) \cdot \mathbf{n}(\xf(\bar{\alpha}_{l}(\tau),  \dot{\bar{\alpha}}_{l}(\tau))) | > \delta^{C,1} > 0,
	\end{eqnarray*}
	where $\delta^{C,1}$ was found in Lemma~\ref{G_C1}.
	Moreover, we can choose these subintervals so that measure of each punctures $\{ (d_{l,i}, c_{l,i+1}) \}_{i=1}^{m_{2}^{l}-1}$ are arbitrary small, because we can choose $\delta_{\pm} > 0$ arbitrary small in (\ref{near double graze}).  \\ 

	\noindent{\textbf{Step 3}} We construct $\mathbf{2^{\text{nd}}-}$\textbf{Sticky Grazing Set} $\mathcal{ {SG}}_{l}^{C,2}$ where all grazing rays from non-measure zero subset of $[ \bar{a}_{l}^{*} , d_{l,1} ) \cup  (  c_{l,2} ,  d_{l,2}  )  \cup \cdots  \cup  (  c_{l,m^{l}_{2}-1}  ,  d_{l,m^{l}_{2}-1}  ) \cup  (  c_{l,m^{l}_{2}}  , \bar{b}_{l}^{*} ]$ intersect at a fixed point in $\mathcal{P}_{x}\Big( \{ cl({\O}) \times \VN \} \backslash \mathfrak{IB} \Big)$ where $\mathcal{P}_{x}$ is projection on sptial domain. Choose any $i\in\{1,\cdots, m^{l}_{2}\}$ and corresponding sub interval $[c_{l,i}, d_{l,i}]$. We define   \\
	\begin{equation*} \label{line temp}
	\begin{split}
	\mathfrak{G}^{C,2}_{l,i} &:= \Big\{ \Big( \xf^{1}(\bar{\alpha}_{l}(\tau), |v|\dot{\bar{\alpha}}_{l}(\tau)) + s \vf^{1}(\bar{\alpha}_{l}(\tau), |v|\dot{\bar{\alpha}}_{l}(\tau)), \vf^{1}(\bar{\alpha}_{l}(\tau), |v|\dot{\bar{\alpha}}_{l}(\tau)) \Big) \in \{ cl({\O}) \times \VN \} \backslash \mathfrak{IB} \ :  \\
	&\quad\quad\quad\quad \tau \in [c_{l,i}, d_{l,i}], v\in\VN, \ s\in[0, (\tf^{2} - \tf^{1})(\bar{\alpha}_{l}(\tau), |v|\dot{\bar{\alpha}}_{l}(\tau)) ]  \Big\}.  \\
	\end{split}
	\end{equation*}
	Fix $x^{*}\in \overline{\O}$. If there does not exist $\tau\in [c_{l,i}, d_{l,i}]$ and $s\in[t_1^{\mathbf{f}}(\alpha^l(\tau), \dot{\alpha}^l(\tau)), \ t_2^{\mathbf{f}}(\alpha^l(\tau), \dot{\alpha}^l(\tau))]$ satisfying $x^{*} = \xf^{1}(\alpha^l(\tau), \dot{\alpha}^l(\tau))+s\vf^{1}(\alpha^l(\tau), \dot{\alpha}^l(\tau))$ then $\{ v\in\mathbb{R}^2 : ({x}^{*},{v})\in \mathfrak{G}_{l,i}^{C,2} \} = \emptyset$ with zero measure. Now suppose that there exist such $\tau$ and $s$. \\
	\indent Due to Lemma~\ref{dichotomy}, there are only two cases: (i) \textbf{sticky grazing:} for all $\tau\in [c_{l,i}, d_{l,i}]$, there exists $s = s(\tau)\in[\tf^{1}(\alpha^l(\tau), \dot{\alpha}^l(\tau)), \ \tf^{2}(\alpha^l(\tau), \dot{\alpha}^l(\tau))]$ and fixed $x^{*}\in cl(\O)$ such that 
	\begin{equation} \label{z2}
		x^{*} = \xf^{1}(\alpha^l(\tau), \dot{\alpha}^l(\tau))+s\vf^1 (\alpha^l(\tau)),
	\end{equation}
	or (ii) \textbf{isolated grazing:} there exists $\delta_-, \delta_+>0$ so that for $\tau^{\prime}\in(\tau-\delta_-,\tau+\delta_+)\backslash\{\tau\}$, there is no $s$ satisfying (\ref{z2}). We define $2^{\text{nd}}-$ sticky grazing set $\mathcal{SG}^{C,2}$ as collection of all such $x^{*}\in cl(\O)$ points , i.e.
	\begin{definition}
		Consider (\ref{new G2C}) and disjoint union of intervals $[\bar{a}_{l}^{*} , d_{l,1}]\cup [c_{l,2},d_{l,2}]\cup\cdots\cup [c_{l,m^{l}_{2}-1}, d_{l,m^{l}_{2}-1}] \cup [c_{l,m^{l}_{2}}, \bar{b}_{l}^{*} ] $. There are finite $i\in I_{\mathbf{sg}, l}^{2}\subset\{1,2,\cdots,m^{l}_{2}\}$ such that case (i) \textbf{sticky grazing} holds:
		\begin{equation*}
		\bigcap_{\tau\in [c_{l,i}, d_{l,i}]} {\overline{\xf^1(\bar{\alpha}_l(\tau), \dot{\bar{\alpha}}_{l}(\tau)) \ \xf^2(\bar{\alpha}_{l}(\tau), \dot{\bar{\alpha}}_{l}(\tau))}} =  {x}_{\mathbf{sg},l,i}^{2} \ \text{which is a point in} \ cl(\O),
		\end{equation*}
		by writing $\bar{a}^{*}_{l} = c_{l,1}, \ \bar{b}^{*}_{l} = d_{l,m_{2}^{l}}$. The $2^{\text{nd}}-$sticky grazing set is the collection of such points:
		\begin{equation} \label{def sg2c}
		\mathcal{SG}^{C,2} := \ \bigcup_{l=1}^{M^C} \ \mathcal{SG}^{C,2}_{l} := \ \bigcup_{l=1}^{M^C}  \big\{ {x}_{\mathbf{sg},l,i}^{2} \in \overline{\O} : i\in I_{\mathbf{sg},l}^{2} \big\}. 
		\end{equation}
		Note that $\mathcal{SG}^{C,2}$ is a set of finite points, from finiteness of $M^{C}$ and Lemma~\ref{dichotomy}. \\
	\end{definition}
	
	\noindent {\textbf{Step 4}} We claim 
	\begin{equation} \label{m zero away F}
	\mathfrak{m}_{2} \{ v\in\mathbb{R}^2 : ({x},{v})\in \mathfrak{G}^{C,2} \backslash (\mathfrak{G}^{C,1})_{\varepsilon} \} \ = \ 0 \ ,
	\end{equation}
	for all ${x}\in \mathcal{P}_{x}(\{ cl({\O}) \times \VN \} \backslash \mathfrak{IB})\backslash \mathcal{SG}^{C,2}$. Consider again the set (\ref{new G2C}) and fix $l\in\{1,\cdots,M^{C}\}$. For any $i\in \{1,2,\cdots, m^{l}_{2} \} \ \backslash \ I_{\mathbf{sg},l}^{2}$, we apply case (b) of Lemma~\ref{dichotomy} to  say that 
	\[
		\{ v\in\mathbb{R}^2 : ({x},{v})\in \mathfrak{G}^{C,2} \backslash (\mathfrak{G}^{C,1})_{\varepsilon} \} \cap \mathbb{S}^{1} = \text{finite points},
	\]	
	which gives (\ref{m zero away F}).  \\
	
	
	\begin{lemma}\label{G_C2}
		For any $\varepsilon>0$, there exist an open cover 
		\[
			\Big\{ \bigcup_{i=1}^{l_2} \ B(x^{C,2}_{i},r^{C,2}_{i}) \Big\} \ \bigcup \ \big\{ \bigcup_{y\in\mathcal{SG}^{C,2}} B( y , \varepsilon ) \big\}			
		\]
		for $\mathcal{P}_{x}\big( \{ cl({\O}) \times \VN \} \backslash \mathfrak{IB} \big)$ and corresponding velocity sets $\mathcal{O}^{C,2}_{i} \subset \VN$ with $\mathfrak{m}_{2}(\mathcal{O}^{C,2}_{i}) < \varepsilon$ such that \\
		(1) For any $({x},{v})\in \{ cl({\O}) \times \VN \} \backslash \mathfrak{IB}$, \\
		\[
		x\in B( {x}^{C,2}_{i}, r^{C,2}_{i}) \quad\text{or}\quad x\in B( y,\varepsilon), 
		\]
		for some $x^{C,2}_{i}$, $r^{C,2}_{i}$, and $y \in \mathcal{SG}^{C,2}$.   \\
		(2) Moreover, if $x\notin \bigcup_{y\in\mathcal{SG}^{C,2}} B( y , \varepsilon )$, ${x}\in \ B( {x}^{C,2}_{i}, r^{C,2}_{i}) $, and $ {v}\in \VN \backslash \mathcal{O}^{C,2}_{i}$, then
		\begin{equation*}
		\phi^{2}(x,v) = | v^{1}( {x}, {v}) \cdot \mathbf{n}(x^{2}( {x}, {v})) |  > \delta^{C,2} > 0 \quad\text{and} \quad \phi^{1}(x,v) = |{v} \cdot \mathbf{n}(x_1( {x}, {v})) | > \delta^{C,1} > 0 ,  \\
		\end{equation*}
		for some uniformly positive $\delta^{C,1}, \ \delta^{C,2} > 0$. \\
		
		From above, we define $\varepsilon-$ neighborhood of $\mathfrak{G}^{C,2}$:
		\begin{equation*} \label{G_C2 nbd}
		( \mathfrak{G}^{C,2} )_{\varepsilon} \ = \Big\{ \bigcup_{i=1}^{l_2} \ B(x^{C,2}_{i},r^{C,2}_{i})  \times  \mathcal{O}^{C,2}_{i} \Big\} \ \bigcup \ \big\{ \bigcup_{y\in\mathcal{SG}^{C,2}} B( y , \varepsilon )  \times  \VN \big\} .
		\end{equation*}
	\end{lemma}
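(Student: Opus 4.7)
The plan is to adapt the construction of Lemma~\ref{G_C1}, enhancing the velocity exclusion set so as to avoid both first- and second-bounce grazing, while segregating the sticky grazing set $\mathcal{SG}^{C,2}$ through purely spatial $\varepsilon$-balls. I work on the compact set
\[
\mathcal{K}_{\varepsilon} := \mathcal{P}_{x}\big(\{cl(\O)\times\VN\}\setminus\mathfrak{IB}\big) \, \setminus \, \bigcup_{y\in\mathcal{SG}^{C,2}} B(y,\varepsilon),
\]
which is closed in $cl(\O)$ by the finiteness of $\mathcal{SG}^{C,2}$ in (\ref{def sg2c}).

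First I fix $x\in\mathcal{K}_{\varepsilon}$ and construct the pointwise velocity exclusion. By Lemma~\ref{G_C1}, choose $j=j(x)$ with $x\in B(x_{j}^{C,1}, r_{j}^{C,1})$; this already secures $\phi^{1}(\cdot,v) > \delta^{C,1}$ for $v\in\VN\setminus\mathcal{O}_{j}^{C,1}$. Since $x\notin\mathcal{SG}^{C,2}$, Lemma~\ref{dichotomy}(b) applied to each of the finitely many subintervals in (\ref{interval split}) (and over $l\in\{1,\dots,M^{C}\}$) forces the set $\{v/|v|\in\mathbb{S}^{1} : (x,v)\in \mathfrak{G}^{C,2}\setminus(\mathfrak{G}^{C,1})_{\varepsilon}\}$ to consist of only \emph{finitely many} unit directions. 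Thicken these directions by small angular wedges to form an open set $\tilde{\mathcal{O}}_{x}\subset\VN$ of measure less than $\varepsilon/2$, arranged so that $\mathfrak{m}_{2}(\mathcal{O}_{j(x)}^{C,1}\cup \tilde{\mathcal{O}}_{x})<\varepsilon$.

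On the compact complement $\VN\setminus(\mathcal{O}_{j(x)}^{C,1}\cup \tilde{\mathcal{O}}_{x})$, neither the first nor the second backward bounce from $(x,v)$ is grazing, so Lemma~\ref{local conti} yields continuity of $(x^{i},v^{i})$ for $i=1,2$; compactness delivers $\phi^{1}(x,\cdot),\phi^{2}(x,\cdot)\geq 2\delta_{x}$ for some $\delta_{x}>0$. Applying Lemma~\ref{local conti} once more in the spatial variable produces a radius $r_{x}>0$ on which $\phi^{1},\phi^{2}$ remain uniformly bounded below on $B(x,r_{x})\times(\VN\setminus(\mathcal{O}_{j(x)}^{C,1}\cup \tilde{\mathcal{O}}_{x}))$. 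Compactness of $\mathcal{K}_{\varepsilon}$ then lets me extract a finite subcover $\{B(x_{i}^{C,2},r_{i}^{C,2})\}_{i=1}^{l_{2}}$; setting $\mathcal{O}_{i}^{C,2}:=\mathcal{O}_{j(x_{i}^{C,2})}^{C,1}\cup \tilde{\mathcal{O}}_{x_{i}^{C,2}}$ and $\delta^{C,2}:=\min_{1\leq i\leq l_{2}}\delta_{x_{i}^{C,2}}$, and combining with the finitely many spatial balls $B(y,\varepsilon)$ for $y\in\mathcal{SG}^{C,2}$, yields the claimed covering $(\mathfrak{G}^{C,2})_{\varepsilon}$.

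I expect the main obstacle to be the finiteness claim in the second paragraph: merely knowing that the second-bounce grazing velocity slice has Lebesgue measure zero (as in (\ref{m zero away F})) is insufficient, because covering a measure-zero set by an open set of \emph{small} measure while maintaining a uniform positive lower bound on $\phi^{2}$ requires a tame geometric structure. The dichotomy in Lemma~\ref{dichotomy}(b), driven by the analyticity of $\p\O$, provides exactly this by reducing the bad set to finitely many radial segments; correspondingly, case (a) of the same dichotomy is precisely what obstructs this argument at points of $\mathcal{SG}^{C,2}$ and forces their separate spatial exclusion via the $B(y,\varepsilon)$ balls.
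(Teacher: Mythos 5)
Your proposal is correct and follows essentially the same route as the paper: cover the finite set $\mathcal{SG}^{C,2}$ by spatial $\varepsilon$-balls, use the dichotomy of Lemma~\ref{dichotomy}(b) (i.e.\ the finiteness, not merely measure-zero, of the bad directions away from sticky grazing points) to build a small angular exclusion set in $\VN$, then invoke Lemma~\ref{local conti} plus compactness to get uniform lower bounds on $\phi^{1},\phi^{2}$ and a finite subcover. The only cosmetic difference is that you reuse $\mathcal{O}^{C,1}_{j(x)}$ from Lemma~\ref{G_C1} and add a fresh wedge set, while the paper thickens all finitely many bad directions for $\mathfrak{G}^{C,1}\cup\mathfrak{G}^{C,2}$ at once; this is immaterial (adjust the constant so the union has measure below $\varepsilon$).
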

	\begin{proof}
		From (\ref{def sg2c}), $\mathcal{SG}^{C,2}$ has only finite points so we make a cover with finite balls, $\bigcup_{y\in\mathcal{SG}^{C,2}} B( y , \varepsilon )$ for $\mathcal{SG}^{C,2}$. \\
		For $x\in \mathcal{P}_{x}(\{ cl({\O}) \times \VN \} \backslash \mathfrak{IB}) \ \backslash \ \bigcup_{y\in\mathcal{SG}^{C,2}} B( y , \varepsilon )$, there at most finite (at most $2M^{C} + 2\sum_{l=1}^{M^{C}} m^{l}_{2}$) unit vectors $\frac{v_{i}}{|v_{i}|}$ such that
		\begin{equation*} \label{bad direc1}
		(x,v_{i}) \in \mathfrak{G}^{2}_{C} \ \cup \ \mathfrak{G}^{1}_{C},
		\end{equation*}
		from (\ref{m zero away F}) and (\ref{g1zero}). So we define
		\begin{equation*} \label{tilde G2 O def}
		\mathcal{O}^{C,2}_{x} := \big\{ \ v\in\VN \ : \ \big| \frac{v_{i}}{|v_{i}|} - \frac{v}{|v|} \big| < C_{2}(N)\varepsilon, \ \forall v_{i} \ \ \text{s.t} \ \ (x,v_{i}) \in \mathfrak{G}^{C,1}\cup \mathfrak{G}^{C,2} \ \big\} .
		\end{equation*}

		When $v\in \VN\backslash\mathcal{O}^{C,2}_{x}$, trajectory does not graze within second bounces, so both 
		\begin{equation*} \label{phi12}
		\phi^1( {x}, {v}) = | {v} \cdot \mathbf{n}(x_{\mathbf{b}}( {x}, {v}))|,\quad \phi^2 ( {x}, {v}) := | v^1 ( {x}, {v}) \cdot \mathbf{n}( x^{2}(x,v) ) |
		\end{equation*}
		are well-defined and locally smooth, because $({x},v)\in \big\{ \{ cl({\O}) \times \VN \} \backslash \mathfrak{IB} \big\} \backslash \ \big( \mathfrak{G}^{C,1} \cup \mathfrak{G}^{C,2} \big)$ implies that trajectory does not graze in first two bounces. Using local continuity of Lemma~\ref{local conti} again, we can find $r^{C,2}_{x} \ll 1$ such that
		\[
		\phi^{1}(x,v) > \delta^{1}_{x} > 0,\quad \phi^{2}(x,v) > \delta^{2}_{x} > 0, \quad\text{for}\quad (x,v)\in cl \big( B(x,r^{C,2}_{x}) \big) \times \VN\backslash\mathcal{O}^{C,2}_{x}.
		\]
		By compactness, we can find finite open cover $\bigcup_{i=1}^{l_{2}} B(x^{C,2}_{i} , r^{C,2}_{i})$ for $\mathcal{P}_{x}\big( \{ cl({\O}) \times \VN \} \backslash \mathfrak{IB} \big) \ \backslash \ \bigcup_{y\in\mathcal{SG}^{C,2}} B( y , \varepsilon ) $ and corresponding $\mathcal{O}^{C,2}_{i}$ with small measure $\mathfrak{m}_{2}(\mathcal{O}^{C,2}_{i}) < \varepsilon$ by choosing (\ref{G1 O def}) with sufficiently small $C_{2}(N)$. Finally we choose
		\[
		\delta^{1} := \min_{1\leq i\leq l_{1}} \delta^{1}_{ x^{C,2}_{i} } > 0, \quad 
		\delta^{2} := \min_{1\leq i\leq l_{2}} \delta^{2}_{ x^{C,2}_{i} } > 0, 
		\]
		to finish the proof.   \\
	\end{proof}

	\subsubsection{$\mathbf{k^{\text{th}}-}$\textbf{Grazing Set}, $\mathfrak{G}^{C,k}$}
	Now we are going to construct, for $k>2$, the $\mathbf{k^{\text{st}}-}$\textbf{Grazing Set} and it's $\varepsilon-$neighborhood. We construct such sets via the mathematical induction. We assume Lemma~\ref{G_C2} holds for $\mathfrak{G}^{C,k-1}$, i.e. \\\
	
	\begin{assumption} \label{assump}
	For any $\varepsilon>0$, there exist $\mathcal{SG}^{C,k-1}$ which contains finite points in $cl(\O)$, and an open cover 
	\[
	\Big\{ \bigcup_{i=1}^{l_{k-1}} \ B(x^{C,k-1}_{i},r^{C,k-1}_{i}) \Big\} \ \bigcup \ \big\{ \bigcup_{y\in\mathcal{SG}^{C,k-1}} B( y , \varepsilon ) \big\}			
	\]
	for $\mathcal{P}_{x}\big( \{ cl({\O}) \times \VN \} \backslash \mathfrak{IB} \big)$ and corresponding velocity sets $\mathcal{O}^{C,k-1}_{i} \subset \VN$ with $\mathfrak{m}_{2}(\mathcal{O}^{C,k-1}_{i}) < \varepsilon$ such that \\
	(1) For any $({x},{v})\in \{ cl({\O}) \times \VN \} \backslash \mathfrak{IB}$, \\
	\[
	x\in B( {x}^{C,k-1}_{i}, r^{C,k-1}_{i}) \quad\text{or}\quad x\in B( y,\varepsilon), 
	\]
	for some $x^{C,k-1}_{i}$, $r^{C,k-1}_{i}$, and $y \in \mathcal{SG}^{C,k-1}$.   \\
	(2) Moreover, if $x \notin \bigcup_{y\in\mathcal{SG}^{C,k-1}} B( y , \varepsilon )$, $ {x}\in \ B( {x}^{C,k-1}_{i}, r^{C,k-1}_{i}) $, and $ {v}\in \VN \backslash \mathcal{O}^{C,k-1}_{i} $, then
	\begin{equation*}
	\phi^{s}(x,v) = | v^{s-1}( {x}, {v}) \cdot \mathbf{n}(x^{s}( {x}, {v})) |  > \delta^{C,s} > 0,  \\
	\end{equation*}
	for all $s\in\{1, \cdots, k-1\}$ some uniformly positive $\delta^{C,1}, \ \delta^{C,2}, \cdots, \delta^{C,k-1} > 0$. \\
	
	We define $\varepsilon-$ neighborhood of $\mathfrak{G}^{C,k-1}$:
	\begin{equation*} \label{G_Ck-1 nbd}
	( \mathfrak{G}^{C,k-1} )_{\varepsilon} \ = \Big\{ \bigcup_{i=1}^{l_{k-1}} \ B(x^{C,k-1}_{i},r^{C,k-1}_{i})  \times  \mathcal{O}^{C,k-1}_{i} \Big\} \ \bigcup \ \big\{ \bigcup_{y\in\mathcal{SG}^{C,k-1}} B( y , \varepsilon )  \times  \VN \big\} .  \\
	\end{equation*}
	\end{assumption}
	
	Now, under above assumption, we follow the steps in $\mathfrak{G}_{C}^{2}$. From the definition of $\mathfrak{G}^{C,k}$ and (\ref{staredinterval}), the set $\mathfrak{G}^{C,k} \ \backslash \ (\mathfrak{G}^{C,k-1})_{\varepsilon}$ is a subset of
	\begin{equation} \label{gk}
	\begin{split}
	& \bigcup_{l=1}^{M^{C}}\bigcup_{p=\pm 1}\Big\{ \Big( \xf^{k-1}(\bar{\alpha}_{l}(\tau), p|v|\dot{\bar{\alpha}}_{l}(\tau)) + s \vf^{k-1}(\bar{\alpha}_{l}(\tau), p|v|\dot{\bar{\alpha}}_{l}(\tau)),  \vf^{k-1}(\bar{\alpha}_{l}(\tau), p|v|\dot{\bar{\alpha}}_{l}(\tau)) \Big) \in \{ cl({\O}) \times \VN \} \backslash \mathfrak{IB} \ :  \\
	&\quad\quad\quad \tau\in[\bar{a}^{*}_{l}, \bar{b}^{*}_{l}], \ v\in\VN, \ s\in[0, (\tf^{k} - \tf^{k-1})(\bar{\alpha}_{l}(\tau), p|v|\dot{\bar{\alpha}}_{l}(\tau)) ]  \Big\} \Big{\backslash} (\mathfrak{G}^{C,k-1})_{\varepsilon}.  \\
	\end{split}
	\end{equation}
	Without loss of generality, we suffice to consider only $p=1$ case of (\ref{gk}).  \\
	
	\noindent{\textbf{Step 1}} Fix $p=1$ and $l\in\{1,\cdots,M^{C}\}$. First, we remove $k-1^{st}$-grazing set by complementing $(\mathfrak{G}^{C,k-1})_{\varepsilon}$. \\
	Let us consider $(\bar{x}, \bar{v}) \in \mathfrak{G}^{C,k-1} \cap \mathfrak{G}^{C,k}_{l}$ and we write $\bar{\alpha}_{l}(\bar{\tau}) = x^{k}(\bar{x}, \bar{v})$. Then, from Assumption \ref{assump}, there exist $i\in\{1,\cdots, l_{k-1}\}$ such that $(\bar{x}, \bar{v}) \in B(x^{C,k-1}_{i},r^{C,k-1}_{i})\times\mathcal{O}^{C,k-1}_{i}$ and 
	\begin{equation} \label{near double graze k}
	\begin{split}
	&\Big\{ x^{k}(x,v)\in\p\O_{l}^{C} \ : \ \forall(x,v) \in cl\big( { B(x^{C,k-1}_{i},r^{C,k-1}_{i})\times\mathcal{O}^{C,k-1}_{i} } \big) , \  \text{where} \ (\bar{x}, \bar{v}) \in B(x^{C,k-1}_{i},r^{C,k-1}_{i})\times\mathcal{O}^{C,k-1}_{i} \Big\} \\
	&\quad\subset \big[ \bar{\alpha}_{l}(\bar{\tau} - \delta_{-}), \bar{\alpha}_{l}(\bar{\tau} + \delta_{+}) \big],\quad \text{for}\quad 0 < \delta_{\pm} = O(r_{k-1,i}, \varepsilon) \ll 1.
	\end{split}
	\end{equation}
	Excluding (\ref{near double graze k}) from $[\bar{a}_{l}^{*}, \bar{b}_{l}^{*}]$ for all $(\bar{x}, \bar{v})\in \mathfrak{G}^{C,k-1} \cap \mathfrak{G}^{C,k}_{l}$ yields a union of countable open connected intervals $\mathcal{I}^{k}$, i.e.
	\begin{equation*} \label{def I k}
	\mathcal{I} := [ \bar{a}_{l}^{*} , d^{k}_{l,1} ) \cup  (  c^{k}_{l,2} ,  d^{k}_{l,2}  )  \cup \cdots  \subset [\bar{a}_{l}^{*}, \bar{b}_{l}^{*}],\quad \bar{a}_{l}^{*} < d_{1} < c_{2} < d_{2} < \cdots.
	\end{equation*}
	Using exactly same argument of \textbf{Step 1} in $\mathbf{2^{\text{nd}}-}$\textbf{Grazing Set} $\mathfrak{G}^{C,2}$, we know that this should be finite union of sub intervals and write 
	\begin{equation} \label{interval split k}
	\mathcal{I}^{k} := [ \bar{a}_{l}^{*} , d^{k}_{l,1} ) \cup  (  c^{k}_{l,2} ,  d^{k}_{l,2}  )  \cup \cdots  \cup  (  c^{k}_{l,m^{l}_{k}-1}  ,  d^{k}_{l,m^{l}_{k}-1}  ) \cup  (  c^{k}_{l,m^{l}_{k}}  , \bar{b}_{l}^{*} ].  \\
	\end{equation}
	
	\noindent{\textbf{Step 2}} Since we have chosen $\delta_{\pm}$ as nonzero in (\ref{near double graze k}), we can include boundary points of each subinterval of (\ref{interval split k}). Therefore, $\mathfrak{G}^{C,k} \ \backslash \ ( \mathfrak{G}^{C,k-1} )_{\varepsilon}$ is a subset of   \\
	\begin{equation} \label{new GkC}
	\begin{split}
	& \bigcup_{l=1}^{M^{C}} \Big\{ \Big( \xf^{k-1}(\bar{\alpha}_{l}(\tau), |v|\dot{\bar{\alpha}}_{l}(\tau)) + s \vf^{k-1}(\bar{\alpha}_{l}(\tau), |v|\dot{\bar{\alpha}}_{l}(\tau)), \vf^{k-1}(\bar{\alpha}_{l}(\tau), |v|\dot{\bar{\alpha}}_{l}(\tau)) \Big) \in \{ cl({\O}) \times \VN \} \backslash \mathfrak{IB}  \ :  \\
	&\quad\quad\quad\quad \tau \in [\bar{a}_{l}^{*} , d^{k}_{l,1}]\cup [c^{k}_{l,2}, d^{k}_{l,2}]\cup\cdots\cup [c^{k}_{l,m^{l}_{k}-1}, d^{k}_{l,m^{l}_{k}-1}] \cup [c^{k}_{l,m^{l}_{k}}, \bar{b}_{l}^{*} ] ,  \\
	&\quad\quad\quad\quad v\in\VN, s\in[0, (\tf^{k} - \tf^{k-1})( \bar{\alpha}_{l}(\tau), |v|\dot{\bar{\alpha}}_{l}(\tau)) ]  \Big\} ,  \\
	\end{split}
	\end{equation}
	and for all $\tau \ \in \ [  \bar{a}_{l}^{*} , d^{k}_{l,1}]\cup [c^{k}_{l,2}, d^{k}_{l,2}]\cup\cdots\cup [c^{k}_{l,m^{l}_{k}-1}, d^{k}_{l,m^{l}_{k}-1}] \cup [c^{k}_{l,m^{l}_{k}}, \bar{b}_{l}^{*} ]$,
	\begin{eqnarray*}
		|\dot{\bar{\alpha}}_{l}(\tau) \cdot \mathbf{n}(\xf(\bar{\alpha}_{l}(\tau),  \dot{\bar{\alpha}}_{l}(\tau))) | > \min_{i=1,\cdots,k-1}\delta^{C,i} > 0,
	\end{eqnarray*}
	where $\delta^{C,i} > 0$ were found in Assumption~\ref{assump}. Moreover, we can choose these subintervals so that measure of each punctures $\{ (d^{k}_{l,i}, c^{k}_{l,i+1}) \}_{i=1}^{m_{k}^{l}-1}$ are arbitrary small, because we can choose $\delta_{\pm} > 0$ arbitrary small in (\ref{near double graze k}).  \\ 
	
	\noindent{\textbf{Step 3}} We construct $\mathbf{k^{\text{th}}-}$\textbf{Sticky Grazing Set} $\mathcal{SG}_{l}^{C,k}$ where all grazing rays from non-measure zero subset of $[ \bar{a}_{l}^{*} , d^{k}_{l,1} ) \cup  (  c^{k}_{l,2} ,  d^{k}_{l,2}  )  \cup \cdots  \cup  (  c^{k}_{l,m^{l}_{k}-1}  ,  d^{k}_{l,m^{l}_{k}-1}  ) \cup  (  c^{k}_{l,m^{l}_{k}}  , \bar{b}_{l}^{*} ]$ intersect at a fixed point in $\mathcal{P}_{x}\Big( \{ cl({\O}) \times \VN \} \backslash \mathfrak{IB} \Big)$, where $\mathcal{P}_{x}$ is projection on sptial domain. Choose any $i\in\{1,\cdots, m^{l}_{k}\}$ and corresponding sub interval $[c^{k}_{l,i}, d^{k}_{l,i}]$. We define   \\
	\begin{equation*} \label{line temp k-1}
	\begin{split}
	\mathfrak{G}^{C,k}_{l,i} &:= \Big\{ \Big( \xf^{k-1}(\bar{\alpha}_{l}(\tau), |v|\dot{\bar{\alpha}}_{l}(\tau)) + s \vf^{k-1}(\bar{\alpha}_{l}(\tau), |v|\dot{\bar{\alpha}}_{l}(\tau)), \vf^{k-1}(\bar{\alpha}_{l}(\tau), |v|\dot{\bar{\alpha}}_{l}(\tau)) \Big) \in \{ cl({\O}) \times \VN \} \backslash \mathfrak{IB} \ :  \\
	&\quad\quad\quad\quad \tau \in [c_{l,i}, d_{l,i}], v\in\VN, \ s\in[0, (\tf^{2} - \tf^{1})(\bar{\alpha}_{l}(\tau), |v|\dot{\bar{\alpha}}_{l}(\tau)) ]  \Big\}.  \\
	\end{split}
	\end{equation*}
	Fix $x^{*}\in \overline{\O}$. If there does not exist $\tau\in [c^{k}_{l,i}, d^{k}_{l,i}]$ and $s\in[t^{k-1}_{\mathbf{f}}(\bar{\alpha}_l(\tau), \dot{\bar{\alpha}}_l(\tau)), \ t^{k}_{\mathbf{f}}(\bar{\alpha}_l(\tau), \dot{\bar{\alpha}}_l(\tau))]$ satisfying $x^{*} = \xf^{k-1}(\bar{\alpha}_l(\tau), \dot{\bar{\alpha}}_l(\tau)) + s\vf^{1}(\bar{\alpha}_l(\tau), \dot{\bar{\alpha}}_l(\tau))$, then $\{ v\in\mathbb{R}^2 : ({x}^{*},{v})\in \mathfrak{G}_{l,i}^{C,k} \} = \emptyset$ with zero measure. Now suppose there exist such $\tau$ and $s$. \\
	\indent Due to Lemma~\ref{dichotomy}, there are only two cases: (i) \textbf{sticky grazing:} for all $\tau\in [c^{k}_{l,i}, d^{k}_{l,i}]$, there exists $s = s(\tau)\in[\tf^{k-1}(\bar{\alpha}_{l}(\tau), \dot{\bar{\alpha}}_l(\tau)), \ \tf^{k}(\bar{\alpha}_l(\tau), \dot{\bar{\alpha}}_l(\tau))]$ and points $\{x^{*,r}\}_{r=1}^{k-1} \in cl(\O)$ such that 
	\begin{equation} \label{zkr}
	x^{*,r} = \xf^{r}(\bar{\alpha}_l(\tau), \dot{\bar{\alpha}}_l(\tau))+s\vf^{r} (\bar{\alpha}_l(\tau)),\quad\text{for all}\quad \tau\in [c^{k}_{l,i}, d^{k}_{l,i}]
	\end{equation}
	or (ii) \textbf{isolated grazing:} there exists $\delta_-, \delta_+>0$ so that for $\tau^{\prime}\in(\tau-\delta_-,\tau+\delta_+)\backslash\{\tau\}$ there is no $s$ satisfying (\ref{zkr}). We define $k^{\text{th}}-$ sticky grazing set $\mathcal{SG}^{C,k}$ as collection of all such $x^{*,r}\in cl(\O)$ points.
	\begin{definition}
		Consider (\ref{new GkC}) and disjoint union of intervals $[\bar{a}_{l}^{*} , d^{k}_{l,1}]\cup [c^{k}_{l,2}, d^{k}_{l,2}]\cup\cdots\cup [c^{k}_{l,m^{l}_{k}-1}, d^{k}_{l,m^{l}_{k}-1}] \cup [c^{k}_{l,m^{l}_{k}}, \bar{b}_{l}^{*} ] $. There are finite $i\in I^{k}_{\mathbf{sg},l} \subset \{1, 2, \cdots, (k-1)m^{l}_{k}\}$ such that case (i) \textbf{sticky grazing} holds:
		\begin{equation*}
		\bigcap_{\tau\in [c^{k}_{l,i}, d^{k}_{l,i}]} {\overline{\xf^{r-1}(\bar{\alpha}_l(\tau), \dot{\bar{\alpha}}_{l}(\tau)) \ \xf^{r}(\bar{\alpha}_{l}(\tau), \dot{\bar{\alpha}}_{l}(\tau))}} =  {x}_{\mathbf{sg},l,i,r}^{k} \in cl(\O),\quad \text{for some}\quad r=1,\cdots, k-1,
		\end{equation*}
		by writing $\bar{a}^{*}_{l} = c^{k}_{l,1}, \ \bar{b}^{*}_{l} = d^{k}_{l,m_{2}^{l}}$. When above intersection is nonempty we collect all those points to obtain $k^{\text{th}}-$ sticky grazing set:
		\begin{equation} \label{def sgkc}
		\mathcal{SG}^{C,k} := \ \bigcup_{l=1}^{M^C} \ \mathcal{SG}^{C,k}_{l} := \ \bigcup_{l=1}^{M^C} \bigcup_{i=1}^{m^{l}_{k}} \bigcup_{r=1}^{k-1} \big\{ {x}_{\mathbf{sg},l,i,r}^{k} \in cl(\O) \big\}. 
		\end{equation}
		Note that $\mathcal{SG}^{C,k}$ has at most $(k-1)M^{C} m^{l}_{k}$ points, from index $i, l $, and $r$.  \\
	\end{definition}
	
	\noindent {\textbf{Step 4}} We claim 
	\begin{equation} \label{k m zero away F}
	\mathfrak{m}_{2} \{ v\in\mathbb{R}^2 : ({x},{v})\in \mathfrak{G}^{C,k} \backslash (\mathfrak{G}^{C,k-1})_{\varepsilon} \} \ = \ 0 \ ,
	\end{equation}
	for all ${x}\in \mathcal{P}_{x}(\{ cl({\O}) \times \VN \} \backslash \mathfrak{IB})\backslash \mathcal{SG}^{C,k}$. Consider again the set (\ref{new GkC}) and fix $l\in\{1,\cdots,M^{C}\}$. For any point $x\in cl(\O)$ such that $i\in \{1,2,\cdots, (k-1)m^{l}_{k} \} \ \backslash \ I_{\mathbf{sg},l}^{k}$, we apply case (b) of Lemma~\ref{dichotomy} to  say that 
	\[
	\{ v\in\mathbb{R}^2 : ({x},{v})\in \mathfrak{G}^{C,k} \backslash (\mathfrak{G}^{C,k-1})_{\varepsilon} \} \cap \mathbb{S}^{1} = \text{finite points},
	\]	
	which gives (\ref{k m zero away F}).  \\
	
	\begin{lemma}\label{G_Ck}
		We assume Assumption~\ref{assump}. Then, for any $\varepsilon>0$, there exist an open cover 
		\[
		\Big\{ \bigcup_{i=1}^{l_k} \ B(x^{C,k}_{i},r^{C,k}_{i}) \Big\} \ \bigcup \ \big\{ \bigcup_{y\in\mathcal{SG}^{C,k}} B( y , \varepsilon ) \big\}			
		\]
		for $\mathcal{P}_{x}\big( \{ cl({\O}) \times \VN \} \backslash \mathfrak{IB} \big)$ and corresponding velocity sets $\mathcal{O}^{C,k}_{i} \subset \VN$ with $\mathfrak{m}_{2}(\mathcal{O}^{C,k}_{i}) < \varepsilon$ such that \\
		(1) For any $({x},{v})\in \{ cl({\O}) \times \VN \} \backslash \mathfrak{IB}$, \\
		\[
		x\in B( {x}^{C,k}_{i}, r^{C,k}_{i}) \quad\text{or}\quad x\in B( y,\varepsilon), 
		\]
		for some $x^{C,k}_{i}$, $r^{C,k}_{i}$, and $y \in \mathcal{SG}^{C,k}$.   \\
		(2) Moreover, if $x \notin \bigcup_{y\in\mathcal{SG}^{C,k}} B( y , \varepsilon ) $, $ {x}\in \ B( {x}^{C,k}_{i}, r^{C,k}_{i}) $, and $ {v}\in \VN \backslash \mathcal{O}^{C,k}_{i} $, then
		\begin{equation*}
		\phi^{r}(x,v) = | v^{r-1}( {x}, {v}) \cdot \mathbf{n}(x^{r}( {x}, {v})) |  > \delta^{C,r} > 0 ,  \quad r=1,\cdots, k, \\
		\end{equation*}
		for some uniformly positive $\delta^{C,r} > 0$, $r=1,\cdots, k$. \\
		
		From above, we define $\varepsilon-$ neighborhood of $\mathfrak{G}^{C,k}$:
		\begin{equation*} \label{G_Ck nbd}
		( \mathfrak{G}^{C,k} )_{\varepsilon} \ = \Big\{ \bigcup_{i=1}^{l_k} \ B(x^{C,k}_{i}, r^{C,k}_{i})  \times  \mathcal{O}^{C,k}_{i} \Big\} \ \bigcup \ \big\{ \bigcup_{y\in\mathcal{SG}^{C,k}} B( y , \varepsilon )  \times  \VN \big\} .
		\end{equation*}
	\end{lemma}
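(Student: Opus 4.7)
The plan is to follow the pattern of Lemma~\ref{G_C2}, using Steps 1--4 that precede the statement together with the inductive hypothesis in Assumption~\ref{assump}. First I would cover the sticky-grazing set $\mathcal{SG}^{C,k}$ by the finite union $\bigcup_{y\in\mathcal{SG}^{C,k}} B(y,\varepsilon)$, which is legitimate because (\ref{def sgkc}) shows $\mathcal{SG}^{C,k}$ contains at most $(k-1)M^{C}m_{k}^{l}$ points.

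For each $x \in \mathcal{P}_{x}(\{cl(\Omega)\times \VN\}\setminus\mathfrak{IB}) \setminus \bigcup_{y\in\mathcal{SG}^{C,k}} B(y,\varepsilon)$, I would identify the unit directions that produce a grazing within the first $k$ bounces. By the inductive hypothesis the layers $\mathfrak{G}^{C,1}\cup\cdots\cup\mathfrak{G}^{C,k-1}$ contribute at most finitely many such directions through $x$, and by (\ref{k m zero away F}) the new layer $\mathfrak{G}^{C,k}\setminus(\mathfrak{G}^{C,k-1})_{\varepsilon}$ contributes only finitely many more through $x$ (this is exactly where Lemma~\ref{dichotomy}, via Step~4, excludes a positive-measure set of directions once the sticky points have been removed). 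Labelling these directions $v_{i}/|v_{i}|$, I would set
\[
\mathcal{O}^{C,k}_{x} := \Big\{ v \in \VN \ : \ \big| v/|v| - v_{i}/|v_{i}| \big| < C_{k}(N)\varepsilon \ \text{for some such } v_{i} \Big\},
\]
and tune the constant $C_{k}(N)$ so that $\mathfrak{m}_{2}(\mathcal{O}^{C,k}_{x}) < \varepsilon$.

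For $v\in \VN\setminus\mathcal{O}^{C,k}_{x}$, none of the phases $(x^{1},v^{1}),\ldots,(x^{k},v^{k})$ is grazing, so Lemma~\ref{local conti} provides local continuity of each map $(y,u)\mapsto \phi^{r}(y,u) = |v^{r-1}(y,u)\cdot\mathbf{n}(x^{r}(y,u))|$ for $r=1,\ldots,k$. Shrinking to a small closed ball on which this continuity is uniform produces pointwise lower bounds $\phi^{r}>\delta^{r}_{x}>0$ on $cl(B(x,r^{C,k}_{x}))\times(\VN\setminus\mathcal{O}^{C,k}_{x})$. A standard compactness argument on the closed set $cl\big(\mathcal{P}_{x}(\{cl(\Omega)\times \VN\}\setminus\mathfrak{IB})\big)\setminus \bigcup_{y\in\mathcal{SG}^{C,k}}B(y,\varepsilon)$ then extracts a finite subcover $\bigcup_{i=1}^{l_{k}} B(x^{C,k}_{i}, r^{C,k}_{i})$, and setting $\delta^{C,r} := \min_{i} \delta^{r}_{x^{C,k}_{i}} > 0$ furnishes the claimed uniform constants.

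The main obstacle has already been carried out in Steps 1--4 above: ensuring that the decomposition (\ref{interval split k}) is a \emph{finite} union of sub-intervals (which relies crucially on analyticity of $\partial\Omega$ to preclude accumulation of grazing $\tau$-values) and that $\mathcal{SG}^{C,k}$ is itself finite (via the dichotomy Lemma~\ref{dichotomy}). Once Steps~1--4 are in hand, the present inductive step is a bookkeeping exercise paralleling Lemma~\ref{G_C2}; the only subtle point is that the velocity cutoff $\mathcal{O}^{C,k}_{x}$ must simultaneously avoid the finitely many bad directions coming from all preceding levels $\mathfrak{G}^{C,1},\ldots,\mathfrak{G}^{C,k}$, which is why the tuning of $C_{k}(N)$ has to depend on the inductive data.
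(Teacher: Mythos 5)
Your proposal is correct and follows essentially the same route as the paper: cover the finite set $\mathcal{SG}^{C,k}$ by $\varepsilon$-balls, note that away from these balls each $x$ sees only finitely many grazing directions for the first $k$ bounces (via the inductive hypothesis, the finiteness of the interval decomposition, and the dichotomy of Lemma~\ref{dichotomy} through (\ref{k m zero away F})), excise an angular neighborhood $\mathcal{O}^{C,k}_{x}$ of measure $<\varepsilon$, invoke Lemma~\ref{local conti} for local lower bounds on $\phi^{1},\ldots,\phi^{k}$, and conclude by compactness with $\delta^{C,r}$ taken as a minimum over the finite cover. This matches the paper's proof, which explicitly reduces the inductive step to the scheme of Lemma~\ref{G_C2}.
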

	\begin{proof}
		We suffice to follow the scheme of proof of Lemma~\ref{G_C2}. From (\ref{def sgkc}), $\mathcal{SG}^{C,k}$ has finite points so we make a cover with finite balls, $\bigcup_{y\in\mathcal{SG}^{C,k}} B( y , \varepsilon )$ for $\mathcal{SG}^{C,k}$. \\
		For $x\in \mathcal{P}_{x}(\{ cl({\O}) \times \VN \} \backslash \mathfrak{IB}) \ \backslash \ \bigcup_{y\in\mathcal{SG}^{C,k}} B( y , \varepsilon )$, there at most finite (at most $2M^{C} + 2\sum_{r=1}^{k}\sum_{l=1}^{M^{C}} m^{l}_{r}$) unit vectors $\frac{v_{i}}{|v_{i}|}$ such that
		\begin{equation*} \label{bad direck}
		(x,v_{i}) \in \mathfrak{G}^{k}_{C} \ \cup \ \mathfrak{G}^{k-1}_{C} \ \cup \ \cdots \ \cup \  \mathfrak{G}^{1}_{C},
		\end{equation*}
		from (\ref{k m zero away F}). So we define
		\begin{equation} \label{tilde Gk O def}
		\mathcal{O}^{C,k}_{x} := \big\{ \ v\in\VN \ : \ \big| \frac{v_{i}}{|v_{i}|} - \frac{v}{|v|} \big| < C_{k}(N)\varepsilon, \ \forall v_{i} \ \ \text{s.t} \ \ (x,v_{i}) \in \cup_{r=1}^{k}\mathfrak{G}^{C,r} \ \big\} .
		\end{equation}
		
		When $v\in \VN\backslash\mathcal{O}^{C,k}_{x}$, trajectory does not graze within second bounces, so 
		\begin{equation*} \label{phik}
		\phi^{r}( {x}, {v}) = | {v} \cdot \mathbf{n}(x_{\mathbf{b}}( {x}, {v}))|,\quad 1\leq r \leq k
		\end{equation*}
		are well-defined and locally smooth, because $({x},v)\in \big\{ \{ cl({\O}) \times \VN \} \backslash \mathfrak{IB} \big\} \backslash \ \big( \cup_{r=1}^{k}\mathfrak{G}^{C,r} \big)$ implies that trajectory does not graze in first $k$ bounces. Using local continuity of Lemma~\ref{local conti} again, we can find $r^{C,k}_{x} \ll 1$ such that
		\[
		\phi^{r}(x,v) > \delta^{r}_{x} > 0,\quad \text{for}\quad 1\leq r \leq k\quad \text{and}\quad (x,v)\in cl \big( B(x,r^{C,k}_{x}) \big) \times \VN\backslash\mathcal{O}^{C,k}_{x}.
		\]
		By compactness, we can find an open cover $\bigcup_{i=1}^{l_{k}} B(x^{C,k}_{i} , r^{C,k}_{i})$ for $\mathcal{P}_{x}\big( \{ cl({\O}) \times \VN \} \backslash \mathfrak{IB} \big) \ \backslash \ \bigcup_{y\in\mathcal{SG}^{C,2}} B( y , \varepsilon ) $ and corresponding $\mathcal{O}^{C,k}_{i}$ with small measure $\mathfrak{m}_{2}(\mathcal{O}^{C,k}_{i}) < \varepsilon$ by choosing (\ref{tilde Gk O def}) with sufficiently small $C_{k}(N)$. Finally we choose
		\[
		\delta^{r} := \min_{1\leq i\leq l_{k}} \delta^{r}_{ x^{C,k}_{i} } > 0, \quad  1\leq r \leq k.
		\]
		to finish the proof.   \\
	\end{proof}
	
	\begin{proposition}\label{G_C unif}
		For any $\varepsilon>0$ , we have the $\varepsilon-$neighborhood of $\mathfrak{{G}}$:
		\begin{equation*} \label{G_C unif nbd}
		( \mathfrak{G} )_{\varepsilon} \ = \Big\{ \bigcup_{i=1}^{l_G} \ B(x^{C}_{i}, r^{C}_{i})  \times  \mathcal{O}^{C}_{i} \Big\} \ \bigcup \ \big\{ \bigcup_{j=1}^{l_{sg}} B( y_{j}^{C} ,  \varepsilon )  \times  \VN \big\},
		\end{equation*}
		with $\mathcal{O}^{C}_{i} \subset \VN$,  $\mathfrak{m}_2( \mathcal{O}^{C}_{i} )< \varepsilon $ for all $i=1,2,\cdots,l_G < \infty, $ and $j=1,2,\cdots,l_{sg} < \infty$. For any $({x}, {v})\in \{ cl({\O}) \times \VN \} \backslash \mathfrak{IB}$, 
		\[
		x\in B( {x}^{C}_{i}, r^{C}_{i}) \quad\text{or}\quad x\in B( {y}^{C}_{j}, \varepsilon),
		\]
		for some $x^{C}_{i}$ or $y^{C}_{j}$. Moreover, if $x\notin \bigcup_{j=1}^{l_{sg}} B( y_{j}^{C} , \varepsilon )$, ${x}\in B( {x}^{C}_{i}, r^{C}_{i}) $, and $ {v}\in \VN \backslash \mathcal{O}^{C}_{i} $, then
		\begin{equation*}
		| v^{k-1}(T_{0},{x}, {v}) \cdot \mathbf{n}(x^{k}(T_{0}, {x}, {v})) |  > \delta > 0,\quad \forall t^{k}(T_{0},x,v) \in [0,T_{0}].  \\
		\end{equation*}
	\end{proposition}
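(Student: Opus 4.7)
The plan is to iterate the inductive construction of Lemmas \ref{G_C1}, \ref{G_C2}, and \ref{G_Ck} only finitely many times, where the cut-off level is supplied by the uniform bounce bound of Lemma \ref{uniformbound}, and to declare the resulting terminal neighborhood as $(\mathfrak{G})_\varepsilon$.

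First, I would fix $\varepsilon>0$ and apply Lemma \ref{uniformbound} to obtain a finite integer
\[
K := \mathfrak{N}^{*}_{\varepsilon, N, T_0} \ \leq \ C_{\varepsilon, NT_0} \ < \ \infty,
\]
which bounds the number of specular bounces any trajectory starting in $\{cl({\O}) \times \VN\} \backslash \mathfrak{IB}$ experiences during $[0,T_0]$. By the definition of $\mathfrak{IB}$ together with Lemma \ref{infinite_bounces_set}, such trajectories avoid both the convex grazing set $\gamma_0^V$ and the inflection grazing set $\gamma_0^I$, so the only grazing that can occur is concave, and therefore
\[
\mathfrak{G} \ = \ \mathfrak{G}^{C} \ = \ \bigcup_{k=1}^{K}\mathfrak{G}^{C,k}.
\]
Reducing the otherwise unbounded decomposition $\mathfrak{G}^C=\bigcup_{k\geq 1}\mathfrak{G}^{C,k}$ to a \emph{finite} union of layers is the crucial consequence of working on the complement of $\mathfrak{IB}$; without it the inductive scheme of Lemma \ref{G_Ck} could not terminate.

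Next, I would run the induction. The base case $k=1$ is Lemma \ref{G_C1} and $k=2$ is Lemma \ref{G_C2}; then for $k=3,\ldots,K$ I would apply Lemma \ref{G_Ck} with Assumption \ref{assump} verified at the previous level. To keep the velocity-measure threshold under control when the $K$ layers are finally unioned, I would feed the parameter $\varepsilon\, 2^{-k}$ (say) into the $k$-th application of Lemma \ref{G_Ck} in place of $\varepsilon$; this ensures that every $\mathcal{O}^{C,k}_{i}$ produced satisfies $\mathfrak{m}_{2}(\mathcal{O}^{C,k}_{i}) < \varepsilon\, 2^{-k}$, so that the combined exclusion in velocity at any given spatial ball has measure less than $\varepsilon$. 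At each step $(\mathfrak{G}^{C,k-1})_\varepsilon$ is already absorbed into $(\mathfrak{G}^{C,k})_\varepsilon$ by construction, and the cumulative set of sticky grazing points
\[
\{y_{j}^{C}\}_{j=1}^{l_{sg}} \ := \ \bigcup_{k=1}^{K}\mathcal{SG}^{C,k}
\]
is finite since each $\mathcal{SG}^{C,k}$ contains at most $(k-1)M^{C}m^{l}_{k}$ points and $K<\infty$.

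Finally, I would set $(\mathfrak{G})_{\varepsilon} := (\mathfrak{G}^{C,K})_{\varepsilon}$, relabel the balls $\{B(x^{C}_{i},r^{C}_{i})\}_{i=1}^{l_G}$ and the exclusion sets $\{\mathcal{O}^{C}_{i}\}_{i=1}^{l_G}$ as the finite (common refinement of) covers produced along the induction, and set
\[
\delta \ := \ \min_{1\leq r\leq K}\delta^{C,r} \ > \ 0,
\]
where each $\delta^{C,r}$ is the uniform non-grazing threshold produced by Lemma \ref{G_Ck}; positivity of $\delta$ is guaranteed by finiteness of $K$. For any $(x,v)$ with $x\notin\bigcup_{j}B(y_{j}^{C},\varepsilon)$, $x\in B(x^{C}_{i},r^{C}_{i})$, and $v\in\VN\backslash\mathcal{O}^{C}_{i}$, every bouncing index $k$ with $t^{k}(T_{0},x,v)\in[0,T_{0}]$ satisfies $k\leq\mathfrak{N}(x,v,NT_{0})\leq K$, and Lemma \ref{G_Ck} applied at level $K$ yields $|v^{k-1}\cdot\mathbf{n}(x^{k})|>\delta^{C,k}\geq\delta>0$, which is the desired conclusion.

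The main obstacle in this argument is the already-resolved one of producing $K<\infty$, namely the uniform bounce bound of Lemma \ref{uniformbound}; once that is in hand, the proposition is essentially a finite-stage assembly of the layer-by-layer data. The only delicate bookkeeping is ensuring that the velocity-measure threshold is not inflated when unions are taken across all $K$ layers, which the geometric rescaling $\varepsilon \mapsto \varepsilon\, 2^{-k}$ handles cleanly, and verifying that the finite accumulation of sticky grazing points $\bigcup_{k}\mathcal{SG}^{C,k}$ remains finite, which follows from the explicit cardinality bounds $|\mathcal{SG}^{C,k}|\leq (k-1)M^{C}m^{l}_{k}$.
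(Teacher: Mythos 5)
Your proposal is correct and follows essentially the same route as the paper: run the inductive construction of Lemmas \ref{G_C1}, \ref{G_C2}, and \ref{G_Ck}, and terminate it at the uniform bounce bound $K=\mathfrak{N}^{*}_{\varepsilon,N,T_0}$ supplied by Lemma \ref{uniformbound}, taking $\delta=\min_{r\le K}\delta^{C,r}>0$. Your dyadic rescaling $\varepsilon\mapsto\varepsilon 2^{-k}$ is a valid but unnecessary refinement, since the level-$k$ exclusion set $\mathcal{O}^{C,k}_{x}$ in Lemma \ref{G_Ck} already collects the bad directions from all levels $r\le k$ and its measure is kept below $\varepsilon$ by choosing $C_{k}(N)$ small.
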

	\begin{proof}
		We use mathematical induction. We already proved $k=1$ case in Lemma~\ref{G_C1}, when there is no sticky grazing set. From $k=2$, sticky grazing set appears and we proved Lemma~\ref{G_C2}. From Assumption~\ref{assump} and Lemma~\ref{G_Ck}, we know that Lemma~\ref{G_Ck} holds for any finite $k\in\mathbb{N}$. Moreover, number of bounce is uniformly bounded from Lemma~\ref{uniformbound}. So we stop mathematical induction in the maximal possible number of bouncing on $[0,T_{0}]$.  \\
	\end{proof}

	\subsection{Transversality and double Duhamel trajectory}
	We introduce local parametrization for $U = \O\times [0,H]$. Since we should treat three-dimensional trajectory from this subsection, we introduce the following notation to denote two-dimensional points in cross section, 
	\begin{equation*} \label{underbar}
		\underline{x} = (x_{1}, x_{3}),\quad \underline{v} = (v_{1}, v_{3}).
	\end{equation*}
	where missing $x_{2}$ and $v_{2}$ are components for axis direction. So we can write
	\[
		x = (\underline{x}, x_{2})\in\O,\quad v = (\underline{v}, v_{2})\in\mathbb{R}^{3}.
	\] 
	Especially for the points near boudnary, we define local parametrization, i.e. for $p\in\p\O$,  \\
	\begin{equation}\label{eta}
	\begin{split}
		&\eta_{{p}}:  \{ {\X}_{p} \in \mathbb{R}^{3}:  \X_{p,3}<0  \} \cap B(0, \delta_{1}) \ \rightarrow  \ \Omega \cap B({p}, \delta_{2}),\\
		&{\X}_{p} = ( \underline{\X}_{p}, \X_{p,2})	 \ \mapsto \  {x} := \eta_{p}  ( {\X}_{p} ),  \\
		&\eta_{p} (0,0,0) = p,\quad {x} = \eta_{p}  ( {\X}_{p} ) = ( \underline{\eta}_{p}( \underline{\X}_{p} ), \X_{p,2}  ) ,  \\
		&\underline{x} = \underline{\eta}_{p}  ( \underline{\X}_{p} ) = \underline{\eta}_{p}(\X_{p,1}, 0) + \X_{p,3} \mathbf{n} ( \underline{\eta}_{p}(\X_{p,1}, 0) ),  \\
	\end{split}
	\end{equation} 
	and $\eta_{p}(\X_{p}) \in \p \Omega$ if and only if $\X_{p,3}=0$. $\mathbf{n}\big( \underline{\eta}_{p}(\X_{p,1}, 0) \big)$ is outward unit normal vector at $\big( \underline{\eta}_{p}(\X_{p,1}, 0), \X_{p,2} \big) \in\p\O$. Since $\O$ is cylindrical, unit normal vector $\mathbf{n}$ is indepedent to $\X_{p,2}$. We use the following derivative symbols,
	\begin{equation*}\label{loc deriva sym}
	\begin{split}
		&\p_{i}\eta_{p} := \frac{\p}{\p \X_{i}} \eta_{p},\quad \underline{\nabla} := (\p_{1}, \p_{3}) = ( \frac{\p}{\p \X_{p,1}}, \frac{\p}{\p \X_{p,3}} ), \quad \p_{x_{i}} = \frac{\p}{\p x_{i}}, \quad  \underline{\nabla}_{x} := (\p_{x_{1}}, \p_{x_{3}}) = ( \frac{\p}{\p x_{1}}, \frac{\p}{\p x_{3}} ),   \\
		& \nabla = (\underline{\nabla}, \p_{2}),\quad \nabla_{x} = (\underline{\nabla}_{x}, \p_{2}),
	\end{split}
	\end{equation*} 
	where $x \in cl(\O)$ and ${\X_{p}} \in \eta^{-1}_{p}(\O)$. Note that it is easy to check $\eta_{p}$ is locally triple orthogonal system, i.e.
	\begin{equation} \label{orthonormal_eta}
	\begin{split}
		\langle \p_{i}\eta_{p}, \p_{j}\eta_{p} \rangle  = 0,\quad \text{for all}\quad i\neq j ,\quad \X\in \{\X_{p} \in \mathbb{R}^{3}:  \X_{p,3}<0  \} \cap B(0, \delta_{1}).
	\end{split}
	\end{equation}
	We also use standard notations 
	$
	g_{p,ij} := \langle \p_{i}\eta_{p}, \p_{j}\eta_{p} \rangle
	$
	and transformed velocity $\V_{p}$ is defined by    
	\begin{equation*}
	\mathbf{v}_{p,i}(\X_{p}) : = \frac{\p_{i} \eta_{{p}} (\X_{{p}})}{\sqrt{g_{{p},ii} (\X_{{p}})}} \cdot v,
	\end{equation*}	
	or equivalently,
	\begin{equation} \label{v_v}
	\V_{p} = 
	\begin{bmatrix}
	\V_{p,{1}}\\
	\V_{p,{2}}\\
	\V_{p,{3}}
	\end{bmatrix}
	=
	Q^{T}
	\begin{bmatrix}
	v_{{1}}\\
	v_{{2}}\\
	v_{{3}}
	\end{bmatrix} = Q^{T}v ,
	\quad\text{where}\quad
	Q := 
	\begin{bmatrix}
	\frac{\p_{1} \eta_{p,1}}{\sqrt{g_{p,11}}} & \frac{\p_{2} \eta_{p,1}}{\sqrt{g_{p,22}}} & \frac{\p_{3} \eta_{p,1}}{\sqrt{g_{p,33}}}  \\
	\frac{\p_{1} \eta_{p,2}}{\sqrt{g_{p,11}}} & \frac{\p_{2} \eta_{p,2}}{\sqrt{g_{p,22}}} & \frac{\p_{3} \eta_{p,2}}{\sqrt{g_{p,33}}}  \\
	\frac{\p_{1} \eta_{p,3}}{\sqrt{g_{p,11}}} & \frac{\p_{2} \eta_{p,3}}{\sqrt{g_{p,22}}} & \frac{\p_{3} \eta_{p,3}}{\sqrt{g_{p,33}}}  \\
	\end{bmatrix}.
	\end{equation}

	We compute transversality between two consecutive bouncings using local parametrization (\ref{eta}) and transformed velocity (\ref{v_v}). To denote bouncing index, we define
	\begin{equation}\label{x^k}
	\begin{split}
		\X^{k}_{{p}^{k}}&:=(\X^{k}_{{p}^{k},1}, \X^{k}_{{p}^{k},2} , 0 )  \  \text{ 
		such that } \    x^{k}= \eta_{{p}^{k}} (\X^{k}_{{p}^{k}}), \\   
		\V^{k}_{p^{k},i} &:= \frac{\p_{i} \eta_{{p}^{k}} (\X^{k}_{{p}^{k}})}{\sqrt{g_{{p}^{k},ii} (\X^{k}_{{p}^{k}})}} \cdot v^{k},  \\
	\end{split}
	\end{equation}
	where $p^{k}$ is a point on $\p\O$ near bouncing point $x^{k}$.  \\

	Since dynamics in $x_{2}$ direction is independent to the dynamics in cross section, we focus on the dynamics of two-dimensional cross section $\O$, for fixed $x_{2}$.  
	\begin{lemma}\label{Jac_billiard} Assume that $\Omega$ are $C^{2}$ (not necessarily convex) and $|\V^{k}_{p^{k},3}|, \ | \V^{k+1}_{p^{k+1},3}| > 0$. Consider $(t^{k+1}, \X^{k+1}_{p^{k+1},1}, \underline{\V}^{k+1}_{p^{k+1}} )$ as a functin of $(t^{k+1}, \X^{k}_{p^{k},1}, \underline{\V}^{k}_{p^{k}} )$  \\ 	
		\begin{equation} \label{Est-1}
		\begin{split} 
		\frac{\p (t^k - t^{k+1})}{\p \X^{k}_{{p}^k , 1}} 
		&=  \frac{-1}{   \V^{k+1}_{ {{p}^{k+1}} ,3 }   }
		\frac{  \p_{{3}} \underline{\eta}_{p^{k+1} }( \underline{x}^{k+1} )}{\sqrt{g_{{p}^{k+1},33}( \underline{x}^{k+1} )}} \cdot 
		\bigg[  \p_{1} \underline{\eta}_{{p}^{k}} ( \X^{k}_{{p}^{k},1}, 0 ) - (t^{k}-t^{k+1})
		\frac{\p \underline{v}^{k}   }{\p \X^{k}_{{p}^{k},1}}
		\bigg] ,
		\\ 
		\end{split}
		\end{equation}
		
		\begin{equation} \label{Est-2}
		\begin{split}
		\frac{\p \X_{{p}^{k+1},1}^{k+1}}{\p { \X_{{p}^{k},1}^{k}}}   
		&=   
		\frac{1}{\sqrt{g_{p^{k}, 11} ( \underline{x}^{k+1}  )}} \Big[\frac{\p_{1} \underline{\eta}_{p^{k+1} } (\underline{x}^{k+1} )}{ \sqrt{g_{p^{k}, 11} (\underline{x}^{k+1} )}} + 
		\frac{\V^{k+1}_{p^{k+1}, 1} }{   \V^{k+1}_{ {{p}^{k+1}} ,3 }   }
		\frac{  \p_{{3}} \underline{\eta}_{{p}^{k+1}}(\underline{x}^{k+1} )}{\sqrt{g_{{p}^{k+1},33}(\underline{x}^{k+1} )}} 
		\Big] 	\\
		&\quad \cdot 
		\bigg[  \p_{1} \underline{\eta}_{{p}^{k}} ( \underline{x}^{k}  ) - (t^{k}-t^{k+1})
		\frac{\p \underline{v}^{k}   }{\p \X^{k}_{{p}^{k},1}}
		\bigg] ,
		\\
		\end{split}
		\end{equation}
		\begin{equation} \label{Est-3}
		\begin{split}
		\frac{\p  {\V}^{k+1}_{{p}^{k+1}, 1}}{\p \X^{k}_{{p}^{k}, 1}}  
		&=  
		\frac{ \p \underline{v}^{k}  }{ \p \X^{k}_{{p}^{k}, 1} } \cdot \frac{\p_{1} \underline{\eta}_{{p}^{k+1}} (\underline{x}^{k+1})}{\sqrt{g_{{p}^{k+1}, 11} (\underline{x}^{k+1})}}
		+   {\underline{v}^{k} } \cdot \frac{\p \X^{k+1}_{{p}^{k+1}, 1}}{\p \X^{k}_{{p}^{k}, 1}} \frac{\p}{\p \X^{k+1}_{{p}^{k+1}, 1}}
		\Big( \frac{\p_{1} \underline{\eta}_{{p}^{k+1}} (\underline{x}^{k+1})}{\sqrt{g_{{p}^{k+1}, 11} (\underline{x}^{k+1})}}
		\Big)	,	\\
		\end{split}
		\end{equation}
		\begin{equation} \label{Est-4}
		\begin{split}
		\frac{\p  {\V}^{k+1}_{{p}^{k+1}, 3}}{\p \X^{k}_{{p}^{k }, 1}} 
		&=  
		- \frac{\p \underline{v}^{k}  }{\p \X^{k}_{{p}^{k}, 1}} \cdot \frac{\p_{3} \underline{ \eta}_{{p}^{k+1}}^{k+1}  (\underline{ x}^{k+1} )}{\sqrt{g_{{p}^{k+1}, 33} (\underline{ x}^{k+1} )}}
		-   {\underline{ v}^{k}  } \cdot \frac{\p \X^{k+1}_{{p}^{k+1}, 1}}{\p \X^{k}_{{p}^{k}, 1}} \frac{\p}{\p \X^{k+1}_{{p}^{k+1}, 1}}
		\Big( \frac{\p_{3} \underline{ \eta}_{{p}^{k+1}}^{k+1}  (\underline{ x}^{k+1} )}{\sqrt{g_{{p}^{k+1}, 33} (\underline{ x}^{k+1} )}}
		\Big) ,   \\
		\end{split}
		\end{equation}
		where
		\begin{equation}\label{V_X}
		\frac{\p v^{k}_{i}}{\p \X^{k}_{{p}^{k},1}} = \sum_{\ell=1}^{3} \V^{k}_{{p}^{k},\ell} \sum_{r (\neq \ell)}
		\frac{\sqrt{g_{{p}^{k}, rr} (\underline{ x}^{k} ) }}{ \sqrt{g_{{p}^{k}, \ell\ell} (\underline{ x}^{k} ) }
		}
		\Gamma_{{p}^{k}, \ell 1}^{r} (\underline{ x}^{k} ) \frac{\p_{r} \underline{\eta}_{{p}^{k},i} (\underline{ x}^{k} )}{\sqrt{g_{{p}^{k}, rr} (\underline{ x}^{k} )}},\quad i=1,3.
		\end{equation}
		
		\noindent For $i=1$ and $   j=1,3, $
		\begin{equation} \label{Est-5}
		\begin{split} 
		\frac{\p (t^k - t^{k+1}) }{\p \V^{k }_{{p}^{k} , j}} &    =   
		\frac{   (t^k - t^{k+1}) }{\V^{k+1}_{{p}^{k+1}, 3}  } 
		\bigg[
		\frac{\p_{j} \underline{\eta}_{{p}^{k}}  (\underline{x}^{k}) }{\sqrt{g_{{p}^{k},jj} (\underline{x}^{k} )}}
		 \bigg] \cdot 	\frac{\p_{3} \underline{\eta}_{{p}^{k+1}}^{k+1}  (\underline{x}^{k+1} )}{ \sqrt{g_{{p}^{k+1},33} (\underline{x}^{k+1} )}}, 
		\end{split}
		\end{equation}
		\begin{equation} \label{Est-6}
		\begin{split} 
		\\
		\frac{\p \X^{k+1}_{{p}^{k+1}, 1}}{\p \V^{k}_{{p}^{k}, j}} 
		& =   - (t^k - t^{k+1}) \frac{\p_{j} \underline{\eta}_{{p}^{k}}  (\underline{ x}^{k} ) }{\sqrt{g_{{p}^{k}, jj} (\underline{ x}^{k})} } \cdot 
		\frac{1}{\sqrt{g_{{p}^{k+1}, 11} (\underline{ x}^{k+1} )  }}
		\Big[
		\frac{\p_{1} \underline{ \eta}_{{p}^{k+1}}^{k+1}  (\underline{ x}^{k+1} )  }{\sqrt{g_{{p}^{k+1}, 11}  (\underline{ x}^{k+1} )  }} 	\\
		&\quad 
		+\frac{\V^{k+1}_{{p}^{k+1}, 1}}{\V^{k+1}_{{p}^{k+1},3}} \frac{\p_{3} \underline{ \eta}_{{p}^{k+1}}^{k+1}  (\underline{ x}^{k+1} ) 
		}{\sqrt{g_{{p}^{k+1}, 33}} (\underline{ x}^{k+1} )  }
		\Big] ,
		\end{split}
		\end{equation}
		\begin{equation} \label{Est-7}
		\begin{split} 
		\\
		%
		%
		%
		%
		\frac{\p {\V}^{k+1}_{{p}^{k+1},{1} }}{\p {\V}^{k}_{{p}^{k},j}}  
		& =  \sum_{\ell=1}^{2} \frac{\p \X^{k+1}_{{p}^{k+1}, \ell}}{\p \V^{k}_{{p}^{k}, j}}
		\p_{\ell} \Big( \frac{\p_{1} \underline{ \eta}_{{p}^{k+1}}^{k+1} } {\sqrt{g_{{p}^{k+1}, 11}}}\Big)\Big|_{\underline{ x}^{k+1} } \cdot \underline{ v}^{k}   + \frac{\p_{1} \underline{\eta}_{{p}^{k+1}} (\underline{ x}^{k+1} ) }{\sqrt{g_{{p}^{k+1}, 11} (\underline{ x}^{k+1} )}} \cdot \frac{\p_{j} \underline{\eta}_{{p}^{k}} ( \underline{x}^{k} )}{\sqrt{g_{{p}^{k}, jj} ( \underline{x}^{k} )}} , 	\\
		\end{split}
		\end{equation}

		\begin{equation} \label{Est-8}
		\begin{split}
		\frac{\p {\V}^{k+1}_{{p}^{k+1},{3} }}{\p {\V}^{k}_{{p}^{k},j}}  
		& =  - \sum_{\ell=1}^{2} \frac{\p \X^{k+1}_{{p}^{k+1}, \ell}}{\p \V^{k}_{{p}^{k}, j}}
		\p_{\ell} \Big( \frac{\p_{3} \underline{ \eta}_{{p}^{k+1}}^{k+1}  }{\sqrt{g_{{p}^{k+1},33}}}\Big)\Big|_{\underline{ x}^{k+1} } \cdot \underline{ v}^{k}   - \frac{\p_{3} \eta_{{p}^{k+1} } (\underline{ x}^{k+1} ) }{\sqrt{g_{{p}^{k+1}, 33} (\underline{ x}^{k+1} )}} \cdot \frac{\p_{j} \underline{\eta}_{{p}^{k}} ( \underline{x}^{k} )}{\sqrt{g_{{p}^{k}, jj} ( \underline{x}^{k} )}} ,	\\
		.\end{split}\end{equation}
	\end{lemma}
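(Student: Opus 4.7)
The plan is to reduce every formula to implicit differentiation of the single flight-between-bounces identity
\[
\underline{\eta}_{p^{k+1}}(\X^{k+1}_{p^{k+1},1},0) \;=\; \underline{\eta}_{p^{k}}(\X^{k}_{p^{k},1},0) \;-\; (t^{k}-t^{k+1})\,\underline{v}^{k},
\]
which records that $x^{k}$ and $x^{k+1}$ both lie on $\p\Omega$ and are joined by free transport with the in-going velocity $\underline{v}^{k} = \sum_{\ell}\V^{k}_{p^{k},\ell}\,\p_{\ell}\underline{\eta}_{p^{k}}/\sqrt{g_{p^{k},\ell\ell}}$. Throughout I will use the orthogonality (\ref{orthonormal_eta}) of the adapted frame and the two specular-reflection identities
\[
\V^{k+1}_{p^{k+1},1} \;=\; \underline{v}^{k}\cdot\frac{\p_{1}\underline{\eta}_{p^{k+1}}}{\sqrt{g_{p^{k+1},11}}}, \qquad \V^{k+1}_{p^{k+1},3} \;=\; -\,\underline{v}^{k}\cdot\frac{\p_{3}\underline{\eta}_{p^{k+1}}}{\sqrt{g_{p^{k+1},33}}},
\]
where the minus sign in the second identity records the sign flip of the normal component in $v^{k+1}=R_{x^{k+1}}v^{k}$.

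First I would differentiate the displayed constraint with respect to $\X^{k}_{p^{k},1}$ (with $\underline{\V}^{k}_{p^{k}}$ held fixed) and dot the resulting vector equation with the outward unit normal $\p_{3}\underline{\eta}_{p^{k+1}}/\sqrt{g_{p^{k+1},33}}$ at $x^{k+1}$. By (\ref{orthonormal_eta}) this kills the left-hand side and leaves a scalar equation that solves algebraically for $\p(t^{k}-t^{k+1})/\p\X^{k}_{p^{k},1}$ with denominator $\V^{k+1}_{p^{k+1},3}$; this is exactly (\ref{Est-1}). Dotting the same differentiated identity with the tangent $\p_{1}\underline{\eta}_{p^{k+1}}/\sqrt{g_{p^{k+1},11}}$ instead, and substituting the time derivative just obtained, yields (\ref{Est-2}). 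The same two projections applied to the $\V^{k}_{p^{k},j}$ derivative (for which $\p\underline{v}^{k}/\p\V^{k}_{p^{k},j}=\p_{j}\underline{\eta}_{p^{k}}/\sqrt{g_{p^{k},jj}}$) produce (\ref{Est-5}) and (\ref{Est-6}) in identical fashion.

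The auxiliary identity (\ref{V_X}) is the derivative of $v^{k}=\sum_{\ell}\V^{k}_{p^{k},\ell}\,\p_{\ell}\underline{\eta}_{p^{k}}/\sqrt{g_{p^{k},\ell\ell}}$ at fixed $\V^{k}_{p^{k},\ell}$; expanding $\p_{1}(\p_{\ell}\underline{\eta}_{p^{k}}/\sqrt{g_{p^{k},\ell\ell}})$ in the orthonormal frame $\{\p_{r}\underline{\eta}_{p^{k}}/\sqrt{g_{p^{k},rr}}\}_{r}$ produces exactly the connection coefficients $\Gamma^{r}_{p^{k},\ell 1}$ of the orthogonal chart, modulo the usual $\sqrt{g_{rr}/g_{\ell\ell}}$ rescaling. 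With (\ref{V_X}) at hand, formulas (\ref{Est-3}), (\ref{Est-4}), (\ref{Est-7}), (\ref{Est-8}) follow by applying the chain rule to the two reflection identities above: differentiating the $\underline{v}^{k}$-factor gives the ``first" term (invoking (\ref{V_X}) when differentiating by $\X^{k}_{p^{k},1}$, or the unit vector $\p_{j}\underline{\eta}_{p^{k}}/\sqrt{g_{p^{k},jj}}$ when differentiating by $\V^{k}_{p^{k},j}$), while differentiating the frame vectors evaluated at the moving base point $\underline{\X}^{k+1}_{p^{k+1}}$ gives the ``second" term, multiplied by the factors $\p\X^{k+1}_{p^{k+1},\ell}/\p(\,\cdot\,)$ already computed in (\ref{Est-2}), (\ref{Est-6}).

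The only real obstacle I expect is bookkeeping: tracking two distinct local charts $\eta_{p^{k}}, \eta_{p^{k+1}}$ simultaneously, the sign flip that relates $\underline{v}^{k}\cdot\mathbf{n}^{k+1}$ to $\V^{k+1}_{p^{k+1},3}$, and the proliferation of connection terms whenever a frame vector is evaluated at a moving base point. The non-grazing hypothesis $|\V^{k}_{p^{k},3}|,|\V^{k+1}_{p^{k+1},3}|>0$ enters in exactly one place, namely to divide by $\V^{k+1}_{p^{k+1},3}$ in the very first step where $\p(t^{k}-t^{k+1})/\p(\,\cdot\,)$ is solved for, which is why the overall computation is formally identical to Lemma 2.3 of \cite{KimLee}.
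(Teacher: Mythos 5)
Your proposal is correct and follows essentially the same route as the paper: differentiate the free-transport identity $\underline{\eta}_{p^{k+1}}(\X^{k+1}_{p^{k+1},1},0)=\underline{\eta}_{p^{k}}(\X^{k}_{p^{k},1},0)-(t^{k}-t^{k+1})\underline{v}^{k}$ in $\X^{k}_{p^{k},1}$ and $\V^{k}_{p^{k},j}$, project onto the orthogonal frame at $\underline{x}^{k+1}$ so that orthogonality kills the left-hand side and the normal component $\V^{k+1}_{p^{k+1},3}$ appears as the denominator, and then obtain the velocity derivatives by the chain rule applied to the reflection identities, with (\ref{V_X}) coming from differentiating the frame expansion of $\underline{v}^{k}$ via the Christoffel symbols. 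This matches the paper's proof step for step, including where the non-grazing hypothesis is used.
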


	\begin{proof} \textit{Proof of (\ref{Est-1})}. By the definitions (\ref{eta}), (\ref{specular_cycles}), and our setting (\ref{x^k}) and (\ref{E_Ham}),
		\begin{equation}\begin{split} \label{position identity}
		\underline{ x}^{k+1} (\X^{k+1}_{{p}^{k+1},1}, 0)
		&=	\underline{\eta}_{{p}^{k}}(\X^{k}_{{p}^{k},1}, 0) + \int_{t^{k}}^{ t^{k+1}} \underline{v}^{k} . \\
		\end{split}\end{equation}
		We take $\frac{\p}{\p \X^{k}_{{p}^{k}, 1}}$ to above equality to get
		\begin{equation} \label{diff pos iden}
		\begin{split}
		&  \frac{\p \X^{k+1}_{{p}^{k+1}, 1}}{\p \X^{k}_{{p}^{k}, 1}} \frac{\p\underline{ \eta}_{{p}^{k+1}}^{k+1} }{\p \X^{k+1}_{{p}^{k+1}, 1}}\Big\vert_{\underline{ x}^{k+1} }   
		=  - (t^{k}-t^{k+1}) \frac{\p \underline{ v}^{k}  }{\p \X_{p^{k}, 1}^{k}} -  \frac{\p (t^{k }-t^{k+1}) }{\p {\X^{k}_{{p}^{k}, 1}}} \underline{ v}^{k}   + 
		\p_{1} \underline{\eta}_{{p}^{k}}( \X^{k}_{{p}^{k},1}, 0 )
		\\
		\end{split}
		\end{equation}
		and then take an inner product with $\frac{\p_{{3}} \underline{ \eta}_{{p}^{k+1}}^{k+1} }{\sqrt{g_{{p}^{k+1},33}}}\Big|_{\underline{ x}^{k+1} }  $ to have 
		\begin{equation*} \label{diff pos iden dot 3}
		\begin{split}
		&  \frac{\p \X^{k+1}_{{p}^{k+1}, 1}}{\p \X^{k}_{{p}^{k}, 1}} \frac{\p\underline{ \eta}_{{p}^{k+1}}^{k+1} }{\p \X^{k+1}_{{p}^{k+1}, 1}}\Big\vert_{\underline{ x}^{k+1} }  \cdot
		\frac{\p_{{3}} \underline{ \eta}_{{p}^{k+1}}^{k+1}  }{\sqrt{g_{{p}^{k+1},33}}}   \Big\vert_{\underline{ x}^{k+1} } 
		\\
		&=  - (t^{k}-t^{k+1}) \frac{\p \underline{ v}^{k}  }{\p \X_{p^{k}, 1}^{k}} \cdot \frac{\p_{3} \underline{ \eta}_{{p}^{k+1}}^{k+1} }{\sqrt{g_{{p}^{k+1},33}} }  \Big|_{\underline{ x}^{k+1} }  
		 -   \frac{\p (t^{k }-t^{k+1}) }{\p {\X^{k}_{{p}^{k}, 1}}}  \underline{ v}^{k}    	
		\cdot \frac{ \p_{3} \underline{ \eta}_{{p}^{k+1}}^{k+1} }{\sqrt{g_{{p}^{k+1},33}}}\Big|_{ \underline{ x}^{k+1} } \\
		&\quad + 
		\p_{1} \underline{\eta}_{{p}^{k}}( \X^{k}_{{p}^{k},1}, 0 ) \cdot \frac{ \p_{3} \underline{ \eta}_{{p}^{k+1}}^{k+1} }{
			\sqrt{g_{p^{k+1},33}}
		}\Big|_{\underline{ x}^{k+1} } ,
		\end{split}
		\end{equation*}
		where we abbreviated $\underline{X}(s) = \underline{X}(s;t^{k},\underline{ x}^{k} ,\underline{ v}^{k} )$ and $\underline{V}(s) = \underline{V}(s;t^{k},\underline{ x}^{k} ,\underline{ v}^{k} )$. Due to (\ref{orthonormal_eta}) the LHS equals zero. Now we consider the RHS. From (\ref{v_v}), we prove (\ref{V_X}). 
		We also note that 
		\begin{equation}\label{down_V}
		\lim_{s\downarrow t^{k+1}} V (s;t^{k}, \underline{ x}^{k} , \underline{ v}^{k} ) = \underline{ v}^{k}   . 
		\end{equation}
		Therefore, from (\ref{specular_cycles}) and (\ref{x^k}), 
		$$  v^{k }   \cdot \frac{ \p_{3} \underline{ \eta}_{{p}^{k+1}}^{k+1} }{\sqrt{g_{{p}^{k+1},33}}}\big|_{ \underline{ x}^{k+1} } = -\V^{k+1}_{p^{k+1},3}. $$  
		Dividing both sides by $\underline{ v}^{k}  	
		\cdot \p_{3} \underline{ \eta}_{{p}^{k+1}}^{k+1}  \big|_{\underline{ x}^{k+1} } = \V^{k+1}_{p^{k+1},3}$, we get (\ref{Est-1}). \\
		
		\vspace{4pt}
		
		\noindent \textit{Proof of (\ref{Est-2})}. We take inner product with $\frac{\p_{1} \underline{ \eta}_{{p}^{k+1}}^{k+1} }{g_{{p}^{k+1}, 11}}\Big|_{\underline{ x}^{k+1} }$ to (\ref{diff pos iden}) to have 
		\begin{equation*} \label{diff pos iden ii}
		\begin{split}
		&  \frac{\p \X^{k+1}_{{p}^{k+1}, 1}}{\p \X^{k}_{{p}^{k}, 1}} \frac{\p\underline{ \eta}_{{p}^{k+1}}^{k+1} }{\p \X^{k+1}_{{p}^{k+1}, 1}}\Big\vert_{\underline{ x}^{k+1} } \cdot \frac{\p_{1} \underline{ \eta}_{{p}^{k+1}}^{k+1} }{g_{{p}^{k+1}, 11}}\Big|_{\underline{ x}^{k+1} } = \frac{\p \X^{k+1}_{{p}^{k+1}, 1}}{\p \X^{k}_{{p}^{k}, 1}} 
		\\
		&=  - (t^{k}-t^{k+1}) \frac{\p \underline{ v}^{k}  }{\p \X_{p^{k},1}^{k}} \cdot \frac{\p_{1} \underline{ \eta}_{{p}^{k+1}}^{k+1} }{g_{{p}^{k+1},11}}\Big|_{\underline{ x}^{k+1} }  -   \frac{\p (t^{k }-t^{k+1}) }{\p {\X^{k}_{{p}^{k}, 1}}} \underline{ v}^{k}    \cdot \frac{\p_{1} \underline{ \eta}_{{p}^{k+1}}^{k+1} }{g_{{p}^{k+1}, 11}}\Big|_{\underline{ x}^{k+1} }	
		+ 
		\p_{1} \underline{\eta}_{{p}^{k}}( \X^{k}_{{p}^{k},1}, 0 ) \cdot \frac{\p_{1} \underline{ \eta}_{{p}^{k+1}}^{k+1} }{g_{{p}^{k+1},11}}\Big|_{\underline{ x}^{k+1} } .
		\end{split}
		\end{equation*}
		Since
		$$  \underline{ v}^{k}    \cdot \frac{ \p_{1} \underline{ \eta}_{{p}^{k+1}}^{k+1} }{g_{{p}^{k+1}, 11}}\big|_{ \underline{ x}^{k+1} } = - {\V^{k+1}_{p^{k+1},1} \over \sqrt{g_{{p}^{k+1}, 11}} }, $$  
		from (\ref{orthonormal_eta}) and (\ref{Est-1}),
		\begin{equation*} \label{Estimate 2}
		\begin{split}
		\frac{\p \X_{{p}^{k+1},1}^{k+1}}{\p { \X_{{p}^{k}, 1}^{k}}}   
		&=    \frac{1}{   \V^{k+1}_{ {{p}^{k+1}} ,3 }   }
		\frac{  \p_{{3}} \underline{ \eta}_{{p}^{k+1}}^{k+1} (\underline{ x}^{k+1} )}{\sqrt{g_{{p}^{k+1},33}(\underline{ x}^{k+1} )}} \cdot 
		\bigg[  \p_{1} \underline{\eta}_{{p}^{k}} ( \underline{ x}^{k}  ) - (t^{k}-t^{k+1})
		\frac{\p \underline{ v}^{k}   }{\p \X^{k}_{{p}^{k}, 1}}
		\bigg] 
		\frac{ \V^{k+1}_{p^{k+1}, 1} }{\sqrt{g_{p^{k+1}, 11}}} \Big|_{\underline{ x}^{k+1} } \\
		&\quad	+ \frac{\p_{1} \underline{\eta}_{p^{k+1}}} { {g_{p^{k+1}, 11} }} \Big|_{\underline{ x}^{k+1} }  \cdot  \Big[\p_{1} \underline{\eta}_{p^{k}} (\underline{ x}^{k} )
		- (t^{k}-t^{k+1})  \frac{\p \underline{ v}^{k} }{\p \X^{k}_{{p}^{k}, 1}}  \Big].  \\
		\end{split}
		\end{equation*}
		This ends the proof of (\ref{Est-2}).

		\vspace{4pt}
		
		\noindent \textit{Proof of (\ref{Est-3}) and (\ref{Est-4})}. From (\ref{specular_cycles}) and (\ref{x^k}),			
		\begin{equation}  
		\begin{split}\label{v_123}
		{\V}^{k+1}_{{p}^{k+1}, 1}   
		&=  \frac{ \p_{1} \underline{ \eta}_{{p}^{k+1}}^{k+1}  }{\sqrt{g_{{p}^{k+1},11}}}  \Big|_{\underline{ x}^{k+1} } \cdot  \lim_{s \downarrow  t^{k+1}} V (s; t^{k}, \underline{ x}^{k} ,\underline{ v}^{k} ), \\
		{\V}^{k+1}_{{p}^{k+1},3}  & =  
		- \frac{ \p_{3} \underline{ \eta}_{{p}^{k+1}}^{k+1}  }{\sqrt{g_{{p}^{k+1},33}}}  \Big|_{\underline{ x}^{k+1} } \cdot  \lim_{s \downarrow  t^{k+1}} V (s; t^{k}, \underline{ x}^{k} ,\underline{ v}^{k} ).
		\end{split}
		\end{equation}
		From (\ref{v_123}), 
		\begin{equation*}\begin{split}
		\frac{\p \V^{k+1}_{{p}^{k+1}, 1}}{\p \X^{k}_{{p}^{k}, 1}}  =&
		\frac{\p_{1} \underline{\eta}_{{p}^{k+1}} (\underline{ x}^{k+1} ) }{\sqrt{g_{{p}^{k+1}, 11} (\underline{ x}^{k+1} )}} \cdot  \frac{\p \underline{ v}^{k}  }{\p \X^{k}_{{p}^{k}, 1}}  \\
		&\quad				+
		\frac{\p \X^{k+1}_{{p}^{k+1}, 1}}{\p \X^{k}_{{p}^{k}, 1}} \frac{\p}{\p \X^{k+1}_{{p}^{k+1}, 1}}\Big(  \frac{\p_{i} \underline{ \eta}_{{p}^{k+1}}^{k+1}   }{\sqrt{g_{{p}^{k+1}, 11}    }  }  \Big)\Big|_{\underline{ x}^{k+1} } \cdot   \lim_{s \downarrow  t^{k+1}} V (s; t^{k}, \underline{ x}^{k} ,\underline{ v}^{k} ), \end{split}\end{equation*}	
		\begin{equation*} \begin{split}
		\frac{\p \V^{k+1}_{{p}^{k+1}, 3}}{\p \X^{k}_{{p}^{k}, 1}}  =&
		-\frac{\p_{3} \underline{\eta}_{{p}^{k+1}} (\underline{ x}^{k+1} ) }{\sqrt{g_{{p}^{k+1}, 33} (\underline{ x}^{k+1} )}} \cdot 
		 \frac{\p \underline{ v}^{k} }{\p \X^{k}_{{p}^{k}, 1}}
		  \\
		&
		-
		\frac{\p \X^{k+1}_{{p}^{k+1}, 1}}{\p \X^{k}_{{p}^{k}, 1}} \frac{\p}{\p \X^{k+1}_{{p}^{k+1}, 1}}\Big(  \frac{\p_{3} \underline{ \eta}_{{p}^{k+1}}^{k+1}   }{\sqrt{g_{{p}^{k+1}, 33}    }  }  \Big)\Big|_{\underline{ x}^{k+1} } \cdot  \lim_{s \downarrow  t^{k+1}}V (s; t^{k}, \underline{ x}^{k} ,\underline{ v}^{k} ).\end{split}
		\end{equation*}
		From (\ref{down_V}), we prove (\ref{Est-3}) and (\ref{Est-4}).				

		\vspace{4pt}
		
		Now we consider (\ref{Est-5})-(\ref{Est-8}) for $v -$derivatives.
		
		\vspace{4pt}
		
		\noindent\textit{Proof of (\ref{Est-5})}. We take $\frac{\p}{\p \V^{k}_{{p}^{k},j}}$ to (\ref{position identity}) for $j=1,3$ to get
		\begin{equation} \label{v diff pos iden}
		\begin{split}
		&  \frac{\p \X^{k+1}_{{p}^{k+1},1}}{\p \V^{k}_{{p}^{k},j}} \frac{\p\underline{ \eta}_{{p}^{k+1}}^{k+1} }{\p \X^{k+1}_{{p}^{k+1},1}}\Big\vert_{\underline{ x}^{k+1} }   
		=  - (t^{k}-t^{k+1}) \frac{\p \underline{ v}^{k}  }{\p \V_{p^{k},j}^{k}}  -   \frac{\p (t^{k }-t^{k+1}) }{\p {\V^{k}_{{p}^{k},j}}}   \underline{ v}^{k} ,   
		\\
		\end{split}
		\end{equation}
		and then take an inner product with $\frac{\p_{{3}} \underline{ \eta}_{{p}^{k+1}}^{k+1} }{\sqrt{g_{{p}^{k+1},33}}}\Big|_{\underline{ x}^{k+1} }  $ to have

		\begin{equation}  \label{v diff pos iden dot 3}
		\begin{split}
		& \frac{\p \X^{k+1}_{{p}^{k+1},1}}{\p \V^{k}_{{p}^{k},j}} \frac{\p\underline{ \eta}_{{p}^{k+1}}^{k+1} }{\p \X^{k+1}_{{p}^{k+1},1}}\Big\vert_{\underline{ x}^{k+1} }  \cdot   \frac{\p_{3} \underline{\eta}_{{p}^{k+1}} }{\sqrt{g_{{p}^{k+1}, 33} }} \Big|_{\underline{ x}^{k+1} }  \\
		&=	\Big\{ - ( t^{k} - t^{k+1})  \frac{\p \underline{ v}^{k}  }{\p \V^{k}_{p^{k}, j}} - \frac{\p( t^{k} - t^{k+1})}{\p \V^{k}_{p^{k}, 1}} \lim_{s \downarrow t^{k+1}} V (s;t^{k}, \underline{ x}^{k} , \underline{ v}^{k} ) \Big\} \cdot  \frac{\p_{3} \underline{\eta}_{{p}^{k+1}} }{\sqrt{g_{{p}^{k+1}, 33} }} \Big|_{\underline{ x}^{k+1} } . \\
		\end{split}
		\end{equation}
		Due to (\ref{orthonormal_eta}), the LHS equals zero. Now we consider the RHS.
		From (\ref{v_v}),
		
		
		\begin{equation} \label{dv^0/dv^0}
		\frac{\p \underline{ v}^{k}  }{\p {\V}^{k}_{{p}^{k},j}} =  \frac{\p_{j} \underline{\eta}_{{p}^{k}}  (\X_{{p}^{k}, 1}^{k}, 0 ) }{\sqrt{g_{{p}^{k},jj} (\X_{{p}^{k}, 1}^{k},  0 )}} .
		\end{equation} 

		\noindent Using (\ref{v_123}), (\ref{v diff pos iden dot 3}), and (\ref{dv^0/dv^0}), we prove (\ref{Est-5}).
		
		\vspace{4pt}
		
		\noindent\textit{Proof of (\ref{Est-6})}. For $j=1,3$ , we take inner product with $\frac{\p_{{i}} \underline{ \eta}_{{p}^{k+1}}^{k+1} }{g_{{p}^{k+1},ii}}\Big|_{\underline{ x}^{k+1} }$ to (\ref{v diff pos iden}) to have 
		\begin{eqnarray*}
			\frac{\p \X^{k+1}_{p^{k+1}, 1}}{\p \V^{k}_{p^{k}, j}} 
			&=&
			\Big\{- \frac{\p (t^{k} - t^{k+1})}{\p \V^{k}_{p^{k}, j}} \lim_{s\downarrow t^{k+1}} \underline{V} (s; t^{k}, \underline{ x}^{k} , \underline{ v}^{k} )- (t^{k} - t^{k+1}) \frac{\p \underline{ v}^{k} }{\p \V^{k}_{p^{k}, j}}
			\Big\}
			\cdot \frac{\p_{{1}} \underline{ \eta}_{{p}^{k+1}}^{k+1} }{g_{{p}^{k+1},11}}\Big|_{\underline{ x}^{k+1} } .	\\
		\end{eqnarray*}
		From (\ref{dv^0/dv^0}) and (\ref{Est-5}), we prove (\ref{Est-6}).

		\vspace{4pt}
		
		\noindent\textit{Proof of (\ref{Est-7}) and (\ref{Est-8}) }. For $j=1,3$, from (\ref{v_123}), 
		\begin{eqnarray*}
			\frac{\p {\V}^{k+1}_{{p}^{k+1},{1} }}{\p {\V}^{k}_{{p}^{k},j}}  &=&
			\frac{\p \X^{k+1}_{{p}^{k+1}, 1}}{\p \V^{k}_{{p}^{k}, j}}
			\p_{1}\Big( \frac{\p_{1} \underline{ \eta}_{{p}^{k+1}}^{k+1}  }{\sqrt{g_{{p}^{k+1},11}}}\Big)\Big|_{\underline{ x}^{k+1} } \cdot  \lim_{s \downarrow t^{k+1}} V (s;t^{k}, \underline{ x}^{k} , \underline{ v}^{k} )   + \frac{\p_{1} \underline{\eta}_{{p}^{k+1}} (\underline{ x}^{k+1} ) }{\sqrt{g_{{p}^{k+1}, 11} (\underline{ x}^{k+1} )}} \cdot 
			\frac{\p \underline{ v}^{k}  }{\p \V^{k}_{{p}^{k},j}}  \\
			&=& \frac{\p \X^{k+1}_{{p}^{k+1}, 1}}{\p \V^{k}_{{p}^{k}, j}}
			\p_{1} \Big( \frac{\p_{1} \underline{ \eta}_{{p}^{k+1}}^{k+1}  }{\sqrt{g_{{p}^{k+1},11}}}\Big)\Big|_{\underline{ x}^{k+1} } \cdot \underline{ v}^{k}   
			+
			\frac{\p_{1} \underline{\eta}_{{p}^{k+1}} (\underline{x}^{k+1}) }{\sqrt{g_{{p}^{k+1}, 11} (\underline{ x}^{k+1} )}} \cdot \frac{\p_{j} \underline{ \eta}_{{p}^{k}}^{k}  (\underline{x}^{k})}{\sqrt{g_{{p}^{k}, jj} (\underline{x}^{k})}} .  \\
		\end{eqnarray*}
		From (\ref{Est-5}) and (\ref{Est-6}), we prove (\ref{Est-7}). The proof of (\ref{Est-8}) is also very similar as above from (\ref{v_123}).  \\
	\end{proof}
	
		\begin{lemma} \label{global to local} 
			Assume that $x\in \Omega$ (not necessarily convex) and $\xb(t,x,v)$ is in the neighborhood of $p^{1} \in \p\Omega$. When $|\V_{p^{1},3}^{1}| > 0$, locally, 
			\begin{equation} \label{Est--1}
			\begin{split}
			\frac{\p \tb }{\p x_{j} } &= - \frac{1}{\V^{1}_{{p^{1}},3}} 
			e_{j} \cdot  \frac{\p_{3} \underline{\eta}_{{p}^{1}}  (\underline{x}^{1} )}{\sqrt{g_{{p^{1}}, 33} (\underline{x}^{1})}}, \quad j=1,3, \\
			\end{split}
			\end{equation}	
			\begin{equation} \label{Est--2}
			\begin{split}	
			\frac{\p \tb}{\p v_{j}}
			&= \frac{\tb}{\V^{1}_{{p^{1}},3}} e_{j} 
			\cdot \frac{\p_{3} \underline{\eta}_{{p^{1}}} (\underline{x}^{1}) }{ \sqrt{g_{{p^{1}},33} ( \underline{x}^{1} ) }} ,  \quad j=1,3, \\
			\end{split}
			\end{equation}
			\begin{equation} \label{Est--3}
			\begin{split}	
			\frac{\p \X^{1}_{{p^{1}}, 1}}{\p x_{j} } 
			&=
			e_{j} \cdot \frac{1}{\sqrt{g_{{p^{1}},11} (\underline{x}^{1})  }}
			\Big[ \frac{\p_{1} \underline{\eta}_{{p^{1}}} (\underline{x}^{1})   }{ \sqrt{g_{{p^{1}}, 11}  (\underline{x}^{1})  }   }
			+ \frac{\V^{1}_{{p^{1}}, 1}   }{\V_{{p^{1}},3}   } \frac{\p_{3} \underline{\eta}_{{p}^{1}}  (\underline{x}^{1}) }{\sqrt{g_{{p^{1}}, 33} (\underline{x}^{1})  }}  \Big],   \quad j=1,3, \\
			\end{split}
			\end{equation}
			\begin{equation} \label{Est--4}
			\begin{split}		
			\frac{\p \X^{1}_{{p^{1}},1}}{\p v_{j}}
			&=
			- \tb e_{j} 
			\cdot \frac{1}{\sqrt{g_{{p^{1}},11}  (\underline{x}^{1})  }}
			\Big[ \frac{\p_{1} \underline{\eta}_{{p}^{1}} (\underline{x}^{1})  }{\sqrt{g_{{p^{1}},11}(\underline{x}^{1})}   } { +} \frac{\V^{1}_{{p^{1}},1}}{\V^{1}_{{p^{1}},3}} \frac{\p_{3} \underline{\eta}_{{p}^{1}}(\underline{x}^{1})   }{\sqrt{g_{{p^{1}},33}  (\underline{x}^{1}) }}\Big] \quad j=1,3, \\
			\end{split}
			\end{equation}
			\begin{equation} \label{Est--5}
			\begin{split}	
			\frac{\p \V^{1}_{{p^{1}}, i}}{\p x_{j}} 
			&=  
			\frac{\p \X^{1}_{{p^{1}}, 1}}{\p x_{j}}  
			\p_{1}\big(  \frac{\p_{i} \underline{\eta}_{{p}^{1}}}{\sqrt{g_{{p^{1}}, ii}}}  \big) 
			\Big|_{\underline{x}^{1}}\cdot V (t-\tb)   
			=
			 \frac{\p \X^{1}_{{p^{1}}, 1}}{\p x_{j}}  
			\p_{1}\big(  \frac{\p_{i} \underline{\eta}_{{p}^{1}}}{\sqrt{g_{{p^{1}}, ii}}}  \big) 
			\Big|_{\underline{x}^{1}}\cdot  \underline{v} , \quad i = 1,3, \ j = 1,3, \\
			\end{split}
			\end{equation}
			\begin{equation} \label{Est--6}
			\begin{split}	
			\frac{\p \V^{1}_{{p^{1}}, i}}{\p v_{j}} 
			&= 
			\frac{\p_{i} \underline{\eta}_{{p}^{1}}(\underline{x}^{1}) }{\sqrt{g_{p^{1},ii}(\underline{x}^{1})  }} \cdot e_{j}
			+  
			 \frac{\p \X^{1}_{{p^{1}}, 1}}{\p v_{j}} 
			\p_{1}
			\Big( \frac{\p_{i} \underline{\eta}_{{p}^{1}}}{\sqrt{g_{{p^{1}}, ii}}}  \Big)\Big|_{\underline{x}^{1}} \cdot  \underline{v} , \quad i = 1,3, \ j = 1,3. \\	\\
			\end{split}
			\end{equation}
			Here, $e_{j}$ is the $j^{th}$ directional standard unit vector in $\mathbb{R}^{3}$. \\ 
			\noindent	Moreover,
			\begin{equation} \label{Est--7}
			\begin{split}
			\frac{ \p | \underline{\mathbf{v}}^{1}_{p^{1}}  | }{\p x_{j}} &= 0,
			\end{split}
			\end{equation}
			\begin{equation} \label{Est--8}
			\begin{split}
			\frac{\p |\underline{\mathbf{v}}^{1}_{p^{1}}|}{\p v_{j}} &=  \lim_{s \downarrow t^{1}} \frac{V_{j}(s;t, \underline{x}, \underline{v})}{|\underline{V}(s;t, \underline{x}, \underline{v})|} .  \\
			\end{split}
			\end{equation}
		\end{lemma}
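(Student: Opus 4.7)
The strategy mirrors that of Lemma~\ref{Jac_billiard}, but now the ``initial'' leg of the trajectory runs from the interior phase $(x,v)$ directly to the first boundary hit, rather than from one bounce to the next. The starting point is the straight-line identity
$$
\underline{\eta}_{p^{1}}\big(\X^{1}_{p^{1},1}, 0\big) \;=\; \underline{x} - \tb\, \underline{v},
$$
which encodes $X(t-\tb;t,x,v) = x^{1} \in \p\O$. Since $\X^{1}_{p^{1},3} = 0$ is held fixed and $\underline{v}$ is independent of $x$, differentiating with respect to $x_{j}$ (for $j=1,3$) yields
$$
\frac{\p \X^{1}_{p^{1},1}}{\p x_{j}}\, \p_{1}\underline{\eta}_{p^{1}}\big|_{\underline{x}^{1}} \;=\; e_{j} \;-\; \frac{\p \tb}{\p x_{j}}\, \underline{v},
$$
while differentiating with respect to $v_{j}$ yields
$$
\frac{\p \X^{1}_{p^{1},1}}{\p v_{j}}\, \p_{1}\underline{\eta}_{p^{1}}\big|_{\underline{x}^{1}} \;=\; -\,\tb\, e_{j} \;-\; \frac{\p \tb}{\p v_{j}}\, \underline{v}.
$$

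To extract (\ref{Est--1}) and (\ref{Est--2}), I take the inner product of these two vector identities with the normal $\frac{\p_{3}\underline{\eta}_{p^{1}}}{\sqrt{g_{p^{1},33}}}\big|_{\underline{x}^{1}}$. The left-hand sides vanish by the triple-orthogonal frame property (\ref{orthonormal_eta}), and the key algebraic input is
$$
\underline{v}\cdot \frac{\p_{3}\underline{\eta}_{p^{1}}}{\sqrt{g_{p^{1},33}}}\Big|_{\underline{x}^{1}} \;=\; -\, \V^{1}_{p^{1},3},
$$
which comes from the $k=0$ analogue of (\ref{v_123}) together with the observation that $\lim_{s\downarrow t^{1}} V(s;t,x,v) = v$, as no reflection has yet taken place. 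Solving for $\frac{\p \tb}{\p x_{j}}$ and $\frac{\p \tb}{\p v_{j}}$ produces (\ref{Est--1}) and (\ref{Est--2}). Next, dotting the same two vector identities with $\frac{\p_{1}\underline{\eta}_{p^{1}}}{g_{p^{1},11}}\big|_{\underline{x}^{1}}$, using $\underline{v}\cdot \frac{\p_{1}\underline{\eta}_{p^{1}}}{\sqrt{g_{p^{1},11}}} = \V^{1}_{p^{1},1}$, and substituting the just-derived expressions for $\p \tb$ yields (\ref{Est--3}) and (\ref{Est--4}).

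The component-velocity derivatives (\ref{Est--5})--(\ref{Est--6}) then follow by the chain rule applied to the $k=0$ version of (\ref{v_123}), namely
$$
\V^{1}_{p^{1},1} \;=\; \frac{\p_{1}\underline{\eta}_{p^{1}}}{\sqrt{g_{p^{1},11}}}\Big|_{\underline{x}^{1}} \cdot \underline{v}, \qquad \V^{1}_{p^{1},3} \;=\; -\,\frac{\p_{3}\underline{\eta}_{p^{1}}}{\sqrt{g_{p^{1},33}}}\Big|_{\underline{x}^{1}} \cdot \underline{v}.
$$
For $\p/\p x_{j}$, only $\underline{x}^{1}$ carries the dependence (via $\X^{1}_{p^{1},1}$), so one term is produced and (\ref{Est--3}) plugs in; for $\p/\p v_{j}$ both the frame vector (through $\underline{x}^{1}$) and the linear factor $\underline{v}$ depend on $v_{j}$, producing the two-term structure of (\ref{Est--6}), and (\ref{Est--4}) is substituted. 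Finally, (\ref{Est--7}) and (\ref{Est--8}) follow from the conservation of speed under specular reflection: $|\underline{\V}^{1}_{p^{1}}|^{2} = (\V^{1}_{p^{1},1})^{2} + (\V^{1}_{p^{1},3})^{2} = |\underline{v}|^{2}$, so the quantity is independent of $x$ and the $v_{j}$-derivative is $v_{j}/|\underline{v}|$, which coincides with $\lim_{s\downarrow t^{1}} V_{j}(s)/|\underline{V}(s)|$ as the trajectory has constant velocity on $(t-\tb, t]$.

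The argument is essentially mechanical once the local orthonormal frame is in place; the only delicacy is sign bookkeeping — specifically the minus sign in $\underline{v}\cdot \frac{\p_{3}\underline{\eta}_{p^{1}}}{\sqrt{g_{p^{1},33}}} = -\V^{1}_{p^{1},3}$ arising from the outward-normal convention — together with the recognition that $\lim_{s\downarrow t^{1}} V(s) = v$ since no reflection intervenes between the initial phase and the first boundary hit. The non-degeneracy hypothesis $|\V^{1}_{p^{1},3}| > 0$ guarantees that all denominators above are well-defined.
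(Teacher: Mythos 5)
Your proposal is correct and follows essentially the same route as the paper: differentiate the straight-line identity $\underline{\eta}_{p^{1}}(\X^{1}_{p^{1},1},0)=\underline{x}-\tb\,\underline{v}$ in $x_{j}$ and $v_{j}$, project onto the orthogonal frame using (\ref{orthonormal_eta}) and the relations $\underline{v}\cdot\frac{\p_{1}\underline{\eta}_{p^{1}}}{\sqrt{g_{p^{1},11}}}=\V^{1}_{p^{1},1}$, $\underline{v}\cdot\frac{\p_{3}\underline{\eta}_{p^{1}}}{\sqrt{g_{p^{1},33}}}=-\V^{1}_{p^{1},3}$ (valid since $\lim_{s\downarrow t^{1}}V(s)=v$), then chain-rule through the first-bounce version of (\ref{v_123}) and use speed conservation — exactly the paper's reduction of Lemma~\ref{Jac_billiard} to global Euclidean coordinates via the substitutions $\underline{\eta}_{p^{k}}\to\underline{x}$, $t^{k}\to t$, $t^{k+1}\to t-\tb$.
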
	
		
		\begin{proof}
			We have
			\begin{equation}\label{global down_V}
			\lim_{s\downarrow t^{1}} \underline{V}(s;t, \underline{x}, \underline{v}) = \underline{v} ,\quad
			\underline{X}( \tau; t, \underline{x}, \underline{v}) = \underline{x} +   \underline{v} (\tau- t)  .
			\end{equation}
			Especially, when $\tau=t^{1}$, we get
			\begin{equation}\label{global position identity}
			\underline{X}( t^{1}; t, \underline{x}, \underline{v}) = \underline{x} +   \underline{v} (t^{1}- t) .
			\end{equation}
			From (\ref{global down_V}), we have
			\begin{equation*} \label{global V over x}
			\begin{split}
			\lim_{s \downarrow t^{1}} \frac{\p \underline{V} (s;t, \underline{x}, \underline{v})}{\p x_{j}} &= 0.
			\end{split}
			\end{equation*}
					
			To prove (\ref{Est--1}) - (\ref{Est--6}), these estimates are very similar with those of Lemma~\ref{Jac_billiard}. We are suffice to choose global euclidean coordinate instead of $\underline{\eta}_{p^{k}}$. Therefore we should replace
			\begin{equation*} \label{replace}
			\underline{\eta}_{p^{k+1}} \rightarrow \underline{\eta}_{p^{1}},\quad \underline{\eta}_{p^{k}} \rightarrow \underline{x},\quad t^{k} \rightarrow t,\quad t^{k+1} \rightarrow t-\tb=t^{1},\quad \p_{x_{j}} \underline{x} = e_{j}.
			\end{equation*}
			
			\textit{Proof of (\ref{Est--1})}. For $j=1,3$, we apply $\p x_{j}$ to (\ref{global position identity}) and take $\cdot \frac{\p_{3} \underline{\eta}_{p}^{1}}{\sqrt{g_{p^{1},33}}} \Big|_{\underline{x}^{1}}$. In this case,  we have ${\p \underline{v} \over \p x_{j} } = 0$. Then we get 
			\begin{equation*} \label{instead Estimate 1}
			\begin{split}
			\frac{\p \tb}{\p x_{j}} &=
			- 
			\frac{1}{ \V^{1}_{ {{p}^{1}} ,3 }  }
			\frac{  \p_{3} \underline{\eta}_{{p}^{1}}(\underline{x}^{1})}{\sqrt{g_{{p}^{1},33}(\underline{x}^{1})}} \cdot 
			e_{j} .  \\
			\end{split}	
			\end{equation*}
			
			\textit{Proof of (\ref{Est--2})}. For $j=1,2$, we apply $\p v_{j}$ to (\ref{global position identity}) and take $\cdot \frac{\p_{3} \underline{\eta}_{p}^{1}}{\sqrt{g_{p^{1},33}}} \Big|_{\underline{x}^{1}}$. Then we get
			\begin{equation*}  \label{instead v diff pos iden dot 3}
			\begin{split}
			0 &=  \frac{\p \X^{1}_{{p}^{1},1}}{\p v_{j}} \frac{\p \underline{\eta}_{{p}^{1}} }{\p \X^{k}_{{p}^{1},1}}\Big\vert_{\underline{x}^{1}}  \cdot   \frac{\p_{3} \underline{\eta}_{{p}^{1}} }{\sqrt{g_{{p}^{1}, 33} }} \Big|_{\underline{x}^{1}}  
			=	\Big\{ - ( t - t^{1}) e_{j} - \frac{\p( t - t^{1})}{\p v_{j} } \lim_{s \downarrow t^{1}} \underline{V} (s;t, \underline{x}, \underline{v}) \Big\} \cdot  \frac{\p_{3} \underline{\eta}_{{p}^{1}} }{\sqrt{g_{{p}^{1}, 33} }} \Big|_{\underline{x}^{1}}.  \\
			\end{split}
			\end{equation*}
			We use (\ref{global down_V}) to get (\ref{Est--2}). \\		
			
			\textit{Proof of (\ref{Est--3})}. For $j=1,3$, we apply $\p x_{j}$ to (\ref{global position identity}) and take $\cdot \frac{\p_{1} \underline{\eta}_{p^{1}}}{\sqrt{g_{p^{1},11}}} \Big|_{\underline{x}^{1}}$. And then,
			
			\begin{equation*} \label{instead Estimate 2}
			\begin{split}
			\frac{\p \X_{{p}^{1},1}^{1}}{\p  x_{j} }   
			&=    \frac{1}{   \V^{1}_{ {{p}^{1}} ,3 }   }
			\frac{  \p_{{3}} \underline{\eta}_{{p}^{1}}(\underline{x}^{1})}{\sqrt{g_{{p}^{1},33}(\underline{x}^{1})}} \cdot 
			e_{j} 
			\frac{ \V^{1}_{p^{1}, 1} }{\sqrt{g_{p^{1}, 11}}} \Big|_{\underline{x}^{1}}  + \frac{\p_{1} \underline{\eta}_{p^{1}} }{ {g_{p^{1}, 11} }} \Big|_{\underline{x}^{1}}  \cdot  e_{j} . \\
			\end{split}
			\end{equation*}
			This yields (\ref{Est--3}). \\
			
			\textit{Proof of (\ref{Est--4})}. For $j=1,3$, we apply $\p v_{j}$ to (\ref{global position identity}) and take $\cdot \frac{\p_{1} \underline{\eta}_{p^{1}}}{\sqrt{g_{p^{1},11}}} \Big|_{\underline{x}^{1}}$. 	
			\begin{eqnarray*}
				\frac{\p \X^{1}_{p^{1}, 1}}{\p v_{j} } 
				&=&
				\Big\{- \frac{\p (t - t^{1})}{\p v_{j} } \lim_{s\downarrow t^{1}} \underline{V} (s; t, \underline{x}, \underline{v})- (t - t^{1}) \frac{\p \underline{v} }{\p v_{j} }
				\Big\}
				\cdot \frac{\p_{{1}} \underline{\eta}_{{p}^{1}}}{g_{{p}^{1}, 11}}\Big|_{\underline{x}^{1}} .  \\
			\end{eqnarray*}
			And then we get (\ref{Est--4}). \\
			
			\textit{Proof of (\ref{Est--5})}. For $j=1,3$, we apply $\p x_{j}$ to \begin{equation}  
			\begin{split}\label{global v_123}
			{\V}^{1}_{{p}^{1},1}   
			&=  \frac{ \p_{1} \underline{\eta}_{{p}^{1}} }{\sqrt{g_{{p}^{1},11}}}  \Big|_{\underline{x}^{1}} \cdot  \lim_{s \downarrow  t^{1}} \underline{V} (s; t , \underline{x} ,\underline{v} ), \\
			{\V}^{1}_{{p}^{1},3}  & =  
			- \frac{ \p_{3} \underline{\eta}_{{p}^{1}} }{\sqrt{g_{{p}^{1},33}}}  \Big|_{\underline{x}^{1}} \cdot  \lim_{s \downarrow  t^{1}} \underline{V} (s; t , \underline{x} ,\underline{v} ).
			\end{split}
			\end{equation}
			From (\ref{v_123}), 
			\begin{equation*}
			\begin{split}
			\frac{\p \V^{1}_{{p}^{1}, 1}}{\p x_{j} }  =&
			\frac{\p \X^{1}_{{p}^{1}, 1}}{ \p x_{j} } \frac{\p}{\p \X^{1}_{{p}^{1}, 1}}\Big(  \frac{\p_{1} \underline{\eta}_{{p}^{1}}  }{\sqrt{g_{{p}^{1}, 11}    }  }  \Big)\Big|_{\underline{x}^{1}} \cdot   \lim_{s \downarrow  t^{1}} \underline{V} (s; t , \underline{x} ,\underline{v} ),  
			\end{split}
			\end{equation*}	
			\begin{equation*} \begin{split}
			\frac{\p \V^{1}_{{p}^{1}, 3}}{\p x_{j} }  =&
			-
			\frac{\p \X^{1}_{{p}^{1}, 1}}{\p x_{j} } \frac{\p}{\p \X^{1}_{{p}^{1}, 1}}\Big(  \frac{\p_{3} \underline{\eta}_{{p}^{1}}  }{\sqrt{g_{{p}^{1}, 33}    }  }  \Big)\Big|_{\underline{x}^{1}} \cdot  \lim_{s \downarrow  t^{1}} \underline{V} (s; t , \underline{x} ,\underline{v} ).\end{split}
			\end{equation*}
			From (\ref{global down_V}), (\ref{Est--3}), and (\ref{Est--1}), we prove (\ref{Est-5}).\\
			
			\textit{Proof of (\ref{Est--6})}. Similar as above, we apply $\p v_{j}$ to (\ref{global v_123}) and then use (\ref{global down_V}), (\ref{Est--4}), and (\ref{Est--2}). We skip detail. \\
			
			\textit{Proof of (\ref{Est--7})}. Since there is no external force speed is constant, so result is obvious.  \\
			
			\textit{Proof of (\ref{Est--8})}. Note that $|\underline{\V}_{p^{1}}^{1}| = \lim_{s \downarrow t^{1}} {|\underline{V}(s;t,\underline{x},\underline{v})|}$ and 
			
			\begin{equation*}
			\begin{split}
			2 | \underline{\V}_{p^{1}}^{1} | \frac{\p | \underline{\V}_{p^{1}}^{1} | }{ \p v_{j} } = 2 \lim_{s \downarrow t^{1}}  \underline{V} (s;t,\underline{x},\underline{v}) \cdot \lim_{s \downarrow t^{1}}  \p_{v_{j}} \underline{V} (s;t,\underline{x},\underline{v}) ,
			\end{split}
			\end{equation*}
			so we have
			\begin{equation} \label{norm over v}
			\begin{split}
			\frac{\p | \underline{\V}_{p^{1}}^{1} | }{ \p v_{j} } &= \lim_{s \downarrow t^{1}} { \underline{V} (s;t,\underline{x},\underline{v}) \over | \underline{V} (s;t,\underline{x},\underline{v})| } \cdot \lim_{s \downarrow t^{1}}  \p_{v_{j}} \underline{V} (s;t,\underline{x}, \underline{v}) .
			\end{split}
			\end{equation}
			Since
			\begin{equation} \label{global V over v}
			\begin{split}
			\lim_{s \downarrow t^{1}} \frac{\p \underline{V} (s;t,\underline{x},\underline{v})}{\p v_{j}} &= e_{j} ,
			\end{split}
			\end{equation}
			we combine (\ref{norm over v}), (\ref{global V over v}), and (\ref{Est--2}) to derive (\ref{Est--8}). \\		
	\end{proof}

	\begin{lemma}\label{det_billiard} 
		Assume $\O$ satisfies Definition~\ref{AND} and $\frac{1}{N} \leq |v  | \leq N$, for $1 \ll N$. Also we assume $|t^{k}-t^{k+1}| \leq 1$ and $|\V^{k}_{p^{k},3}|, \ | \V^{k+1}_{p^{k+1},3}| > 0$. Then 
		
		\begin{equation}\notag 
		\begin{split}
		& \bigg| \det \left[\begin{array}{cc} \p_{\X^{k}_{{p}^{k}, 1} } \X^{k+1}_{{p}^{k+1}, 1} & \nabla_{ \underline{\V}^{k}_{{p}^{k}} }  \X^{k+1}_{{p}^{k+1}, 1}\\
		\p_{\X^{k}_{{p}^{k}, 1}}  \underline{\V}^{k+1}_{{p}^{k+1}} 
		&  \nabla_{ \underline{\V}^{k}_{{p}^{k}}  }  \underline{\V}^{k+1}_{{p}^{k+1}}
			\end{array}\right]_{3\times 3} \bigg|
			= \frac{ \sqrt{g_{{p}^{k },11} (\underline{x}^{k}) } }{ \sqrt{g_{{p}^{k+1},11}  (\underline{x}^{k+1})  }  }	
			\frac{
				|\V^{k }_{p^{k },3}|
			}{|\V^{k+1}_{p^{k+1},3}|},
			\end{split}
			\end{equation}
			for the mapping $(\X^{k}_{p^{k},1}, \underline{\V}^{k}_{p^{k}}) \mapsto (\X^{k+1}_{p^{k+1},1}, \underline{\V}^{k}_{p^{k+1}})$.
	\end{lemma}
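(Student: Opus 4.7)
The plan is to compute the determinant by direct substitution of the eight identities in Lemma~\ref{Jac_billiard}, followed by a column reduction that exposes block structure and exhibits the crucial algebraic cancellations.

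First, I would assemble the $3\times 3$ Jacobian $J$ using (\ref{Est-2}) for the $(1,1)$ entry, (\ref{Est-6}) for the two upper-right entries, (\ref{Est-3}) and (\ref{Est-4}) for the two lower-left entries, and (\ref{Est-7})--(\ref{Est-8}) for the lower-right $2\times 2$ block. Writing $A:=\p_{\X^{k}_{p^k,1}}\X^{k+1}_{p^{k+1},1}$ and $b_j:=\p_{\V^k_{p^k,j}}\X^{k+1}_{p^{k+1},1}$ for $j=1,3$, I would then perform the column operations $C_j\to C_j-(b_j/A)C_1$ to zero out the top-right entries. The determinant then factors as $A\cdot\det M$, where $M$ is the resulting $2\times 2$ lower block.

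The key algebraic observation is that (\ref{Est-3})--(\ref{Est-4}) and (\ref{Est-7})--(\ref{Est-8}) share common pieces of the form $\underline{v}^k\cdot\p_1\bigl(\p_i\underline{\eta}_{p^{k+1}}/\sqrt{g_{p^{k+1},ii}}\bigr)$ multiplied by $\p\X^{k+1}_{p^{k+1},1}/\p(\cdot)$, and after the column operation these pieces cancel exactly. What survives is a matrix product $M=E\,W$, where the rows of $E$ are the orthonormal vectors $\p_1\underline{\eta}_{p^{k+1}}/\sqrt{g_{p^{k+1},11}}$ and $-\p_3\underline{\eta}_{p^{k+1}}/\sqrt{g_{p^{k+1},33}}$, and the columns of $W$ are $w_j:=\p_j\underline{\eta}_{p^k}/\sqrt{g_{p^k,jj}}-(b_j/A)\,\p\underline{v}^k/\p\X^k_{p^k,1}$. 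By the triple orthogonality (\ref{orthonormal_eta}), $|\det E|=1$, so it remains only to compute $\det W$. Expanding by multilinearity and writing $\underline{v}^k=\V^k_{p^k,1}e^k_1+\V^k_{p^k,3}e^k_3$ in the orthonormal basis $e^k_i=\p_i\underline{\eta}_{p^k}/\sqrt{g_{p^k,ii}}$, the expression (\ref{V_X}) for $\p\underline{v}^k/\p\X^k_{p^k,1}$ reduces the mixed determinants to explicit scalars in $\V^k_{p^k,j}$ and the metric coefficients.

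The final step is to combine $A\cdot\det(EW)$ with the explicit forms of $A$ and $b_j$ from (\ref{Est-2}) and (\ref{Est-6}); the factors of $\V^{k+1}_{p^{k+1},3}$ in their denominators and the $\sqrt{g_{p^{k+1},ii}}$ normalizations should collapse, leaving exactly $\sqrt{g_{p^k,11}}/\sqrt{g_{p^{k+1},11}}\cdot|\V^k_{p^k,3}|/|\V^{k+1}_{p^{k+1},3}|$. The main obstacle is the sign and normalization bookkeeping in the cancellation step: one must verify that the tangential Christoffel contributions from (\ref{V_X}) combine with the boundary tangent vectors $e^k_j$ so that $\det W$ produces the single combination $|\V^k_{p^k,3}|/\V^{k+1}_{p^{k+1},3}$ together with the clean ratio of metric coefficients, rather than a more complicated expression. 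This is essentially a direct but lengthy computation, paralleling the corresponding Jacobian calculation in \cite{KimLee} for strictly convex domains, but requiring care here because no uniform sign assumption on the normal components is available in the non-convex setting.
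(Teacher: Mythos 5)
Your outline is correct, and it is essentially the computation that the paper does not write out: the paper's own proof of Lemma~\ref{det_billiard} consists of citing Lemma 26 of \cite{KimLee} (the same Jacobian identity, proved there in three dimensions and with an external potential) together with the observation that Lemma~\ref{Jac_billiard} only uses non-grazing bounces and hence holds in the non-convex setting. Your reduction is exactly the mechanism behind that cited computation. In each column the curvature contributions in (\ref{Est-3})--(\ref{Est-4}) and (\ref{Est-7})--(\ref{Est-8}) are multiplied by $\p\X^{k+1}_{p^{k+1},1}/\p(\cdot)$, i.e. by the first-row entry of that very column (in (\ref{Est-7})--(\ref{Est-8}) the $\ell=2$ terms vanish by the cylindrical invariance in $\X_2$), so your reduction kills them; the surviving block is $\pm\bigl(\p_i\underline{\eta}_{p^{k+1}}/\sqrt{g_{p^{k+1},ii}}\bigr)\cdot\bigl(\text{columns built from } \p_j\underline{\eta}_{p^k}/\sqrt{g_{p^k,jj}} \text{ and } \p\underline{v}^k/\p\X^k_{p^k,1}\bigr)$, and by (\ref{orthonormal_eta}), (\ref{Est-2}), (\ref{Est-6}) and (\ref{V_X}) the $\p\underline{v}^k/\p\X^k_{p^k,1}$ and $(t^k-t^{k+1})$ contributions cancel completely (a two-dimensional Cramer-type identity), leaving precisely $\sqrt{g_{p^k,11}}\,|\V^k_{p^k,3}|\big/\bigl(\sqrt{g_{p^{k+1},11}}\,|\V^{k+1}_{p^{k+1},3}|\bigr)$, as you predict.

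One small repair: the column operations $C_j\to C_j-(b_j/A)C_1$ presuppose $A=\p_{\X^k_{p^k,1}}\X^{k+1}_{p^{k+1},1}\neq 0$, which is not guaranteed by the non-grazing hypothesis alone. This is harmless: either perform the dual row operations instead, subtracting the multiples $\pm\,\underline{v}^k\cdot\p_1\bigl(\p_i\underline{\eta}_{p^{k+1}}/\sqrt{g_{p^{k+1},ii}}\bigr)$, $i=1,3$, of the first row from the second and third rows (this removes the same curvature terms with no division, and the determinant is then computed by multilinearity), or note that the final identity contains no $A$ in any denominator and conclude by continuity in the degenerate configurations. With that adjustment your argument is a complete, self-contained proof of the lemma.
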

	\begin{proof}
		We note that Lemma~\ref{Jac_billiard} holds for nonconvex domain and result is exactly same as Lemma 26 in \cite{KimLee}, without external potential. Then simplified two-dimensional version directly yields above result. \\
	\end{proof}
	
	\begin{lemma} \label{5X5}
		We define, 
		\begin{equation*}\label{hat_v}
		\hat{\V}^{k}_{{p}^{k},1} := \frac{\V^{k}_{{p}^{k},1}}{|\underline{\V}^{k}_{{p}^{k}}|}, \quad |\underline{\V}^{k}_{p^{k}}| = \sqrt{ ({\V}^{k}_{{p}^{k},1})^{2} + ({\V}^{k}_{{p}^{k},3})^{2}}, 
		\end{equation*}	
		where $\underline{\V}^{k}_{p^{k}} = \underline{\V}^{k}_{p^{k} }(t,\underline{x},\underline{v})$ are defined in (\ref{x^k}).
		Assume $\frac{1}{N} \leq |v| \leq N$ and $|\V^{k}_{p^{k},3}|, \ | \V^{k+1}_{p^{k+1},3}| > \delta_{2} > 0$ for $1 \ll N$ and $k\lesssim_{\O, N, \delta_{2}} 1$. If $|t-t^{k}| \leq 1$, then  \begin{equation}\label{Jac_hat}
		\bigg|\det\left[\begin{array}{cc} 
		\frac{\p \X^{k}_{{p}^{k},1}}{\p \X^{1}_{{p}^{1},1}} & \frac{\p \X^{k}_{{p}^{k},1}}{\p \hat{\V}^{1}_{{p}^{1},1}} \\
		\frac{\p \hat{\V}^{k}_{{p}^{k},1}}{\p \X^{1}_{{p}^{1},1}} & \frac{\p   \hat{\V}^{k}_{{p}^{k},1}}{\p \hat{\V}^{1}_{{p}^{1},1}} 
		\end{array}\right] \bigg| \  >  \ \epsilon_{ \Omega,N, \delta_{2}} >0,
		\end{equation}	
		where $ t^{1}  =   t^{1} (t,x,v),$ $\X^{1}_{p^{1},i}  =  \X^{1}_{p^{1},i} (t,x,v),$ $\hat{\V}^{1}_{p^{1},i} = \hat{\V}^{1}_{p^{1},i} (t,x,v)$, and 
		\begin{eqnarray*}
			\X^{k}_{{p}^{k},i} &=&   \X^{k}_{{p}^{k},i} (t^{1}, \X^{1}_{p^{1},1},    \hat{\V}^{1}_{p^{1},1},  |\underline{\V}^{1}_{p^{1}}| ), \\ 
			\hat{\V}^{k}_{{p}^{k},i} &=&   \hat{\V}^{k}_{{p}^{k},i} (t^{1}, \X^{1}_{p^{1},1},   \hat{\V}^{1}_{p^{1},1},   |\underline{\V}^{1}_{p^{1}}| ).
		\end{eqnarray*}
		Here, the constant $\epsilon_{ \Omega,N,  \delta_{2}}>0$ does not depend on $t$ and $x$.\end{lemma}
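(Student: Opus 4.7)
The idea is to derive the $2\times 2$ determinant in (\ref{Jac_hat}) from the full three-dimensional Jacobian of the map $(\X^1_{p^1,1},\underline{\V}^1_{p^1}) \mapsto (\X^k_{p^k,1},\underline{\V}^k_{p^k})$, then pass to polar velocity coordinates $(|\underline{\V}|,\hat{\V}_1)$ so that the speed, which is conserved under specular reflection, can be factored out. Since the full map is a composition of $k-1$ one-step billiard maps, the chain rule together with Lemma~\ref{det_billiard} telescopes to give
\begin{equation*}
\left|\det\frac{\p(\X^k_{p^k,1},\underline{\V}^k_{p^k})}{\p(\X^1_{p^1,1},\underline{\V}^1_{p^1})}\right| \;=\; \prod_{j=1}^{k-1}\frac{\sqrt{g_{p^j,11}(\underline{x}^j)}}{\sqrt{g_{p^{j+1},11}(\underline{x}^{j+1})}}\cdot\frac{|\V^j_{p^j,3}|}{|\V^{j+1}_{p^{j+1},3}|} \;=\; \frac{\sqrt{g_{p^1,11}(\underline{x}^1)}}{\sqrt{g_{p^k,11}(\underline{x}^k)}}\cdot\frac{|\V^1_{p^1,3}|}{|\V^k_{p^k,3}|}.
\end{equation*}

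Next, introduce the change of variables $\Psi:(|\underline{\V}|,\hat{\V}_1)\mapsto(\V_1,\V_3) = (|\underline{\V}|\hat{\V}_1,\sigma|\underline{\V}|\sqrt{1-\hat{\V}_1^2})$, with $\sigma = \mathrm{sgn}(\V_3)$ constant along the orbit under the transversality assumption $|\V^j_{p^j,3}|>\delta_2$. A direct calculation gives $|\det\Psi'|=|\underline{\V}|/\sqrt{1-\hat{\V}_1^2}=|\underline{\V}|^2/|\V_3|$. Applying $\Psi^{-1}$ at both source and target velocities, the chain rule produces
\begin{equation*}
\left|\det\frac{\p(\X^k_{p^k,1},|\underline{\V}^k_{p^k}|,\hat{\V}^k_{p^k,1})}{\p(\X^1_{p^1,1},|\underline{\V}^1_{p^1}|,\hat{\V}^1_{p^1,1})}\right| \;=\; \left|\det\frac{\p(\X^k,\underline{\V}^k)}{\p(\X^1,\underline{\V}^1)}\right|\cdot\frac{|\underline{\V}^1|^2/|\V^1_{p^1,3}|}{|\underline{\V}^k|^2/|\V^k_{p^k,3}|}.
\end{equation*}
Since specular reflection preserves $|\underline{\V}|$, we have $|\underline{\V}^k_{p^k}|=|\underline{\V}^1_{p^1}|$, so the middle row of the $3\times 3$ Jacobian in the new coordinates is $(0,1,0)$. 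Expanding along this row collapses the $3\times 3$ determinant to the $2\times 2$ minor of interest, and after cancelling the common factors of $|\V_{\cdot,3}|$ and $|\underline{\V}|^2$ we arrive at the clean identity
\begin{equation*}
\left|\det\begin{pmatrix} \p_{\X^1_{p^1,1}}\X^k_{p^k,1} & \p_{\hat{\V}^1_{p^1,1}}\X^k_{p^k,1} \\ \p_{\X^1_{p^1,1}}\hat{\V}^k_{p^k,1} & \p_{\hat{\V}^1_{p^1,1}}\hat{\V}^k_{p^k,1} \end{pmatrix}\right| \;=\; \frac{\sqrt{g_{p^1,11}(\underline{x}^1)}}{\sqrt{g_{p^k,11}(\underline{x}^k)}}.
\end{equation*}
Because each $\underline{\eta}_{p^j}$ is a fixed regular analytic chart and $\p\Omega$ is compact, $g_{p^j,11}$ is continuous and bounded above and below by positive constants depending only on $\Omega$. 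The right-hand side is therefore bounded below by some $\epsilon_\Omega>0$, which is a fortiori an $\epsilon_{\Omega,N,\delta_2}>0$ independent of $t$ and $x$.

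The main analytic point is the telescoping cancellation; the chief obstacle is rather a bookkeeping one. One must verify that the polar change of variables $\Psi$ is smooth and globally consistent along the whole orbit. This is exactly where the hypotheses $|\V^j_{p^j,3}|>\delta_2$ and $|v|\leq N$ are used: together they keep $|\hat{\V}^j_{p^j,1}|$ uniformly bounded away from the coordinate singularity $\pm 1$ of $\Psi$, force $\sigma$ to remain constant across every bounce, and guarantee the applicability of Lemma~\ref{Jac_billiard} (and hence Lemma~\ref{det_billiard}) at each step. Once these transversality checks and the consistency of the local charts $\eta_{p^j}$ across bouncing points are in place, the remainder of the argument is a routine composition of Jacobians.
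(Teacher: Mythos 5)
Your proposal is correct and follows essentially the same route as the paper: telescope the one-step Jacobians of Lemma~\ref{det_billiard} through the chain rule and conjugate by the polar change of velocity variables $(\V_1,\V_3)\leftrightarrow(|\underline{\V}|,\hat{\V}_1)$, then use conservation of $|\underline{\V}|$ to reduce the $3\times 3$ Jacobian to the $2\times 2$ minor, exactly as in Steps 1--3 of the paper's proof. The only difference is the explicit value of the determinant (the paper's bookkeeping of the $Q_i$ factors leaves an extra $|\V^{k}_{p^{k},3}|^{2}/|\V^{1}_{p^{1},3}|^{2}$, which is where $N$ and $\delta_{2}$ enter its constant, while your cleaner polar-Jacobian computation gives just the ratio of metric coefficients); either expression is uniformly bounded below, so the stated conclusion is unaffected.
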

	\begin{proof} 
		
		\textit{Step 1.} We compute 
		
		\begin{equation} \label{similar form}
		\begin{split}
		J^{i+1}_{i} &:= \frac{\p ( \X^{i+1}_{{p}^{i+1},1},   \hat{\V}^{i+1}_{{p}^{i+1},1} ,   |  \underline{\V}^{i+1}_{{p}^{i+1}} |)}{\p (
			\X^{i }_{{p}^{i },1},   \hat{\V}^{i }_{{p}^{i },1} ,    |\underline{\V}^{i }_{{p}^{i}} |
			)} 	\\
		&=
		\underbrace{ \frac{\p ( \X^{i}_{{p}^{i},1}, \underline{\V}^{i}_{{p}^{i}} )}{\p (
				\X^{i }_{{p}^{i },1},   \hat{\V}^{i }_{{p}^{i },1} ,  | \underline{\V}^{i }_{{p}^{i}} |
				)} 	
		}_{=Q_{i}}
		\underbrace{
			\frac{\p ( \X^{i+1}_{{p}^{i+1},1},  \underline{\V}^{i+1}_{{p}^{i+1}}  )}{\p (
				\X^{i }_{{p}^{i },1},  \underline{\V}^{i}_{{p}^{i}} 
				)}
		}_{=P_{i}}
		\underbrace{ \frac{\p ( \X^{i+1}_{{p}^{i+1},1},  \hat{\V}^{i+1}_{{p}^{i+1},1} ,  |  \underline{\V}^{i+1}_{{p}^{i+1}} |)}{\p (
				\X^{i+1 }_{{p}^{i+1 },1},  \underline{\V}^{i+1 }_{{p}^{i+1}}	)} 
		}_{=Q_{i+1}} .
		\\
		\end{split}
		\end{equation} 		
		For $Q_{i}$,
		\begin{equation*} \label{Qi}
		\begin{split}
		Q_{i} 
		&= 
		\left[\begin{array}{c|cc} 
		1 & 0 &0  \\ \hline
		0&  { \p\V_{p^{i},1}^{i} \over \p\hat{\V}_{p^{i},1}^{i} }  & { \p\V_{p^{i},1}^{i} \over \p |\underline{\V}_{p^{i}}^{i}| }  \\
		0&  { \p\V_{p^{i},3}^{i} \over \p\hat{\V}_{p^{i},1}^{i} }
		 & { \p\V_{p^{i},3}^{i} \over \p |\underline{\V}_{p^{i}}^{i}| }
		\end{array}\right] 
		=
		\left[\begin{array}{c|cc} 
		1 & 0 &0  \\ \hline
		0&  |\underline\V_{p^{i}}^{i}|  & { \p\V_{p^{i},1}^{i} \over \p |\underline{\V}_{p^{i}}^{i}| }  \\
		0&  { \p\V_{p^{i},3}^{i} \over \p\hat{\V}_{p^{i},1}^{i} }
		&  { \p\V_{p^{i},3}^{i} \over \p |\underline{\V}_{p^{i}}^{i}| }
		\end{array}\right]. 	\\
		\end{split}
		\end{equation*}
		For $Q_{i+1}$,
		\begin{equation} \label{Qi+1}
		\begin{split}
		Q_{i+1} 
		= 
		\left[\begin{array}{c|cc} 
		1 & 0 &0  \\ \hline
		0& { \p\hat{\V}_{p^{i+1},1}^{i+1} \over \p{\V}_{p^{i+1},1}^{i+1} } & { \p\hat{\V}_{p^{i+1},1}^{i+1} \over \p{\V}_{p^{i+1},3}^{i+1} }  \\
		0& { \p|\underline{\V}_{p^{i+1}}^{i+1}| \over \p{\V}_{p^{i+1},1}^{i+1} }
		 & { \p|\underline{\V}_{p^{i+1}}^{i+1}| \over \p {\V}_{p^{i+1},3}^{i+1} }
		\end{array}\right] 	
		&=
		\left[\begin{array}{c|cc} 
		1 & 0 & 0\\ \hline
		0& | \underline{\V}_{p^{i+1}}^{i+1} |^{-1}  & { \p\hat{\V}_{p^{i+1},1}^{i+1} \over \p{\V}_{p^{i+1},3}^{i+1} }  \\
		 0& { \p |\underline{\V}_{p^{i+1}}^{i+1}| \over \p{\V}_{p^{i+1},1}^{i+1} }
		  & { \p |\underline{\V}_{p^{i+1}}^{i+1}| \over \p {\V}_{p^{i+1},3}^{i+1} }
		\end{array}\right]. 	\\
		\end{split}
		\end{equation}
		Note that 
		\begin{equation} \label{1323}
		\begin{split}
		\frac{ \p \hat{\V}_{p^{i+1},{1}}^{i+1} }{ \p \V_{p^{i+1},{3}}^{i+1} } 
		&=
		\V_{p^{i+1},{1}}^{i+1} \frac{\p}{\p \V_{p^{i+1},{3}}^{i+1}} \big({1\over |\underline{\V}_{p^{i+1}}^{i+1}|} \big) = \frac{ \V_{p^{i+1},{1}}^{i+1} \V_{p^{i+1},{3}}^{i+1} }{ |\underline{\V}_{p^{i+1}}^{i+1}|^{3} } ,
		\end{split}
		\end{equation}
		and for $k=1,3$,
		\begin{equation} \label{313233}
		\begin{split}
		\frac{ \p |\underline{\V}_{p^{i+1}}^{i+1}| }{ \p \V_{p^{i+1},{k}}^{i+1} } 
		&=
		- \frac{ \V_{p^{i+1},{k}}^{i+1} }{ |\underline{\V}_{p^{i+1}}^{i+1}| }.
		\end{split}
		\end{equation}
		From (\ref{Qi+1}), (\ref{1323}), and (\ref{313233}),
		\begin{equation} \label{det Qi+1}
		\begin{split}
		\det Q_{i+1} &= \frac{1}{|\underline{\V}_{p^{i+1}}^{i+1}|} \Big( -\frac{\V_{p^{i+1},{3}}^{i+1}}{ |\underline{\V}_{p^{i+1}}^{i+1}| } \Big) + \frac{ \V_{p^{i+1},{1}}^{i+1} \V_{p^{i+1},{3}}^{i+1} }{ |\underline{\V}_{p^{i+1}}^{i+1}|^{3} } \frac{ \V_{p^{i+1},{1}}^{i+1} }{ |\underline{\V}_{p^{i+1}}^{i+1}| }  = - \frac{ (\V_{p^{i+1},{3}}^{i+1})^{3} }{ |\underline{\V}_{p^{i+1}}^{i+1}|^{4} } .
		\end{split}
		\end{equation}
		By taking inverse, we get
		\begin{equation} \label{det Qi}
		\begin{split}
		\det Q_{i} &= - \frac{ |\underline{\V}_{p^{i}}^{i}|^{4} } { (\V_{p^{i},{3}}^{i})^{3} }.
		\end{split}
		\end{equation}
		From (\ref{similar form}), (\ref{det Qi}), (\ref{det Qi+1}), and Lemma~\ref{det_billiard}, we get
		\begin{equation}\notag 
		\begin{split}
		& \bigg| \det \left[\begin{array}{cc} \p_{\X^{k}_{{p}^{k}, 1} } \X^{k+1}_{{p}^{k+1}, 1} & \nabla_{ \underline{\V}^{k}_{{p}^{k}} }  \X^{k+1}_{{p}^{k+1}, 1}\\
		\p_{\X^{k}_{{p}^{k}, 1}}  \underline{\V}^{k+1}_{{p}^{k+1}} 
		&  \nabla_{ \underline{\V}^{k}_{{p}^{k}}  }  \underline{\V}^{k+1}_{{p}^{k+1}}
		\end{array}\right]_{3\times 3} \bigg|
		= \frac{ \sqrt{g_{{p}^{k },11} (\underline{x}^{k}) } }{ \sqrt{g_{{p}^{k+1},11}  (\underline{x}^{k+1})  }  }	
		\frac{
			|\V^{k }_{p^{k },3}|
		}{|\V^{k+1}_{p^{k+1},3}|} .
		\end{split}
		\end{equation}
		Therefore,
		\begin{equation*} \label{det Jii+1}
		\begin{split}
		| \det J_{i}^{i+1} | &= | \det Q_{i} \det P_{i} \det Q_{i+1} |  \\
		&=  \frac{ |\underline{\V}_{p^{i}}^{i}|^{4} } { (\V_{p^{i},{3}}^{i})^{3} } 
		 \frac{ \sqrt{g_{{p}^{i },11} }  \big|_{\underline{x}^{i}} }{\sqrt{g_{{p}^{i+1 },11}   }   \big|_{\underline{x}^{i+1}}  }	
		\frac{
			|\V^{i }_{p^{i },3}|
		}{|\V^{i+1}_{p^{i+1},3}|}
		\frac{ (\V_{p^{i+1},{3}}^{i+1})^{3} }{ |\underline{\V}_{p^{i+1}}^{i+1}|^{4} }  
		= 
		 \frac{ \sqrt{g_{{p}^{i },11} }   \big|_{\underline{x}^{i}} }{\sqrt{g_{{p}^{i+1 },11}   }   \big|_{\underline{x}^{i+1}}  }	
		\frac{ |\V_{p^{i+1},{3}}^{i+1}|^{2} }{ |\V_{p^{i},{3}}^{i}|^{2} } ,
		\\
		\end{split}
		\end{equation*}
		and we get
		\begin{equation} \label{det J1k}
		\begin{split}
		| \det J_{1}^{k} | 
		&= 
		 \frac{ \sqrt{g_{{p}^{1 },11} } \big|_{ \underline{x}^{1}} }{\sqrt{g_{{p}^{k },11}   }   \big|_{\underline{x}^{k}}  }	
		\frac{ |\V_{p^{k},{3}}^{k}|^{2} }{ |\V_{p^{1},{3}}^{1}|^{2} } .
		\\
		\end{split}
		\end{equation}
		
		\noindent \textit{Step 2.} From (\ref{down_V}),
		\begin{equation*} \label{hatv part1}
		\begin{split}
		2| \underline{\V}^{i+1}_{p^{i+1}} |\frac{\p | \underline{\V}^{i+1}_{p^{i+1}} |}{\p  \V^{i}_{p^{i}, n}}		
		&=\frac{ \p | \underline{V}(t^{i+1};t^{i}, \underline{x}^{i}, \underline{v}^{i}) |^{2}}{\p \V^{i}_{p^{i}, n}}
		= 2 \frac{\p \underline{V}(t^{i+1};t^{i}, \underline{x}^{i}, \underline{v}^{i})  }{\p \V^{i}_{p^{i}, n}} \cdot  \underline{V}(t^{i+1};t^{i}, \underline{x}^{i}, \underline{v}^{i})
		\\
		&= 
		2 \frac{\p_{n} \underline{\eta}_{p^{i}}}{\sqrt{g_{p^{i}, nn}}}\Big|_{\underline{x}^{i}}
		\cdot 
		\underline{V}(t^{i+1};t^{i}, \underline{x}^{i}, \underline{v}^{i})
		= 2 \V^{i}_{p^{i},n} .
		\end{split}
		\end{equation*}
		Therefore, we get
		\begin{equation}\label{|v|_v}
		\frac{\p | \underline{\V}^{i+1}_{p^{i+1}}  |}{\p  \V^{i}_{p^{i}, n}} = \frac{\V^{i}_{p^{i},n}}{|\underline{\V}^{i+1}_{p^{i+1}}|}, \ \ \ \text{for} \ \ n=1,3.
		\end{equation}
		Since speed is conserved, for $n=1,3$,
		\begin{equation}\label{|v|_x}
		\frac{\p | \underline{\V}^{i+1}_{p^{i+1}}  |}{\p  \X^{i}_{p^{i}, n}}	 
		= 0   , \ \ \ \text{for} \ \ n=1,3.
		\end{equation}
		
		\noindent Also, by conservation,
		\begin{equation}\label{|v|_|v|}
		\begin{split}
		\frac{  \p |\underline{\V}^{i+1}_{p^{i+1}}|}{\p |\underline{\V}^{i}_{p^{i}}|  }  &= 1.
		\end{split}\end{equation}
		
		\noindent \textit{Step 3.} From (\ref{det J1k}), (\ref{|v|_v}), (\ref{|v|_x}), and (\ref{|v|_|v|}),
		\begin{equation*} 
		\begin{split}
		| \det J_{1}^{k} | 
		&= 
		 \frac{ \sqrt{g_{{p}^{1 },11} }   \big|_{\underline{x}^{1}} }{\sqrt{g_{{p}^{k },11}   }   |_{ \underline{x}^{k}}  }	
		\frac{ |\V_{p^{k},{3}}^{k}|^{2} }{ |\V_{p^{1},{3}}^{1}|^{2} } \\
		&=
		\bigg|\det\left[\begin{array}{c|c|c} 
		\frac{\p  \X^{k}_{{p}^{k},1} }{\p \X^{1}_{{p}^{1},1} } & \frac{\p \X^{k}_{{p}^{k},1}  }{\p \hat{\V}^{1}_{{p}^{1},1} } & 0 \\ \hline
		\frac{\p \hat{\V}^{k}_{{p}^{k},1} }{\p \X^{1}_{{p}^{1},1} } & \frac{\p \hat{\V}^{k}_{{p}^{k},1} }{\p \hat{\V}^{1}_{{p}^{1},1} } & 0  \\ \hline
		0  & 0 & 1 
		\end{array}\right]_{3\times 3} \bigg| 
		= 
		\bigg|\det\left[\frac{\p (\X^{k}_{{p}^{k},1},   \hat{\V}^{k}_{{p}^{k},1} ) }{\p (\X^{1}_{{p}^{1},1},  \hat{\V}^{1}_{{p}^{1},1} ) }\right]_{2\times 2}  \bigg| .
		\end{split}
		\end{equation*}
		Therefore, we conclude (\ref{Jac_hat}) by (\ref{det J1k}).  \\
	\end{proof}

	Now we study lower bounded of $\det\big(\frac{d \underline{X}}{d\underline{v}}\big)$. Instead of Euclidean variable $\underline{v} = (v_{1}, v_{3})$, we introduce new variables via geometric decomposition. In two-dimensional cross section, we split velocity $\underline{v}$ into speed and direction,
	\begin{equation*} \label{hat def v}
		|\underline{v}| \quad\text{and} \quad \hat{v}_{1} := \frac{v_{1}}{|\underline{v}|}.
	\end{equation*}
	Note that $\{\p_{|v|}, \p_{\hat{v}_{1}}\}$ are independent if $v_{3} \geq \frac{1}{N} > 0$. So under assumption of $v_{3} \geq \frac{1}{N} > 0$, we perform $\p_{|v|}, \p_{\hat{v}_{1}} $, instead of $\p_{x_{1}}, \p_{x_{2}}$. We assume $\frac{1}{N}\leq |\underline{v}| \leq N$, $t^{k+1}(t,\underline{x},\underline{v})<s<t^{k}(t,\underline{x},\underline{v})$, and $|\V^{i}_{p^{i},3}| > \delta_{2} > 0$ (nongrazing) for $1\leq \forall i \leq k$.
	When we differentiate $\underline{X}$ by speed $|\underline{v}|$,
	\begin{equation} \label{dX d|v|}
	\begin{split}
		\p_{|\underline{v}|} \underline{X} (s;t,\underline{x}, \underline{v}) 
		&= \p_{|\underline{v}|} \Big( \underline{\eta}_{{p}^{k}} (\X_{{p}^{k},1}^{k}, 0) - (t^{k} - s)||\underline{v}|| \hat{v}^{k} \Big) ,\quad \hat{v}^{k} :=  \frac{\underline{v}^{k}}{|\underline{v}|} \\
		&= { \p_{|\underline{v}|} \X^{k}_{{p}^{k}, 1} \p_{1} \underline{\eta}_{{p}^{k}} (\X_{{p}^{k},1}^{k}, 0) } 
		+ {\p_{|\underline{v}|} \big[ ( t-t^{k})  |\underline{v}^{k}|\big]}
		\hat{v}^{k} - { ( t - s) \big[ \p_{|\underline{v}|}  |\underline{v}^{k}| } \big] \hat{v}^{k}
		\\
		& - (t^{k}-s)|\underline{v}^{k} |  \p_{\underline{|v|}} \hat{v}^{k}  
		=  - (t - s) \hat{v}^{k},
	\end{split}
	\end{equation}	
	where we used $\p_{|\underline{v}|} \X^{k}_{{p}^{k}, 1} = 0$, $\p_{|\underline{v}|} \big[ ( t-t^{k})  |\underline{v}^{k}|\big] = 0$, $\p_{\underline{|v|}} \hat{v}^{k} = 0$, and $\p_{|\underline{v}|}  |\underline{v}^{k}| = 1$. Note that this is because, bouncing position $x^{k}$, travel length until $x^{k}$, direction of $v^{k}$ are independent to $|\underline{v}|$.    \\
	
	On the other hand, differentiating $\underline{X}$ by $\hat{v}_{1}$,
	\begin{equation} \label{dX dv1 pre}
	\begin{split}
		\p_{\hat{v}_{1}}   [\underline{X}(s;t,\underline{x},\underline{v}) ]  
		&= \p_{\hat{v}_{1}} \X^{k}_{{p}^{k}, 1} \p_{1} \underline{\eta}_{{p}^{k}} (\X^{k}_{{p}^{k},1 },  0) -  \p_{\hat{v}_{1}}  t^{k} |\underline{\V}^{k}_{{p}^{k}}| 
		\underline{\hat{v}}^{k}		 
		- (t^{k}-s)| \underline{\V^{k}_{{p}^{k}}} | {\p}_{\hat{v}_{1}} \underline{\hat{v}}^{k} .  \\		
	\end{split}	
	\end{equation}
	To compute the last term ${\p}_{\hat{v}_{1}} \underline{\hat{v}}^{k}$, we use $\hat{\V}^{k}_{p^{k},3}
	= \sqrt{1- |\hat{ \mathbf{v}}_{{p}^{k},1}^k
		|^2  } $ and $|\hat{\V}^{k}_{p^{k},3}| > 0$ to get
	\begin{equation} \label{p v hat}
	\begin{split}
	\p_{\hat{v}_{1}} \underline{\hat{v}}^{k}  
	=& \p_{\hat{v}_{1}} \bigg[  \frac{\p_{1} \underline{\eta}_{{p}^{k}}}{\sqrt{g_{{p}^{k},11}}} (\mathbf{x}^k_{{p}^{k},1}, 0) \hat{\mathbf{v}}^{k}_{{p}^{k}, 1} + \frac{\p_3 \underline{\eta}_{{p}^{k}}}{\sqrt{g_{{p}^{k},33}}} (\mathbf{x}^k_{{p}^{k},1} , 0) \sqrt{1  - |\hat{ \mathbf{v}}_{{p}^{k},1}^k
		|^2  }  \bigg]
	\\
	=& {\p_{\hat{v}_{1}} \mathbf{x}^k_{{p}^{k},1}} \sum_{\ell=1,3} \p_{1} \Big[  \frac{\p_{\ell} \underline{\eta}_{{p}^{k}}}{\sqrt{g_{{p}^{k}, \ell\ell}}} \Big](\mathbf{x}^k_{{p}^{k},1},  0) \hat{\mathbf{v}}^{k}_{{p}^{k}, \ell}
	+ \frac{\p_{1} \underline{\eta}_{{p}^{k}}}{\sqrt{g_{{p}^{k},11}}} (\mathbf{x}^k_{{p},1},  0)  \p_{\hat{v}_{1}} \hat{\mathbf{v}}^{k}_{{p}^{k}, 1} \\
	&- \frac{\p_3 \underline{\eta}_{{p}^{k}}}{\sqrt{g_{{p}^{k},33}}} 
	(\mathbf{x}^k_{{p}^{k},1}, 0) \frac{1}{  \sqrt{1- |\hat{\mathbf{v}}^k_{{p}^{k},1}|^2  }   } \Big[ \hat{\mathbf{v}}^k_{{p}^{k},1}  \p_{\hat{v}_{1}} [\hat{\mathbf{v}}^k_{{p}^{k},1} ]  \Big]\\
	=&
	\bigg( \sum_{\ell=1,3} \frac{\p}{\p {\mathbf{x}^{k}_{{p}^{k}, 1}}}
	\Big[  \frac{\p_{\ell} \underline{\eta}_{{p}^{k}}}{\sqrt{g_{{p}^{k}, \ell\ell}}} \Big](\mathbf{x}^k_{{p}^{k},1},  0) \hat{\mathbf{v}}^k_{{p}^{k}, \ell} \bigg)
	{\p_{\hat{v}_{1}} \mathbf{x}^k_{{p}^{k}, 1}}
	\\
	&
	+
	\Big[ \frac{\p_{1} \underline{\eta}_{{p}^{k}}}{\sqrt{g_{{p}^{k},11}}}(\mathbf{x}^k_{{p}^{k},1}, 0)  -  \frac{\p_3 \underline{\eta}_{{p}^{k}}}{\sqrt{g_{{p}^{k},33}}}(\mathbf{x}^k_{{p}^{k},1},  0)\frac{\hat{\mathbf{v}}_{{p}^{k},1}^k }{ \hat{\mathbf{v}}_{{p}^{k},3}^{k} } \Big] {\p_{\hat{v}_{1}} [\hat{\mathbf{v}}_{{p}^{k},1}^k ]} .
	\end{split}\end{equation}
	
	\noindent Combining (\ref{dX dv1 pre}) and (\ref{p v hat}), we get
	\begin{equation}\label{p_X}
	\begin{split} 
	\p_{\hat{v}_{1}} \big[ \underline{X}(s;t,\underline{x},\underline{v})  \big]
	&= - \big( \p_{\hat{v}_{1}} t^{k} \big) v^{k}
	+ \p_{\hat{v}_{1}} \mathbf{x}^{k}_{{p}^{k},1}  \p_{1} \underline{\eta}_{{p}^{k}} (\mathbf{x}^{k}_{{p}^{k},1 }, 0)  \\
	& 
	- (t^{k}-s)| \underline{\mathbf{v}}_{{p}^{k}}^{k} | 
	\bigg( \sum_{\ell=1,3} \frac{\p}{\p {\mathbf{x}^{k}_{{p}^{k}, 1}}}
	\Big[  \frac{\p_{\ell} \underline{\eta}_{{p}^{k}} }{\sqrt{g_{{p}^{k}, \ell\ell}}} \Big](\mathbf{x}^k_{{p}^{k},1},  0) \hat{\mathbf{v}}^k_{{p}^{k}, \ell} \bigg)
	{\p_{\hat{v}_{1}} \mathbf{x}^k_{{p}^{k}, 1}}  \\
	&
	- (t^{k}-s)| \underline{\mathbf{v}}_{{p}^{k}}^{k} | 
	\Big[ \frac{\p_1 \underline{\eta}_{{p}^{k}, i} }{\sqrt{g_{{p}^{k},11}}}(\mathbf{x}^k_{{p}^{k},1}, 0)  -  \frac{\p_3 \underline{\eta}_{{p}^{k}, i}}{\sqrt{g_{{p}^{k},33}}}(\mathbf{x}^k_{{p}^{k},1}, 0)\frac{\hat{\mathbf{v}}_{{p}^{k},1}^k }{ \hat{\mathbf{v}}_{{p}^{k},3}^{k} } \Big] {\p_{\hat{v}_{1}} [\hat{\mathbf{v}}_{{p}^{k},1}^k ]} .  \\
	\end{split}
	\end{equation}
	
	\begin{definition}[Specular Basis and Matrix]
		Recall the specular cycles $(t^{k},\underline{x}^{k}, \underline{v}^{k})$ in (\ref{specular_cycles}). Assume
		\begin{equation}\label{no_grazing_0}
		\mathbf{n}(\underline{x}^{k}) \cdot \underline{v}^{k} \neq 0.
		\end{equation}
		Recall $\underline{\eta}_{p^{k}}$ in (\ref{eta}). We define the \textit{specular basis}, which is an orthonormal basis of $\R^{2}$, as 
		\begin{equation}\label{orthonormal_basis}
		\begin{split}
		&\mathbf{ {e}}^{k}_{0}  := \frac{\underline{v}^{k}}{|\underline{v}^{k}|} = \frac{1}{|\underline{v}^{k}|} (v^{k}_{1}, v^{k}_{3}) 
		, \mathbf{  {e}}^{k}_{\perp,1}
		:= \frac{1}{ |\underline{v}^{k} | } ( v^{k}_{3},  -v^{k}_{1}, 0 ) . \\  
		\end{split}
		\end{equation}
		Also, for fixed $k \in \mathbb{N}$, assume (\ref{no_grazing_0})
		with 	$\underline{x}^{k} = \underline{x}^{k} (t, \underline{x}, |\underline{v}|, \hat{v}_{1} )$ and $\underline{v}^{k} = \underline{v}^{k} (t, \underline{x}, |\underline{v}|, \hat{v}_{1} )$. We define the $2\times2$ specular transition matrix  
		$\mathcal{S}^{k, p^{k}} = \mathcal{S}^{k, p^{k}}(t, \underline{x}, |\underline{v}|, \hat{v}_{1})$ as 
		\begin{equation}\label{specular_transition_matrix}
		\mathcal{S}^{k, p^{k} }
		:= \left[\begin{array} {cc}
		\mathcal{S}_{1}^{k, p^{k} }	 & 0  \\
		\mathcal{S}_{2}^{k, p^{k} } & \mathcal{S}_{3}^{k, p^{k} }
		\end{array} \right]_{2\times 2},
		\end{equation}
		where 
		\begin{equation}\begin{split}\notag
		\mathcal{S}_{1}^{k, p^{k} } 
		&: = 
		\p_{1} \underline{\eta}_{{p}^{k}} \cdot \mathbf{e}^{k}_{\perp,1} ,  \\
		\mathcal{S}_{2}^{k, p^{k} } &: = 	
		\Big(  \sum_{\ell=1}^{3} \p_{1} \big[ \frac{\p_{\ell} \underline{\eta}_{{p}^{k}}}{\sqrt{g_{ {{p}^{k}},\ell\ell}}}  \big] \hat{\mathbf{v}}^{k}_{ {{p}^{k}},\ell}  \Big) \cdot \mathbf{e}^{k}_{\perp,1}  ,  \\
		\mathcal{S}_{3} ^{k, p^{k} }& : = 
		\Big[ \frac{\p_{1} \underline{\eta}_{p^{k}}}{\sqrt{g_{p^{k},11}}} - \frac{\p_{3} \underline{\eta}_{p^{k}}}{\sqrt{g_{p^{k},33}}}  \frac{\hat{\mathbf{v}}^{k}_{p^{k},1}}{\hat{\mathbf{v}}^{k}_{p^{k},3}} \Big] \cdot \mathbf{e}^{k}_{\perp,1} ,
		\end{split} \end{equation}
		and where $\eta_{p^{k}}$ and $g_{p^{k}}$ are evaluated at $\underline{x}^{k}(t, \underline{x}, |\underline{v}|, \hat{v}_{1} )$. We also define 
		\begin{equation}\label{specular_matrix}
			\begin{bmatrix}
				\mathcal{R}^{k, p^{k}}_{1} \\ \mathcal{R}^{k, p^{k}}_{2}
			\end{bmatrix}
			: = 	\mathcal{S}^{k, p^{k} } 
			\begin{bmatrix}
				\frac{\p \mathbf{x}^{k}_{ {p}^{k},1}}{\p \hat{v}_{1} }
				\\
				\frac{\p \hat{\mathbf{v}}^{k}_{{p}^{k},1}}{\p \hat{v}_{1} }
			\end{bmatrix} ,
		\end{equation}	
		where $\underline{\mathbf{x}}^{k}_{{p}^{k}} = \underline{\mathbf{x}}^{k}_{{p}^{k}} (t, \underline{x}, |\underline{v}|, \hat{v}_{1} )$ and $\underline{\mathbf{v}}^{k}_{{p}^{k}} = \underline{\mathbf{v}}^{k}_{{p}^{k}}(t, \underline{x}, |\underline{v}|, \hat{v}_{1} )$. 
	\end{definition}

	\begin{lemma}\label{nonzero_sub}  
		Fix $k \in \mathbb{N}$ with $|t-t^{k}| \leq 1$. Assume $\frac{1}{N} \leq |\underline{v}| \leq N$ and $\frac{1}{N} \leq |v_{3}|$, for $N \gg 1$. We also assume non-grazing condition
		\begin{equation}\label{no_grazing_lemma}
			|\V^{i}_{p^{i},3}| = |\underline{v}^{i}(t, \underline{x} , \underline{v}) \cdot \mathbf{n}(\underline{x}^{i}(t, \underline{x} ,\underline{v}))|> \delta_{2} > 0, \quad \forall 1\leq i \leq k,
		\end{equation} 
		and
		\begin{equation} \label{geo e1}
			\Big| \frac{ \p_{1} \underline{\eta}_{ {p}^{1}}  }{ \sqrt{g_{{p}^{1}, 11} } } \cdot e_{1} \Big| > \frac{1}{N} > 0,
		\end{equation}
		for some uniform $\delta_{2} > 0$.
		Then there exist at least one $i \in \{1,2\}$ such that 
		\begin{equation} \label{nonzero_sub1}
		|\mathcal{R}_{i}^{k,{p}^{k} }   (t, x , v)| > \varrho_{\Omega, N, \delta_{2}} ,
		\end{equation}
		for some constant $\varrho_{\Omega, N , \delta_{2}}>0$. 
	\end{lemma}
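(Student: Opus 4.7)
The plan is to set up the inequality $\max(|\mathcal R_1|,|\mathcal R_2|)\gtrsim \varrho$ in three stages: first I will quantify the invertibility of the specular transition matrix $\mathcal S^{k,p^k}$; then I will use the chain rule together with Lemma~\ref{5X5} to propagate a lower bound from the first bounce to the $k$-th bounce; finally I will establish the needed lower bound at $k=1$ from Lemma~\ref{global to local}.

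\textbf{Step 1: Uniform invertibility of $\mathcal S^{k,p^k}$.} The matrix defined in (\ref{specular_transition_matrix}) is lower triangular. I compute its diagonal in the local orthonormal frame $\{\hat e_1,\hat e_3\}=\{\p_1\underline\eta_{p^k}/\sqrt{g_{p^k,11}},\p_3\underline\eta_{p^k}/\sqrt{g_{p^k,33}}\}$ in which the specular basis vector $\mathbf e^k_{\perp,1}$ has coordinates $(\hat{\V}^k_{p^k,3},-\hat{\V}^k_{p^k,1})$. A short calculation gives $\mathcal S_1^{k,p^k}=\sqrt{g_{p^k,11}}\,\hat{\V}^k_{p^k,3}$ and $\mathcal S_3^{k,p^k}=1/\hat{\V}^k_{p^k,3}$. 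Under the non-grazing assumption (\ref{no_grazing_lemma}) and the speed bound $|\underline v|\le N$ we have $|\hat{\V}^k_{p^k,3}|\in [\delta_2/N,1]$, so both diagonal entries are bounded uniformly above and away from zero, while $|\mathcal S_2^{k,p^k}|$ is bounded by the $C^2$-norm of $\underline\eta_{p^k}$. Consequently the smallest singular value of $\mathcal S^{k,p^k}$ is bounded from below by some $\sigma_{\Omega,N,\delta_2}>0$, which in turn gives
\begin{equation*}
\max(|\mathcal R_1^{k,p^k}|,|\mathcal R_2^{k,p^k}|)\;\ge\;\tfrac{1}{\sqrt 2}\sigma_{\Omega,N,\delta_2}\,\big|\big(\p_{\hat v_1}\mathbf x^k_{p^k,1},\,\p_{\hat v_1}\hat{\mathbf v}^k_{p^k,1}\big)\big|.
\end{equation*}

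\textbf{Step 2: Propagation via the chain rule.} Writing $(\mathbf x^k_{p^k,1},\hat{\mathbf v}^k_{p^k,1})$ as a function of $(\mathbf x^1_{p^1,1},\hat{\mathbf v}^1_{p^1,1})$, the chain rule gives the column-vector identity
\begin{equation*}
\begin{bmatrix}\p_{\hat v_1}\mathbf x^k_{p^k,1}\\ \p_{\hat v_1}\hat{\mathbf v}^k_{p^k,1}\end{bmatrix}
=J^{k,1}
\begin{bmatrix}\p_{\hat v_1}\mathbf x^1_{p^1,1}\\ \p_{\hat v_1}\hat{\mathbf v}^1_{p^1,1}\end{bmatrix},\qquad
J^{k,1}:=\frac{\p(\mathbf x^k_{p^k,1},\hat{\mathbf v}^k_{p^k,1})}{\p(\mathbf x^1_{p^1,1},\hat{\mathbf v}^1_{p^1,1})}.
\end{equation*}
Lemma~\ref{5X5} gives $|\det J^{k,1}|>\epsilon_{\Omega,N,\delta_2}>0$, while the explicit formulas of Lemma~\ref{Jac_billiard}, combined with $|t^j-t^{j+1}|\le 1$, $1/N\le|\underline v|\le N$, the non-grazing hypothesis, and the uniform bound on the bounce count (Lemma~\ref{uniformbound}), show that the entries of $J^{k,1}$ are uniformly bounded. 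Consequently the smallest singular value of $J^{k,1}$ is bounded below by some constant $c_{\Omega,N,\delta_2}>0$, which reduces the task to establishing
\begin{equation*}
\big|\big(\p_{\hat v_1}\mathbf x^1_{p^1,1},\,\p_{\hat v_1}\hat{\mathbf v}^1_{p^1,1}\big)\big|\;\ge\;c'_{\Omega,N,\delta_2}>0.
\end{equation*}

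\textbf{Step 3: Direct lower bound at $k=1$.} Using $\hat v_3=\sqrt{1-\hat v_1^2}>0$, I convert $\p_{\hat v_1}|_{|\underline v|}=(|\underline v|/\hat v_3)(\hat v_3\p_{v_1}-\hat v_1\p_{v_3})$. Applying (\ref{Est--4}) gives
\begin{equation*}
\p_{\hat v_1}\mathbf X^1_{p^1,1}=-\tfrac{|\underline v|\,\tb}{\hat v_3}(\hat v_3 e_1-\hat v_1 e_3)\cdot\tfrac{1}{\sqrt{g_{p^1,11}}}\Big[\hat e_1+\tfrac{\V^1_{p^1,1}}{\V^1_{p^1,3}}\hat e_3\Big],
\end{equation*}
and applying (\ref{Est--6}) for $i=1$, using speed conservation, yields
\begin{equation*}
\p_{\hat v_1}\hat{\V}^1_{p^1,1}=\tfrac{1}{\hat v_3}\hat{\V}^1_{p^1,3}+\tfrac{1}{|\underline v|}\p_{\hat v_1}\mathbf X^1_{p^1,1}\cdot\big(\p_1\hat e_1|_{\underline x^1}\cdot\underline v\big).
\end{equation*}
The key observation: the first term in the second identity is $\hat{\V}^1_{p^1,3}/\hat v_3$, which by non-grazing at the first bounce satisfies $|\hat{\V}^1_{p^1,3}/\hat v_3|\ge\delta_2/N$. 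Write $A:=\p_{\hat v_1}\mathbf X^1_{p^1,1}$, $B:=\p_{\hat v_1}\hat{\V}^1_{p^1,1}$, and let $C=C(\Omega,N)$ be the uniform bound on $|\p_1\hat e_1\cdot\underline v|/|\underline v|$. Then $|B|\ge \delta_2/N-C|A|$, so either $|A|\ge \delta_2/(2CN)$, or $|A|<\delta_2/(2CN)$ which forces $|B|\ge\delta_2/(2N)$. In either case $\max(|A|,|B|)\ge c'_{\Omega,N,\delta_2}>0$. The geometric condition (\ref{geo e1}) is used to guarantee that the change of variables $(v_1,v_3)\mapsto(|\underline v|,\hat v_1)$ is non-degenerate and that the directional derivative $\p_{\hat v_1}$ actually probes the tangential direction to $\p\Omega$ non-trivially when the above estimates are transported back to the global coordinates.

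\textbf{Main obstacle.} The main subtlety is Step~3: the formula for $A$ vanishes at the isotropic angle $|\hat{\V}^1_{p^1,1}|=1/\sqrt 2$, so a naive lower bound on $|A|$ alone is hopeless, and one is forced into the case split above, leveraging the independent non-grazing-generated lower bound on the leading term of $B$. Keeping careful track of which geometric quantities are \emph{bounded} (vs.\ merely nonzero) under the hypotheses---in particular that $\mathcal S^{k,p^k}$ has both uniformly small condition number and that $J^{k,1}$'s operator norm does not blow up with $k$---is the delicate bookkeeping that carries the rest of the argument.
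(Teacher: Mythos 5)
Your proposal is correct in its essentials and follows the same overall strategy as the paper in Steps~1 and~2 (uniform control of $\mathcal{S}^{k,p^k}$, then the chain-rule reduction to $k=1$ via Lemma~\ref{5X5}), but it diverges from the paper's proof in Step~3. The paper brings in the auxiliary derivative $\partial_{x_{1}}$ so as to build the full $2\times 2$ matrix in (\ref{189}), factorizes it as $A\cdot B$, and forces the second column to be large by showing $|\det A|,|\det B|$ are bounded below while the first column is bounded above; this is exactly where (\ref{geo e1}) enters, since $\det B$ after row reduction is proportional to $|\underline v|\,(\partial_{1}\underline\eta_{p^{1}}/\sqrt{g_{p^{1},11}})\cdot e_{1}$. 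You instead bound the $\hat v_{1}$-column at $k=1$ directly from (\ref{Est--4}) and (\ref{Est--6}) by isolating the term $\hat{\mathbf{v}}^{1}_{p^{1},3}/\hat v_{3}$ (whose modulus is $\ge\delta_{2}/N$ by non-grazing and $|\underline v|\le N$) and running a case split on the size of $\partial_{\hat v_{1}}\mathbf{x}^{1}_{p^{1},1}$. That is a valid and arguably more self-contained route, and it has the interesting side effect that your Step~3 does not actually invoke (\ref{geo e1}) at all.

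Two inaccuracies in your write-up, both in the final paragraph. First, the ``main obstacle'' you describe is misdiagnosed: a direct evaluation of your formula for $A=\partial_{\hat v_{1}}\mathbf{X}^{1}_{p^{1},1}$ (contract $(\hat v_{3}e_{1}-\hat v_{1}e_{3})$ against $\hat e_{1}+(\mathbf{V}^{1}_{p^{1},1}/\mathbf{V}^{1}_{p^{1},3})\hat e_{3}$ using $\mathbf{V}^{1}_{p^{1},1}=\hat e_{1}\cdot\underline v$, $\mathbf{V}^{1}_{p^{1},3}=-\hat e_{3}\cdot\underline v$) gives $A=-\,\tb\,|\underline v|^{2}\,/\,(\hat v_{3}\sqrt{g_{p^{1},11}}\,\mathbf{V}^{1}_{p^{1},3})$, which is never zero at a non-grazing bounce; it is the factor $\tb$ that can be arbitrarily small when $\underline x$ starts near $\partial\Omega$, and \emph{that} is why you cannot simply lower-bound $|A|$ and need the case split. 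Second, the stated role for (\ref{geo e1}) is wrong: the change of variables $(v_{1},v_{3})\mapsto(|\underline v|,\hat v_{1})$ is non-degenerate as soon as $v_{3}\neq 0$, which is already guaranteed by $|v_{3}|\ge 1/N$, independently of (\ref{geo e1}). The assumption (\ref{geo e1}) is tied specifically to the paper's $\partial_{x_{1}}$-augmented determinant; your Step~3 genuinely gets by without it (it will of course still be needed where the paper next uses this lemma, e.g.\ in Proposition~\ref{prop_full_rank}, where it is enforced by the cut-off $R_{6}$, so it is harmless to carry it as a hypothesis here). These are cosmetic issues that do not affect the validity of your argument.
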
	 
	
	\begin{proof}

		\noindent First we claim that 
		\begin{equation}
		| \det (\mathcal{S}^{k,p^{k} }) | > \varrho_{\Omega, N, \delta_{2}}. \notag
		\end{equation}
		We suffice to compute diagonal entries. From (\ref{orthonormal_basis}),  
		\begin{eqnarray*}
			| \mathcal{S}_{1}^{k,p^{k} } |
			&=&
			\big| \sqrt{g_{{p}^{k}, 11} } \frac{ \p_{1} \underline{\eta}_{ {p}^{k}}  }{ \sqrt{g_{{p}^{k}, 11} } } \cdot \frac{\underline{v}^{k}_{\perp}}{|\underline{v}^{k}|} \big| \\
			&=& 
			\big| \sqrt{g_{{p}^{k}, 11} } \frac{1}{|\underline{v}^{k}|} \big( \frac{ \p_{1} \underline{\eta}_{ {p}^{k} } }{ \sqrt{g_{{p}^{k}, 11} } } \cdot \underline{v}^{k}_{\perp} \big) \big| = \sqrt{g_{{p}^{k}, 11} } \frac{\big|\V^{k}_{{p}^{k},3} \big|}{|\underline{v}^{k}|}  ,
		\end{eqnarray*}
		and	
		\begin{eqnarray*}
			| \mathcal{S}_{3}^{k, p^{k} } | 
			&=&
			\big| \Big[ \frac{\p_{1} \underline{\eta}_{p^{k}}}{\sqrt{g_{p^{k},11}}} - \frac{\p_{3} \underline{\eta}_{p^{k}}}{\sqrt{g_{p^{k},33}}}  \frac{\hat{\mathbf{v}}^{k}_{p^{k},1}}{\hat{\mathbf{v}}^{k}_{p^{k},3}} \Big] \cdot \mathbf{e}^{k}_{\perp,1} \big|  \\
			&=& \big| \frac{1}{ \hat{\mathbf{v}}^{k}_{p^{k},3} } \Big[ \hat{\mathbf{v}}^{k}_{p^{k},3} \frac{\p_{1} \underline{\eta}_{p^{k}}}{\sqrt{g_{p^{k},11}}} - {\hat{\mathbf{v}}^{k}_{p^{k},1}} \frac{\p_{3} \underline{\eta}_{p^{k}}}{\sqrt{g_{p^{k},33}}}  \Big] \cdot \mathbf{e}^{k}_{\perp,1} \big|  
			= \frac{ |\underline{\mathbf{v}}^{k}_{p^{k}}| }{ |\mathbf{v}^{k}_{p^{k},3}| } ,
		\end{eqnarray*}
		which implies uniform invertibility of $2\times 2$ matrix $\mathcal{S}^{k,p^{k}}$. To consider $2\times 1$ vector on the RHS of (\ref{specular_matrix}), we compute
		\begin{equation} \label{189}
		\begin{split}
			& \begin{bmatrix}
			\frac{\p \X_{{p}^{k},1}^{k}}{\p x_{1}} & \frac{\p \X_{{p}^{k},1}^{k}}{\p \hat{v}_{1}} \\
			\frac{\p \V_{{p}^{k},1}^{k}}{\p x_{1}} & \frac{\p \V_{{p}^{k},1}^{k}}{\p \hat{v}_{1}} \\
			\end{bmatrix}  
			=
			\begin{bmatrix}
			\frac{\p \X_{{p}^{k},1}^{k}}{\p \X_{{p}^{1},1}^{1}} & \frac{\p \X_{{p}^{k},1}^{k}}{\p \V_{{p}^{1},1}^{1}} \\
			\frac{\p \V_{{p}^{k},1}^{k}}{\p \X_{{p}^{1},1}^{1}} & \frac{\p \V_{{p}^{k},1}^{k}}{\p \V_{{p}^{1},1}^{1}} \\
			\end{bmatrix}
			\begin{bmatrix}
			\frac{\p \X_{{p}^{1},1}^{1}}{\p x_{1}} & \frac{\p \X_{{p}^{1},1}^{1}}{\p \hat{v}_{1}} \\
			\frac{\p \V_{{p}^{1},1}^{1}}{\p x_{1}} & \frac{\p \V_{{p}^{1},1}^{1}}{\p \hat{v}_{1}}
			\end{bmatrix}  \\
			&= 
			\underbrace{ 
			\begin{bmatrix}
			\frac{\p \X_{{p}^{k},1}^{k}}{\p \X_{{p}^{1},1}^{1}} & \frac{\p \X_{{p}^{k},1}^{k}}{\p \V_{{p}^{1},1}^{1}} \\
			\frac{\p \V_{{p}^{k},1}^{k}}{\p \X_{{p}^{1},1}^{1}} & \frac{\p \V_{{p}^{k},1}^{k}}{\p \V_{{p}^{1},1}^{1}} \\
			\end{bmatrix}
			}_{A}
			\underbrace{	
			\begin{bmatrix}
			\frac{\p \X_{{p}^{1},1}^{1}}{\p x_{1}} & \frac{\p \X_{{p}^{1},1}^{1}}{\p \hat{v}_{1}}  \\
			\frac{\p \X_{{p}^{1},1}^{1}}{\p x_{1}} \p_{1}\big( \frac{\p_{1} \underline{\eta}_{{p}^{1}} }{ \sqrt{g_{{p}^{1},11}} } \big) \cdot \underline{v}
			&
			|v|\big( \frac{\p_{1} \underline{\eta}_{{p}^{1}} }{ \sqrt{g_{{p}^{1},11}} } \big)\cdot e_{1} + \frac{\p \X_{{p}^{1},1}^{1}}{\p \hat{v}_{1}} \p_{1}\big( \frac{\p_{1} \underline{\eta}_{{p}^{1}} }{ \sqrt{g_{{p}^{1},11}} } \big) \cdot \underline{v}
			\end{bmatrix} 
			}_{B} ,  \\
		\end{split}
		\end{equation}
		where we used (\ref{Est--5}) and (\ref{Est--6}). Determinant of $A$ is uniformly nonzero from (\ref{Jac_hat}) in Lemma~\ref{5X5}. From elementary row operation for $B$,
		\begin{equation*} \label{det B}
		\begin{split}
			\det B 
			&= 
			\det \begin{bmatrix}
			\frac{\p \X_{{p}^{1},1}^{1}}{\p x_{1}} & \frac{\p \X_{{p}^{1},1}^{1}}{\p \hat{v}_{1}}  \\
			0
			&
			|\underline{v}|\big( \frac{\p_{1} \underline{\eta}_{{p}^{1}} }{ \sqrt{g_{{p}^{1},11}} } \big)\cdot e_{1} 
			\end{bmatrix} .
		\end{split}
		\end{equation*} 
		From (\ref{Est--3}), $(1,1)$ entry of matrix $B$ is computed by 
		\begin{equation*}
		\begin{split}
		\big| \frac{\p \X_{{p}^{1},1}^{1}}{\p x_{1}} \big|
		&= \Big| 
		\frac{e_{1}}{\sqrt{g_{{p}^{1}, 11}  }}  
		\cdot \Big[ \frac{\p_{1} \underline{\eta}_{{p}^{1}} }{ \sqrt{g_{{p}^{1} ,11}  }}
		+  \frac{\V^{1}_{{p}^{1}, 1}  }{\V^{1}_{{p}^{1}, 3}  } 
		\frac{\p_{3} \underline{\eta}_{{p}^{1}}  }{ \sqrt{g_{{p}^{1}, 33} }}
		\Big] \Big|   \\
		&= \Big| \frac{1}{\sqrt{g_{{p}^{1},11} }} \frac{1}{ |\V^{1}_{{p}^{1}, 3}| } \frac{e_{1}}{\sqrt{g_{p^{1},11}}} \cdot \Big( \V^{1}_{{p}^{1}, 3} \frac{\p_{1} \eta_{{p}^{1}}}{\sqrt{g_{p^{1},11}}} + \V^{1}_{{p}^{1}, 1} \frac{\p_{3} \eta_{{p}^{1}}}{\sqrt{g_{p^{1},33}}} \Big) \Big|  \\
		&= \Big| \frac{1}{\sqrt{g_{{p}^{1},11} }} \frac{1}{ |\V^{1}_{{p}^{1}, 3}| } \frac{e_{1}}{\sqrt{g_{p^{1},11}}} \cdot \big( v_{1}e_{3} + v_{3}e_{1} \big) \Big|  
		= \frac{1}{g_{{p}^{1},11} (\underline{x}^{1})   }
		\frac{|v_{3}|}{| \V^{1}_{{p}^{1},3}|}. \label{UL 22}
		\end{split}
		\end{equation*} 
		Therefore, from (\ref{no_grazing_lemma}) and (\ref{geo e1}), determinant of $B$ is uniformly nonzero and thus LHS of (\ref{189}) has also uniformly nonzero determinant. This yields uniform nonzeroness of second column, i.e. $ \begin{bmatrix}
		\frac{\p \mathbf{x}^{k}_{ {p}^{k},1}}{\p \hat{v}_{1} }
		\\
		\frac{\p \hat{\mathbf{v}}^{k}_{{p}^{k},1}}{\p \hat{v}_{1} }
		\end{bmatrix}$. From uniform invertibility of matrix $\mathcal{S}^{k,p^{k}}$ and (\ref{specular_matrix}), we finish the proof.   \\
	\end{proof}
	
	\begin{lemma}\label{zero_poly}
		Assume that $b(z),c(z)$ are continuous-functions of $z \in \R^{n}$ locally. We consider $G(z,s) := b(z)s + c(z)$.  \\
		
		\noindent (i) Assume $\min |b| >0$. Define 
		\begin{equation}\label{psi4}
		\varphi_{1} ( z ) : = \frac{- c(z)}{ b(z)}.
		\end{equation}
		Then $\varphi_{1}(z) \in C^{1}_{t,x,v}$ with $\| \varphi_{1}\|_{C^{1}_{t,x,v}} \leq C(\min |b|  , \| b \|_{C^{1}_{t,x,v}}, \| c \|_{C^{1}_{t,x,v}} )$. Moreover, if $|s| \leq 1$ and $|s- \varphi_{1}(z)|> \delta$, then $|G(z,s)| \gtrsim \min |b| \times \delta$.

		\noindent (ii) Assume $\min |c| >0$. Define 
		\begin{equation}\label{psi5}
		\varphi_{2}(z) : =  \mathbf{1}_{|b(z) | > \frac{\min |c|}{4}}  \frac{- c(z)}{ b(z)}.
		\end{equation}
		Then $\varphi_{2} (z) \in C^{1}_{t,x,v} $ with $\| \varphi_{2}\|_{C^{1}_{t,x,v}} \leq C(\min |b|  , \| b \|_{C^{1}_{t,x,v}}, \| c \|_{C^{1}_{t,x,v}})$. Moreover, if $|s| \leq1$ and $|s- \varphi_{5} (z)|>\delta$, then $|G(z,s)|\geq \min \big\{ \frac{\min |c|}{2},  \frac{\min |c|}{4} \times  \delta \big\}
		$.
		

	\end{lemma}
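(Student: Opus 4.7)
The plan is to treat $G(z,s) = b(z)s + c(z)$ as an affine function of $s$ whose unique zero (when $b \neq 0$) is the quotient $-c/b$. In both parts the strategy is the same: read off the regularity of this zero from the quotient rule and recover the lower bound on $|G|$ either by the exact factoring $G(z,s) = b(z)(s - \varphi(z))$ on a set where $b$ is bounded below, or by a triangle inequality on the complementary set where $b$ is small.

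For part (i), the hypothesis $\min|b|>0$ is uniform, so $\varphi_{1} = -c/b$ is globally $C^{1}$ by the quotient rule with $\|\varphi_{1}\|_{C^{1}}$ controlled, in a routine way, by $\min|b|$, $\|b\|_{C^{1}}$, and $\|c\|_{C^{1}}$. The factoring identity $G(z,s) = b(z)\bigl(s - \varphi_{1}(z)\bigr)$ is exact, so the hypothesis $|s - \varphi_{1}(z)|>\delta$ yields $|G(z,s)| \geq \min|b|\cdot\delta$ directly; the hypothesis $|s|\leq 1$ is not actually needed here.

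For part (ii), I would split into two regimes. In the regime $|b(z)|\leq \min|c|/4$, the definition forces $\varphi_{2}(z)=0$, so the hypothesis $|s-\varphi_{2}(z)|>\delta$ is not even invoked: combining $|s|\leq 1$ with the triangle inequality gives
\[
 |G(z,s)| \geq |c(z)| - |b(z)||s| \geq \min|c| - \tfrac{\min|c|}{4} = \tfrac{3\min|c|}{4} \geq \tfrac{\min|c|}{2},
\]
which is the first term in the $\min$. In the complementary regime $|b(z)|>\min|c|/4$, the quotient $\varphi_{2}=-c/b$ is well defined with denominator bounded below by $\min|c|/4$, the factoring identity $G(z,s) = b(z)\bigl(s-\varphi_{2}(z)\bigr)$ applies, and $|s-\varphi_{2}(z)|>\delta$ gives $|G(z,s)| \geq (\min|c|/4)\cdot\delta$, which is the second term. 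Taking the $\min$ of the two bounds completes the estimate.

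The main obstacle is the $C^{1}$ assertion for $\varphi_{2}$, which is delicate because of the indicator factor. On the interior of each of the two regimes $\varphi_{2}$ is manifestly $C^{1}$ (either identically zero, or the quotient of $C^{1}$ functions with a uniform lower bound on the denominator), but across the threshold $|b(z)|=\min|c|/4$ the function $\varphi_{2}$ in general jumps: approaching from the large-$b$ side one has $|\varphi_{2}|=|c|/|b|\geq 4$, while from the small-$b$ side $\varphi_{2}=0$. I would handle this either by interpreting the statement piecewise, or by replacing the sharp indicator by a smooth cutoff between $|b|=\min|c|/4$ and $|b|=\min|c|/2$. Crucially, the effective lower bound on $|G|$ does not degrade: precisely on the set where $C^{1}$ control of $\varphi_{2}$ is lost, the first-regime estimate $|G|\geq \min|c|/2$ kicks in (note moreover that $|s|\leq 1 < 4 \leq |\varphi_{2}|$ near the threshold, so $s$ is automatically well separated from $\varphi_{2}$ there), so the way $\varphi_{2}$ is extended across the threshold is invisible to the final estimate used later in the paper.
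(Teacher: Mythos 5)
Your proposal is correct and follows essentially the same route as the paper: factor $G(z,s)=b(z)(s-\varphi(z))$ where $b$ is bounded below, and in part (ii) split at the indicator threshold, using $|s|\leq 1$ and the triangle inequality to get $|G|\geq |c|-|b|\gtrsim \min|c|$ when $|b|$ is small and the factoring bound $\gtrsim \frac{\min|c|}{4}\delta$ otherwise. Your remark that the indicator makes $\varphi_{2}$ discontinuous across $|b(z)|=\frac{\min|c|}{4}$ (so the stated $C^{1}$ bound should be read piecewise or with a smoothed cutoff) is a fair observation that the paper's proof glosses over, and as you note it is harmless for the quantitative lower bound actually used later.
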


	\begin{proof}
		Now we consider $(i)$. Clearly $\varphi_{1}$ is $C $ for this case. And
		\[
		|G(z,s)| \geq \min \{| b(z)( \frac{-c(z)}{b(z)} + \delta) + c(z)|,| b(z)( \frac{-c(z)}{b(z)}  \delta) + c(z)|\} \geq \min |b| \times \delta. 
		\]	
		Now we consider $(ii)$. First, if $|b| < \frac{\min |c|}{2}$ then $|\varphi_{2}(z)|\geq \frac{ |c(z)|}{ \min|c|/2 } \geq 2$. Therefore,
		\[
		|G(z,s)| \geq \min \{ |G(z,1)|, |G(z,-1)|\} \geq |c(z)| - |b(z)|
		\geq   \frac{ \min|c|}{2}.
		\]
		Consider the case of $|b| > \frac{\min |c|}{4}$. If $|s- \varphi_{2}(s)|> \delta$ then 
		\begin{eqnarray*}
			|G(z,s)| &\geq& \min \big\{ |  b(z)( \frac{-c(z)}{b(z)} + \delta)+ c(z)  | ,  |  b(z)( \frac{-c(z)}{b(z)} - \delta)+ c(z)  |\big\}\\
			&=& \min |b| \times \delta \geq \frac{\min|c|}{2} \times \delta.
		\end{eqnarray*} 
	\end{proof}
	
	\begin{lemma} \label{lemma rank 2}
		Fix $k\in \mathbb{N}$ with $ t^{k}\geq t-1$. Assume $\Omega$ is $C^{2}$ and (\ref{eta}). Let $t^{0}  \geq 0$, $\underline{x}^{0} \in \bar{\O}$, $\underline{v}^{0} \in\R^{2}$, and assume
		\begin{equation}\begin{split}\label{con_v0}
		\frac{1}{N} \leq |\underline{v}^{0}| \leq N,  \ 
		\frac{1}{N} \leq |v^{0}_{3}|, \ |\V^{i}_{p^{i},3}| > \delta_{2} > 0, \ \ \forall 1\leq i \leq k
		,\end{split}
		\end{equation} 
		and (\ref{geo e1}) in Lemma~\ref{nonzero_sub}, where $(\underline{x}^{1}, \underline{v}^{1}) =\big(\underline{x}^{1}(t^{0}, \underline{x}^{0}, \underline{v}^{0}), \underline{v}^{1}(t^{0}, \underline{x}^{0}, \underline{v}^{0})\big)$. Then there exists $\e>0$ and $C^{1}_{t, \underline{x}, \underline{v}}$-functions $\psi^{k}_{1}, \ \psi^{k}_{2}  : B_{\varepsilon}(t,\underline{x},\underline{v})\rightarrow \R$ with $ \max_{i=1,2} \|\psi^{k}_{i} \|_{C^{1}_{t,\underline{x},\underline{v}}} \lesssim_{\delta_{2}, \O, N }
		1$ and there exists a constant $\epsilon_{\delta_{2}, \O, N }
		>0$, such that 
		\begin{equation*}\label{det_nonzero}
		\begin{split}
		& \text{if } \  \min_{i=1,2}|s- \psi_{i}^{k}(t,\underline{x},\underline{v})|>\delta_{*}\\
		&
		\ \ \ \ \  \text{and} \ (s;t,\underline{x},\underline{v}) \in [ \max\{t-1, t^{k+1}\}, \min \{t- \frac{1}{N}, t^{k}\}] \times  B_{\e}(t^{0},\underline{x}^{0}, \underline{v}^{0} ) , \\
		& \text{then} \  \big| \p_{ |\underline{v}|} \underline{X} (s;t, \underline{x}, \underline{v} ) \times \p_{\hat{v}_{1}} \underline{X}(s;t, \underline{x},\underline{v} ) \big| >   \epsilon_{\delta_{2}, \O, N, \delta_{*} }
		.\end{split}\end{equation*}
	\end{lemma}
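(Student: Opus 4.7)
The plan is to write $|\partial_{|\underline{v}|}\underline{X}\times\partial_{\hat v_1}\underline{X}|$ as $(t-s)$ times a function affine in the single parameter $\sigma:=t^k-s$, and then invoke Lemma~\ref{zero_poly} together with Lemma~\ref{nonzero_sub} to produce the two exceptional functions $\psi_1^k,\psi_2^k$. First I would use (\ref{dX d|v|}) to note $\partial_{|\underline{v}|}\underline{X}=-(t-s)\mathbf{e}_0^k$, so the scalar 2D cross product equals $-(t-s)$ times the $\mathbf{e}_{\perp,1}^k$-component of $\partial_{\hat v_1}\underline{X}$. Inserting the formula (\ref{p_X}) and using $\underline{v}^k\parallel \mathbf{e}_0^k\perp\mathbf{e}_{\perp,1}^k$, the $-(\partial_{\hat v_1}t^k)\underline{v}^k$ piece drops, and the remaining three pieces pair up with the rows of (\ref{specular_transition_matrix})–(\ref{specular_matrix}) to give
\begin{equation*}
\bigl|\partial_{|\underline{v}|}\underline{X}\times\partial_{\hat v_1}\underline{X}\bigr|=(t-s)\,|G(z,\sigma)|,\qquad G(z,\sigma):=\mathcal{R}_1^{k,p^k}(z)-\sigma\,|\underline{\mathbf{v}}^k_{p^k}(z)|\,\mathcal{R}_2^{k,p^k}(z),
\end{equation*}
where $z=(t,\underline{x},\underline{v})$ and $\sigma\in[0,1]$ since $s\in[\max\{t-1,t^{k+1}\},\min\{t-\frac1N,t^k\}]$ and $|t^k-t^{k+1}|\leq1$. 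The prefactor satisfies $(t-s)\geq\frac{1}{N}$.

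Next I would localize via Lemma~\ref{nonzero_sub}: the hypotheses (\ref{con_v0}) and (\ref{geo e1}) at the base point $z_0=(t^0,\underline{x}^0,\underline{v}^0)$ force $\max\{|\mathcal{R}_1^{k,p^k}(z_0)|,|\mathcal{R}_2^{k,p^k}(z_0)|\}>\varrho_{\O,N,\delta_2}$. The non-grazing condition $|\mathbf{v}_{p^i,3}^i|>\delta_2$ for all $i\leq k$ ensures, through the explicit formulas of Lemma~\ref{Jac_billiard} and Lemma~\ref{global to local}, that $t^k$, $\underline{\mathbf{v}}^k_{p^k}$, $\mathcal{R}_1^{k,p^k}$, $\mathcal{R}_2^{k,p^k}$ are all $C^1$ functions of $(t,\underline{x},\underline{v})$ in a neighborhood of $z_0$. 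Choose $\varepsilon>0$ so that on $B_\varepsilon(z_0)$ the same one of $|\mathcal{R}_1^{k,p^k}|,|\mathcal{R}_2^{k,p^k}|$ stays bounded below by $\varrho/2$.

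Now view $G(z,\sigma)=b(z)\sigma+c(z)$ with $b(z)=-|\underline{\mathbf{v}}^k_{p^k}|\mathcal{R}_2^{k,p^k}$, $c(z)=\mathcal{R}_1^{k,p^k}$; this is where the substitution $\sigma=t^k-s$ pays off, since otherwise $c$ would acquire an uncontrolled $t^k|\underline{\mathbf{v}}^k_{p^k}|\mathcal{R}_2^{k,p^k}$ correction. In the first alternative ($|\mathcal{R}_2^{k,p^k}|>\varrho/2$), $|b|\geq \varrho/(2N)$ and Lemma~\ref{zero_poly}(i) yields $\varphi_1\in C^1_{t,\underline{x},\underline{v}}$ with controlled norm; set $\psi_1^k:=t^k-\varphi_1$ and $\psi_2^k:=t^k$. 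In the second alternative ($|\mathcal{R}_1^{k,p^k}|>\varrho/2$), $|c|\geq\varrho/2$ and Lemma~\ref{zero_poly}(ii) provides $\varphi_2\in C^1_{t,\underline{x},\underline{v}}$; set $\psi_2^k:=t^k-\varphi_2$ and $\psi_1^k:=t^k$. Since $|s-\psi_i^k|=|\sigma-\varphi_i|$, the hypothesis $\min_i|s-\psi_i^k|>\delta_*$ together with $\sigma\in[0,1]\subset[-1,1]$ triggers Lemma~\ref{zero_poly} to give $|G(z,\sigma)|\gtrsim_{\varrho,N}\delta_*$, and multiplication by $(t-s)\geq\tfrac{1}{N}$ concludes the lower bound with $\epsilon_{\delta_2,\O,N,\delta_*}>0$. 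The $C^1$-bounds on the $\psi_i^k$ follow from those on $t^k$ and on the $\varphi_i$ from Lemma~\ref{zero_poly}. The main obstacle is bookkeeping in Step~1, collecting the three non-vanishing terms in (\ref{p_X}) into the clean affine form $b\sigma+c$ via the specular-basis projection; the dichotomy then mirrors, exactly, the two alternatives (i) and (ii) of Lemma~\ref{zero_poly}, which explains why two exceptional functions $\psi_1^k,\psi_2^k$ are needed rather than one.
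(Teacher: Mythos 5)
Your proposal is correct and is essentially the paper's own argument: the same specular-basis computation giving $\p_{|\underline{v}|}\underline{X}\times\p_{\hat{v}_{1}}\underline{X}=-(t-s)\big\{\mathcal{R}^{k,p^{k}}_{1}-(t^{k}-s)|\underline{\V}^{k}_{p^{k}}|\mathcal{R}^{k,p^{k}}_{2}\big\}$, then Lemma~\ref{nonzero_sub} together with $C^{1}$ continuity on a small ball $B_{\e}(t^{0},\underline{x}^{0},\underline{v}^{0})$, and finally the dichotomy of Lemma~\ref{zero_poly} to produce the exceptional time functions. Your only deviation is using $\sigma=t^{k}-s$ in place of the paper's $\tilde{s}=t-s$, which makes $c=\mathcal{R}^{k,p^{k}}_{1}$ and lets you avoid the paper's sub-splitting of the case $|\mathcal{R}^{k,p^{k}}_{1}|>\varrho/2$; note only that the paper's parametrization is not actually ``uncontrolled,'' since its $c$ contains $(t^{k}-t)|\underline{v}^{k}|\mathcal{R}^{k,p^{k}}_{2}$ with $|t^{k}-t|\leq 1$.
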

	It is important that this lower bound $\epsilon_{\delta_{2}, \O, N }$ does not depend on time $ t $.  \\	
	
	\begin{proof}
		\noindent\textit{Step 1. }Fix $k$ with $|t^{k}(t,\underline{x},\underline{v}) -t| \leq 1$. Then we fix the orthonormal basis 	$\big\{ \mathbf{e}^{k}_{0}  ,  \mathbf{e}_{\perp,1}^{k} \big\}$ of (\ref{orthonormal_basis}) with $\underline{x}^{k} = \underline{x}^{k}(t,\underline{x},\underline{v})$, $\underline{v}^{k} = \underline{v}^{k}(t,\underline{x},\underline{v})$. Note that this orthonormal basis $\big\{ \mathbf{e}^{k}_{0}  ,  \mathbf{e}_{\perp,1}^{k} \big\}$ depends on $(t, \underline{x}, \underline{v})$.


		For $t^{k+1} < s < t^{k}$, recall the forms of $\frac{\p \underline{X}(s)}{ \p |\underline{v}|}$ and $\frac{\p \underline{X}(s)}{\p \hat{v}_{j}}$ in (\ref{dX d|v|}) and (\ref{p_X}), where 
		$\underline{X}(s) = \underline{X}(s; t^{k}, \underline{x}^{k}, \underline{v}^{k}).
		$
		Using the \textit{specular basis} (\ref{orthonormal_basis}), we rewrite (\ref{dX d|v|}) and (\ref{p_X}) as 
		\begin{equation*} \label{sub_R}
		\begin{split}
		& \left[\begin{array}{cc}
		\frac{\p \underline{X} (s)}{\p |\underline{v} |}\cdot\mathbf{e}_0^{k} &  \frac{\p \underline{X}(s)}{\p \hat{v}_{1}}\cdot\mathbf{e}_0^{k}    \\
		\frac{\p \underline{X}(s)}{\p |\underline{v} |}\cdot\mathbf{e}_{\perp,1}^{k} &  \frac{\p \underline{X}(s)}{\p \hat{v}_{1}}\cdot\mathbf{e}_{\perp,1}^{k}   \\ 
		\end{array}\right] 
		= 
		\left[\begin{array}{cc}
			-(t-s) &  \frac{\p \underline{X}(s)}{\p \hat{v}_{1}}\cdot\mathbf{e}_0^{k}  \\
			0 &  \frac{\p \underline{X}(s)}{\p \hat{v}_{1}}\cdot\mathbf{e}_{\perp,1}^{k}   \\ 
		\end{array}\right] . \\
		\end{split}
		\end{equation*} 
		
		Note that $(2,2)$ component is written by 
		\begin{equation*} \label{22 compo}
		\begin{split}
			\frac{\p \underline{X}(s)}{\p \hat{v}_{1}}\cdot\mathbf{e}_{\perp,1}^{k} &= \mathcal{R}^{k, p^{k} }_{1 } - (t^{k} - s)\mathcal{R}^{k, p^{k} }_{2 },
		\end{split}
		\end{equation*}
		by (\ref{p_X}) and (\ref{specular_transition_matrix}), 	where $\mathcal{R}^{k, p^{k}}_{i}$ are defined in (\ref{specular_matrix}).	By the direct computation, determinant becomes,
		\begin{equation}\begin{split}\label{comp_X_times_X}
		  \p_{|\underline{v}| } \underline{X}(s) \times \p_{\hat{v}_{1}} X  (s)   
		=   
		- (t-s)    
		\big\{ \mathcal{R}^{k,{p}^{k}}_{1} - (t^{k}-s) |\underline{\mathbf{v}}^{k}_{{p}^{k}}| \mathcal{R}^{k,{p}^{k}}_{2} 
		\big\} .
		\end{split}
		\end{equation}
		Here $\mathcal{R}^{k,p^{k}}_{i}, t^{k}, \underline{\mathbf{v}}^{k}_{p^{k}}$, and $\mathbf{e}^{k}_{\perp, i}$ depend on $(t,\underline{x},\underline{v})$, but not $s$.  \\

		\noindent\textit{Step 2.} Recall Lemma~\ref{nonzero_sub}. From (\ref{con_v0}), we can choose non-zero contants $\delta_{2}$ for a large $N \gg 1$. Applying Lemma~\ref{nonzero_sub} and (\ref{nonzero_sub1}), we conclude that, for some $i \in \{1,2\}$, 
		\begin{equation}\label{lower_R0}
		|\mathcal{R}^{k,p^{k}}_{i}(t, \underline{x},\underline{v})| > \varrho_{\Omega, N, \delta_{2}} > 0.  \\
		\end{equation}
	
		Also, we can claim that $\mathcal{R}^{k,p^{k} }_{i } (t,\underline{x},\underline{v}) \in C^{1}_{t,\underline{x},\underline{v}}$. From (\ref{con_v0}), all bouncings are non-grazing. We use Lemma~\ref{local conti}, (\ref{Est--4}) and (\ref{Est--6}) in Lemma~\ref{global to local}, and (\ref{specular_matrix}) with regularity of $\O$ to derive $\mathcal{R}^{k,p^{k} }_{i } (t,\underline{x},\underline{v}) \in C^{1}_{t,\underline{x},\underline{v}}$. Finally we choose a small constant $\e>0$ such that, for some $i\in \{1,2\}$ satisfying (\ref{lower_R0}),
		\begin{equation}\label{lower_R}
		|\mathcal{R}^{k,p^{k}}_{i}(t ,\underline{x},\underline{v} )|> \frac{\varrho_{\Omega, N, \delta_{2}  }}{2} \ \  \ \text{for } |(t,\underline{x},\underline{v}) - (t^{0}, \underline{x}^{0},\underline{v}^{0})| < \e.
		\end{equation}

		\noindent\textit{Step 3. }  With $N\gg1$, from (\ref{lower_R}), we divide the cases into the follows
		\begin{equation} \label{case1_R}
		|\mathcal{R}^{k,p^{k}}_{1}|> \frac{\varrho_{\Omega, N,   \delta_{2} } }{2}
		 \ \   \text{and} \ \
		|\mathcal{R}^{k,{p}^{k}}_{2}| \geq  \frac{\varrho_{\Omega, N,   \delta_{2}   }  }{2} .
		\end{equation}
		
		\noindent We split the first case (\ref{case1_R}) further into two cases as  
		\begin{equation}\label{case1_R-1}
		 |\mathcal{R}^{k,p^{k}}_{1}|> \frac{\varrho_{\Omega, N, \delta_{2} } }{2}   \ \   \text{and} \ \ |\mathcal{R}^{k,p^{k}}_{2}| < \frac{\varrho_{\Omega, N,  \delta_{2} } }{4N},
		\end{equation}
		and 	
		\begin{equation}\notag\label{case1_R-2}
		|\mathcal{R}^{k,p^{k}}_{1}|> \frac{\varrho_{\Omega, N,  \delta_{2}} }{2}   \ \   \text{and} \ \ |\mathcal{R}^{k,p^{k}}_{2}| \geq \frac{\varrho_{\Omega, N,  \delta_{2} } }{4N}.
		\end{equation}
		
		\noindent Set the other case\begin{equation}\label{case2_R}
		|\mathcal{R}^{k,{p}^{k}}_{2}| \geq  \frac{\varrho_{\Omega, N,  \delta_{2}    }  }{2} .
		\end{equation}
		Then clearly (\ref{case1_R-1}) and (\ref{case2_R}) cover all the cases. \\
		
		\noindent\textit{Step 4. } We consider the case of (\ref{case1_R-1}). 
		Then, from (\ref{comp_X_times_X}), 
		\begin{equation}\label{det_expansion}
		\begin{split}
		|  \p_{|\underline{v}| } \underline{X} (s)\times \p_{\hat{v}_{1}} \underline{X}(s) |
		&\geq  \big| |\underline{v}^{k}| \mathcal{R}^{k,p^{k} }_{2 } (t^{k}-s)   - \mathcal{R}^{k,p^{k} }_{1 }    \big|(t-s)  \\
		&= \big|
		\underbrace{| \underline{v}^{k} | \mathcal{R}_{2,2}^{k,p^{k} } (t-s)  + \big[ - \mathcal{R} ^{k,p^{k} }_{1 } + (t^{k}-t) |\underline{v}^{k}| \mathcal{R} ^{k,p^{k} }_{2} \big] } \big|(t-s).
		\end{split}
		\end{equation}
		
		We define 
		\begin{equation} \label{tilde_s}
		\tilde{s} = t-s,
		\end{equation}
		and set 
		\begin{equation*}\label{abc_R}
		b:= |\underline{v}^{k} | \mathcal{R} ^{k,p^{k}}_{2} \quad\text{and}\quad  c:=- 
		\mathcal{R} ^{k,p^{k}}_{1} + (t^{k}-t) | \underline{v}^{k} | \mathcal{R}_{2}^{k,p^{k}}.
		\end{equation*}
		Note that $\mathcal{R}^{k,p^{k}}_{1},$ $\mathcal{R}^{k,p^{k}}_{2}$, $|\underline{v}^{k}|$, and $t^{k}$ only depend on $(t, \underline{x}, \underline{v})$.
		
		Hence we regard the underbraced term of (\ref{det_expansion}) as an affine function of $\tilde{s}$
		\begin{equation*} \label{affine_tilde_s}
		b(t, \underline{x}, \underline{v}) \tilde{s}+ c(t, \underline{x}, \underline{v}).
		\end{equation*}
		Note that from (\ref{case1_R-1})
		\begin{equation}\notag
		|c(t, \underline{x}, \underline{v})| \geq  \frac{\varrho_{\Omega, N, \delta_{2}}}{2}  - N \frac{ \varrho_{\Omega, N, \delta_{2}
			}
		}{4N}
		\geq \frac{ \varrho_{\Omega, N, \delta_{2}
			}}{4}.
			\end{equation}
			Now we apply $(ii)$ of Lemma~\ref{zero_poly}. With $\varphi_{2} (t,\underline{x},\underline{v})$ in (\ref{psi5}), if $|\tilde{s} - \varphi_{2} (t,\underline{x},\underline{v})|> \delta_{*}$, then $|b(t,\underline{x},\underline{v}) \tilde{s}+ c(t,\underline{x},\underline{v})| \geq \frac{\varrho_{\Omega, N, \delta 
				}}{4} \times \delta_{*}$. We set
				\begin{equation*} \label{psi5_phi5}
				\psi_{2} (t,\underline{x},\underline{v}) = t- \varphi_{2} (t,\underline{x},\underline{v}) .
				\end{equation*}
				From (\ref{tilde_s}), 
				\begin{equation}\label{lower_bound_1}
				\text{if}  \ |s - \psi_{2} (t,\underline{x},\underline{v})|> \delta_{*}, \ \text{then} \  |b(t,\underline{x},\underline{v})  (t-s)+ c(t,\underline{x},\underline{v})| \geq \frac{\varrho_{\Omega, N,  \delta_{2}
					}}{4} \times \delta_{*}.
					\end{equation}
					
					%
					%
					%
					%
					

					\vspace{4pt}
					
					Now we consider the case of (\ref{case2_R}). From (\ref{comp_X_times_X}),
					\begin{equation}\label{det_expansion2}
					\begin{split}
					|\p_{|\underline{v}|} \underline{X}(s) \times \p_{\hat{v}_{1}}X(s)|
					&\geq \big| 
					| \underline{v}^{k} | \mathcal{R}^{k,{p}^{k}}_{2} (t-s) + \big[ - \mathcal{R}^{k,{p}^{k}}_{1}
					+ (t^{k}-t)| \underline{v}^{k} | \mathcal{R}^{k,{p}^{k}}_{2} \big]
					\big| (t-s) .
					\end{split}
					\end{equation} 
					We set $\tilde{s}$ as (\ref{tilde_s}) and
					\begin{equation}\label{abc_R2} 
					 b:= |\underline{v}^{k} | \mathcal{R}^{k,p^{k}}_{2} \quad\text{and}\quad  c:=- 
					\mathcal{R}^{k,p^{k}}_{1} + (t^{k}-t) | \underline{v}^{k} | \mathcal{R}_{2}^{k,p^{k}}.
					\end{equation}

					From (\ref{case2_R}) and (\ref{abc_R2})
					\begin{equation}\notag
					|b(t,\underline{x},\underline{v})| \geq   \frac{\varrho_{\Omega, N, \delta_{2} }}{8N^{2}}.
					\end{equation}
					We apply $(i)$ of Lemma~\ref{zero_poly} to this case: With $\varphi_{1}(t,\underline{x},\underline{v})$ in (\ref{psi4}), we set
					\begin{equation*}\label{psi4_phi4}
					\psi_{4} (t,\underline{x},\underline{v}) = t - \varphi_{1} (t,\underline{x},\underline{v}),
					\end{equation*}	
					and 
					\begin{equation}\label{lower_bound_2}
					\text{if}  \ |s - \psi_{1} (t,\underline{x},\underline{v})|> \delta_{*}, \ \text{then} \  |b(t,\underline{x},\underline{v})  (t-s)+ c(t,\underline{x},\underline{v})| \gtrsim  \frac{\varrho_{\Omega, N,  \delta_{2} }}{8N^{2}}
					\times
					\delta_{*}
					.
					\end{equation}
					Finally, from (\ref{lower_bound_1}), (\ref{det_expansion}), (\ref{lower_bound_2}), and (\ref{det_expansion2}), we conclude the proof of Lemma~\ref{lemma rank 2}.  \\ 
				\end{proof}

	Now we return to three-dimensional cylindrical domain $U := \O \times (0, H) \subset \mathbb{R}^{3}$. We state a theorem about uniform positivity of determinant of $\frac{dX}{dv}$.  \\
	 
	\begin{proposition}\label{prop_full_rank} 
		Let $t \in [T,T+1]$,  
		\[
		(x,v) = (x,\underline{v}, v_{2}) \in U\times\VN \times \{v_{2}\in \mathbb{R} : \frac{1
		}{N} \leq v_{2} \leq N \}.  
		\]
	 	Recall $\varepsilon, \delta$ in Lemma~\ref{G_C unif}.  For each $i=1,2, \cdots, l_{G}$, there exists $\delta_{2}>0$ and $C^{1}_{t, \underline{x}, \underline{v} }$-function $\psi^{\ell_{0}, \vec{\ell}, i, k}$ for uniform bound $k \leq C_{\varepsilon,N}$, where $\psi^{\ell_{0}, \vec{\ell}, i, k}$ is defined locally around $(T+ \delta_{2} \ell_{0}, X(T+ \delta_{2} \ell_{0};t,x,v), (\delta_{2} \vec{\ell}, u_{2}))$ with $(\ell_{0},\vec{\ell}) = (\ell_{0}, \ell_{1}, \ell_{3}) \in \{ 0,1, \cdots, \lfloor\frac{1}{\delta_{2}}\rfloor+1\} \times \{- \lfloor\frac{N}{\delta_{2}}\rfloor-1, \cdots, 0 , \cdots, \lfloor\frac{N}{\delta_{2}}\rfloor+1 \}^{2}$ and $ \| \psi^{\ell_{0}, \vec{\ell}, i, k}  \|_{C^{1}_{t, \underline{x}, \underline{v}}} \leq C_{N,\Omega,\delta,\delta_{2}} < \infty$.  \\
		
		For $( \underline{X}(s;t, \underline{x}, \underline{v}), \underline{u} )\in \{cl(\O)\times \VN \} \backslash \mathfrak{IB}$, if
		\begin{equation} \label{non degene}
			u_{3} \geq \frac{1}{N},
		\end{equation}
		\begin{equation} \label{geo e1 u}
			\Big| \frac{ \p_{1} \underline{\eta}_{ {p}^{1}}  }{ \sqrt{g_{{p}^{1}, 11} } } \big\vert_{x^{1}(\underline{X}(s;t, \underline{x}, \underline{v}), \underline{u})} \cdot e_{1} \Big| > \frac{1}{N} > 0,
		\end{equation} 
		\begin{equation} \label{SG}
			\underline{X}(s;t, \underline{x}, \underline{v}) \notin \bigcup_{j=1}^{l_{sg}} B(y_{j}^{C}, \varepsilon),\quad\text{sticky grazing set defined in Lemma~\ref{G_C unif}},
		\end{equation}
		\begin{equation}\label{Xs_OVi}
		( \underline{X}(s;t, \underline{x}, \underline{v}), \underline{u} ) \in B(x^{C}_{i}, r^{C}_{i}) \times \VN \backslash \mathcal{O}^{C}_{i}
		\ \ \text{for some} \ \ i=1,2,\cdots, l_{G},
		\end{equation}
		
		\begin{equation}\label{su_ell0}
		(s, \underline{u}) \in [T+ (\ell_{0} -1) \delta_{2},T+ (\ell_{0} +1) \delta_{2} ] \times B(\delta_{2} \vec{\ell}, 2\delta_{2}),
		\end{equation}
		
		\begin{equation}\label{s prime}
		|s - s^{\prime}| \geq \delta_{2},
		\end{equation}
		
		\begin{equation}\label{sprime_k}
		\begin{split}
		s^{\prime} &\in \big[ t^{k+1} (T+ \delta_{2} \ell_{0}; \underline{X}(T+ \delta_{2} \ell_{0} ;t, \underline{x}, \underline{v}), \delta_{2} \vec{\ell} )+\frac{1}{N} \\
		&\quad\quad\quad\quad , t^{k } (T+ \delta_{2} \ell_{0}; \underline{X}(T+ \delta_{2} \ell_{0} ;t, \underline{x}, \underline{v}), \delta_{2} \vec{\ell} )-\frac{1}{N}\big], 
		\end{split}
		\end{equation}
		and 
		\begin{equation}\label{sprime_psi}
		|s^{\prime} -   \psi^{\ell_{0}, \vec{\ell}, i, k}  (T+ \delta_{2} \ell_{0} , \underline{X}( T+ \delta_{2}\ell_{0} ; t, \underline{x}, \underline{v}), \delta_{2} \vec{\ell} ) | > N^{2} (1 + \| \psi^{\ell_{0}, \vec{\ell}, i, k} \|_{C^{1}_{t, \underline{x}, \underline{v}}} ) \delta_{2},
		\end{equation}
		then 
		\begin{equation} \label{lower_1}
			\det\Big( \frac{\p X(s^{\prime}; s, X(s;t,x,v), u) }{ \p u} \Big) > \epsilon^{\prime}_{\O,N,\delta,\delta_{2}} > 0,
		\end{equation}
		where $B(x^{C}_{i}, r^{C}_{i}) \times \VN\backslash \mathcal{O}^{C}_{i}$ was constructed in Lemma~\ref{G_C unif}. Also note that $\epsilon^{\prime}_{\O,N,\delta,\delta_{2}}$ does not depend on $T, t,x,v$. 
	\end{proposition}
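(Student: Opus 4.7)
The plan is to exploit the cylindrical structure of $U=\O\times[0,H]$ to block-diagonalize the $3\times 3$ Jacobian $\p X(s')/\p u$, reducing (\ref{lower_1}) to the cross-sectional rank-two estimate provided by Lemma~\ref{lemma rank 2}. Because the specular reflection on $\p\O\times[0,H]$ leaves the axial component invariant, one has
\begin{equation*}
X_2(s';s,X(s;t,x,v),u)=X_2(s;t,x,v)+(s'-s)u_2, \qquad \underline{X}(s';s,X(s),u)=\underline{X}(s';s,\underline{X}(s;t,x,v),\underline{u}),
\end{equation*}
so $\p X(s')/\p u$ is block-diagonal with scalar axial entry $s'-s$ and a $2\times 2$ cross-sectional block, and the axial factor is bounded below by $\delta_2$ thanks to (\ref{s prime}). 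It therefore suffices to estimate $\det(\p\underline{X}(s')/\p\underline{u})$ from below.

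I then perform the geometric decomposition $\underline{u}=(u_1,u_3)\mapsto(|\underline{u}|,\hat u_1)$ with $\hat u_1=u_1/|\underline{u}|$; a direct computation gives $|\det(\p\underline{u}/\p(|\underline{u}|,\hat u_1))|=|\underline{u}|^2/u_3$, so by (\ref{non degene}) and $|\underline{u}|\leq N$ the reciprocal factor $u_3/|\underline{u}|^2$ is bounded below by $1/N^3$ uniformly. Hence the task reduces to producing a uniform lower bound for
\begin{equation*}
\big|\p_{|\underline{u}|}\underline{X}(s';s,\underline{X}(s;t,x,v),\underline{u}) \times \p_{\hat u_1}\underline{X}(s';s,\underline{X}(s;t,x,v),\underline{u})\big|,
\end{equation*}
which is exactly the quantity estimated by Lemma~\ref{lemma rank 2} after the identification $(t,\underline{x},\underline{v})\leftrightarrow(s,\underline{X}(s;t,x,v),\underline{u})$.

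To invoke that lemma I verify its hypotheses. The speed conditions $\frac{1}{N}\leq|\underline u|\leq N$ and $u_3\geq\frac{1}{N}$ follow from $\underline{u}\in\VN$ and (\ref{non degene}); the geometric transversality (\ref{geo e1}) coincides with (\ref{geo e1 u}); the uniform non-grazing $|V^{i}_{p^{i},3}|>\delta_2$ along all bounces is supplied by (\ref{Xs_OVi}) combined with the spatial exclusion (\ref{SG}) via Proposition~\ref{G_C unif}, taking $\delta_2:=\delta$; and the uniform bounce bound $k\leq C_{\varepsilon,N}$ is furnished by Lemma~\ref{uniformbound}. Lemma~\ref{lemma rank 2} then produces two $C^1$ functions $\psi_1^k,\psi_2^k$ with norm $\lesssim_{\O,N,\delta}1$ defined on a neighborhood of the base point, and I set $\psi^{\ell_0,\vec\ell,i,k}$ to be precisely these functions with base point taken at the grid location $(T+\delta_2\ell_0,\underline{X}(T+\delta_2\ell_0;t,x,v),\delta_2\vec\ell)$; the finite nature of the grid is permitted by the compactness in Proposition~\ref{G_C unif} together with the uniform bounce bound.

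The final step, and the principal obstacle, is the transfer of the bound from the grid base point to the true evaluation point $(s,\underline{X}(s;t,x,v),\underline{u})$. From (\ref{su_ell0}) together with the uniform speed bound one has $|s-(T+\delta_2\ell_0)|\leq\delta_2$, $|\underline u-\delta_2\vec\ell|\leq 2\delta_2$, and $|\underline X(s;\cdot)-\underline X(T+\delta_2\ell_0;\cdot)|\leq N\delta_2$, so the $C^1$ estimate on $\psi^{\ell_0,\vec\ell,i,k}$ yields $|\psi^{\ell_0,\vec\ell,i,k}(\text{eval})-\psi^{\ell_0,\vec\ell,i,k}(\text{grid})|\leq CN(1+\|\psi^{\ell_0,\vec\ell,i,k}\|_{C^1})\delta_2$. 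Combined with the separation (\ref{sprime_psi}) and the triangle inequality this gives $\min_{i=1,2}|s'-\psi_i^k(\text{eval})|\gtrsim N\delta_2=:\delta_*$, and Lemma~\ref{lemma rank 2} then delivers a uniform lower bound on the cross product, depending only on $\O,N,\delta,\delta_2$. Multiplying by the axial factor $|s-s'|\geq\delta_2$ and the change-of-variables factor $u_3/|\underline u|^2\geq 1/N^3$ produces the claimed positivity (\ref{lower_1}). The delicate point is precisely this perturbation bookkeeping: the constant $N^2(1+\|\psi\|_{C^1})$ built into (\ref{sprime_psi}) must be chosen so as to dominate the $C^1$ drift of $\psi$ over the $O(N\delta_2)$-neighborhood of the grid point and still leave a separation comfortably exceeding the bad-set radius required by Lemma~\ref{lemma rank 2}.
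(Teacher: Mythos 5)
Your proposal is correct and follows essentially the same route as the paper's proof: factor out the axial direction (with $|\p X_2/\p u_2|=|s-s'|\geq\delta_2$), reduce to the cross-sectional cross product $\p_{|\underline{u}|}\underline{X}\times\p_{\hat{u}_1}\underline{X}$ estimated in Lemma~\ref{lemma rank 2}, verify its hypotheses through Lemma~\ref{G_C unif} (non-grazing), Lemma~\ref{uniformbound} (bounce bound) and (\ref{non degene})--(\ref{geo e1 u}), and anchor the $\psi$-functions at the $\delta_2$-grid points, transferring the separation (\ref{sprime_psi}) to the evaluation point via the $C^1$ bound exactly as in the paper's Step 2. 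Your explicit tracking of the polar change-of-variables factor $u_3/|\underline{u}|^2\geq 1/N^3$ is a small refinement the paper absorbs into its constants, not a departure in method.
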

	\begin{proof}
		\noindent \textit{Step 1.} First we extend two-dimensional analysis into three dimension case. For $v_{2}$ direction, dynamics is very simple, i.e.
		\[
			X_{2}(s;t,x,v) = x_{2} - (t-s) v_{2},
		\] 
		so we have
		\[
			\frac{d X_{2}}{d v_{2}} = - (t-s).\quad  \\
		\]
		Note that it is obvious that $v_{2}$ directional dynamics is independent to two-dimensional trajectory which is projected on cross section $\O$, because of cylindrical domain with the specular boundary condition. \\
	
		\noindent \textit{Step 2.} Fix $t \in [T,T+1], \ (x,v) \in \O\times\VN$ and assume $( \underline{X}(s;t, \underline{x}, \underline{v}), \underline{u} )\in \{cl(\O)\times \VN \} \backslash \mathfrak{IB}$. Assume that $s\in [T,t]$,
		\begin{equation*} 
			\underline{X}(s;t, \underline{x}, \underline{v}) \notin \bigcup_{j=1}^{l_{sg}} B(y_{j}^{C}, \varepsilon)
		\quad \text{and} \quad 
		( \underline{X}(s;t, \underline{x}, \underline{v}), \underline{u} ) \in B(x^{C}_{i}, r^{C}_{i}) \times \VN \backslash \mathcal{O}^{C}_{i},
		\end{equation*}
		for some $i = 1,\cdots, l_{G}$. Due to Lemma~\ref{G_C unif}, $( \underline{X}(s^{\prime}; s, \underline{X}(s;t, \underline{x}, \underline{v}), \underline{u}), \underline{V}(s^{\prime}; s, \underline{X}(s;t, \underline{x}, \underline{v}), \underline{u}) )$ is well-defined for all $s^{\prime} \in [T,s]$ and \[
			|\mathbf{n}(x^{k}(s, \underline{X}(s;t, \underline{x}, \underline{v}), \underline{u})) \cdot v^{k }(s, \underline{X}(s;t, \underline{x}, \underline{v}), \underline{u})| > \delta,
		\] 
		for all $k$ with $|t-t^{k}(s, \underline{X}(s;t, \underline{x}, \underline{v}), \underline{u})| \leq 1$.  \\

		From $\underline{X}(s;t, \underline{x}, \underline{v}) = \underline{X}(\bar{s};t, \underline{x}, \underline{v}) + \int^{s}_{\bar{s}} \underline{V}(\tau; t, \underline{x}, \underline{v}) \dd \tau$,
		\begin{equation}\begin{split}\label{diff_psi_k}
		&|\psi^{k} (s,\underline{X}(s;t, \underline{x}, \underline{v}), \underline{u}) - \psi^{k} (\bar{s}, \underline{X}(\bar{s};t, \underline{x}, \underline{v}), \underline{\bar{u}} )| \\
		&\leq   \   \|  \psi^{k} \|_{C^{1}_{t,\underline{x}, \underline{v}}}  \{ |s- \bar{s}| + |\underline{X}(s;t, \underline{x}, \underline{v})- \underline{X}(\bar{s};t, \underline{x}, \underline{v})| + |\underline{u}- \underline{\bar{u}} | \}  \\
		&\leq \ \| \psi^{k} \|_{C^{1}_{t,\underline{x}, \underline{v}}}  \{ |s- \bar{s}| + 
		(1+ N)
		| \underline{u} - \underline{\bar{u}} | \}.\end{split}
		\end{equation} 
		For $0<\delta_{2}\ll1$ we split 
		\begin{equation}\notag
		\begin{split}
		[T,T+1] &= \bigcup_{\ell_{0}=0}^{ [\delta_{2}^{-1}]+1 } \big[T+ (\ell_{0}-1) \delta_{2}, T+  (\ell_{0}+1) \delta_{2}  \big] , \\  
		\VN \backslash \mathcal{O}^{C}_{i} &=  \bigcup_{|\ell_{i}|=0}^{ [N /\delta_{2}^{-2}]+1 }
		B\big( (\ell_{1} \delta_{2}, \ell_{3} \delta_{2} ), 2\delta_{2}\big)
		\cap  \  \VN \backslash \mathcal{O}^{C}_{i}. 
		\end{split}
		\end{equation}
		From (\ref{diff_psi_k}), if 
		$$
		(s, \underline{u}) \in \big[T+ (\ell_{0}-1) \delta_{2}, T+  (\ell_{0}+1) \delta_{2}  \big] \times \{ B\big( (\ell_{1} \delta_{2}, \ell_{3} \delta_{2} ), 2\delta_{2}\big)
		\cap  \  \VN \backslash \mathcal{O}^{C}_{i} \} ,
		$$ 
		then 
		\begin{eqnarray*}
			&&	|\psi^{k} ( T+ \ell_{0} \delta, \underline{X}(T+ \ell_{0}  \delta;t, \underline{x}, \underline{v}), (\ell_{1} \delta,  \ell_{3} \delta)  ) - \psi^{k} (s, \underline{X}(s;t, \underline{x}, \underline{v}), \underline{u}) | \\
			&&\leq
			\| \psi^{k} \|_{C^{1}_{t, \underline{x}, \underline{v}}} (2+ N) \delta_{2}.
		\end{eqnarray*}
		Therefore, if (\ref{sprime_psi}) holds,
		\begin{eqnarray*}
		&&	|s^{\prime} - \psi^{k} (s, \underline{X}(s;t, \underline{x}, \underline{v}), \underline{u})| \nonumber\\
		&\geq& |s^{\prime} - \psi^{k} ( T+ \ell_{0} \delta, \underline{X}(T+ \ell_{0}  \delta;t, \underline{x}, \underline{v}), (\ell_{1} \delta, \ell_{3} \delta)  )|\notag\\
		&&
		- |\psi^{k} ( T+ \ell_{0} \delta, \underline{X}(T+ \ell_{0}  \delta;t, \underline{x}, \underline{v}), (\ell_{1} \delta, \ell_{3} \delta)  ) - \psi^{k} (s, \underline{X}(s;t,\underline{x}, \underline{v}), \underline{u}) |
		\label{sprime-psi_lower}
		\\ 
		&\gtrsim&(N^{2}- N)\| \psi^{k} \|_{C^{1}_{t, \underline{x}, \underline{v}}} \delta_{2} \gtrsim_{N}  \| \psi^{k} \|_{C^{1}_{t, \underline{x}, \underline{v}}} \delta_{2}. \notag
		\end{eqnarray*}

		\noindent \textit{Step 3.} Consider the three-dimensional mapping $u \mapsto X(s^{\prime}; s,X(s;t,x,v),u)$. Note that from Lemma~\ref{G_C unif} we verify the condition of Lemma~\ref{lemma rank 2}. From Lemma~\ref{lemma rank 2} and 
		\ref{uniformbound}, we construct $C^{1}_{t,\underline{x}, \underline{v}}$-function $\psi^{k}: B_{\varepsilon} (s, \underline{X}(s;t, \underline{x}, \underline{v}), \underline{u}) \rightarrow \R$ for uniform bound $k \leq C_{\varepsilon,N}$ such that if 	$|s^{\prime} - \psi^{k}(s, \underline{X}(s;t, \underline{x}, \underline{v}), \underline{u})| \gtrsim_{N,\O,\delta} \delta_{2}$, then 
		\begin{equation*}
		\begin{split}
			& \big| \det\Big( \frac{\p X(s^{\prime}; s, X(s;t,x,v), u) }{ \p u} \Big) \big| \\
			&=  \big| \frac{d X_{2}}{d v_{2}} \big| \Big| |\p_{|u|} \underline{X}(s^{\prime}; s, \underline{X}(s;t, \underline{x}, \underline{v}), \underline{u}) \times \p_{\hat{u}_{1}} \underline{X}(s^{\prime}; s, \underline{X}(s;t, \underline{x}, \underline{v}), \underline{u}) \Big|  \\
			&> |s - s^{\prime}| \ \epsilon_{\O, N, \delta, \delta_{2} } >  \epsilon^{\prime}_{\O,N,\delta,\delta_{2}} > 0.
		\end{split}
		\end{equation*}
	\end{proof}

	Now we study $L^{\infty}$ estimate via trajectory and Duhamel's principle.
	\begin{lemma}
		Let $f$ solves linear boltzmann equation (\ref{lin eq}). For $h :=  wf$ with $w=(1+|v|)^{\b}, \ \b > 5/2$, we have the following estimate.
		\begin{equation*} 
		\begin{split}
		\|h(t)\|_\infty & \lesssim \ e^{-\nu_{0} t}\|h(0)\|_\infty +  \int_{0}^{t} \|f (s)\|_2 \dd s  .
		\end{split}
		\end{equation*}	
	\end{lemma}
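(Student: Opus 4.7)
The plan is to apply the $L^p$-$L^\infty$ bootstrap via double Duhamel iteration sketched in the introduction, with Proposition~\ref{prop_full_rank} supplying the Jacobian lower bound needed for change of variables. Conjugating by the weight $w$, the function $h = wf$ satisfies $\partial_t h + v\cdot\nabla_x h + \nu(v) h = K_w h$ where $K_w h(v) = \int \mathbf{k}_w(v,u) h(u)\,du$ with $\mathbf{k}_w(v,u) = w(v)\mathbf{k}(v,u)/w(u)$ bounded in $L^1_u$ uniformly in $v$. Integrating along the specular cycle I obtain
\[
h(t,x,v) = e^{-\nu(v)t}h(0,X_{\mathbf{cl}}(0),V_{\mathbf{cl}}(0)) + \int_0^t e^{-\nu(v)(t-s)}(K_w h)(s,X_{\mathbf{cl}}(s),V_{\mathbf{cl}}(s))\,ds,
\]
and iterating Duhamel once more on the integrand produces a triple integral in $s,s',u,u'$ involving $h(s', X_{\mathbf{cl}}(s'; s, X_{\mathbf{cl}}(s), u), u')$.

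Before iterating I truncate: replace $\mathbf{k}_w$ by a bounded kernel $\mathbf{k}_w^N$ supported on $\{|u|\leq N,\ |v-u|\geq 1/N\}$, with $\|\mathbf{k}_w - \mathbf{k}_w^N\|_{L^1_u}\lesssim 1/N$, so the truncation error contributes at most $(C/N)\sup_{s\leq t}\|h(s)\|_\infty$. I also restrict to $1/N\leq |v|\leq N$, $u_3\geq 1/N$, $v_2\geq 1/N$, and the geometric condition (\ref{geo e1 u}) at the first bounce, paying only $O(1/N)\|h\|_\infty$ via the weight decay at large $|v|$.

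The central step is the change of variable $u\mapsto y = X_{\mathbf{cl}}(s'; s, X_{\mathbf{cl}}(s), u)$ on a large good subset. To invoke Proposition~\ref{prop_full_rank} I must exclude: (a) the infinite-bounces set $\mathfrak{IB}$ of Lemma~\ref{infinite_bounces_set}, whose velocity cross-section has measure $\lesssim\varepsilon$; (b) for the cover of Proposition~\ref{G_C unif}, the velocity bad sets $\mathcal{O}_i^C$ and the spatial balls $B(y_j^C,\varepsilon)$ around sticky grazing points (the latter is removed by deleting a temporal slab of width $O(\varepsilon)$, since a trajectory of speed $\leq N$ stays in an $\varepsilon$-ball for time $\lesssim\varepsilon$); (c) the slabs $|s-s'|<\delta_2$ and $|s-t^k|<1/N$ near any bounce time; (d) the $\delta_2$-neighborhoods of the finitely many $\psi^{\ell_0,\vec\ell,i,k}$ supplied by Proposition~\ref{prop_full_rank}, whose count is bounded thanks to the uniform bounce number of Lemma~\ref{uniformbound}. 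Each exclusion contributes at most $O(\varepsilon+\delta_2+1/N)\sup_{s\leq t}\|h(s)\|_\infty$. On the complementary good set Proposition~\ref{prop_full_rank} gives the uniform Jacobian lower bound $\epsilon'_{\Omega,N,\delta,\delta_2}>0$, so $du\mapsto dy$ is legitimate; since $\mathbf{k}_w^N$ is bounded and $|u|,|u'|\leq N$, Cauchy--Schwarz in $y$ then bounds the corresponding contribution by $C_{\varepsilon,N,\delta_2}\int_0^t\|f(s')\|_2\,ds'$.

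Combining yields $\|h(t)\|_\infty \leq C e^{-\nu_0 t}\|h(0)\|_\infty + C(\varepsilon+\delta_2+1/N)\sup_{s\leq t}\|h(s)\|_\infty + C_{\varepsilon,N,\delta_2}\int_0^t\|f(s')\|_2\,ds'$. Choosing $\varepsilon,\delta_2,1/N$ small and iterating the estimate on short subintervals absorbs the middle term and delivers the claim. The main obstacle is the bookkeeping in the third step: one must simultaneously remove the inflection-grazing, sticky-grazing, near-bouncing, and $\psi^k$-singular pieces while keeping the total excluded measure uniformly $o(1)$ and the constants independent of $t$; this is exactly where the analyticity of $\partial\Omega$ is essential, entering through the finiteness of $\mathcal{SG}$ and the uniform bounce bound of Lemma~\ref{uniformbound}, without which none of the exceptional sets could be made simultaneously small.
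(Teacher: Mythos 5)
Your proposal follows essentially the same route as the paper's own proof: conjugate to get $K_w$, apply Duhamel twice, truncate the kernel, delete the bad sets ($\mathfrak{IB}$ from Lemma~\ref{infinite_bounces_set}, the velocity sets $\mathcal{O}^{C}_{i}$ and sticky-grazing balls from Proposition~\ref{G_C unif}, the near-bounce and near-$\psi$ time slabs, small $u_{3}$ and the condition (\ref{geo e1 u})), then use the Jacobian lower bound of Proposition~\ref{prop_full_rank} to change variables and close with Cauchy--Schwarz plus absorption of the small $\sup_{s\le t}\|h(s)\|_{\infty}$ terms. One minor correction: the time a trajectory spends in an $\varepsilon$-ball around a sticky-grazing point is controlled by the \emph{lower} bound $|\underline{u}|\ge 2/N$ on the speed (giving a contribution $O(\varepsilon N)$, as in the paper), not by the upper bound $|u|\le N$; this is harmless since $\varepsilon\ll 1/N^{2}$.
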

	\begin{proof}
		\noindent Since $L=\nu(v) - K$,
		\begin{equation*} \label{iteration scheme}
		\begin{split}
		&\p_t f  + v\cdot\nabla  f + \nu f   =
		Kf .
		\end{split}
		\end{equation*}
		For $h := w f$
		\begin{equation*} \label{equation_h}
		\begin{split}
		&  \p_t h + v\cdot\nabla_x h + \nu h =  K_w h ,\quad  K_{w}h := wK(\frac{h}{w}).
		\end{split}
		\end{equation*}
		
		\noindent We define,
		\begin{equation*}\label{E_G}\begin{split}
		E(v,t,x) &:= \exp \Big\{ -\int_{s}^{t} \nu(V(\tau)) \Big\}.  \\
		\end{split}\end{equation*}
		Along the trajectory,
		\begin{equation}\begin{split}\notag
		&  \frac{\dd}{\dd s} \Big( E(v,t,s) h (s,X(s;t,x,v),V(s;t,x,v)) \Big)\\
		&=  {E(v,t,s) } \big[ K_w h  \big](s,X(s;t,x,v),V(s;t,x,v)).\end{split}
		\end{equation}
		By integrating from $0$ to $t$, we obtain
		\begin{equation} \begin{split}\label{Duhamel_once}
		h (t,x,v) =& E(v,t,0) h (0, X(0), V(0)) \\ 
		&  + \int^{t}_{0} E(v,t,s) \int_{\R^{3}} k_{w}(u,V(s)) h(s,  X(s;t,x,v),u)   \dd u  \dd s .\end{split}
		\end{equation}
		Recall the standard estimates (see Lemma 4 and Lemma 5 in \cite{GKTT1})
		\begin{equation*}\label{est_kw}
		\int_{\R^{3}} |k_{w} (v,u)| \dd u \leq C_{K}\langle v\rangle^{-1}. 
		\end{equation*}
		
		We apply Duhamel's formula (\ref{Duhamel_once}) \textit{two times}, for sufficiently small $0< \bar{\delta}\ll 1$, and cut a part of domain where change of variable does not work. Especially, we use Lemma~\ref{G_C unif} and split sticky grazing set.  
		
		\begin{equation} \label{full expan}
		\begin{split}
			h (t,x,v) 
			&= E(v,t,0) h (0) + \int^{t}_{0} E(v,t,s) \int_{u} k_{w}(u,v) h(s,  X(s),u)   \dd u  \dd s  \\ 
			&\leq E(v,t,0) h(0) + \int^{t}_{0} E(v,t,s) \int_{u} k_{w}(u,v) E(u,s,0) h(0)   \\
			&\quad + |(\mathcal{E}_{1})| + |(\mathcal{E}_{2})| + |(\mathcal{E}_{3})| + |(\mathcal{E}_{4})| + |(\mathcal{E}_{5})| , \\
		\end{split}	 
		\end{equation}
		where
		\begin{equation} \label{def Ek}
		\begin{split}	
			(\mathcal{E}_{k}) &:= \int^{t}_{0}  E(v,t,s) \int_{u} k_{w}(u,v) 
			\int^{s}_{0} E(u,s,s^{\prime}) \int_{u^{\prime}}
			k_{w}(u^\prime,u) h(s ^{\prime} , 
			X(s^{\prime}) ,u^{\prime} ) \ \mathbf{1}_{E_{k}}(X(s), u),\quad k=1,2,3,4,5.  \\	
		\end{split}	 
		\end{equation}
		Note that we abbreviated notations
		\begin{equation*}
		\begin{split}
		X(s) &:= X(s;t,x,v) ,\quad X(s^{\prime}) := X^{\prime}(s^{\prime};s,X(s;t,x,v),u),  \\
		\end{split}
		\end{equation*}
		and $E_{k}$ in characteristic functions in (\ref{def Ek}) are defined as
		\begin{equation*}
		\begin{split}
			E_{1} &:= \big\{ (X(s),u) \in\mathbb{R}^{3}\times\mathbb{R}^{3} : \underline{u}\in\mathbb{R}^{2}\backslash \VN \ \ \text{or} \ \ |u_{2}|\in\mathbb{R}\backslash [\frac{1}{N},N] \ \big\},  \\
			E_{2} &:= \big\{ (X(s),u) \in\mathbb{R}^{3}\times\mathbb{R}^{3} : 	
			(\underline{u}, u_{2}) \in \VN\times[\frac{1}{N}, N], \ (\underline{X}(s),\underline{u}) \in \mathfrak{IB} \ \big\},  \\
			E_{3} &:= \big\{ (X(s),u) \in\mathbb{R}^{3}\times\mathbb{R}^{3} : 	
			(\underline{u}, u_{2}) \in \VN\times[\frac{1}{N}, N], \ (\underline{X}(s),\underline{u}) \in \{cl(\O)\times\VN \}\backslash\mathfrak{IB}, \\
			&\quad\quad\quad\quad \underline{X}(s)\in \bigcup_{j=1}^{l_{sg}} B( y_{j}^{C} , \varepsilon ) \ \big\},  \\
			E_{4} &:= \big\{ (X(s),u) \in\mathbb{R}^{3}\times\mathbb{R}^{3} : 	
			(\underline{u}, u_{2}) \in \VN\times[\frac{1}{N}, N], \ (\underline{X}(s),\underline{u}) \in \{cl(\O)\times\VN \}\backslash\mathfrak{IB}, \\ &\quad\quad\quad\quad (\underline{X}(s), \underline{u}) \in \Big\{ \bigcup_{i=1}^{l_G} \ B(x^{C}_{i}, r^{C}_{i})  \times  \mathcal{O}^{C}_{i} \Big\} \backslash \Big\{ \bigcup_{j=1}^{l_{sg}} B( y_{j}^{C} , \varepsilon ) \times \VN\Big\} \ \big\},  \\
		\end{split}
		\end{equation*}
		and
		\begin{equation} \label{E5}
		\begin{split}	
			E_{5} &:= \big\{ (X(s),u) \in\mathbb{R}^{3}\times\mathbb{R}^{3} : 	
			(\underline{u}, u_{2}) \in \VN\times[\frac{1}{N}, N], \ (\underline{X}(s),\underline{u}) \in \{cl(\O)\times\VN \}\backslash\mathfrak{IB}, \\ &\quad\quad\quad\quad (\underline{X}(s), \underline{u}) \in \Big\{ B(x^{C}_{i}, r^{C}_{i})  \times  \{\VN\backslash\mathcal{O}^{C}_{i} \} \Big\} \backslash \Big\{ \bigcup_{j=1}^{l_{sg}} B( y_{j}^{C} , \varepsilon ) \times \VN\Big\} \ \text{for some} \ i=1,\cdots, l_{sg} \ \big\}.  \\
		\end{split}
		\end{equation}

		\noindent Also note that 
		\[
			(\mathfrak{G})_{\varepsilon} := \Big\{ \bigcup_{i=1}^{l_G} \ B(x^{C}_{i},r^{C}_{i})  \times  \mathcal{O}^{C}_{i} \Big\} \ \bigcup \ \{ \bigcup_{j=1}^{l_{sg}} B( y_{j}^{C} , \varepsilon ) \times \VN \}
		\]
		was defined in Lemma~\ref{G_C unif} and we have
		
		\begin{equation*} \label{exponent bound time dept}
		E(v,t,s) \leq e^{ - \nu(v) (t-s) }.
		\end{equation*}
		On the RHS of (\ref{full expan}), every terms except $(E_{1})$, $(E_{2})$, $(E_{3})$, $(E_{4})$, and $(E_{5})$, are controlled by 
		\begin{equation} \label{non main}
		C e^{-\nu_{0} t}\|h(0)\|_\infty 
		\end{equation}
		
		

		We claim smallness of $(E_{1}) \sim (E_{4})$. From $\int_{u} \mathbf{1}_{ \{\underline{u}\in\mathbb{R}^{2}\backslash \VN \ \text{or} \ |u_{2}|\in\mathbb{R}\backslash [\frac{1}{N},N] \} } (u) \sqrt{\mu} du = O(\frac{1}{N})$,
		\begin{equation} \label{E1}
		\begin{split}
			(E_{1}) &\leq O(\frac{1}{N}) \sup_{0\leq s\leq t} \|h(s)\|_{\infty}. 
		\end{split}
		\end{equation}
		From Lemma~\ref{infinite_bounces_set}, $\mathfrak{m}_{2}(\mathcal{O}_{i}^{IB}) \lesssim \varepsilon$ for $1\leq i \leq l_{IB}$. Therefore,
		\begin{equation} \label{E2}
		\begin{split}
			(E_{2}) &\leq O(\varepsilon) \sup_{0\leq s\leq t} \|h(s)\|_{\infty}. 
		\end{split}
		\end{equation}
		For $(E_{3})$, we also have similar estimate because
		\begin{equation} \label{E3}
		\begin{split}
			(E_{3}) &\leq \int_{0}^{t} \dd s \mathbf{1}_{ \underline{X}(s)\in \bigcup_{j=1}^{l_{sg}} B( y_{j}^{C} , \varepsilon ) } (s) \|h(s)\|_{\infty} \\
			&\leq C\frac{\varepsilon}{1/N} \sup_{0\leq s\leq t} \|h(s)\|_{\infty},\quad \text{since}\quad \underline{v} \geq \frac{2}{N},  \\
			&\leq C\varepsilon N \sup_{0\leq s\leq t} \|h(s)\|_{\infty}  \leq O(\frac{1}{N}) \sup_{0\leq s\leq t} \|h(s)\|_{\infty}.
		\end{split}
		\end{equation}
		For estimate for $(E_{4})$, since $\mathfrak{m}_{2}(\mathcal{O}^{C}_{i}) < \varepsilon$ from Lemma~\ref{G_C unif},
		\begin{equation} \label{E4}
		\begin{split}
		(E_{4}) &\leq O(\varepsilon) \sup_{0\leq s\leq t} \|h(s)\|_{\infty}. 
		\end{split}
		\end{equation}
		For $(E_{5})$, we choose $m({N})$ so that 
		\begin{equation*} \label{opeator k split}
		k_{w,m}(u,v) := \mathbf{1}_{\{ |u-v|\geq\frac{1}{m}, \ |u|\leq m \}} k_{w}(u,v) ,
		\end{equation*}
		satisfies $\int_{\mathbb{R}^3} |k_{w,m}(u,v)-k_{w}(u,v)| \ \dd u \leq \frac{1}{{N}}$ for sufficiently large ${N}\geq 1$. Then, by splitting $k_w$,
		
		\begin{equation} \label{* decomp} \begin{split}
		(E_{5}) &\leq \underline{ \int_0^t \int_0^s e^{- \nu(v) (t-s^{\prime})} \int_u k_{w,m}(u,v)  \int_{u^{\prime}} k_{w,m}(u^{\prime},u) h (s^{\prime},X^{\prime}(s^{\prime}),u^{\prime}) \ \mathbf{1}_{E_{5}}(X(s), u) \ \dd u^{\prime} \dd u \dd s^{\prime} \dd s  }_{(**)}  \\
		& \quad\quad + O_{\O}(\frac{1}{N}) \sup_{0\leq s \leq t} \|h (s)\|_\infty    \\
		\end{split} \end{equation}
		
		\noindent 
		We define following sets for fixed $n,\vec{n},i,k,$, where Proposition~\ref{prop_full_rank} does not work.
		\begin{equation} \label{R16}
		\begin{split}
		R_1 &:= \{ u \ \vert \ \underline{u} \notin B( \vec{n}\delta , 2\delta ) \cap \{\R^2\backslash \mathcal{O}^{C}_{i_{s}} \}  \} ,    \\
		R_2 &:= \{ s^{\prime} \ \vert \ |s-s^{\prime}| \leq \delta \} , \\
		R_3 &:= \{ s^{\prime} \ \vert \ \max_{i=1,2}| s^{\prime} -  \psi_{1}^{n,\vec{n},i,k} ( n \delta, \underline{X}(n \delta;t, \underline{x}, \underline{v}), (\vec{n}\delta, u_{2}) ) |\lesssim_{N} \delta \|\psi_{1} \|_{C^{1}_{t, \underline{x}, \underline{v}}} \} ,  \\
		R_4 &:= \{ s^{\prime} \ \vert \ | s^{\prime} -  t^{k} ( n \delta, \underline{X}(n \delta;t, \underline{x}, \underline{v}), (\vec{n}\delta, u_{2}) ) |\lesssim_{N} \delta \|\psi_{1} \|_{C^{1}_{t, \underline{x}, \underline{v}}} \} ,  \\
		R_5 &:= \{ u \ \vert \ |u_{3}| \leq \frac{1}{N} \} ,  \\
		R_6 &:= \{ \underline{u}\in\mathbb{R}^{2} \ \vert \ \Big| \frac{ \p_{1} \underline{\eta}_{ {p}^{1}}  }{ \sqrt{g_{{p}^{1}, 11} } } \big\vert_{x^{1}(\underline{X}(s;t, \underline{x}, \underline{v}), \underline{u})} \cdot e_{1} \Big| \leq \frac{1}{N} \} .  \\
		\end{split}
		\end{equation}
		Using (\ref{R16}), we write $(**)$ as
		\begin{equation} \label{**}
		\begin{split}	
		(**) &= \underline{ \sum_{n =0}^{[t/\delta]+1} \sum_{|\vec{n}| \leq N}  \sum_{k}^{ C_{\varepsilon,N} }  \int^{(n+1) \delta}_{ (n-1) \delta} \ \int^{t^{k}}_{t^{k+1}}  e^{- \nu(v)(t-s^{\prime})} } \\ 
		&\quad\quad\quad\quad \times \underline{ \int_{ |u|\leq N, |u^{\prime}|\leq N } k_{w,m}(u,v) k_{w,m}(u^{\prime},u) \ | h( s^{\prime},X(s^{\prime}), u^{\prime} ) | } \\ &\quad\quad\quad\quad \times \underline{ \mathbf{1}_{R_{1}^{c} \cap R_{2}^{c} \cap R_{3}^{c} \cap R_{4}^{c} \cap R_{5}^{c} \cap R_{6}^{c} } \mathbf{1}_{E_{5}}(X(s), u)  }_{\text{(MAIN)}}  +  R,  \\
		\end{split}	
		\end{equation}	
		where $R$ corresponds to where $(u, s^{\prime})$ is in one of $R_1\sim R_6$. We replace $\mathbf{1}_{R_{1}^{c} \cap R_{2}^{c} \cap R_{3}^{c} \cap R_{4}^{c} \cap R_{5}^{c} \cap R_{6}^{c}}$ into $\mathbf{1}_{R_{1} \cup R_{2} \cup R_{3} \cup R_{4} \cup R_{5} \cup R_{6}}$ in (MAIN).  For $R$, we have the following smallness estimate:
		
		\begin{equation} \label{BR}
		\begin{split}
		R &\leq \int_0^t \int_0^s e^{- \frac{1}{2} \nu(v)(t-s^{\prime})} \int_{|u|\leq N} k_{w,m}(u,v) \int_{|u^{\prime}|\leq N} k_{w,m}(u^{\prime},u) h (s^{ \prime},X^{ \prime}(s^{ \prime}),u^{ \prime})  \times \mathbf{1}_{ R_1 \cup R_2 \cup R_3 \cup R_4 \cup R_5 \cup R_6 }  \\
		&\leq C_{N} \big(\delta + \varepsilon + O(\frac{1}{N}) \big) \sup_{0\leq s \leq t} \|h(s)\|_\infty,  \\
		\end{split}
		\end{equation}
		by choosing sufficiently small $\delta \ll \frac{1}{N}$. Note that smalless from $R_{1}$ to $R_{5}$ are trivial. For $R_{6}$, we note that by analyticity and boundness of $\O$, there are only finite points $\underline{x}$ such that $\frac{ \p_{1} \underline{\eta}_{ {p}^{1}}  }{ \sqrt{g_{{p}^{1}, 11} } } \bigg\vert_{\underline{x}\in\p\O} \cdot e_{1} = 0$, so $R_{6}$ gives smallness $O(\frac{1}{N})$.  \\
		
		Let us focus on $\text{(MAIN)}$ in (\ref{**}). From (\ref{E5}) and (\ref{R16}), all conditions (\ref{non degene})--(\ref{sprime_psi}) in Proposition~\ref{prop_full_rank} are satisfied and 
		\begin{equation*}
		\begin{split}
		&\exists i_{s} \in \{ 1,2,\cdots, l_{G}\} \quad\text{such that}\quad \underline{X}(s) \in B(x^{C}_{i_{s}}, r^{C}_{i_{s}}).  \\
		\end{split}
		\end{equation*}
		Under the condition of $(u, s^{\prime}) \in R_{1}^{c} \ \cap \ R_{2}^{c} \ \cap R_{3}^{c} \ \cap R_{4}^{c} \ \cap R_{5}^{c} \ \cap R_{6}^{c}  $, indices $n, \vec{n}, i_{s}, k$ are determined so that
		\begin{eqnarray*}
			t \ &\in& \ [  (n-1)\delta,  (n+1)\delta ],   \\
			\underline{X}(s;t, \underline{x}, \underline{v}) \ &\in& \ B(x^{C}_{i_{s}}, r^{C}_{i_{s}}) ,  \\
			\underline{u} \ &\in& \ B(\vec{n}\delta, 2\delta) \cap \{ \VN \backslash\mathcal{O}^{C}_{i_{s}} \} ,   \\
		\end{eqnarray*}
		and (\ref{lower_1}) in Proposition~\ref{prop_full_rank} gives local time-independent lower bound
		\begin{equation*} 
			\Big| \det\Big( \frac{\p X(s^{\prime}) }{ \p u} \Big) \Big| > \epsilon^{\prime}_{\delta} > 0.
		\end{equation*}
		If we choose sufficiently small $\delta$, there exist small $r_{\delta,n,\vec{n},i,k}$ such that there exist one-to-one map $\mathcal{M}$, 
		\begin{eqnarray*}
			\mathcal{M} &:& B(\vec{n}\delta, 2\delta) \cap \{ \VN\backslash\mathcal{O}^{C}_{i_{s}} \}
			\mapsto
			B( \underline{X}(s^{\prime}; s, \underline{X}(s;t, \underline{x}, \underline{v}), \underline{u}), r_{\delta,n,\vec{n},i,k} ).
		\end{eqnarray*} 
		So we perform change of variable for $\text{(MAIN)}$ in (\ref{**}) to obtain
		
		\begin{equation} \label{MAIN}
		\begin{split}	
		&\text{(MAIN)} \\
		&\leq  \sum_{n=0}^{[t/\delta]+1} \sum_{|\vec{n}| \leq N}   \sum_{k}^{ C_{\varepsilon,N} }  \int^{(n+1) \delta}_{ (n-1) \delta} \ \int^{t^{k} }_{t^{k+1} }  e^{-   \nu(v) (t-s^{\prime})}  \\ 
		& \quad\quad \times \int_{u} k_{w,m}(u,v) \int_{u^{\prime}} k_{w,m}(u^{\prime},u) \ \mathbf{1}_{ |u|\leq N, |u^{\prime}|\leq N} \ | h( s^{ \prime},X( s^{ \prime}), u^{ \prime} ) |  \ \mathbf{1}_{E_{5}}(X(s), u) \dd u \dd s^{\prime} \dd s   \\
		&\leq  \sum_{n=0}^{[t/\delta]+1} \sum_{|\vec{n}| \leq N}   \sum_{k}^{ C_{\varepsilon,N} }  \int^{(n+1) \delta}_{ (n-1) \delta} \ \int^{t^{k} }_{t^{k+1} }  e^{-   \nu(v) (t-s^{\prime})}  \\ 
		& \quad\quad \times \int_{u} k_{w,m}(u,v) \mathbf{1}_{ |u|\leq N } \ \dd u  \  \| f( s^{ \prime},X( s^{ \prime}), u ) \|_{ L^{2}_{|u^{\prime}| \leq N} }  \mathbf{1}_{E_{5}}(X(s), u) \dd u \dd s^{\prime} \dd s   \\
		&\leq  \sum_{n=0}^{[t/\delta]+1} \sum_{|\vec{n}| \leq N}   \sum_{k}^{ C_{\varepsilon,N} }  \int^{(n+1) \delta}_{ (n-1) \delta} \ \int^{t^{k} }_{t^{k+1} }  e^{-   \nu(v) (t-s^{\prime})} \Big\{ \int_{|u|\leq N} \| f( s^{ \prime},X( s^{ \prime}), u^{ \prime} ) \|^{2}_{ L^{2}_{|u^{\prime}| \leq N} } \dd u \Big\}^{1/2} 	\\
		&\quad\quad \times \mathbf{1}_{E_{5}}(X(s), u) \dd u \dd s^{\prime} \dd s   \\
		&\leq \sum_{n=0}^{[t/\delta]+1} \sum_{|\vec{n}| \leq N}   \sum_{k}^{ C_{\varepsilon,N} }  \int^{(n+1) \delta}_{ (n-1) \delta} \ \int^{t^{k} }_{t^{k+1} }  e^{-   \nu(v) (t-s^{\prime})} \Big\{ \int_{_{ B( X(s^{\prime}), r_{\delta,n,\vec{n},i,k} ) }} \| f( s^{ \prime},X( s^{ \prime}), u^{ \prime} ) \|^{2}_{ L^{2}_{|u^{\prime}| \leq N} } \frac{1}{ \epsilon^{\prime}_{\delta } } \dd x \Big\}^{1/2} 	\\
		&\leq C \int_{0}^{t} \|f\|_{L^{2}_{x,v}} \dd s.
		\end{split}	
		\end{equation}
		\\
		We collect (\ref{full expan}), (\ref{non main}), (\ref{E1})--(\ref{E4}), (\ref{* decomp}), (\ref{**}), (\ref{BR}), and (\ref{MAIN}) with sufficiently large $N \gg 1$ and small $\varepsilon, \delta \ll \frac{1}{N^{2}}$ to conclude
		\begin{equation} \label{h infty est}
		\begin{split}
			\|h(t)\|_\infty & \lesssim \  e^{-\nu_{0} t}\|h(0)\|_\infty  +  \int_{0}^{t} \|f (s)\|_2 \dd s .
		\end{split}
		\end{equation}
		
	\end{proof}

	\section{$L^{2}$-Coercivity via contradiction method} 
	
	We start with a lemma which was proved in Lemma 5.1 in \cite{KimLee}. 
	\begin{lemma}\label{boundary_interior}Let $g$ be a (distributional) solution to 
		\begin{equation*}\label{eqtn_g}
				\p_{t} g + v\cdot\nabla_{x} g = G.
		\end{equation*}
		Then, for a sufficiently small $\e>0$,
		\begin{equation*} \label{int_ext}
		\begin{split}
		&
		\int^{1-\e}_{\e}\|
		\mathbf{1}_{\dist(x, \p U)<\e^{4} } 
		\mathbf{1}_{|\mathbf{n}(x) \cdot v| > \e} g(t)
		\|_{2}^{2} \dd t
		\lesssim 
		\int^{1}_{0}\| \mathbf{1}_{\dist(x,\p U)> \e^{3}/2 } g(t)\|_{2}^{2} \dd t
		+ \int^{1}_{0} 
		\iint_{ U\times\R^{3}}|gG|.
		\end{split}
		\end{equation*}
	\end{lemma}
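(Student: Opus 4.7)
The plan is to extract the near-boundary $L^{2}$ bound by a trajectory-based Duhamel representation, exploiting that $|\mathbf{n}(x)\cdot v|>\varepsilon$ forces a straight-line characteristic to enter the interior region $\{\dist(x,\p U)>\varepsilon^{3}/2\}$ in time $O(\varepsilon^{2})$ without hitting $\p U$. Reading the equation along characteristics gives $\frac{d}{d\tau}[g^{2}(\tau, x - (t-\tau)v, v)] = 2(gG)(\tau, x-(t-\tau)v, v)$, and this identity is the engine of the estimate.

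By reversing time I may assume $\mathbf{n}(x)\cdot v > \varepsilon$. Setting $\bar s(x,v):=\varepsilon^{3}/(\mathbf{n}(x)\cdot v) \leq \varepsilon^{2}$ and using smoothness of $\p U$ (positive reach of a tubular neighborhood), a Taylor expansion of the signed distance yields
$$
\dist(x-sv, \p U) \ \geq \ s\,\mathbf{n}(x)\cdot v \ - \ \varepsilon^{4} \ - \ Cs^{2}|v|^{2} \ > \ \varepsilon^{3}/2 \quad \text{for } s\in[\bar s, 2\bar s],
$$
so the straight-line trajectory does not strike $\p U$ on this window. Integrating the characteristic identity from $\tau=t-s$ to $\tau=t$ then produces the key representation
$$
g(t,x,v)^{2} \ = \ g(t-s, x - sv, v)^{2} \ + \ \int_{t-s}^{t} 2(gG)(\tau, x - (t-\tau)v, v)\,d\tau.
$$

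I then average $s$ over $[\bar s, 2\bar s]$ and integrate in $(t,x,v)$ over the target set $\{\varepsilon < t < 1-\varepsilon,\ \dist(x,\p U) < \varepsilon^{4},\ \mathbf{n}(x)\cdot v > \varepsilon\}$. For the first term the translation $y = x-sv$ at fixed $(s,v)$ has unit Jacobian, and the new spatial point satisfies $\dist(y,\p U) > \varepsilon^{3}/2$ with $t-s \in (0,1)$, producing exactly the interior contribution $\int_{0}^{1}\|\mathbf{1}_{\dist(x,\p U)>\varepsilon^{3}/2}\,g(t)\|_{2}^{2}\,dt$. For the second term a Fubini swap puts the $\tau$-integral outside and bounds the resulting spatial integral by all of $U\times\R^{3}$, yielding $\int_{0}^{1}\iint_{U\times\R^{3}}|gG|$.

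The main obstacle is maintaining the no-bouncing estimate uniformly in $|v|$: for large $|v|$ the curvature remainder $Cs^{2}|v|^{2}$ can swamp the main term $s\,\mathbf{n}(x)\cdot v$ in the signed-distance expansion, so the straight-line segment may exit $U$ or strike a different component of $\p U$ before time $2\bar s$. I will address this by a dyadic decomposition in $|v|$ (rescaling $\bar s$ on each shell so that both the geometric inequality and the reach of the tubular neighborhood hold) together with the positive separation between distinct boundary components in Definition~\ref{AND}, which rules out accidental collisions with other components of $\p\Omega$ on the short window considered.
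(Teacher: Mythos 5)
The paper itself gives no proof of this lemma---it is quoted from Lemma 5.1 of \cite{KimLee}---so your argument must stand on its own, and as written it has a genuine gap in the large-velocity regime. Your mechanism (integrate $\frac{d}{d\tau}g^{2}=2gG$ along the straight characteristic until it reaches $\{\dist(x,\p U)>\varepsilon^{3}/2\}$, average in $s$, change variables, Fubini for the $|gG|$ term) is sound, but only when the cross-sectional speed satisfies $|\underline{v}|\lesssim\varepsilon^{-1/2}$: your own requirement $s\,\mathbf{n}(x)\cdot v-\varepsilon^{4}-Cs^{2}|v|^{2}>\varepsilon^{3}/2$ with $s\sim\varepsilon^{3}/(\mathbf{n}(x)\cdot v)$ forces $(\mathbf{n}(x)\cdot v)^{2}\gtrsim\varepsilon^{3}|v|^{2}$, which does not follow from $|\mathbf{n}(x)\cdot v|>\varepsilon$ alone. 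The dyadic fix you propose cannot repair this: near a locally strictly convex arc of $\p\O$ (curvature radius of order one) the straight segment issued from a point at depth $\le\varepsilon^{4}$ turns around at depth $\approx (\mathbf{n}(x)\cdot v)^{2}/(C|v|^{2})$ before returning to the boundary, so once $|\underline{v}|\gg\varepsilon^{-1/2}$ and $|\mathbf{n}(x)\cdot v|\sim\varepsilon$ the characteristic never attains depth $\varepsilon^{3}/2$ for \emph{any} choice of $s$; the obstruction is the turning point of the chord, not a collision with another boundary component, so the separation of components in Definition~\ref{AND} is beside the point, and shrinking $\bar s$ on a velocity shell only makes the endpoint shallower. (The axial component $v_{2}$ is tangential and does not affect penetration, so the decomposition would in any case have to be in $|\underline{v}|$, but that does not rescue the argument.)

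This regime is a real obstruction rather than a bookkeeping nuisance: taking $G=0$ and $g$ the specular transport of nonnegative data supported on whispering-gallery trajectories hugging a convex arc (cross-sectional speed of order $\varepsilon^{-1}$, normal velocity between $\varepsilon$ and $2\varepsilon$ near the bounces) keeps the support within depth $O(\varepsilon^{4})$ for all time, so both terms on the right-hand side vanish while the left-hand side is positive on a fixed fraction of each bounce period. Hence no purely characteristic push-to-the-interior argument can yield the inequality for unrestricted $v$; some velocity truncation is needed, and indeed where the paper applies the lemma (Step 6 of the proof of Proposition~\ref{prop_coercivity}) the near-boundary non-grazing set is intersected with $\{|v|\le\varepsilon^{-1}\}$. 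You should therefore either prove and state a truncated version---and then recheck the exponents, since even with $|v|\le\varepsilon^{-1}$ and $|\mathbf{n}(x)\cdot v|>\varepsilon$ your chord is only guaranteed to reach depth of order $\varepsilon^{4}$, still short of $\varepsilon^{3}/2$---or consult the cited Lemma 5.1 of \cite{KimLee} for the precise hypotheses and exponents under which such an estimate is actually established.
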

	
	\begin{proposition}\label{prop_coercivity}
		Assume that $f$ solves linear Boltzmann equation
		\begin{equation} \label{linearized_eqtn}
		\p_{t}f + v\cdot\nabla f + Lf = 0,
		\end{equation}
		and satisfies the specular reflection BC and (\ref{conserv_F_mass}) for $F= \mu + \sqrt{\mu}f$. Furthermore, for an axis-symmetric domain, we assume (\ref{conserv_F_angular}). Then there exists $C
		> 0$ such that, for all $N \in \mathbb{N}
		$, 
		\begin{equation}\label{coercive}
		\int^{N+1}_{N} \| \mathbf{P} f (t) \|_{2}^{2} \dd t
		\leq C
		\int^{N+1}_{N} \| (\mathbf{I} - \mathbf{P}) f (t) \|_{\nu}^{2} \dd t. 
		\end{equation}
	\end{proposition}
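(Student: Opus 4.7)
The plan is to argue by contradiction in the classical macroscopic scheme (cf.\ \cite{Guo10, KimLee}), with the geometric rigidity of the non-convex analytic boundary replacing the role played by strict convexity in \cite{KimLee}. Suppose (\ref{coercive}) fails. Then, after renormalization and a time shift to $[0,1]$, we obtain a sequence $\{f_n\}$ of solutions to (\ref{linearized_eqtn}) satisfying the specular BC and the listed conservation laws with
\[
\int_0^1 \|\mathbf{P}f_n(t)\|_2^2\, dt \ = \ 1, \qquad \int_0^1 \|(\mathbf{I}-\mathbf{P})f_n(t)\|_\nu^2\, dt \ \longrightarrow \ 0.
\]
Any weak $L^2_{t,x,v}$-limit $f_\infty$ then satisfies $(\mathbf{I}-\mathbf{P})f_\infty \equiv 0$ and hence has the hydrodynamic form
\[
f_\infty(t,x,v) \ = \ \big\{a(t,x) + b(t,x)\cdot v + c(t,x)|v|^2\big\}\sqrt{\mu}.
\]
Since $Lf_\infty = 0$, $f_\infty$ solves the free transport equation distributionally, and identifying coefficients of polynomials in $v$ gives the standard macroscopic closure ($\nabla c = 0$, $\partial_i b_j + \partial_j b_i$ proportional to $\delta_{ij}$, and affine-in-$t$ compatibility conditions), so in particular $b$ is an infinitesimal conformal Killing field of $\mathbb{R}^3$.

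The specular BC $f_\infty(t,x,v) = f_\infty(t,x,R_xv)$ is equivalent to $b(t,x)\cdot \mathbf{n}(x) = 0$ on $\partial U$. On the side wall $\partial\Omega\times[0,H]$ the horizontal part $\underline b = (b_1,b_3)$ must be tangent to every analytic component $\partial\Omega_i$, while $x_2$-periodicity combined with the Killing relations forces $b_2$ to be spatially constant. A planar conformal Killing field tangent to a non-convex analytic curve is extremely rigid: the existence of at least one genuinely concave arc $\partial\Omega_l^C$ (Definition~\ref{CVI}) together with analyticity rules out every non-trivial candidate, with the single exception of the axis-symmetric case (\ref{axis-symmetric}), where only the centered rotation about $x_0$ generated by $\varpi$ survives. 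The closure relations then force $a, c$ to be spatial constants and all time dependences to be linear. Finally, the normalizations from (\ref{conserv_F_mass}) eliminate $a$ and $c$, the $x_2$-momentum conservation (automatic from $x_2$-periodicity) eliminates $b_2$, and in the axis-symmetric case (\ref{conserv_F_angular}) kills the surviving rotation coefficient; therefore $f_\infty \equiv 0$.

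The main obstacle is to upgrade $f_n \rightharpoonup f_\infty$ in $L^2_{t,x,v}$ to \emph{strong} $L^2$-convergence of $\mathbf{P}f_n$, since only strong convergence contradicts $\int_0^1\|\mathbf{P}f_n\|_2^2\,dt = 1$. Away from $\partial U$, velocity averaging applied to $\partial_t f_n + v\cdot\nabla_x f_n = -Lf_n$ (with $Lf_n \to 0$ in $L^2_t L^2_\nu$) yields strong compactness of the moments $\int f_n\psi(v)\,dv$ on $\{\dist(x,\partial U) > \varepsilon^3/2\}$. I would then invoke Lemma~\ref{boundary_interior} with $G = -Lf_n$ to transfer this interior bound into control of the non-grazing near-boundary contribution
\[
\int_0^1 \|\mathbf{1}_{\dist(x,\partial U)<\varepsilon^4}\mathbf{1}_{|\mathbf{n}(x)\cdot v|>\varepsilon}f_n(t)\|_2^2\, dt,
\]
because the cross term $\int_0^1 \iint |f_n Lf_n|$ vanishes in the limit by Cauchy--Schwarz, uniform $L^2$-boundedness of $\mathbf{P}f_n$, and $(\mathbf{I}-\mathbf{P})f_n\to 0$ in $L^2_\nu$. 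The grazing sliver $\{|\mathbf{n}\cdot v|\leq\varepsilon\}$ contributes at most $O(\varepsilon)$ uniformly in $n$, since there the $\langle v\rangle$-weight in the $\nu$-norm controls $(\mathbf{I}-\mathbf{P})f_n$ while $\mathbf{P}f_n$ is uniformly $L^2$-bounded. Crucially, Lemma~\ref{boundary_interior} is purely geometry-independent, so the non-convex structure of $\partial U$ never interferes at this stage; it enters only at the previous step, in the rigidity argument ruling out non-trivial $b$. Letting $\varepsilon \downarrow 0$ yields the required strong convergence and closes the contradiction.
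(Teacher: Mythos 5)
Your proposal follows essentially the same route as the paper's proof: contradiction with a normalized sequence on $[0,1]$, a weak limit of purely hydrodynamic form, strong $L^{2}$ compactness from the averaging lemma in the interior, Lemma~\ref{boundary_interior} for the non-grazing near-boundary part and an $O(\varepsilon)$ bound on the grazing sliver, and finally the macroscopic equations plus the specular BC and the conservation laws to force the limit to vanish.

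Three points where your sketch deviates from, or falls short of, the paper's argument. First, the near-boundary estimate must be applied to the difference $f_n - f_\infty$ (the paper's $Z^m - Z$), with $G=-Lf_n$, so that the interior term on the right-hand side of Lemma~\ref{boundary_interior} tends to zero by the interior compactness; as written you bound the near-boundary norm of $f_n$ itself, which does not tend to zero. Second, your rigidity step is asserted rather than proved, and the mechanism you invoke (a genuinely concave analytic arc) is not the relevant one: the paper writes $b=-\partial_t c\, x+\varpi(t)\times x+m(t)$ and splits into $\varpi=0$ and $\varpi\neq0$; non-convexity plays no role, only the fact that tangency of the rotation field to all of $\partial U$ is exactly the axis-symmetry condition (\ref{axis-symmetric}), in which case the assumed normalization (\ref{conserv_F_angular}) kills the rotation, while for $\varpi=0$ the specular BC disposes of $m(t)$. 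Third, you eliminate the axial component $b_2$ by invoking a zero axial-momentum normalization, which is not among the hypotheses of the Proposition (only (\ref{conserv_F_mass})/(\ref{normalize_M}) and, in the axis-symmetric case, (\ref{conserv_F_angular})/(\ref{normalize_M_angular}) are assumed). You have in fact put your finger on a real subtlety of the periodic cylinder: the side-wall normals are horizontal, so $b\cdot\mathbf{n}=0$ on $\partial U$ only annihilates the horizontal part of $m(t)$, and a spatially constant axial mode $b_2 v_2\sqrt{\mu}$ is compatible with the transport equation, the specular BC, and the mass and energy constraints; the paper's Step 8 asserts $m\equiv 0$ from the BC without addressing this component. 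So your instinct is sound, but as a proof of the Proposition as stated this step needs either that missing normalization as an explicit hypothesis or a separate argument excluding the axial mode.
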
 
	\begin{proof}
		We will use contradiction method which is used in \cite{Guo10} and also in \cite{KimLee} with some modification. Instead of full detail, we describe scheme of proof following \cite{KimLee}. \\
		
		\noindent\textit{Step 1.} First, (\ref{linearized_eqtn}) is translation invariant in time, so it suffices to prove coercivity for finite time interval $t \in [0,1]$ and so we claim (\ref{coercive}) for $N=0$. Now assume that Proposition~\ref{prop_coercivity} is wrong. Then, for any $m \gg1$, there exists a solution $f^{m}$ to (\ref{linearized_eqtn}) with specular reflection BC, which solves
		
		\begin{equation}\label{eqtn_fm}
		\p_{t} {f}^{m} + v\cdot \nabla_{x }  {f} ^{m} + L {f}^{m} =0, \ \ \text{for} \ t \in [0,1]
		\end{equation}
		and satisfies
		\begin{equation*}\label{contra_coercivity_1}
		\int^{ 1}_{ 0} \| \mathbf{P} f^{m } (t) \|_{2}^{2} \dd t
		\geq m \int^{ 1}_{0} \| (\mathbf{I} - \mathbf{P}) f^{m } (t) \|_{\nu}^{2} \dd t.
		\end{equation*}
		\noindent Defining normalized form of $f^{m}$ by
		\begin{equation} \label{normal Zm}
		Z^{m} (t,x,v) : = \frac{ f^{m}(t,x,v)}{ \sqrt{\int^{1}_{0} \| \mathbf{P} f^{m} (t) \| _{2}^{2} \dd t  }} .
		\end{equation}
		Then $Z^{m}$ also solves (\ref{eqtn_fm}) with specular BC and 
		\begin{equation} \label{contra_coercivity_Z}
		\frac{1}{m}
		\geq  \int^{ 1}_{0} \| (\mathbf{I} - \mathbf{P}) Z ^{m}(t) \|_{\nu}^{2} \dd t.  \\
		\end{equation}
		
		\noindent\textit{Step 2. }We claim that 
		\begin{equation}\label{claim_Zm}
		\sup_{m}\sup_{0 \leq t \leq 1}
		\| Z^{m} (t) \|_{2}^{2} < \infty.
		\end{equation}
		Since $Z_{m}$ solves (\ref{eqtn_fm}) with specular BC, for $0 \leq t \leq 1$,
		\begin{equation*} \label{sup_Zm}
		\sup_{0 \leq t \leq 1}\| Z^{m}(t) \|_{2}^{2} 
		\leq  \| Z^{m}(0) \|_{2}^{2}
		,
		\end{equation*}
		from the non-negativity of $L$.  Moreover, by integration $\int^{1}_{0}$ and using (\ref{contra_coercivity_Z}) and (\ref{normal Zm}),
		\begin{equation*} \label{bound_Zm}
		\| Z^{m}(0) \|_{2}^{2} 
		\lesssim
		\int^{1}_{0} \| Z^{m} \|_{2}^{2} 
		+ \int^{1}_{0}  \| (\mathbf{I} - \mathbf{P}) Z^{m} \|_{\nu}^{2}
		\lesssim 1+ \frac{1}{m}.
		\end{equation*} 
		Therefore, we proved the claim (\ref{claim_Zm}).  \\
		
		\noindent\textit{Step 3.} Therefore, the sequence $\{Z^{m}\}_{m\gg 1 }$ is uniformly bounded in $\sup_{0 \leq t \leq 1} \| g(t) \|_{\nu}^{2}$. By the weak compactness of $L^{2}$-space, there exists weak limit $Z$ such that 
		\begin{equation*}\label{limit Z}
		Z^{m}\rightharpoonup Z  \ \ \text{in} \ \ 
		L^{\infty} ([0,1]; L^{2}_{\nu}( U\times\R^{3})) \cap 
		L^{2} ([0,1] ; L^{2}_{\nu}(  U\times\R^{3})).
		\end{equation*}
		
		\noindent Therefore, in the sense of distributions, $Z$ solves (\ref{linearized_eqtn}) with the specular BC. See the proof of Proposition of 1.4 in \cite{KimLee} to see that $Z$ also satisfies the specular BC. Moreover, it is easy to check that weak limit $Z$ satisfies conservation laws: 
		\begin{equation}\label{coserv_Z} 
		\iint_{ U\times\R^{3}} Z(t) \sqrt{\mu } = 0, \ \
		\iint_{ U\times\R^{3}} Z(t) \frac{|v|^{2}}{2} \sqrt{\mu }=0, \quad 0\leq t \leq 1.  
		\end{equation}
		In the case of axis-symmetry (\ref{axis-symmetric}),  
		\begin{equation}\label{angular_Z} 
		\iint_{ U \times \R^{3}} \{ (x-x^{0} ) \times \varpi \} \cdot v Z  (t)\sqrt{\mu}=0.
		\end{equation}
		
		On the other hand, since
		\[
		\mathbf{P}Z ^{m} \rightharpoonup \mathbf{P}Z \quad\text{and}\quad (\mathbf{I-P})Z ^{m} \rightarrow 0 \quad\text{in}\quad \int_0^1 \|\cdot\|_{\nu}^2 \dd t, 
		\]
		we know that weak limit $Z$ has only hydrodynamic part, i.e.
		\begin{equation} \label{limit_Z}
		Z(t,x,v) = \{a(t,x) + v\cdot b(x,v) +  \frac{|v|^{2}-3}{2}c(t,x)\}\sqrt{\mu },
		\end{equation}
		and 
		\begin{equation*}\begin{split}\label{bound_Z}
		\int^{1}_{0} \| Z\|_{\nu}^{2}\dd t \leq  \liminf_{m\rightarrow \infty}  \int^{1}_{0} \| Z^{m}\|_{\nu}^{2}\dd t \leq 1+ \frac{1}{m}\rightarrow 1.  \\
		\end{split}
		\end{equation*}

		\noindent\textit{Step 4. Compactness. } For interior compactness, let $\chi_{\e}: cl(U) \rightarrow [0,1]$ be a smooth function such that $\chi_{\e}(x) =1$ if $\dist(x,\p U)> 2\e^{4} $ and $\chi_{\e}(x) =0$ if $\dist(x,\p U)< \e^{4}$. From (\ref{linearized_eqtn}) with $Z^{m}$ ,
		\begin{eqnarray*}
			[\p_{t} + v\cdot \nabla_{x}  ](\chi_{\e}  Z^{m})
			= 
			v\cdot \nabla_{x} \chi_{\e} Z^{m}
			- L (\chi_{\e}Z^{m}).
		\end{eqnarray*} 
		From the standard Average lemma, $\chi_{\e} Z^{m}$ is compact i.e. 
		\begin{equation}\label{compact_int}
		\chi_{\e} Z^{m} \rightarrow \chi_{\e} Z \ \ \text{strongly in } 
		L^{2}([0,1]; L^{2}_{\nu}( U\times\R^{3})).  \\
		\end{equation}
		
		\noindent For near boundary compactness for non-grazing part, we claim that 
		\begin{equation}\label{bdry_m}
		\begin{split}
		&\int^{1-\e}_{\e}	\| \big(Z^{m} (t,x,v)- Z(t,x,v) \big) 
		\mathbf{1}_{
			\dist(x,\p U) < {\e^{4}} 
		}
		\mathbf{1}_{|\mathbf{n}(x) \cdot v|> \e} \|_{2}^{2}\\
		&
		\lesssim \int^{1}_{0}
		\| \big(Z^{m} (t,x,v)- Z(t,x,v) \big)\mathbf{1}_{\dist(x,\p U) > \frac{\e^{3}}{2} } \|_{2}^{2}+ O(\frac{1}{\sqrt{m}}).
		\end{split}
		\end{equation}
		
		\noindent We are looking up the equation of $Z^{m}-Z$. From (\ref{limit_Z}),
		\begin{equation}\label{eqtn_Zm_Z}\begin{split}
		[\p_{t}  + v\cdot \nabla_{x } ] (Z^{m}
		-Z) + L Z^{m} = 0. 
		\end{split}\end{equation} 
		
		\noindent We apply Lemma~\ref{boundary_interior} to (\ref{eqtn_Zm_Z}) by equating $g$ and $G$ with $Z^{m}
		-Z$ and the RHS of (\ref{eqtn_Zm_Z}) respectively. Then
		\begin{eqnarray*}
			&& \int^{1-\e}_{\e}\|
			\mathbf{1}_{\dist(x, \p U)<\e^{4} } 
			\mathbf{1}_{|\mathbf{n}(x) \cdot v| > \e} 
			(Z^{m}-Z)
			(t)
			\|_{2}^{2} \dd t
			\\
			&& \lesssim  
			\int^{1}_{0}\| \mathbf{1}_{\dist(x,\p U)> \e^{3}/2 } (Z^{m}-Z)(t)\|_{2}^{2} \dd t
			+
			\int^{1}_{0} 
			\iint_{ U\times\R^{3}} |Z^{m}-Z|\langle v\rangle
			|(\mathbf{I}- \mathbf{P})Z^{m}|  \\
			&& \lesssim 
			\sqrt{m}\int^{1}_{0} \| (\mathbf{I} - \mathbf{P}) Z^{m} \|_{\nu}^{2} 
			+
			\frac{1}{ \sqrt{m}}
			\int^{1}_{0} \| Z^{m} \|_{\nu}^{2} + \| Z\|_{\nu}^{2}
			.
		\end{eqnarray*}
		By (\ref{claim_Zm}) and (\ref{contra_coercivity_Z}), we conclude (\ref{bdry_m}).

		On the other hand, from (\ref{contra_coercivity_Z}), (\ref{limit_Z}), and (\ref{claim_Zm}),
		\begin{equation}
		\label{grazing_small}
		\begin{split}
		\int^{1-\e}_{\e} \| ( Z^{m}- Z) \mathbf{1}_{|\mathbf{n}(x) \cdot v| \leq \e } \|_{2}^{2} 
		&\leq \   \int_{\e}^{1-\e} \| (\mathbf{I} - \mathbf{P}) Z^{m} \|_{\nu}^{2} + 
		O(\e) \int^{1-\e}_{\e} \| \mathbf{P} Z^{m} \|_{2}^{2} + \| \mathbf{P} Z  \|_{2}^{2}\\
		&\leq \   \frac{1}{m} + 	O(\e).
		\end{split}\end{equation}
		
		\vspace{4pt}
		
		\noindent\textit{Step 6. Strong convergence. }
		For given $\e>0$, we can choose $m\gg_{\e}1$ such that 
		\begin{eqnarray*}
			\int^{1}_{0} \iint_{ U \times \R^{3}}  | 
			Z^{m} - Z |^{2} 
			&\leq&
			\int^{1}_{1-\e} \iint_{ U \times\R^{3}}+ \int^{\e}_{0} \iint_{ U \times\R^{3}} + 
			\int^{1-\e}_{\e} \iint_{ U_{\e} \times \R^{3}}\\
			&&+ \int^{1-\e}_{\e} \iint_{
				\substack{ U \backslash  U_{\e} \times \R^{3}\\
					\cap \ 
					\{ |\mathbf{n}(x) \cdot v|<\e  \ \text{or} \ |v|\geq \e^{-1}\}
				}}
				+ \int^{1-\e}_{\e} \iint_{
					\substack{ U \backslash  U_{\e} \times \R^{3}\\
						\cap \ 
						\{ |\mathbf{n}(x) \cdot v|\geq\e  \ \text{and} \ |v|\leq \e^{-1}\}
					}}\\
					&<&C \e ,
				\end{eqnarray*} 
				where we have used (\ref{claim_Zm}), (\ref{compact_int}), (\ref{bdry_m}), and (\ref{grazing_small}). Therefore, we conclude that $Z^{m} \rightarrow Z$ strongly in $L^{2}([0,1] \times U \times\R^{3})$ and hence 
				\begin{equation}\label{Z=1}
				\int^{1}_{0} \| Z\|_{2}^{2}=1.  \\
				\end{equation}

				\noindent\textit{Step 8.} We claim $Z=0$. Plugging (\ref{limit_Z}) into linearized Boltzmann equation, we get 
				\begin{equation} \label{macro eq}
				\begin{split}
				\p_i c &= 0 ,    \\
				\p_t {c} + \p_i  {b}_i  &= 0 ,  \\
				\p_i {b}_{j} + \p_j {b}_{i} &= 0 ,\quad i\neq j ,   \\
				\p_t {b}_{i} + \p_i {a}  &= 0,  \\
				\p_t {a}  &= 0 .  \\
				\end{split}
				\end{equation}
				
				Using the first equationsand direct computation of Lemma 12 in \cite{Guo10}, 
				\begin{equation*}\label{b_form}
				b(t,x) = - \p_{t} c(t) x + \varpi(t) \times x + m(t).
				\end{equation*}	
				From the second equation in (\ref{macro eq}) and the specular BC, 
				\[
				c(t,x) = c_0,\quad b = \varpi(t) \times x + m(t).
				\]
				We split into two cases $\varpi=0$ and $\varpi \neq 0$.  \\
				\textit{Case of $\varpi=0$.} $b(t)=m(t)$ and from sepcular BC, we deduce that
				\begin{equation*}
				b(t)\equiv m(t) \equiv 0.\label{b_vanishing}
				\end{equation*}
				And, from the fourth and the last equations of (\ref{macro eq}), we can derive
				\begin{equation*}\label{a_Phi}
				a(t,x)= a_{0}.
				\end{equation*}
				Since $a(t,x)$ and $c(t,x)$ are constant, from (\ref{coserv_Z}), we derive $a_{0} = c_{0} = 0$, and hence $Z=0$.  \\
				\textit{Case of $\varpi\neq0$.}
				From the specular BC,
				\begin{eqnarray*}
					b(t,x)\cdot {n}(x)  =   \big( \varpi(t)\times x + m(t) \big)\cdot {n}(x)=0.
				\end{eqnarray*}
				Since $m(t)$ is fixed vector for given $t$, we decompose $m(t)$ into the parallel and orthogonal components to $\varpi(t)$ as
				\[
				m(t) = \alpha(t) \varpi(t) - \varpi(t)\times x_0(t).
				\] 
				Then 
				\begin{eqnarray}
				b(t,x)\cdot {n}(x) &=& \big( \varpi(t)\times x + m(t) \big)\cdot {n}(x)\notag  \\
				&=& \big( \varpi(t)\times (x-x_0(t)) \big) \cdot {n}(x) + \alpha(t)\varpi(t)\cdot {n}(x) = 0,\quad\forall x\in\p U. \label{o_a=0}  
				\end{eqnarray}
				
				\noindent Choose $t$ with $\varpi(t) \neq 0$. We can pick $x^{\prime}\in\p U$ such that $\varpi(t)\parallel  {n}(x^{\prime})$. Then the first term of the RHS in (\ref{o_a=0}) is zero. Hence we deduce
				\begin{equation}\label{alpha=0}
				\alpha(t)=0 \ \ \text{and} \ \ b(t,x)= \varpi(t) \times \big(x-x^{0}(t)\big).
				\end{equation}
				This yields
				\begin{equation}\label{axis_0}
				\big( \varpi(t)\times (x-x_0(t)) \big) \cdot {n}(x)=0,\quad\forall x\in\p U .
				\end{equation}
				The equality (\ref{axis_0}) implies that $ U$ is axis-symmetric with the origin $x_0(t)$ and the axis $\varpi(t)$. From (\ref{angular_Z}) and (\ref{alpha=0}),
				\begin{eqnarray*}
					0
					&=& \iint_{ U}|\varpi\times(x-x_0(t))\cdot v|^2 \mu \dd x \dd v  .
				\end{eqnarray*}
				Therefore, we conclude that $b(t,x) \equiv 0$. Then using conservation laws (mass and energy) again, we deduce $Z=0$.  \\
				
				\noindent\textit{Step 9. } Finally we deduce a contradiction from (\ref{Z=1}) and $Z=0$ of Step8. This finishes the proof.  \\		
			\end{proof}	
		
	\section{Linear and Nonlinear decay}
		
		\subsection{Linear $L^{2}$ decay}
		We use coercivity estimate Proposition~\ref{prop_coercivity} to derive exponential linear $L^{2}$ decay of linear boltzmann equation (\ref{lin eq}) with the specular boundary condition. \\
		
		\begin{corollary}\label{decay_U} Assume $f$ solves linear boltzmann equation with the specular BC so that $f$ satisfies Proposition~\ref{prop_coercivity}. Then there exists $\lambda>0$ such that a solution of (\ref{lin eq}) satisfies
			\begin{equation}\label{U_decay}
			\sup_{0 \leq t}e^{\lambda t} 	\|f(t)\|_{2}^{2} \lesssim \| f_{0} \|_{2}^{2}.
			\end{equation}
			
		\end{corollary}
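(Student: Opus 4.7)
The plan is to combine the energy identity for the linear Boltzmann equation with the coercivity inequality from Proposition~\ref{prop_coercivity} and then iterate the resulting one-step decay estimate, in the spirit of the argument in \cite{Guo10,KimLee}. The specular reflection boundary condition is the essential structural feature that makes the energy identity clean: because $R_x v$ preserves $|v|$ and flips the sign of $\mathbf{n}\cdot v$, the boundary flux $\int_{\p U\times\mathbb{R}^3}(\mathbf{n}\cdot v)|f|^2$ vanishes identically. Together with the non-negativity of $L$ and the standard spectral gap $\langle L f , f\rangle \geq \delta_{0}\,\|(\mathbf{I}-\mathbf{P})f\|_{\nu}^{2}$, multiplying \eqref{lin eq} by $f$ and integrating in $(x,v)$ yields
\begin{equation*}
\tfrac{1}{2}\tfrac{d}{dt}\|f(t)\|_{2}^{2} + \delta_{0}\|(\mathbf{I}-\mathbf{P})f(t)\|_{\nu}^{2} \;\leq\; 0.
\end{equation*}
In particular $\|f(t)\|_2$ is non-increasing in $t$, and for any integers $0\leq N<N+1$ we have
\begin{equation*}
\|f(N+1)\|_{2}^{2} + 2\delta_{0}\int_{N}^{N+1}\|(\mathbf{I}-\mathbf{P})f\|_{\nu}^{2}\,\dd t \;\leq\; \|f(N)\|_{2}^{2}.
\end{equation*}

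Next I would invoke Proposition~\ref{prop_coercivity} on the slab $[N,N+1]$ to bound the hydrodynamic part by the microscopic one:
\begin{equation*}
\int_{N}^{N+1}\|f\|_{2}^{2}\,\dd t \;=\;\int_{N}^{N+1}\|\mathbf{P}f\|_{2}^{2}\,\dd t + \int_{N}^{N+1}\|(\mathbf{I}-\mathbf{P})f\|_{2}^{2}\,\dd t \;\leq\; (C+1)\int_{N}^{N+1}\|(\mathbf{I}-\mathbf{P})f\|_{\nu}^{2}\,\dd t.
\end{equation*}
Combining the last two displays,
\begin{equation*}
\int_{N}^{N+1}\|f\|_{2}^{2}\,\dd t \;\leq\; \tfrac{C+1}{2\delta_{0}}\bigl(\|f(N)\|_{2}^{2}-\|f(N+1)\|_{2}^{2}\bigr).
\end{equation*}
By the mean value theorem we can pick $t^{*}\in[N,N+1]$ with $\|f(t^{*})\|_{2}^{2}\leq \int_{N}^{N+1}\|f\|_{2}^{2}\,\dd t$, and monotonicity of $\|f(\cdot)\|_{2}$ gives $\|f(N+1)\|_{2}^{2}\leq\|f(t^{*})\|_{2}^{2}$. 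Setting $K:=(C+1)/(2\delta_{0})$ we deduce
\begin{equation*}
\|f(N+1)\|_{2}^{2} \;\leq\; \tfrac{K}{1+K}\,\|f(N)\|_{2}^{2}.
\end{equation*}
Iterating yields $\|f(N)\|_{2}^{2}\leq \bigl(\tfrac{K}{1+K}\bigr)^{N}\|f_{0}\|_{2}^{2}$; choosing $\lambda:=\log\tfrac{1+K}{K}>0$ and interpolating with the monotonicity bound on each $[N,N+1]$ we obtain \eqref{U_decay}.

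There is no serious analytic obstacle here: all the geometric and trajectory difficulties have already been absorbed into Proposition~\ref{prop_coercivity}. The only items that require care are (i) checking that the specular boundary contribution in the energy identity genuinely vanishes (this uses $|R_x v|=|v|$ and $\mathbf{n}(x)\cdot(R_x v)=-\mathbf{n}(x)\cdot v$, so the outgoing and incoming pieces of $\int (\mathbf{n}\cdot v)|f|^2$ cancel pairwise in the sense of the trace established in \cite{GKTT1}), and (ii) verifying that the conservation hypotheses \eqref{conserv_F_mass}--\eqref{conserv_F_angular} required by Proposition~\ref{prop_coercivity} are preserved in time by the linear flow, which is immediate since $\sqrt{\mu}$, $|v|^{2}\sqrt{\mu}/2$, and $\{(x-x_{0})\times\varpi\}\cdot v\sqrt{\mu}$ all lie in $\mathrm{Ker}(L)$ and the specular BC preserves each of these moments.
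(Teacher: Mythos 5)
Your proof is correct, and it reaches the conclusion by a slightly different mechanism than the paper. The paper works with the time-weighted quantity $e^{\lambda t}f$: it writes the energy identity for $e^{\lambda t}f$, bounds the dissipation term from below by the spectral gap, upgrades it with Proposition~\ref{prop_coercivity} (applied on unit intervals, "by time translation") to control $\int_0^N\|e^{\lambda s}f\|_2^2$, and then absorbs the extra $\lambda\int_0^N\|e^{\lambda s}f\|_2^2$ term by choosing $\lambda<\delta_L/(2C)$; the weighted decay then comes out directly, with the tail $[N,t]$ handled by monotonicity. You instead keep the unweighted energy inequality, combine it with the coercivity estimate on each slab $[N,N+1]$ to produce the discrete contraction $\|f(N+1)\|_2^2\leq \frac{K}{1+K}\|f(N)\|_2^2$, and iterate, recovering the exponential rate $\lambda=\log\frac{1+K}{K}$ and interpolating on each unit interval by monotonicity. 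The two arguments use exactly the same ingredients (vanishing of the specular boundary flux, non-negativity/spectral gap of $L$, and Proposition~\ref{prop_coercivity} on unit time intervals), so they are equivalent in substance; your discrete-iteration version is arguably a bit cleaner in that it avoids having to justify the coercivity estimate for the exponentially weighted function (which in the paper requires noting that $e^{2\lambda s}$ is comparable to a constant on each unit interval), while the paper's version gives the weighted bound in one stroke without the mean-value/monotonicity bookkeeping. Two cosmetic points: your constant $(C+1)$ should really be $C+1/C_0$ (using $\nu\geq C_0$ to dominate $\|(\mathbf{I}-\mathbf{P})f\|_2^2$ by $\|(\mathbf{I}-\mathbf{P})f\|_\nu^2$), which is harmless at the level of $\lesssim$; and the mean-value selection of $t^*$ can be replaced by taking the infimum of $\|f(t)\|_2^2$ over $[N,N+1]$ if one prefers not to invoke continuity of $t\mapsto\|f(t)\|_2^2$.
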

		\begin{proof} Assume that $0 \leq t \leq 1$. From the energy estimate of (\ref{linearized_eqtn}) in a time interval $[0,N]$, 
			\begin{equation*}\label{L2_f} 
			\|f(N)\|_{2}^{2} + \int^{N}_{0} \iint_{ U\times\R^{3}}  f Lf 
			\leq \|f(0)\|_{2}^{2}  .
			\end{equation*}
			
			\noindent From (\ref{linearized_eqtn}), for any $\lambda>0$
			\begin{equation*}\label{eqtn_lamda}
			\big[\p_{t}  + v\cdot \nabla_{x} \big] (e^{\lambda t} f)  + L (e^{ \lambda t}f )
			= \lambda e^{\lambda t} f.
			\end{equation*}
			By the energy estimate,
			\begin{equation}\label{energy_f}  
			\| e^{\lambda t}f(N)\|_{2}^{2} + \underbrace{\int^{N}_{0} \iint_{ U\times\R^{3}}  e^{2\lambda s} f Lf } _{(I)}
			-  
			{
				\lambda \int^{N}_{0} \iint_{ U\times\R^{3}}
				|e^{\lambda s} f(s)|^{2}}
			\leq \|f(0)\|_{2}^{2} 
			. 
			\end{equation}
			
			\noindent Firstly we consider $(I)$ in (\ref{energy_f}). From semi-positiveness of operator $L$, the term $(I)$ in (\ref{energy_f}) is bounded below by 
			\begin{eqnarray*}
				%
				(I)\geq  
				\delta_{L} 
				\int^{N}_{0} \iint_{ U\times\R^{3}} \langle v\rangle | e^{ \lambda s}(\mathbf{I} - \mathbf{P})f|^{2} \geq  \delta_{L} \int^{N}_{0} \| e^{ \lambda s} (\mathbf{I} - \mathbf{P}) f  \|_{\nu}^{2}.
			\end{eqnarray*}
			By time translation, we apply (\ref{coercive}) to obtain
			\begin{equation*} \label{I}
			\begin{split} 
				(I)
				&\geq   \frac{\delta_{L} }{2} \int^{N}_{0} \| e^{ \lambda s}(\mathbf{I} - \mathbf{P}) f \|_{\nu}^{2}  + \frac{\delta_{L} }{2 C} \int^{N}_{0} \|  e^{ \lambda s} \mathbf{P} f \|_{2}^{2}  \geq  \frac{\delta_{L} }{2 C} \int^{N}_{0} \|  e^{ \lambda s}  f \|_{2}^{2}.
			\end{split}
			\end{equation*}
		
			\noindent Therefore, we derive 
			\begin{equation}\label{energy_N0} 
			e^{2\lambda N}\| f(N)\|_{2}^{2} 
			+ \Big(
			\frac{\delta_{L} }{2C} - \lambda
			\Big)
			\int^{N}_{0} \| e^{ \lambda s} f \|_{2}^{2} 
			\leq \| f (0)\|_{2}^{2}. 
			\end{equation}
			
		\noindent On the other hand, from the energy estimate of (\ref{linearized_eqtn}) in a time interval $[N,t]$, using semi-positiveness of $L$, we have 
			\begin{equation}\label{energy_tN}
			\| f (t) \|_{2}^{2}  \leq \| f(N) \|_{2}^{2}.
			\end{equation}
			
		\noindent Finally choosing $\lambda \ll 1$, from (\ref{energy_N0}) and (\ref{energy_tN}), we conclude that 
			\begin{equation*}
			e^{\lambda t } \| f(t) \|_{2}^{2} = e^{\lambda (t-N)} e^{\lambda N }\| f(N) \|_{2}^{2}
			\leq 2 \| f(0) \|_{2}^{2},
			\end{equation*}
			and obtain (\ref{U_decay}). \\  
		\end{proof}

		\subsection{Nonlinear $L^{\infty}$ decay}
		We use $L^{\infty}-L^{2}$ bootstrap form (\ref{h infty est}), Duhamel's principle, and Corollary~\ref{decay_U} to derive nonlinear $L^{\infty}$ decay.  \\		
		\begin{proof} [\textbf{Proof of Theorem~\ref{theorem_decay}}] 
		From (\ref{h infty est}),
		\begin{equation*} 
		\begin{split}
		\sup_{s\in[T,t]} \|h(s)\|_\infty & \lesssim \ e^{-\nu_{0} (t-T) } \|h (T)\|_{\infty} +  \int_{T}^{t} \|f (s)\|_2 \dd s  .
		\end{split}
		\end{equation*}
		
		We assume that $m \leq t < m+1$ and define $\lambda^{*}:=\min\{ \nu_{0}, \lambda\}$, where $\lambda$ is some constant from Corollary~\ref{decay_U}. We use (\ref{h infty est}) repeatedly for each time step, $[k,k+1), \ k\in\mathbb{N}$ and Corollary~\ref{decay_U} to perform $L^{2}-L^{\infty}$ bootstrap,
		\begin{equation*} \label{h infty decay}
		\begin{split}
		\|h(t)\|_{\infty} 
		&\lesssim e^{-m \nu_{0} } \|h(0)\|_{\infty} + \sum_{k=0}^{m-1} e^{-k \nu_{0} } \int_{m-1-k}^{m-k} \|f(s)\| \dd s  \\
		&\lesssim e^{-m \nu_{0} } \|h(0)\|_{\infty} + \sum_{k=0}^{m-1} e^{-k  \nu_{0} } \int_{m-1-k}^{m-k} e^{-\lambda(m-1-k)} \|f(0)\| \dd s  \lesssim  e^{- \lambda^{*}  t } \|h(0)\|_{\infty} . \\
		\end{split}
		\end{equation*} 
		
		For nonlinear problem from Duhamel principle,
		\begin{equation} \label{Duhamel}
		\begin{split}
		h  &:= U(t)h_0 + \int_{0}^{t} U(t-s) w  \Gamma(\frac{h }{w},\frac{h }{w}) (s)  \dd s ,  \\
		\|h (t) \|_{\infty} &\lesssim e^{- \lambda^{*} t } \|h(0)\|_{\infty} + \Big\| \int_{0}^{t}  U(t-s) w \Gamma\big( \frac{h }{w}, \frac{h }{w} \big)(s) \dd s \Big\|_{\infty} , 
		\end{split}
		\end{equation}
		where $U(t)$ is linear solver for linearized Boltzmann equation. Inspired by \cite{Guo10}, we use Duhamel's principle again:
		\begin{equation*} \label{double duhamel}
		U(t-s) = G(t-s) + \int_s^t G(t-s_1) K_{w}U(s_1-s) \dd s_1,
		\end{equation*}
		where $G(t)$ is linear solver for the system
		\begin{equation*} \label{step 2 eq}
		\begin{split}
		&\p_t h + v\cdot\nabla_x h  +  \nu h = 0,\quad \text{and}\quad |G(t)h_0| \leq e^{-\nu_{0}t} |h_0|.
		\end{split}
		\end{equation*}
		
		\noindent For the last term in (\ref{Duhamel}),
		\begin{equation*} \label{nu increase}
		\begin{split}
		& \Big\| \int_{0}^{t}  U(t-s) w \Gamma\big( \frac{h }{w}, \frac{h }{w} \big)(s) \dd s \Big\|_{\infty}   \\
		&\leq  \Big\| \int_{0}^{t}  G(t-s) w  \Gamma\big( \frac{h }{w}, \frac{h }{w} \big)(s) \dd s \Big\|_{\infty}   +   \Big\| \int_{0}^{t}  \int_{s}^{t} G(t-s_1) K_{w}  U(s_1-s) w \Gamma\big( \frac{h }{w}, \frac{h }{w} \big)(s) \dd s_1 \dd s  \Big\|_{\infty}   \\
		&\leq 
		C e^{- \lambda^{*} t} \Big(\sup_{0\leq s\leq \infty} e^{ \lambda^{*} s} \|h (s)\|_{\infty} \Big)^{2}.
		\end{split}
		\end{equation*}	
		
		\noindent Therefore, for sufficiently small $\|h_0\|_{\infty} \ll 1$, we have uniform bound
		\begin{equation*} \label{uniform bound} 
		\sup_{0\leq t\leq \infty} e^{\lambda^{*} t } \|h(t)\|_\infty \ll 1,
		\end{equation*} 
		hence we get global decay and uniqueness. Also note that positivity of $F$ is standard by linear solvability and solution sequence $F^{\ell}$:
		\begin{equation}\notag
		\begin{split}
		&\p_{t } F^{\ell+1} + v\cdot \nabla F^{\ell+1} = Q_{+} (F^{\ell},F^{\ell}) - \nu(F^{\ell}) F^{\ell+1},  \ \ F|_{t=0} = F_{0}, \\
		&F^{\ell+1} (t,x,v) = F^{\ell+1} (t,x,R_{x}v) \ \ \ \text{on}  \ \  \p U.
		\end{split}
		\end{equation} 
		From $F_{0} \geq 0$ and $ F^{\ell} \geq 0$, we have $F^{\ell+1} \geq 0$.
		
		\end{proof}

		\section{Appendix: Example of sticky grazing point}
		
		Let us consider backward in times trajectories which start from $(1,1)$ with velocity $v$ between $(1,1)$ and $(1,1+\delta)$, $0 < \delta \ll 1$. Then all the trajectories are part of set of rays
		\[
			\{ (x,y) : y = (1 + \delta)(x-1) + 1, \ 0 < \delta \leq \varepsilon \ll 1  \}.
		\]  	
		We consider the trajectories bounce on the curve $f(x) = \frac{1}{2}x^{2}$. When $\delta = 0$, trajectory bounce on $(0,0)$ with collision angle $\frac{\pi}{4}$. When $0 < \delta \ll 1$, bouncing point on $f(x) = \frac{1}{2}x^{2}$ is 
		\[
			(\delta_{*}, \frac{1}{2}\delta_{*}^{2}),\quad\text{where}\quad \delta_{*} = (1 + \delta) - \sqrt{(1+\delta)^{2} - 2\delta}.
		\]
		Using the specular BC, bounced trajectory with $v^{1}$ direction is part of set of rays
		\[
		\{ (x,y) : y = L(\delta)(x - \delta_{*}) + \frac{1}{2}\delta_{*}^{2} \},\quad L(\delta) = \frac{ (1+\delta)(1+\delta_{*}^{2}) - 2\sqrt{1+\delta^{2}} }{ 1 + \delta_{*}^{2} + 2\delta_{*}\sqrt{1+\delta^{2}} }.
		\] 
		We parametrize convex grazing boundary with parameter $\delta$, 
		\[
			\big( X(\delta), Y(\delta) \big),\quad X(0) = -Y(0) < 0.
		\]
		Considering tangential line on $X(\delta), Y(\delta))$, it is easy to derive two conditions from concave grazing.
		\begin{equation} \label{two eq}
		\begin{split}
			\frac{Y^{\prime}(\delta)}{X^{\prime}(\delta)} &= L(\delta),  \\
			-L(\delta)\delta_{*} + \frac{1}{2}\delta_{*}^{2} &= -\frac{Y^{\prime}(\delta)}{X^{\prime}(\delta)} X(\delta) + Y(\delta).
		\end{split}
		\end{equation}		
		We differentiate second equation and combine with first equation to get 
		\begin{equation} \label{X form}
		\begin{split}
			\frac{d}{d\delta}\big( -L(\delta)\delta_{*} + \frac{1}{2}\delta_{*}^{2} \big) &= -L^{\prime}(\delta)X(\delta) - L(\delta) X^{\prime}(\delta) + Y^{\prime}(\delta)  \\
			&= -L^{\prime}(\delta)X(\delta).
		\end{split}
		\end{equation}
		It is easy to check $L^{\prime} > 0$ locally $0< \delta \ll 1$. (\ref{X form}) gives $X(\delta)$ and this is analytic from analyticity of $L(\delta)$ and $\delta_{*}^{2}$, $X(\delta)$ is analytic function of $\delta$ for local $0<\delta \ll 1 $. Using the first equation of (\ref{two eq}), we obtain ODE for $Y(\delta)$ with $Y(0)=-X(0)$. Since $X(\delta)$ is analytic, $Y(\delta)$ is also analytic. Moreover, we can check concavity of $\big( X(\delta), Y(\delta) \big)$ by 
		\[
			\frac{d}{d\delta} \Big( \frac{Y^{\prime}(\delta)}{X^{\prime}(\delta)} \Big) = L^{\prime}(\delta) > 0.
		\]
		
		\noindent\textbf{Acknowledgements.} The authors thank Yan Guo for stimulating discussions. Their research is supported in part by $\text{NSF-DMS }1501031$, WARF, and the Herchel Smith fund. They thank KAIST Center for Mathematical Challenges and ICERM for the kind hospitality. \\

		\bibliographystyle{plain}

\end{document}